\newtheorem{thm}{Theorem}[section]
\newtheorem{prop}[thm]{Proposition}
\newtheorem{lemma}[thm]{Lemma}
\newtheorem{conj}[thm]{Conjecture}
\newtheorem{cor}[thm]{Corollary}
\newtheorem{obs}[thm]{Observation}
\newtheorem{claim}{Claim}
\newtheorem{subclaim}{Subclaim}
\numberwithin{equation}{section}
\theoremstyle{definition} 
\newtheorem{definition}[thm]{Definition}
\newenvironment{cproof}[1][\proofname]
{\proof[#1]}
{\endproof}
\newenvironment{scproof}[1][\proofname]
{\proof[#1]}
{\endproof}
\renewcommand{\subset}{\subseteq}
\newcommand{\g}{g}
\newcommand{\del}{\mathsf{Del}}
\newcommand{\delsave}{\mathsf{DelSave}}
\renewcommand{\bar}{\overline}
\definecolor{asparagus}{rgb}{0.53, 0.66, 0.42}
\definecolor{cerulean}{rgb}{0.0, 0.48, 0.65}
\definecolor{cornellred}{rgb}{0.7, 0.11, 0.11}
\definecolor{darklavender}{rgb}{0.45, 0.31, 0.59}
\definecolor{darkslateblue}{rgb}{0.28, 0.24, 0.55}
\definecolor{burntorange}{rgb}{0.8, 0.33, 0.0}
\date{}
\definecolor{nicelavender}{RGB}{153, 128, 250}
\begin{document}


\author{Ewan Davies\thanks{Supported in part by NSF grant CCF-2309707}}
\affil{Department of Computer Science, Colorado State University \\ \texttt{ewan.davies@colostate.edu}}
\author{Evelyne Smith-Roberge}
\affil{School of Mathematics, Georgia Institute of Technology \\ \texttt{evelyne.smithroberge@gmail.com}}
\title{Local Weak Degeneracy of Planar Graphs}
\date{\today}
\maketitle
\date{}

\begin{abstract}
Thomassen showed that planar graphs are 5-list-colourable, and that planar graphs of girth at least five are 3-list-colourable. An easy degeneracy argument shows that planar graphs of girth at least four are 4-list-colourable. 
In 2022, Postle and Smith-Roberge proved a common strengthening of these three results: with $g(v)$ denoting the length of a shortest cycle containing a vertex $v$, they showed that if $G$ is a planar graph and $L$ a list assignment for $G$ where $|L(v)| \geq \max\{3,8-g(v)\}$ for all $v \in V(G)$, then $G$ is $L$-colourable. 
Moreover, they conjectured that an analogous theorem should hold for correspondence colouring. 
We prove this conjecture; in fact, our main theorem holds in the still more restrictive setting of \emph{weak degeneracy}, and moreover acts as a joint strengthening of the fact that planar graphs are weakly 4-degenerate (originally due to Bernshteyn, Lee, and Smith-Roberge), and that planar graphs of girth at least five are weakly 2-degenerate (originally due to Han et al.). 
\end{abstract}
\maketitle

\section{Introduction}\label{sec:intro}
Colouring planar graphs is one of the oldest and most well-known problems in graph theory. 
Perhaps the most famous theorem in graph colouring is the \emph{Four Colour Theorem} of Appel and Haken~\cite{AH77a,AHK77}, which states that the vertices of every planar graph can be coloured with only four colours such that the endpoints of each edge receive different colours.

Graphs in this paper are finite, and contain neither loops nor parallel edges. Given a graph $G$, we refer to its vertex- and edge-sets as $V(G)$ and $E(G)$, respectively.
The \emph{chromatic number} of a graph $G$ is the minimum number $k$ such that there exists a proper \emph{$k$-colouring} of the kind above: a function $\varphi:V(G) \rightarrow \{1, 2, \dots, k\}$ where for each edge $uv \in E(G)$, we have that $\varphi(u) \neq \varphi(v)$. We say that $G$ is \emph{$k$-colourable} if it admits a $k$-colouring.

As there exist planar graphs that require four colours in any proper colouring (e.g.\ $K_4$, the complete graph on four vertices), the four colour theorem is in a sense best possible. 
To gain a deeper understanding of the chromatic properties of planar graphs, it is very natural to further limit their structure, and investigate the resulting class of graphs. 
One commonly studied structural limitation is a lower bound on \emph{girth} $g(G)$, the length of a shortest cycle in $G$ (the girth of a forest is defined to be infinite).
Gr\"{o}tzsch's theorem~\cite{grotzsch1959dreifarbensatz} concerns the chromatic number of planar graphs from this perspective: it states that planar graphs with girth at least four are 3-colourable. Like the four colour theorem, Gr\"{o}tzsch's theorem is best possible: there exist planar graphs of arbitrarily large girth that require three colours in any proper colouring (e.g.\ odd cycles).
From here, to further understand the colouring properties of planar graphs, one could attempt to limit graph structure in more complex ways. For example, one could allow triangles under the condition that they be sufficiently far apart \cite{dvovrak2021three}. A different approach is to investigate different notions of colouring, beyond standard vertex colouring.

\emph{List colouring} is the most widely-studied generalization of colouring. 
It was introduced in the mid-70s by Vizing~\cite{vizing}, and independently by Erd\H{o}s, Rubin, and Taylor~\cite{erdos1979choosability}. Intuitively, list colouring is the result of localizing the colour palette available to each vertex: that is, the result of restricting the possible images of the colouring function for each vertex. Formally, it is defined as follows.

\begin{definition}
    Given a positive integer $k$ and graph $G$, a \emph{$k$-list assignment} for $G$ is a function $L$ that assigns to each vertex $v \in V(G)$ a set $L(v)$ of size at least $k$. We refer to the elements of $L(v)$ as colours. An \emph{$L$-colouring} is a function $\varphi$ that assigns to each vertex $v \in V(G)$ a colour in $L(v)$, and where for each $uv \in E(G)$, we require $\varphi(u) \neq \varphi(v)$. $G$ is \emph{$L$-colourable} if it has an $L$-colouring; it is \emph{$k$-list-colourable} if it has an $L$-colouring for every $k$-list assignment $L$. The \emph{list chromatic number} of $G$ is denoted by $\chi_\ell(G)$ and defined as the minimum $k$ such that $G$ is $k$-list-colourable.
\end{definition}

Clearly, list colouring generalizes standard vertex colouring: we recover the standard notion by setting $L(u) = L(v)$ for every pair of vertices $u,v$ in a graph $G$. In general, the list chromatic and chromatic numbers of a graph can be arbitrarily far apart: there exist bipartite (2-colourable) graphs with arbitrarily high list chromatic number. For planar graphs, the situation is a bit simpler: Thomassen showed (with a remarkably elegant proof) that planar graphs are 5-list-colourable~\cite{thomassen5LC}. As demonstrated by Voigt~\cite{voigt1993list}, this is best possible. 
Analogous to Gr\"{o}tzsch's theorem, by ruling out triangles we can lower this: every planar graph of girth at least four is 4-list-colourable.  
Unlike Gr\"{o}tzsch's theorem, this result is elementary, and follows easily the fact that triangle-free planar graphs are \emph{3-degenerate} (which itself follows easily from Euler's formula for embedded graphs). This is also the best we can do for planar graphs of girth four: Voigt \cite{voigt1995not} constructed such a graph that is not 3-list-colourable. 
We recall the definition of degeneracy below, about which we will say more later.

\begin{definition}
    A graph $G$ is \emph{$k$-degenerate} if every subgraph of $G$ contains a vertex of degree at most $k$. 
\end{definition}

Continuing a theme of restricting girth, Thomassen \cite{thomassen3LC,thomassen3LCnew} gave two proofs that planar graphs of girth at least five are 3-list-colourable. As this is best possible for ordinary colouring (e.g.\ arbitrarily long odd cycles), it follows that it is also best for list colouring. 
We summarize the state of affairs for list colouring planar graphs $G$ of given girth thus: $\chi_\ell(G)\le \max\{8 - g(G), 3\}$.

To gain a deeper understanding of the list colouring properties of planar graphs, one might try instead to limit list sizes based on \emph{local} structure, rather than a global graph parameter such as the girth.
Indeed, understanding the extent to which local structure affects global properties such as the chromatic number is a major theme in graph theory. In the colouring context, notable examples include results where list sizes depend on individual vertex degrees~\cite{borodin1997list,DJKP20a,BDLP24}, or more involved properties of neighbourhoods such as density, clique number, and Hall ratio~\cite{kelly2020local,BKNP22,PH21a,DKPS20a}. To this end, we recall the following definitions from \cite{PS22}.

\begin{definition}\label{def:girth}
    Given a graph $G$ and a vertex $v \in V(G)$, we define the \emph{girth of $v$} as the length of a shortest cycle in $G$ containing $v$. When $v$ is not contained in a cycle, we define its girth to be infinite. We denote the girth of $v$ as $g_G(v)$, dropping the subscript when the choice of $G$ is clear from context.
\end{definition}
\begin{definition}\label{def:localgirth}
    Let $G$ be a graph, and $L$ a list assignment for $G$. We say $L$ is a \emph{local girth list assignment} if 
    \begin{itemize}[itemsep=0pt]
        \item every vertex $v \in V(G)$ has $|L(v)| \geq 3$,
        \item every vertex $v \in V(G)$ with $g(v) = 4$ has $|L(v)| \ge 4$, and
        \item every vertex $v \in V(G)$ with $g(v) = 3$ has $|L(v)| \ge 5$. 
    \end{itemize}
    Equivalently, $|L(v)|\ge \max\{8-g(v),3\}$. If $G$ admits an $L$-colouring for every local girth list assignment $L$, we say $G$ is \emph{local girth list colourable}.
\end{definition}

In 2022, Postle and the second author~\cite{PS22} proved that every planar graph is local girth list colourable, uniting and strengthening both theorems of Thomassen mentioned above as well as the fact that triangle-free planar graphs are 4-list-colourable. 

List colouring, however, is merely the beginning of a sequence of generalisations of vertex colouring. 
While list colouring localises the palette of colours available to a vertex, a further generalisation due to Dvo{\v{r}}{\'a}k and Postle~\cite{dvovrak2018correspondence} known as \emph{correspondence colouring} (or DP-colouring) localises the constraint imposed by each edge of the graph on the function $\varphi$ assigning a vertex to a colour on its list. Correspondence colouring is defined as follows.

\begin{definition}\label{def:correspondence}
 Let $G$ be a graph. A \emph{$k$-correspondence assignment} $(L,M)$ for $G$ is a $k$-list assignment $L$ together with a function $M$ that assigns to every edge $e = uv \in E(G)$ a matching $M_e$ between $\{u\}\times L(u)$ and $\{v\}\times L(v)$.
An $(L,M)$-colouring of $G$ is a function $\varphi$ that assigns to each vertex $v \in V(G)$ a colour $\varphi(v) \in L(v)$ such that for every edge $e=uv \in E(G)$, the vertices $(u, \varphi(u))$ and $(v, \varphi(v))$ are non-adjacent in $M_{e}$. We say that $G$ is $(L,M)$-colourable if it admits an $(L,M)$-colouring, and that $G$ is \emph{$k$-correspondence-colourable} if $G$ is $(L,M)$-colourable for every $k$-correspondence assignment $(L,M)$ for $G$. The \emph{correspondence chromatic number} of $G$ is denoted $\chi_c(G)$ and defined to be the minimum number $k$ such that $G$ is $k$-correspondence colourable.
\end{definition}

\noindent 
Also important to our study is the following.

\begin{definition}
    If $G$ is a graph and $(L,M)$ is a correspondence assignment for $G$ where $L$ is a local girth list assignment, we say that $(L,M)$ is a \emph{local girth correspondence assignment.}
\end{definition}

Clearly, for every graph $G$ we have that
$$
\chi(G) \leq \chi_\ell(G) \leq \chi_c(G),
$$
since one can take matchings $M_{uv}$ that connect each $(u,i)$ to $(v,i)$ for each $i\in L(u)\cap L(v)$ to recover the list colouring framework. 
Relevant to our discussion of planar graphs, we note that correspondence colouring was originally introduced~\cite{dvovrak2018correspondence} in order to study the list chromatic number of planar graphs with no cycles of length four to eight, a more complex global condition than girth on the cycles in a planar graph.
Correspondence colouring quickly became the subject of sustained study in its own right; the original paper~\cite{dvovrak2018correspondence} has over 250 citations at the time of writing. 

With only minor semantic changes, Thomassen's proofs of the 5-list-colourability of planar graphs and 3-list-colourability of planar graphs of girth at least five also apply in the correspondence colouring setting. Since $k$-degeneracy implies $(k+1)$-correspondence colourability, it is moreover true that planar graphs of girth at least four are 4-correspondence colourable. 
Nevertheless, the proof of Postle and the second author on the local girth list colourability of planar graphs does \emph{not} go through in the correspondence colouring framework, though the authors of \cite{PS22} conjectured in~\cite{PS23} that the correspondence analogue of their theorem should hold.
In this paper, we prove this conjecture.

\begin{thm}\label{thm:localgirthcorrcol}
    Every planar graph is local girth correspondence colourable.
\end{thm}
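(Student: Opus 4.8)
The plan is to prove a strengthening of Theorem~\ref{thm:localgirthcorrcol} in the language of \emph{weak degeneracy}, following the general discharging-plus-reducibility template that Postle and Smith-Roberge used in \cite{PS22}, but recast so that every step is ``correspondence-friendly.'' Recall that weak degeneracy is a potential-function refinement of degeneracy: one processes vertices one at a time, either deleting a vertex (paying its current ``demand'') or performing a \emph{DelSave} operation at a vertex $u$ with a neighbour $v$, which deletes $u$ and decrements the demand of $v$ by one, provided $u$'s demand is strictly less than its current degree. A graph is weakly $f$-degenerate, for a demand function $f\colon V(G)\to\mathbb Z$, if one can reduce it to the empty graph this way; the key fact (proved in Bernshteyn--Lee and used throughout the weak-degeneracy literature) is that weak $f$-degeneracy with $f(v)=|L(v)|$ implies $(L,M)$-colourability for \emph{every} correspondence assignment $(L,M)$, because the DelSave operation is exactly what one can afford when a vertex has strictly fewer constraints than list colours. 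So the target becomes: every planar graph $G$ is weakly $f$-degenerate where $f(v)=\max\{8-g(v),3\}$. This simultaneously implies Theorem~\ref{thm:localgirthcorrcol}, the weak $4$-degeneracy of planar graphs, and the weak $2$-degeneracy of girth-$5$ planar graphs, as advertised in the abstract.

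First I would set up the induction: suppose $G$ is a vertex-minimal counterexample, embedded in the plane. The local girth function $g(v)$ is awkward under vertex deletion (deleting a vertex can only increase girths, which is the ``right'' direction, but one must track this carefully), so the real work is to identify a rich list of \emph{reducible configurations} --- small subgraphs $H$ together with their local-girth demands --- that cannot appear in $G$. For each such $H$ one argues: by minimality $G-V(H)$ (or $G$ minus an appropriate subset) is weakly $f'$-degenerate for the updated demand function $f'$, and then one gives an explicit short sequence of Delete/DelSave operations on the vertices of $H$ that finishes the reduction, using that each $v\in H$ has few neighbours outside $H$ relative to its demand. Typical configurations will be: low-degree vertices (a vertex of degree $\le 2$, or degree $3$ with large girth, etc.), adjacent low-degree vertices, vertices with many degree-$3$ neighbours, short paths/cycles of low-degree vertices, and configurations forcing small separating structures. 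Because we are in the weak-degeneracy setting rather than plain colouring, some configurations that are \emph{not} reducible for colouring become reducible here (this is exactly why the $\max$ with $3$ and the girth-$5$ case improve from $3$-degeneracy-flavoured bounds), and conversely we must be more careful about the ``$<$ degree'' side condition on DelSave.

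Then comes discharging. Assign each vertex $v$ charge $\deg(v)-6$ and each face charge $2\ell(f)-6$ where $\ell(f)$ is the length of $f$; by Euler's formula the total is $-12<0$. The discharging rules must be sensitive to \emph{local} girth: a vertex $v$ with $g(v)$ large is incident only to long faces and can afford to send charge, while vertices on triangles are the ones in deficit. The crux is to design rules so that, after discharging, every vertex and face has nonnegative charge unless $G$ contains one of the reducible configurations --- contradicting minimality. I expect the main obstacle to be precisely this interface: making the reducible-configuration list simultaneously (i) closed enough that the discharging argument has no ``leftover'' positive-charge-absorbing gaps, and (ii) genuinely reducible \emph{in the weak-degeneracy, correspondence sense}, where the DelSave side condition ``demand $<$ current degree'' can fail exactly when a vertex has been stripped of too many neighbours. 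Handling vertices whose girth is $4$ versus $\ge 5$ versus $=3$ in a unified potential-bookkeeping, and controlling how girths change as the configuration is peeled off (a deleted triangle can raise a neighbour's girth from $3$ to something larger, changing its demand mid-process), is where the technical weight of the paper will lie; I would expect a careful case analysis organized by the ``type'' (degree, girth, number of small-girth neighbours) of the vertices involved, mirroring but substantially extending the structure of \cite{PS22}.
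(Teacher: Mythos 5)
Your route is genuinely different from the paper's, and as written it has real gaps rather than just missing details. The paper does not use discharging anywhere: it proves the stronger Theorem~\ref{thm:main} by a Thomassen-style induction on a boundary statement (Theorem~\ref{thm:inductive}) about ``canvases'' --- a plane graph with a short preremoved path $P$ on the outer face, together with two independent sets $A$ and $B$ of further-restricted boundary vertices and three explicit exceptional configurations --- and then analyses a minimum counterexample through chord lemmas and the removal of a long boundary path $R$. Your description of \cite{PS22} as ``discharging-plus-reducibility'' is also inaccurate; that proof is likewise a precoloured-path induction in the tradition of \cite{thomassen5LC}. This matters because your plan's viability hinges on a list of locally reducible configurations plus Euler-formula discharging, and you supply neither the configurations nor the rules. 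There is a structural reason to doubt such a list exists: planar triangulations can have minimum degree $5$ while their (girth-$3$) vertices carry demand $4$, so no small configuration is reducible by a purely local Delete/DelSave argument without boundary bookkeeping --- which is exactly why every result you aim to unify (\cite{thomassen5LC}, \cite{PS22}, \cite{bernshteyn2024weak}, \cite{weakdegengirth5}) is proved via an outer-face induction rather than discharging. The ``interface'' you flag as the expected difficulty is in fact the entire content of the theorem, and your outline gives no mechanism for getting past it.

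There are also two concrete technical errors. First, your description of the operation is wrong: by Definition~\ref{def:delsave}, $\delsave(G,f,v,w)$ deletes $v$ and decrements $f$ on \emph{all} neighbours of $v$ except the saved vertex $w$, and it is legal when $f(v) > f(w)$ and no value goes negative --- not ``decrements the demand of one neighbour, provided the deleted vertex's demand is less than its current degree.'' Since the legality condition $f(v)>f(w)$ is what drives every use of $\delsave$ in the actual proof (e.g.\ in the removal of $R$ and in Sections~\ref{sec:tildeB}--\ref{sec:tildeA}), this is not cosmetic. Second, there is an off-by-one in your reduction: weak $f$-degeneracy yields $(L,M)$-colourability only for correspondence assignments with $|L(v)|\ge f(v)+1$, so the strengthening you need is weak degeneracy with respect to $f(v)=\max\{7-g(v),2\}$ (the paper's local girth function in Theorem~\ref{thm:main}); weak degeneracy with $f(v)=\max\{8-g(v),3\}$, as you state it, does not imply Theorem~\ref{thm:localgirthcorrcol}.
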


In fact, our theorem holds in the much more restrictive setting of \emph{weak degeneracy}. 
This is a parameter akin to degeneracy that is relevant to various graph colouring parameters. 
To explain, we require a definition.

\begin{definition}\label{def:delsave}
    Let $G$ be a graph and let $f:V(G) \rightarrow \mathbb{N}$ be a function.
    \begin{enumerate}
        \item\label{itm:del}
        Let $\del(G, f, v) := (G-v, f')$, where $f':V(G-v) \rightarrow \mathbb{N}$ is given by 
        \[ f'(u) = 
        \begin{cases*} 
        f(u)-1 &if $u\in N_G(v)$ \\
        f(u) &otherwise.
        \end{cases*} \]
        If $v\in V(G)$ then we say that $\del(G, f, v)$ is \emph{legal} if $f'(u) \geq 0$ for all $u \in V(G) \setminus \{v\}$. 
        \item\label{itm:delsave}
        We let $\delsave(G, f, v, w) := (G-v, f'')$, where $f'':V(G-v) \rightarrow \mathbb{N}$ is given by
        \[ f''(u) = 
        \begin{cases*}
            f(u)-1 & if $u\in N_G(v)\setminus \{w\}$ \\
            f(u) & otherwise.
        \end{cases*}
        \]
        If $v\in V(G)$ and $w\in N_G(v)$ then we say that $\delsave(G, f, v, w)$ is \emph{legal} if $f(v) > f(w)$ and $f''(u)\ge 0$ for all $u\in V(G)\setminus\{v\}$. 
    \end{enumerate}
\end{definition}

With this definition in mind, we can give an equivalent definition of degeneracy by way of the $\del$ operation: a graph $G$ is $k$-degenerate if, beginning from the constant function $f:V(G) \rightarrow \{k\}$, there exists a legal sequence of $\del$ operations that removes all the vertices of $G$. 
In the context of list and correspondence colouring  we think of the function $f$ as a measure of the number of available colours, though somewhat annoyingly these parameters are off-by-one. 
One can show that a $k$-degenerate graph is $(k+1)$-correspondence colourable with the following argument. 
Suppose that a graph is $k$-degenerate, and consider a $(k+1)$-correspondence assignment. 
If we assign colours one-by-one, every time we colour-and-delete a vertex we have to remove a corresponding colour from the list of each of its neighbours colours. 
Such a one-by-one colouring succeeds if whenever we delete a vertex, none of its neighbours end up with no colours on their lists. Thus, the vertices can be coloured in the order they are deleted in a legal fashion starting with the function $f$.

Weak degeneracy is a more flexible version of degeneracy that permits another operation, one that ``saves'' a colour in the above colouring analogy.

\begin{definition}\label{defs:weakdegen}
    Let $G$ be a graph and $f:V(G) \rightarrow \mathbb{N}$ be a function. We say $G$ is \emph{weakly $f$-degenerate} if there exists a sequence of legal $\del$ and $\delsave$ operations that removes every vertex of $G$.
\end{definition}

Weak degeneracy was introduced in 2023 by Bernshteyn and Lee~\cite{weakdegen}. Since then, the topic has received a remarkable amount of attention considering the short timeline. Most relevant to our discussion: Han, Wang, Wu, Zhou, and Zhu \cite{weakdegengirth5} proved that planar graphs of girth at least five are weakly 2-degenerate; since planar graphs of girth at least four are 3-degenerate, they are trivially weakly 3-degenerate; and Bernshteyn, Lee, and the second author proved that planar graphs are weakly 4-degenerate \cite{bernshteyn2024weak}.
Our main theorem is a unified strengthening of all of these results in the local girth setting. Before stating it, we require one final definition (which copes with the aforementioned off-by-one problem).

\begin{definition}
    The function $f$ is a \emph{local girth function} for a graph $G$ if $f:V(G) \rightarrow \mathbb{N}$ has $f(v) \ge \max\{7-g(v),2\}$ for every $v \in V(G)$.
\end{definition}

\begin{thm}\label{thm:main}
    If $G$ is a planar graph and $f$ is a local girth function for $G$, then $G$ is weakly $f$-degenerate.
\end{thm}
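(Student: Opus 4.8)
The plan is to prove Theorem~\ref{thm:main} by the standard ``precolour a facial cycle, find a reducible configuration'' technique due to Thomassen, adapted to the weak degeneracy setting and carrying the local girth function through the induction. Concretely, I would embed $G$ in the plane and strengthen the statement to one about near-triangulations (or more generally $2$-connected plane graphs) with a designated outer face $C$: some short initial segment of $C$ may already be ``deleted'' (i.e.\ we are given a partial legal sequence, equivalently a precolouring), the vertices of $C$ carry slightly reduced demand values reflecting the constraints imposed by the precoloured part, and interior vertices $v$ satisfy $f(v)\ge\max\{7-g(v),2\}$. The induction is on $|V(G)|$. The $\del$/$\delsave$ formalism is convenient here because ``saving'' a colour at a neighbour is exactly what lets one push a single unit of demand from a chord endpoint or a cut vertex into the two pieces when splitting the graph, mirroring how Thomassen's proof handles chords and separating triangles.

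\medskip

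The main case analysis would proceed roughly as follows. First, reduce to the case that $G$ is $2$-connected with no separating triangle and the outer walk is a cycle, by splitting along cut vertices, chords, and separating triangles and applying induction to the pieces; each split must be checked to preserve the hypotheses, with $\delsave$ absorbing the overlap vertices. Next, when there is a vertex $v$ on the outer cycle $C$ that is not on the precoloured segment and has low degree or sits in a short face, delete it (using $\del$ if a neighbour's value is already large enough, $\delsave$ otherwise to protect the next outer vertex) and recurse. The heart of the argument is the case where no such local reduction is available: then one identifies an interior vertex or a vertex just inside $C$ whose girth is small precisely because it lies on a short cycle through $C$, and argues that the local girth bound $f(v)\ge 7-g(v)$ gives exactly enough slack to delete it and continue. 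The girth-dependence is what makes this delicate: a vertex with $g(v)=3$ has $f(v)\ge 4$, with $g(v)=4$ has $f(v)\ge 3$, and with $g(v)\ge 5$ only $f(v)\ge 2$; so when we delete a vertex we must track how the girths of its neighbours can \emph{decrease} (a deleted vertex can shorten a cycle through a neighbour) and ensure the remaining values still dominate the updated local girth function. This interplay — deletions lowering girths and hence raising the required $f$-values of survivors — is exactly the phenomenon that the Postle--Smith-Roberge proof handled in the list setting and that must be re-engineered for weak degeneracy; I expect this to be the main obstacle, and it is presumably why the proof requires a carefully chosen invariant on the outer cycle rather than a naive induction.

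\medskip

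To make the bookkeeping manageable I would, following \cite{bernshteyn2024weak} and \cite{weakdegengirth5}, phrase the inductive statement in terms of a ``weight'' or ``potential'' function assigning to each plane graph-with-boundary-data a number that is non-negative on all legal instances and strictly controlled under the reductions, so that discharging over the faces of $G$ (using Euler's formula, with triangular faces costing the most and larger faces contributing surplus) certifies that one of the reducible configurations must occur. The girth condition enters the discharging as a charge bonus: a vertex on only long faces is allowed to have small $f$-value, and long faces have charge to spare, so the two scale together. I would expect the write-up to isolate a handful of named configurations (a low-degree outer vertex, a short outer face with a deletable vertex, an interior vertex of degree at most some constant whose neighbourhood girths cooperate, etc.), prove each is reducible by exhibiting the explicit $\del$/$\delsave$ step and invoking induction, and then run the discharging argument to show their unavoidability. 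The routine parts are the per-configuration verifications of legality; the genuinely new work is choosing the boundary invariant and the discharging rules so that the girth-$3$, girth-$4$, and girth-$\ge 5$ regimes are handled simultaneously rather than by three separate arguments.
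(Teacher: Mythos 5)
Your first two paragraphs correctly identify the general framework the paper uses (a Thomassen-style strengthened statement about a plane graph with a ``preremoved'' boundary path, with $\del$/$\delsave$ replacing precolouring, and preliminary reductions along cut vertices and chords), but the proposal stops exactly where the real work begins, and the part you do commit to is not how the argument goes. The entire content of the paper is the precise inductive statement (Theorem~\ref{thm:inductive}): a canvas carries \emph{two} independent sets on the outer boundary --- $A$ (girth at least $5$, value $1$) and $B$ (girth $3$, value $2$) --- an ``acceptable'' path of at most four vertices with girth restrictions on its internal vertices, and exactly three exceptional configurations \ref{ex:B}--\ref{ex:AB}; and the heart of the induction is not a local reducible configuration at all but the removal of an \emph{arbitrarily long} path $R$ along $\delta G$ by a sequence of $\del$/$\delsave$ operations specified in eight cases depending on the $f$-values along the boundary, precisely because the coordinated colour choice available in the list-colouring proof of \cite{PS22} has no analogue in weak degeneracy. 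One then shows the resulting object fails to be a canvas only in two mutually exclusive ways (an edge inside $\tilde B$ or inside $\tilde A$) and repairs each by a further bespoke surgery. Your proposal defers all of this to ``choosing the boundary invariant and the discharging rules,'' which is the hard part, not the routine part; moreover the discharging/unavoidability plan in your third paragraph is a genuinely different method that neither this paper nor its predecessors (\cite{PS22}, \cite{bernshteyn2024weak}, \cite{dvovrak20175}) use for statements of this type, and you give no candidate potential function, no list of configurations, and no reason to believe the local girth condition can be discharged globally --- the known obstructions (the exceptional canvases, and boundary vertices whose residual values drop to $1$ or $2$) are not local in the sense a discharging proof would need.

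There is also a concrete error in the stated ``main obstacle'': deleting a vertex cannot shorten a cycle through a surviving vertex, so girths never decrease when passing to subgraphs ($g_{G-v}(u)\ge g_G(u)$), and the phenomenon you plan to control (``deletions lowering girths and hence raising the required $f$-values of survivors'') does not occur --- if anything, girth monotonicity works in your favour. The actual difficulty is different: removing boundary vertices lowers the $f$-values of their neighbours, and the induction must keep the resulting very-low-value boundary vertices (the sets $A$ and $B$) independent and away from the precoloured path, which is why the paper's canvases track them explicitly and why the two failure modes in Sections~\ref{sec:tildeB} and~\ref{sec:tildeA} require separate structural arguments. As written, the proposal is a research plan whose central unresolved step coincides with the theorem's actual difficulty, so it cannot be credited as a proof.
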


This directly implies Theorem \ref{thm:localgirthcorrcol}. In fact, as discussed in  \cite{weakdegen}, this weak degeneracy theorem implies an analogous theorem for correspondence‐\emph{painting}, an online version of the correspondence colouring introduced by Kim et al.~\cite{kim2020line}. 

The proof of Theorem \ref{thm:main} is inspired by that of Thomassen's proofs on the 5-list-colorability of planar graphs \cite{thomassen5LC} and the 3-list-colourability of planar graphs of girth at least five \cite{thomassen3LC,thomassen3LCnew}; as well as the proof of Postle and Smith-Roberge on local girth list colouring planar graphs \cite{PS22}; and the proof of a theorem of Dvo\v{r}\'{a}k, Lidick\`{y}, and Mohar \cite{dvovrak20175}, on extending the precolouring of a path of length two in a planar graph to a 5-list-colouring of the entire graph. Each of these proofs is established via proving a stronger, more restrictive statement: in each case, a plane embedding and list assignment $L$ for a graph $G$ are fixed. Vertices in the outer face boundary walk of $G$ have more restricted list sizes, and a short path on the outer face boundary is precoloured. The proofs then argue about extending the precolouring to an $L$-colouring of $G$. 
In some of these proofs, the stronger statement proved by induction is made more complex by the presence of \emph{exceptions}. 
That is, there is some excluded subgraph-type structural assumption that the graph and list assignment must satisfy in order for the conclusion to hold. This means that whenever one applies the induction hypothesis one must check that the required structural assumption holds. 
Structural conditions that appear in prior works~\cite{thomassen2007exponentially,PS22} include the absence of so-called \emph{generalized wheels}. Our proof involves analogous exceptions, though they are simpler to describe. In particular, we do not have any infinite families of exceptions.

The proofs of the list colouring theorems discussed above do not readily generalize to the weak degeneracy framework, and the proof of the local girth result of Postle and Smith-Roberge does not readily generalize to the correspondence colouring framework. As a result, though many steps of our arguments are greatly influenced by theirs, the analysis is more delicate as the tools available to us are far more restricted.  
There is some prior work on this type of proof in the weak degeneracy framework, however. 
Our proof is also inspired by the proof of the weak 4-degeneracy of planar graphs by Bernshteyn, Lee, and Smith-Roberge \cite{bernshteyn2024weak}; this too is established via a stronger inductive statement that is more restrictive (in analogous ways to those described above, in the weak degeneracy setting). 

In some of the results listed, the stronger inductive statement involves keeping track of an independent set of vertices in the outer face boundary walk of $G$, whose list assignments are still more restricted. For instance, in \cite{PS22} and \cite{thomassen3LC, thomassen3LCnew}, the analysis involves an independent set $A$ of vertices of girth at least five and list size at most two. In \cite{dvovrak20175} and \cite{bernshteyn2024weak} (itself inspired by \cite{dvovrak20175}), the analysis involves tracking an independent set $B$ of vertices of girth three and list size at most three (analogously restricted in the case of weak degeneracy) in the outer face boundary walk of $G$. 
To the best of our knowledge, our proof is the first that keeps track of analogous version of both $A$ \emph{and} $B$. Keeping track of these two independent sets of further restricted vertices complicates the analysis, in that when we perform any sort of reduction, there are more properties the resulting graph and list assignment must satisfy in order to ensure the inductive hypothesis holds. 
On the other hand, though our inductive statement is more complicated than that given in \cite{PS22} (showing planar graphs are local girth list colourable), there are fewer exceptions: in particular, two infinite families of exceptions in \cite{PS22} are each replaced with a single graph. 
The result overall is a shorter argument, and moreover one which is self-contained (and in particular which no longer relies on a theorem of Thomassen from \cite{thomassen2007exponentially}).  
Like the proof of \cite{PS22} (and unlike the proofs of the remainder of the results listed), some of our reductions involve deleting an arbitrarily long path in the outer face boundary walk of $G$, instead of merely a handful of vertices.

Though our main result is a common strengthening of several influential results on list colouring planar graphs, we do \emph{not} prove a strengthening of \cite[Theorem 4]{thomassen2007exponentially}, which implies that a planar graph $G$ with 5-list assignment $L$ admits at least $2^{v(G)/9}$ $L$-colourings. 
Similarly, we do not strengthen \cite[Theorem 6.5]{PS23} which states that a planar graph $G$ with local girth list assignment $L$ has at least $5^{v(G)/12}$ distinct $L$-colourings. 
More precisely, we do not prove any exponential lower bound on the number of $L$-colourings in these settings.

The literature on colourings of planar graphs provides examples of proofs that proceed in a manner similar to ours that do~\cite{thomassen2007exponentially} and do not~\cite{thomassen5LC,PS22} immediately yield an exponential lower bound on the number of colourings in question. 
We do not believe that an exponential lower bound can be extracted from our arguments without substantial extra work, in part due to the fact that we remove an arbitrarily long path in the outer face boundary walk of the graph as part of our main induction. 
One could possibly show exponentially many colourings by extending the methods of the second author and Postle~\cite{PS23} to the more general setting we consider. 
These methods depend on results similar to ours to establish the existence of a single colouring, and so we provide an important starting point for such analysis.

There is a natural analogue of exponentially many colourings in the weak degeneracy setting, though it is slightly delicate to define. 
\begin{definition}
    The \emph{availability} $a(\del(G, f, v))$ of the operation $\del(G, f, v)$ is $f(v)+1$. 
    The \emph{availability} $a(\delsave(G,f,v,w))$ of the operation $\delsave(G,f,v,w)$ is $f(v)-f(w)$.
    The \emph{average availability} $\overline a(\sigma)$ of a sequence of $n$ operations $\sigma$ is the geometric mean of the availabilities: $\overline a(\sigma) = \prod_{i=1}^n a(\sigma_i)^{1/n}$.
\end{definition}
\noindent
Note that the availability of an operation is at least one when the operation is legal sequence, and every legal sequence has average availability at least one. The following result is easy to prove by induction on the number of vertices of $G$.
\begin{prop}
    Let $G$ be a graph and let $f:V(G)\to \mathbb{N}$ be a function. 
    Suppose that the sequence of operations $\sigma$ certifies that $G$ is  weakly $f$-degenerate. Then for any correspondence assignment $(L,M)$ of $G$ with $|L(v)|\ge f(v)+1$, $G$ admits at least $\overline{a}(\sigma)^{v(G)}$ distinct $(L,M)$-colourings.
\end{prop}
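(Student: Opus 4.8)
The plan is to prove the equivalent statement that $G$ admits at least $\prod_{i=1}^{n} a(\sigma_i)$ distinct $(L,M)$-colourings, where $n = v(G)$ and $\sigma = (\sigma_1,\dots,\sigma_n)$; this is the same bound since $\overline a(\sigma)^{v(G)} = \prod_{i=1}^{n} a(\sigma_i)$. I would induct on $n$. The base case $n = 0$ is immediate: the empty graph has exactly one colouring and the empty product is $1$. For the inductive step, let $v$ be the vertex removed by $\sigma_1$, write $G' = G - v$, and let $\sigma' = (\sigma_2,\dots,\sigma_n)$; by Definition~\ref{defs:weakdegen} the sequence $\sigma'$ certifies that $G'$ is weakly $f'$-degenerate, where $f'$ is the function produced from $f$ by $\sigma_1$. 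Throughout I use that restricting a correspondence assignment to sublists is again a correspondence assignment, and that a colouring valid for a restriction of $M$ is valid for $M$.

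If $\sigma_1 = \del(G,f,v)$, then for each colour $c \in L(v)$ form the correspondence assignment $(L_c,M_c)$ on $G'$ by deleting from $L(u)$, for every $u \in N_G(v)$, the unique colour (if any) matched to $(v,c)$ by $M_{vu}$, leaving all other lists unchanged, and restricting $M$ accordingly. Legality of $\del$ gives $|L_c(u)| \ge f'(u)+1$ for all $u \in V(G')$, so by the induction hypothesis each $(L_c,M_c)$ admits at least $\prod_{i\ge 2} a(\sigma_i)$ colourings of $G'$, and each extends to an $(L,M)$-colouring of $G$ by setting $\varphi(v) = c$. Colourings arising from distinct $c$ differ on $v$, so $G$ has at least $|L(v)|\cdot\prod_{i\ge 2}a(\sigma_i) \ge (f(v)+1)\prod_{i\ge 2}a(\sigma_i) = a(\sigma_1)\prod_{i\ge 2}a(\sigma_i) = \prod_{i=1}^{n}a(\sigma_i)$ colourings.

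If $\sigma_1 = \delsave(G,f,v,w)$, the one extra idea is to first pass to a sublist $\hat L(w) \subseteq L(w)$ of size exactly $f(w)+1$, which is possible since $|L(w)| \ge f(w)+1$. As $M_{vw}$ is a matching, at most $f(w)+1$ colours of $L(v)$ are matched to colours of $\hat L(w)$, so the set $S$ of colours of $L(v)$ matched to no colour of $\hat L(w)$ satisfies $|S| \ge |L(v)| - (f(w)+1) \ge f(v)-f(w) = a(\sigma_1) \ge 1$, using the legality condition $f(v) > f(w)$. For each $c \in S$, form $(L_c,M_c)$ on $G'$ exactly as in the $\del$ case but deleting colours only from the neighbours in $N_G(v)\setminus\{w\}$, while setting $L_c(w) = \hat L(w)$; the legality condition $f''(u)\ge 0$ ensures $|L_c(u)| \ge f''(u)+1$ everywhere, and every colour of $\hat L(w)$ is compatible with $\varphi(v)=c$ precisely because $c \in S$. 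The induction hypothesis again yields at least $\prod_{i\ge 2}a(\sigma_i)$ colourings of each $(G',L_c,M_c)$, each extends to $G$ via $\varphi(v)=c$, and distinct $c \in S$ give colourings differing on $v$, for a total of at least $|S|\cdot\prod_{i\ge 2}a(\sigma_i) \ge a(\sigma_1)\prod_{i\ge 2}a(\sigma_i) = \prod_{i=1}^{n}a(\sigma_i)$.

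The only genuine obstacle is the $\delsave$ case: one must notice that shrinking $L(w)$ to size exactly $f(w)+1$ up front is what makes $f(v)-f(w)$ colours of $L(v)$ simultaneously safe against \emph{every} possible eventual colour of $w$, which is exactly the availability the definition assigns to a $\delsave$ operation — without this step the set $S$ could be empty. Everything else is bookkeeping, and the geometric-mean normalisation is arranged so that the product telescopes cleanly: $\overline a(\sigma)^{v(G)} = \prod_{i=1}^{n} a(\sigma_i) = a(\sigma_1)\cdot \overline a(\sigma')^{v(G')}$.
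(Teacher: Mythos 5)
Your proposal is correct, and it follows exactly the route the paper indicates (the paper omits the details, remarking only that the result follows by an easy induction on $v(G)$): induct on the number of vertices, peel off the first operation, and in the $\delsave(v,w)$ case first shrink $L(w)$ to a sublist of size exactly $f(w)+1$ so that at least $f(v)-f(w)$ colours of $L(v)$ are compatible with every remaining colour of $w$. The legality checks, the list-size bounds $|L_c(u)|\ge f'(u)+1$ (resp.\ $f''(u)+1$), and the counting $a(\sigma_1)\cdot\prod_{i\ge 2}a(\sigma_i)$ are all handled correctly, so nothing is missing.
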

\noindent
We end the introduction with a conjecture that, via the above result, would establish the existence of exponentially many local girth correspondence colourings of planar graphs.
\begin{conj}
    There exists a constant $c>1$ such that the following holds. 
    Given a planar graph $G$ and local girth function $f$ for $G$, there exists a legal sequence of operations $\sigma$ with average availability at least $c$ that removes every vertex of $G$.
\end{conj}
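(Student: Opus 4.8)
\emph{This is stated as a conjecture; what follows is a plan of attack rather than a proof.} The plan is to re-run the induction behind Theorem~\ref{thm:main} with a \emph{quantitative} hypothesis: in place of ``there is a legal removal sequence'', prove ``there is a legal removal sequence $\sigma$ of the uncoloured vertices with $\prod_i a(\sigma_i)\ge c^{\,v(G)-v(P)}/c_0^{\,|A|+|B|}$'', for absolute constants $c\in(1,2)$ and $c_0\ge 1$, where $P$ is the precoloured subpath of the outer walk and $A,B$ are the two restricted independent sets carried through the induction. Dividing by $c_0^{\,|A|+|B|}$ mirrors the convention in the exponential counting arguments of Thomassen~\cite{thomassen2007exponentially} and Postle--Smith-Roberge~\cite{PS23} that precoloured or ``saturated'' boundary vertices are charged nothing: the vertices of $A$ and $B$ carry the tightest $f$-values and are precisely those one expects to be forced to delete by availability-$1$ operations. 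At the top level ($P$ of bounded size, $A=B=\varnothing$) this yields $\prod_i a(\sigma_i)\ge c^{\,v(G)-O(1)}$, hence average availability at least $c^{\,1-O(1)/v(G)}$, which exceeds a fixed constant $>1$ once $v(G)$ is large; the finitely many smaller graphs are checked directly.

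The heart of the work is to show that each reduction in the proof ``pays for itself'': if a reduction removes a vertex set $S$ and grows $A\cup B$ by $t$ vertices before recursing, then $S$ can be removed by operations of total availability at least $c^{|S|}c_0^{\,t}$. Several features of the setup make this plausible. Because a local girth function has $f(v)\ge 2$ everywhere, $a(\del(G,f,v))=f(v)+1\ge 3$ whenever $v$ has not yet lost credit from a deleted neighbour; the operations that threaten availability are the $\delsave$'s, for which $a(\delsave(G,f,v,w))=f(v)-f(w)$ is typically exactly $1$. The reduction the authors single out --- deleting an arbitrarily long induced subpath $v_1\cdots v_k$ of the outer walk --- is, on closer inspection, not actually an obstruction: a vertex of girth at least five is adjacent only to pairwise-distant vertices of the path, and a vertex of small $f$ can absorb credit from only a bounded fraction of the path, so after ordering the deletions appropriately a positive fraction of $v_1,\dots,v_k$ are removed by $\del$ at availability $\ge 2$, contributing a factor exponential in $k$, while the few inner vertices deleted first to make room contribute availability $\ge 3$ each. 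The genuine task is to verify an analogous local accounting in every one of the finitely many reductions: that each tight $\delsave$ is offset by a nearby high-availability $\del$ inside the same bounded cluster of deleted vertices.

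Granting those local estimates, the induction closes by a routine computation. For a reduction removing $S$ and recursing on $G'$ with precoloured path $P'$ and restricted sets $A',B'$, writing $A_{\mathrm{red}}$ for the availability product of the $S$-removing operations,
\[
 \prod_i a(\sigma_i)\ =\ A_{\mathrm{red}}\cdot\!\!\prod_{\text{child}}\! a(\sigma'_j)\ \ge\ c^{|S|}\,c_0^{\,(|A'|+|B'|)-(|A|+|B|)}\cdot\frac{c^{\,v(G')-v(P')}}{c_0^{\,|A'|+|B'|}}\ =\ \frac{c^{\,v(G)-v(P)}}{c_0^{\,|A|+|B|}},
\]
using $|S|+v(G')-v(P')=v(G)-v(P)$ (in the case $P'=P$; with the evident adjustment otherwise). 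For the long-path reduction this reduces, after cancellation, to $2^{\,\Omega(k)}\ge c^{k}$, which holds for any sufficiently small $c>1$, and the $O(1)$-size reductions each need only a fixed multiplicative surplus.

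The main obstacle is cumulative rather than isolated: the conclusion of Theorem~\ref{thm:main} has to be re-derived through its whole case analysis with the sharper bound attached, and one must check --- or locally repair --- that tight $\delsave$'s are always offset nearby. The exceptional configurations (the finitely many excluded graphs, and reductions interacting with $A$ and $B$) are where this is most fragile, and making the target $c$, the discount $c_0$, and the set of exceptions mutually consistent is the delicate fixed point; one should not expect $c$ better than of the form $2^{1/\Theta(1)}$, precisely because of the amortization forced by deleting long boundary paths. A cleaner route may be to avoid a direct re-proof and instead extend the counting machinery of Postle and the second author~\cite{PS23}, which already yields $5^{\,v(G)/12}$ local girth list colourings, to the correspondence-colouring and weak-degeneracy settings, feeding Theorem~\ref{thm:main} (and the structure of its proof) in as the single-object existence input that such machinery requires.
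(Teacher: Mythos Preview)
The statement is a \emph{conjecture} in the paper, not a theorem; the paper offers no proof and explicitly leaves it open. You correctly identify this and present a plan of attack rather than a proof, so there is nothing to compare against a ``paper's own proof''.

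Your plan is broadly aligned with the paper's own remarks on the conjecture. The authors write that they ``do not believe that an exponential lower bound can be extracted from our arguments without substantial extra work, in part due to the fact that we remove an arbitrarily long path in the outer face boundary walk'', and they suggest that extending the methods of~\cite{PS23} may be the cleaner route --- exactly the alternative you propose at the end. Where you differ is in optimism about the direct approach: you argue the long-path reduction is ``not actually an obstruction'' because most deletions along $R$ have availability at least $2$. A back-of-envelope check of cases~\ref{handle:f2233}--\ref{handle:f223} supports this for the path itself (the $\del(v_i)$ operations along $Q_2$ each have availability $2$, giving a factor $2^{\Theta(|R|)}$). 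The subtlety you have not fully addressed is the \emph{growth of $\tilde A\cup\tilde B$}: removing $R$ can push many interior vertices into the restricted sets of the child canvas, and your discount term $c_0^{|A'|+|B'|-|A|-|B|}$ must absorb this. Bounding that growth linearly in $|R|$ with a small enough constant (so that $2^{\Theta(|R|)}\ge c^{|R|}c_0^{\Theta(|R|)}$ is solvable for $c>1$) is where the ``substantial extra work'' the authors anticipate would lie, and your proposal does not yet pin this down.
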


\section{Preliminaries}

Throughout, we use $v(G)$ and $e(G)$ to denote the number of vertices and edges, respectively, in a graph $G$. Given a graph $G$, path $P = v_1v_2 \dots v_t \subseteq G$, and vertex $v \in V(G)\setminus V(P)$ with $vv_1 \in E(G)$, we denote the path $vv_1v_2\dots v_t$ by $v+P$, and similarly define $P+v$ when $vv_t\in E(G)$. We use the convention that $\mathbb{N} = \{0, 1, 2, 3, \dots\}$.

We start with a refinement of the definitions of legality for the weak degeneracy operations that we find technically convenient. 
    \begin{definition}\label{def:techWD}
    Let $G$ be a graph and let $f:V(G) \rightarrow \mathbb{N}$ be a function.

    If $v\notin V(G)$ we note that $\del(G, f, v)$ is well-defined  but simply returns $(G, f)$ unchanged. In this case we define the operation to be trivially legal.
        
    Analogously, we extend the definition of legality for $\delsave$ as follows.
        \begin{itemize}
            \item If $v\notin V(G)$ then for any $w$, $\delsave(G,f,v,w)=(G,f)$ and is trivially legal. 
            \item If $v\in V(G)$ and $w\notin N_G(v)$ then $\delsave(G,f,v,w)=\del(G,f,v)$ and is legal if $\del(G,f,v)$ is legal.
        \end{itemize}
    
    When $G$ and $f$ are clear from context, we write $\del(v)$ and $\delsave(v,w)$ instead of $\del(G, f, v)$ and $\delsave(G, f, v, w)$. 
    This means that we can think of $\del(v)$ and $\delsave(v,w)$ operations as functions on a set $\mathcal U$ of graph-function pairs: $\del(v):\mathcal U\to\mathcal U$ is the map $(G, f) \mapsto \del(G, f, v)$ and $\delsave(v,w):\mathcal U\to\mathcal U$ is the map $(G, f) \mapsto \delsave(G, f, v, w)$.
    
    Using this perspective, given a starting graph-function pair $(G, f)\in\mathcal U$ and a sequence $\sigma = (\sigma_1, \dotsc, \sigma_k)$ of $\del$ and $\delsave$ operations, we say that the sequence is \emph{legal} if when we write $(G_0,f_0)=(G,f)$ and $(G_i, f_i) = \sigma_i(G_{i-1}, f_{i-1})$ for $1\le i\le k$, each $\sigma_i(G_{i-1}, f_{i-1})$ is a legal application of either $\del$ or $\delsave$.
\end{definition}

We consider it obvious that list colouring is monotone in the sense that if we have a graph $G$ and list assignment $L$ such that $G$ admits an $L$-colouring, then $G$ must admit an $L'$-colouring for any list assignment $L'$ with $L'(v)\supseteq L(v)$ for all $v\in V(G)$. 
The following lemma of Bernshteyn and Lee~\cite{weakdegen} establishes this in the weak degeneracy setting.

\begin{lemma}[{Bernshteyn and Lee~\cite[Lem.~2.1]{weakdegen}}]\label{lem:monotone}
    Let $G$ be a graph and let $f,f':V(G)\to\mathbb N$ be functions such that $f'\ge f$ pointwise. Then if $G$ is weakly $f$-degenerate, it is also weakly $f'$-degenerate.
\end{lemma}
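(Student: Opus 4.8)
\textbf{Proof plan for Lemma~\ref{lem:monotone}.} The plan is to induct on $v(G)$, showing that a legal sequence of $\del$/$\delsave$ operations witnessing weak $f$-degeneracy can be transformed, operation by operation, into a legal sequence witnessing weak $f'$-degeneracy. The base case $v(G)=0$ is trivial. For the inductive step, let $\sigma=(\sigma_1,\dots,\sigma_k)$ be a legal sequence for $(G,f)$ that removes every vertex, and let $v$ be the vertex removed by $\sigma_1$ (so $\sigma_1$ is either $\del(v)$ or $\delsave(v,w)$ for some $w\in N_G(v)$). The key point is to choose a suitable first operation for $(G,f')$ that removes the same vertex $v$, and then argue that the resulting pair dominates (pointwise, on $G-v$) the pair obtained by applying $\sigma_1$ to $(G,f)$, so that the induction hypothesis applies to the tail $(\sigma_2,\dots,\sigma_k)$.

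The natural choice is: if $\sigma_1=\del(v)$, use $\del(v)$ again on $(G,f')$; if $\sigma_1=\delsave(v,w)$, we would like to use $\delsave(v,w)$ on $(G,f')$ as well. First I would check legality of this choice. For $\del(v)$ this is immediate, since $f'\ge f\ge 0$ forces $f'(u)-1\ge f(u)-1\ge 0$ for neighbours $u$ of $v$ (using that $\del(G,f,v)$ being legal gives $f(u)\ge 1$ there). For $\delsave(v,w)$ the subtlety is the extra legality requirement $f(v)>f(w)$: we have $f'(v)\ge f(v)>f(w)$, but we need $f'(v)>f'(w)$, which need not hold if $f'(w)$ is much larger than $f(w)$. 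This is the main obstacle. The fix is that when $f'(v)\le f'(w)$ we simply fall back to performing $\del(v)$ on $(G,f')$ instead of $\delsave(v,w)$: this is always legal (as above), and one checks that the resulting function on $G-v$ is still pointwise $\ge$ the function produced by $\delsave(v,w)$ on $(G,f)$. Indeed, $\del(v)$ and $\delsave(v,w)$ applied to $(G,f')$ differ only at $w$, where $\del$ gives $f'(w)-1$ and $\delsave$ gives $f'(w)$; and the function produced by $\sigma_1=\delsave(v,w)$ on $(G,f)$ equals $f$ on $w$ with value $f(w)$, while off $\{v,w\}$ it is $f$ or $f-1$, in all cases dominated by the corresponding value coming from $(G,f')$ — the only case to verify carefully is the value at $w$, where we need $f'(w)-1\ge f(w)$, i.e.\ $f'(w)\ge f(w)+1$; but in the fallback case $f'(w)\ge f'(v)\ge f(v)\ge f(w)+1$, using $f(v)>f(w)$ from legality of $\sigma_1$.

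So in every case we obtain a legal first operation on $(G,f')$ removing $v$, producing a pair $(G-v,\tilde f')$ with $\tilde f'\ge f_1$ pointwise, where $(G-v,f_1)=\sigma_1(G,f)$. By hypothesis $(\sigma_2,\dots,\sigma_k)$ is legal for $(G-v,f_1)$ and removes all remaining vertices, so $G-v$ is weakly $f_1$-degenerate; since $v(G-v)<v(G)$ and $\tilde f'\ge f_1$, the induction hypothesis gives that $G-v$ is weakly $\tilde f'$-degenerate. Prepending our chosen first operation yields a legal sequence removing every vertex of $G$ starting from $(G,f')$, completing the induction. The one step that requires genuine case analysis rather than bookkeeping is the $\delsave$-to-$\del$ fallback and the verification of the domination inequality at the saved neighbour $w$; everything else is a routine check that subtracting $1$ preserves the pointwise inequality.
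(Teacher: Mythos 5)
Your proposal is correct and is essentially the paper's own argument: the paper only sketches the proof with the observation that the sole obstruction is a $\delsave(v,w)$ becoming illegal because the increased values give $f'(v)\le f'(w)$, in which case one replaces it by $\del(v)$, and your induction with the domination check at the saved neighbour $w$ (using $f'(w)\ge f'(v)\ge f(v)\ge f(w)+1$) is precisely the bookkeeping that makes that observation rigorous.
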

The proof is elementary: one simply has to observe that the only issue is if an application of $\delsave$ becomes illegal due to the increased function value, and in this case one can afford to replace $\delsave$ with $\del$.

The rest of this section introduces notation we need to describe the structure of our inductive proof.

\begin{definition}
    Let $G$ be a plane graph. We denote by $\delta G$ the graph whose vertices and edges comprise the outer face boundary of $G$.
\end{definition}

\begin{definition}\label{def:accpath}
A path $P$ with $v(P)\le 4$ is \emph{acceptable} in a graph $G$ if 
\begin{enumerate}[label=\textup{(A\arabic*)}]
    \item $V(P)\subset V(G)$,
    \item $V(P)$ induces a path in $G$, and
    \item if $v(P) = 4$, then either 
    \begin{enumerate}[label=\textup{(A3\alph*)}]
        \item  both internal vertices of $P$ have girth at least $4$ (in $G$), or 
        \item  one internal vertex of $P$ has girth at least $5$ (in $G$).
    \end{enumerate} 
\end{enumerate}
A cycle $C$ is \emph{acceptable} in $G$ if $C$ contains a spanning acceptable path. 
\end{definition}

\begin{definition}\label{def:canvas}
A \emph{canvas} is a tuple $(G, P, A, B, f)$ where $G$ is a plane graph and 
\begin{enumerate}[label=\textup{(C\arabic*)}]
    \item\label{canv:acceptable} $P\subseteq \delta G$ is an acceptable path or cycle in $G$,
    \item\label{canv:A} $A \cap V(G) \subseteq V(\delta G)\setminus V(P)$ is an independent set of vertices of girth at least 5, 
    \item\label{canv:B} $B \cap V(G) \subseteq V(\delta G) \setminus V(P)$ is an independent set of vertices of girth 3,
    \item\label{canv:f} $f$ is a function taking values in $\mathbb{N}$ where the restriction of $f$ to $V(G)\setminus V(P)$ satisfies the following:
    \begin{enumerate}[label=\textup{(C4\alph*)}]
        \item\label{canv:fA} for all $v\in A \cap V(G)$, $f(v)=1$,
        \item\label{canv:fB} for all $v\in B \cap V(G)$, $f(v)=2$,
        \item\label{canv:fCn3} for all $v\in V(\delta G)\setminus (V(P) \cup A \cup B)$, if $g(v) \neq 3$ then $f(v) \geq 2$,
        \item\label{canv:fC3} for all $v\in V(\delta G)\setminus (V(P) \cup A \cup B)$, if $g(v) = 3$ then $f(v) \geq 3$,
        \item\label{canv:fint} for all $v\in V(G) \setminus V(\delta G)$, 
        \begin{itemize}
            \item if $g(v) = 3$ then $f(v)\ge 4$,
            \item if $g(v) = 4$ then $f(v)\ge 3$,
            \item if $g(v) \ge 5$ then $f(v)\ge 2$.
        \end{itemize}
    \end{enumerate}
\end{enumerate}
\end{definition}

We note the following.
\begin{obs}\label{obs:subcanvas}
    Removing vertices from a canvas keeps it a being a canvas and cannot turn it into an exceptional one.
\end{obs}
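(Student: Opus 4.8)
The plan is to prove the two assertions separately; the first contains all the substance. Fix a canvas $(G,P,A,B,f)$ and a set $S\subseteq V(G)\setminus V(P)$ of vertices to delete (deleting vertices of $P$ would force us to shrink $P$ as well, which is not what is meant here). Put $G':=G-S$ with the inherited plane embedding, let $A':=A$, let $B'$ be $B$ with every vertex of $G'$ whose girth in $G'$ has risen above $3$ discarded, and let $f':=f|_{V(G')}$. I claim $(G',P,A',B',f')$ is a canvas; the argument handles any such $S$ at once, with no need to delete vertices one at a time.

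Everything rests on two monotonicity facts. First, vertex deletion only increases girth: $g_{G'}(v)\ge g_G(v)$ for all $v\in V(G')$; moreover a triangle of $G'$ is a triangle of $G$, so $g_{G'}(v)=3$ \emph{forces} $g_G(v)=3$. Second, the outer face of a plane graph only grows under vertex deletion — erasing $S$ and its incident edges enlarges the unbounded region of the complement of the drawing — so a vertex or edge incident to the outer face of $G$ that survives in $G'$ is incident to the outer face of $G'$; thus $V(\delta G)\cap V(G')\subseteq V(\delta G')$, the analogous containment holds for edges, and in particular $P\subseteq\delta G'$ (using $S\cap V(P)=\emptyset$, which also gives $G'[V(P)]=G[V(P)]$).

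Granting these, the clauses of Definition~\ref{def:canvas} fall out by inspection. Condition~\ref{canv:acceptable} holds because $P\subseteq\delta G'$ and $G'[V(P)]=G[V(P)]$, the girth hypotheses in (A3) on the internal vertices of $P$ being preserved by the first fact (and likewise if $P$ is a cycle). Conditions~\ref{canv:A} and~\ref{canv:B} hold because $A'$, $B'$ are subsets of independent sets, their parts inside $V(G')$ lie on $\delta G'$ outside $V(P)$ by the second fact, the property that $A'$ has girth at least $5$ is preserved by the first fact, and $B'$ was trimmed precisely so its members keep girth $3$ in $G'$. For~\ref{canv:f}, the key structural point is that the $f$-lower-bounds of \ref{canv:fA}--\ref{canv:fint} are ordered — interior demands $\ge$ boundary demands $\ge$ the $A$/$B$ values, and the demand is nonincreasing in girth — so each vertex outside $V(P)$ either keeps its category under deletion or moves to a category with a \emph{weaker} demand (the only possible moves being: an interior vertex of $G$ becomes a boundary vertex of $G'$; a vertex of $B$ leaves $B'$; or a boundary vertex of girth $3$ in $G$ acquires girth at least $4$ in $G'$), so the inequality it satisfied in $G$ still suffices. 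The one place the first fact is needed in its sharp form is \ref{canv:fC3} and the girth-$3$ bullet of \ref{canv:fint}: a vertex with $g_{G'}(v)=3$ had $g_G(v)=3$, so it already met the ``girth $3$'' demand; and a vertex newly on $\delta G'$ was interior in $G$, hence in neither $A$ nor $B$, so it falls under \ref{canv:fCn3}/\ref{canv:fC3}, whose demand is dominated by the interior demand it satisfied before.

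The second assertion — that $(G',P,A',B',f')$ is not exceptional — refers to the notion of exceptional canvas introduced just after this observation, which is not yet available; I would simply check it against that definition. I expect it to be immediate: the exceptional canvases will form a short explicit list of ``tight'' configurations, every vertex outside $P$ of which is essential, and exceptionality will be a bounded-substructure condition that deleting vertices cannot create, so no sequence of deletions carries a non-exceptional canvas to an exceptional one. There is no genuinely hard step here; the only obstacle is bookkeeping — getting the definition of $B'$ right (one must really discard the vertices whose girth grew) and the correspondingly sharp use of girth-monotonicity for triangles in \ref{canv:fC3} and \ref{canv:fint} — together with the fact that the non-exceptionality claim cannot be fully discharged until the definition of exceptional canvas is in hand.
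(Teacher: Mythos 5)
The paper offers no written proof of this observation---it is stated as self-evident---so there is no ``paper route'' to diverge from; measured against the intended justification, your argument for the first half is correct and in fact more careful than the paper's implicit treatment. Your two monotonicity facts (girth can only increase under deletion, and a triangle of the subgraph is a triangle of the whole graph; surviving outer-boundary vertices stay on the outer boundary) are exactly what is needed, and your case analysis of \ref{canv:f} correctly exploits that the demands are monotone in girth and weaker on the boundary than in the interior. Your decision to trim $B$ is not cosmetic: it flags a real subtlety, since a vertex of $B$ can lose its girth-$3$ status after deletion, and then \ref{canv:B} fails literally if $B$ is kept intact (the trimmed vertex still satisfies \ref{canv:fCn3} because $f(v)=2$ by \ref{canv:fB}). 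The paper applies the observation with $A,B,f$ unchanged, so strictly one must either trim $B$ as you do or check in each application that girth-$3$ witnesses survive; the trimming is harmless, since shrinking $B$ cannot create an exception and the weak-degeneracy conclusion drawn from the smaller canvas does not depend on $B$.

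The second half, which you could not discharge blind, is indeed immediate once the exceptional types \ref{ex:B}, \ref{ex:A}, \ref{ex:AB} of Theorem~\ref{thm:inductive} are available, and for precisely the reason you anticipate: each exception is witnessed by one or two vertices of $A\cup B$ together with edges to $P$ (and possibly to each other), the path $P$ is unchanged, deletion creates no edges, and $A'=A$, $B'\subseteq B$; hence any witness in $G-S$ is already a witness in $G$, so deletions cannot create an exception. The one caveat worth recording is that the paper also invokes the observation in situations where $P$ itself is replaced by its restriction to a subgraph (e.g.\ in Lemmas~\ref{lem:2-conn} and~\ref{lem:1chord}); your formulation with $S\cap V(P)=\emptyset$ does not cover those uses, but there the paper verifies acceptability of the new path separately, and your argument supplies all the remaining canvas conditions, so nothing essential is missing.
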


\begin{definition}\label{def:degencanvas}
    Given a canvas $K=(G,P,A,B,f)$, we write $f_K$ for the function $f_K:V(G-P)\to \mathbb Z$ given by $f_K(v) = f(v) - |N(v)\cap V(P)|$.
    
   We say that a canvas $K=(G,P,A,B,f)$ is \emph{weakly $f$-degenerate} if the graph $G-P$ is weakly $f_K$-degenerate.
\end{definition}

Note that we need to allow for negative integers in the image of $f_K$ above for technical reasons, once we define our stronger statement for induction we will see that in unexceptional cases the image of $f_K$ is a subset of $\mathbb{N}$ as expected.
The above definition connects with a common theme in earlier works on list colouring planar graphs. We think of the vertices in $P$ as precoloured and seek to extend the colouring to $G-P$ in a way that yields a valid colouring of $G$. The analogue of this in the weak degeneracy framework is that $P$ has been ``preremoved'' by $\del$ or $\delsave$ operations and we wish to continue the sequence of operations and remove all of $V(G-P)$ legally, including the effect of the function value being reduced by the prior removal of $P$.

\begin{thm}\label{thm:inductive}
    Let $G$ be a plane graph and let $P\subset \delta G$ be either an acceptable path $u_1u_2\dotsb u_k$ or an acceptable cycle $u_1u_2\dotsb u_ku_1$ in $G$. 
    Then every canvas $(G,P,A,B,f)$ is weakly $f$-degenerate unless one of the following exceptional cases holds:
    \begin{enumerate}[label={\textup{(X\arabic*)}}]
        \item\label{ex:B} $k=3$, $P$ is a path, and there is a vertex $v\in B$ adjacent to each vertex in $P$.
        \item\label{ex:A} $k=4$, $P$ is a path, and there is a vertex $v\in A\setminus V(P)$ such that $\{u_1, u_k\} \subset N(v)$.
        \item\label{ex:AB} $k=4$, $P$ is a path, and up to reversing the names of vertices on $P$ there is a vertex $v_1\in B$ such that $\{u_1, u_2\} \subset N(v_1)$, and there is a vertex $v_2\in A$ such that $\{u_4, v_1\}\subset N(v_2)$.
    \end{enumerate}
\end{thm}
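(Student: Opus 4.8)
The plan is to prove Theorem~\ref{thm:inductive} by induction on $v(G)$ (with ties broken by $e(G)$, and further by $v(G)-v(P)$ if needed), following the Thomassen-style discharging-free reducibility template adapted to the weak degeneracy setting. The base case is when $G = P$, where there is nothing to remove and the statement is vacuous; so assume $V(G)\setminus V(P)\neq\emptyset$ and that $(G,P,A,B,f)$ is not one of \ref{ex:B}--\ref{ex:AB}. Throughout, I will use Lemma~\ref{lem:monotone} freely to reduce function values to their minimum legal values whenever convenient, and Observation~\ref{obs:subcanvas} to guarantee that subgraphs obtained by vertex deletion remain canvases (never exceptional ones), so that the only exceptional cases to worry about arise when we invoke the induction hypothesis on a graph with the \emph{same} vertex set minus a precoloured path -- i.e.\ after a reduction that moves vertices into $P$.

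The core of the argument is a sequence of structural reductions, organized roughly as follows. \textbf{(1)~Connectivity and chords.} First I would show we may assume $G$ is $2$-connected, $\delta G$ is a cycle, and reduce using chords of $\delta G$: a chord splits $G$ along a path, and one applies induction to the two pieces, feeding the colours/removals across the separating path; the acceptability conditions (A3a)/(A3b) on internal vertices of a length-$4$ path are exactly what is needed to make the spliced path acceptable in each piece. \textbf{(2)~Separating triangles and short separating cycles.} Handle separating cycles of length $\le 5$ by the standard ``colour the inside given the outside'' two-step induction. \textbf{(3)~Low-degree vertices not on $\delta G$ or far from $P$.} An interior vertex $v$ with $g(v)\ge 5$ has $f(v)\ge 2$ and, if $\deg(v)\le 2$ after accounting for deletions, can be removed by $\del$ or $\delsave$ directly; more generally one sets up a legal $\del/\delsave$ at a vertex of the outer walk adjacent to $P$ whose degree is small enough, using that such a vertex $u_1$ (or $u_k$) loses a unit of $f$ for each $P$-neighbour but $f(u_i)$ is unconstrained on $P$ -- this is where the ``save'' operation earns its keep, since $\delsave(v,w)$ lets us avoid decrementing one neighbour at the cost of the inequality $f(v)>f(w)$. \textbf{(4)~The main path-contraction reduction.} When no small configuration is reducible, identify a vertex $u$ on $\delta G$ adjacent to an endpoint of $P$, put $u$ (and possibly a maximal ``fan'' of outer vertices, an arbitrarily long path in $\delta G$) into the precoloured path, and apply induction to the smaller canvas; the exceptional outputs \ref{ex:B},\ref{ex:A},\ref{ex:AB} are precisely the obstructions that can appear here, and in each such case one argues directly -- using the special vertex $v\in A$ or $v\in B$ it names, whose tightly-constrained value ($f(v)=1$ or $2$) and girth force a very rigid local picture -- that a short explicit $\del/\delsave$ sequence removes everything, or that $(G,P,A,B,f)$ itself was exceptional, contradiction.

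The key bookkeeping throughout is that whenever we delete a vertex or move vertices into $P$, we must re-verify \ref{canv:acceptable}--\ref{canv:f} for the resulting tuple: that $A$ and $B$ stay independent and on the outer walk with the right girths (girths only increase under vertex deletion, which is why $A$-vertices of girth $\ge 5$ and $B$-vertices of girth exactly $3$ need care -- a $B$-vertex could leave girth $3$, but then it can be treated as an ordinary vertex with a \emph{larger} $f$-requirement already satisfied); that the new path is acceptable, invoking (A3a)/(A3b); and that $f_K\ge 0$ off the exceptional cases. The subtle point flagged in the remark after Definition~\ref{def:degencanvas} -- that $f_K$ may go negative -- is handled by showing negativity forces exactly one of \ref{ex:B},\ref{ex:A},\ref{ex:AB}.

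The main obstacle, I expect, is step~(4): correctly delimiting the ``long path'' to remove so that (a)~the leftover graph is a genuine non-exceptional canvas, (b)~the removed path is acceptable, and (c)~the three exceptional outputs can always be resolved by hand. In particular, the interaction between the $A$-set (girth $\ge 5$, $f=1$) and the $B$-set (girth $3$, $f=2$) in the combined exception \ref{ex:AB} is new relative to \cite{PS22} and \cite{bernshteyn2024weak}, each of which tracked only one such set; making sure a reduction never creates a configuration that is exceptional ``for both reasons at once'' in an unaccounted way -- and that the hand-analysis of \ref{ex:AB} genuinely closes -- is where the delicacy lies. A secondary difficulty is that, unlike in list colouring, every reduction must be realized by an explicit legal $\del/\delsave$ sequence with the inequalities $f''(u)\ge 0$ and $f(v)>f(w)$ verified, so there is less slack than in the colouring arguments we are imitating; the discipline of always reducing $f$ to its minimum via Lemma~\ref{lem:monotone} before attempting a removal is what keeps this manageable.
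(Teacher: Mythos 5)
Your outline reproduces the broad shape of the paper's argument (minimal counterexample, elimination of cutvertices and short separating cycles, constraints on chords, then removal of an arbitrarily long path of $\delta G$ near $P$), but it leaves the genuinely hard content unproved and, more importantly, misidentifies where that content lies. You predict that the obstruction in your step (4) is that the induction hypothesis, applied after absorbing outer vertices into the precoloured path, may return one of the exceptional canvases \ref{ex:B}--\ref{ex:AB}, which you then propose to "resolve by hand with a short explicit $\del/\delsave$ sequence." In the actual proof the reduced object $\tilde K$ obtained after deleting the path $R$ is shown \emph{never} to be exceptional; what can fail is something your plan only mentions as routine bookkeeping, namely the independence conditions \ref{canv:A} and \ref{canv:B} for the \emph{new} sets $\tilde A$ and $\tilde B$ (vertices whose function value was driven down to $1$, resp.\ to $2$ with girth $3$, by the removal of $R$). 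These failures are caused by interior vertices of $G$ with two or three neighbours on $R$, and they cannot be dispatched by a short local deletion: one needs the precise structural characterizations of Lemmas~\ref{lem:BBstructure} and~\ref{lem:AAstructure} (a vertex $w$ adjacent to $v_1,v_2,v_3$, or a $3$-chord $v_1w_1w_2v_3$ with two girth-$5$ interior vertices), the fact that the two failure modes are mutually exclusive, and then two further nontrivial arguments: in the $\tilde B$ case a modified removal sequence that also deletes $v_0,v_1,v_3$ and leaves $v_2$ as a pendant vertex, and in the $\tilde A$ case a removal of $R$ together with $w_1,w_2$ followed by yet another possible independence failure, which forces cutting $G$ along a path $Q$ and a two-stage application of the induction hypothesis. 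None of this is anticipated by, or derivable from, the plan as written; nor is the careful case-by-case definition of $R$ (eight cases driven by the pattern of $f$-values $2$ and $3$ along $\delta G$, with Lemmas~\ref{lem:Bmax} and~\ref{lem:Amax} needed to make the $\delsave$ steps legal), which is exactly what makes (a)--(c) in your own list of worries go through.

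A second, smaller inaccuracy: you treat chords of $\delta G$ as uniformly reducible by splitting and applying induction to both sides. That works for $0$- and $1$-chords, but $2$- and $3$-chords cannot be eliminated this way, because the side not containing $P$ may itself be an exceptional canvas of type \ref{ex:B}, \ref{ex:A}, or \ref{ex:AB}; the paper's Lemmas~\ref{lem:2chords} and~\ref{lem:3chords} only \emph{constrain} such chords, and the surviving exceptional configurations are precisely the structures that later produce the independence failures above. So the claim in your plan that "the only exceptional cases to worry about arise when we invoke the induction hypothesis on a graph with the same vertex set minus a precoloured path" is not correct, and a proof built on that assumption would not close.
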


If a canvas contains the structure described by \ref{ex:B}, \ref{ex:A}, or \ref{ex:AB} above, we call it an \emph{exceptional canvas of type \ref{ex:B}, \ref{ex:A}, or \ref{ex:AB}}, respectively. See Figure~\ref{fig:exceptions} for an illustration of exceptional canvases.  Note that if $(G,P,A,B,f)$ is an exceptional canvas, then $v(P) \geq 3$.

\begin{figure}[ht]
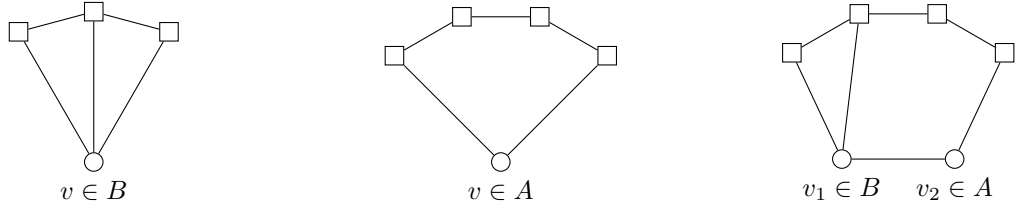
\centering
    \subcaptionbox{Exception of type~\ref{ex:B}.\label{fig:exceptionB}}[0.3\textwidth]{\includestandalone[page=1]{figures/fig_exceptions}}%
    \hspace*{0.02\textwidth}
    \subcaptionbox{Exception of type~\ref{ex:A}.\label{fig:exceptionA}}[0.3\textwidth]{\includestandalone[page=2]{figures/fig_exceptions}}%
    \hspace*{0.02\textwidth}%
    \subcaptionbox{Exception of type~\ref{ex:AB}.\label{fig:exceptionAB}}[0.3\textwidth]{\includestandalone[page=3]{figures/fig_exceptions}}%
    \captionsetup{width=0.9\textwidth}
    \caption{The exceptional cases of Theorem~\ref{thm:inductive}. Vertices of the path $P$ are squares.\label{fig:exceptions}}
\end{figure}

It is straightforward to check that with the prescribed function $f$, exceptional canvases $K$ are not weakly $f$-degenerate because the function $f_K$ in Definition \ref{def:degencanvas} assigns a negative value to some vertices. In fact, in the list-colouring setup there are list assignments $L$ to exceptional canvases with $|L(v)|=f(v)+1$ that do not admit a list colouring. 
We have the following observation for unexceptional canvases.

\begin{obs}\label{obs:canvasfunction}
    If $K=(G,P,A,B,f)$ is an unexceptional canvas then $f_K$ is nonnegative on $G-P$.
\end{obs}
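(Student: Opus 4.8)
The plan is to prove the contrapositive: if $f_K(v) < 0$ for some $v \in V(G)\setminus V(P)$, then $(G,P,A,B,f)$ is an exceptional canvas — in fact of type \ref{ex:B} or \ref{ex:A}. Since $f_K(v) = f(v) - |N(v)\cap V(P)|$, the hypothesis $f_K(v)<0$ says exactly that $v$ has strictly more neighbours on $P$ than $f(v)$ permits, so the whole argument reduces to matching the lower bounds on $f(v)$ supplied by \ref{canv:f} against upper bounds on $|N(v)\cap V(P)|$ supplied by the acceptability of $P$, the girth of $v$, and planarity. Write $P = u_1u_2\cdots u_k$ with $k = v(P) \le 4$, and note that since exceptional canvases have $v(P)\ge 3$ it is harmless to dispose of $k\le 2$ separately.

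First I would record a handful of elementary facts about a vertex $v\notin V(P)$ and its neighbours among $u_1,\dots,u_k$: (i) if $v$ is adjacent to two consecutive vertices $u_i,u_{i+1}$, then $vu_iu_{i+1}$ is a triangle, so $g(v)=3$; (ii) if $v$ is adjacent to $u_i$ and $u_{i+2}$, then $vu_iu_{i+1}u_{i+2}v$ is a $4$-cycle, so $g(v)\le 4$; (iii) if $k=4$ and $P$ is a path, then $v$ cannot be adjacent to three of $u_1,u_2,u_3,u_4$ — checking the two cases $\{u_1,u_2,u_3\}$ and $\{u_1,u_2,u_4\}$ (the others follow by reversing $P$) shows that in each case one internal vertex of $P$ has girth $3$ and the other has girth at most $4$, contradicting \ref{def:accpath}(A3); the same holds when $P$ is a $4$-cycle, since every spanning path of such a cycle uses all four vertices and the identical computation applies; and (iv) if $P$ is a cycle and $v$ is adjacent to every vertex of $P$, then planarity traps $v$ in the closed region bounded by $P$ not containing the outer face, so $v\notin V(\delta G)$ and in particular $v\notin A\cup B$.

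With these in hand I would finish by cases on the location and girth of $v$. If $k\le 2$, then $|N(v)\cap V(P)|\le 2$, so $f_K(v)<0$ forces $f(v)\le 1$, hence $v\in A$ by \ref{canv:f}; but then $v$ is adjacent to the two (necessarily adjacent) vertices of $P$, contradicting $g(v)\ge 5$ via (i). If $v\notin V(\delta G)$, then \ref{canv:fint} gives $f(v)\ge 4$ when $g(v)=3$ (and $|N(v)\cap V(P)|\le k\le 4$), $f(v)\ge 3$ when $g(v)=4$ (and $|N(v)\cap V(P)|\le 2$ by (i)), and $f(v)\ge 2$ when $g(v)\ge 5$ (and $|N(v)\cap V(P)|\le 2$ by (i)--(ii)), so $f_K(v)\ge 0$ in every subcase. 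If $v\in V(\delta G)\setminus(V(P)\cup A\cup B)$, the same three girth subcases go through using \ref{canv:fCn3} and \ref{canv:fC3} in place of \ref{canv:fint}, with (iii) supplying the bound $|N(v)\cap V(P)|\le 3$ needed when $g(v)=3$. This leaves $v\in A$ or $v\in B$. If $v\in A$, then $f(v)=1$ and $g(v)\ge 5$, so by (i)--(ii) no two neighbours of $v$ on $P$ lie at $P$-distance at most $2$; hence $|N(v)\cap V(P)|\ge 2$ forces $P=u_1u_2u_3u_4$ to be a path with $\{u_1,u_4\}\subseteq N(v)$, which is precisely case \ref{ex:A}. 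If $v\in B$, then $f(v)=2$, so $|N(v)\cap V(P)|\ge 3$; by (iii) and (iv) this forces $P=u_1u_2u_3$ to be a path with $\{u_1,u_2,u_3\}\subseteq N(v)$, which is precisely case \ref{ex:B}. In either case the canvas is exceptional, completing the contrapositive.

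The main obstacle is the bookkeeping behind fact (iii): one must check that \emph{every} choice of three neighbours among $\{u_1,u_2,u_3,u_4\}$ defeats both sub-conditions of (A3), and separately handle a $4$-cycle $P$ via its spanning acceptable paths. The only non-combinatorial input is (iv), where planarity is used to place $v$ strictly inside a cycle of $\delta G$; everything else is a direct comparison against the numerical conditions of \ref{canv:f}.
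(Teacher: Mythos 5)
Your argument is correct: the paper states this as an observation without proof, and your contrapositive case check (lower bounds on $f$ from \ref{canv:f} versus upper bounds on $|N(v)\cap V(P)|$ from acceptability, girth, and planarity, with the only failure configurations being exactly \ref{ex:B} and \ref{ex:A}) is precisely the routine verification the paper leaves implicit. The only terse spots are the extension of fact (iii) to a $4$-cycle $P$ and the topological claim (iv), but both are standard and hold as you assert.
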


The deduction of Theorem~\ref{thm:main} from Theorem~\ref{thm:inductive} is simple.

\begin{proof}[Proof of Theorem~\ref{thm:main}]
    Let $G$ be a plane graph and let $f$ be a local girth function for $G$. If $G$ has one vertex then it is $0$-degenerate and there is nothing more to prove. 
    In the case that $v(G)\ge 2$ we claim that if $P=uv$ is any two-vertex path in $\delta G$ then $G$ is weakly $f$-degenerate by a sequence of operations that begins $\del(u)$, $\del(v)$. 
    This is because $K=(G,P,\emptyset,\emptyset, f)$ is an unexceptional canvas and hence by Theorem~\ref{thm:inductive}, $K$ is weakly $f$-degenerate. Using Definition~\ref{def:degencanvas}, this means that the graph  $G-P$ is weakly $f_K$-degenerate for the function $f_K$ given by $f_K(w) = f(w) - |N(w)\cap P|$. But $f_K$ is the function one has starting with the pair $(G, f)$ and performing the sequence of operations $\del(u)$, $\del(v)$, which it is easy to see must be legal.
\end{proof}

\section{Properties of a minimum counterexample to Theorem \ref{thm:inductive}}\label{sec:properties}

A \emph{counterexample} is a canvas (see Definition~\ref{def:canvas}) for which Theorem \ref{thm:inductive} does not hold. A \emph{minimum counterexample} is a counterexample $K=(G,P,A,B,f)$ that is minimum with respect to $v(G)$, and subject to that, minimum with respect to $v(G-P)$, and subject to that minimum with respect to $\sum_{v\in V(G)}f(v)$. 
In what follows, we assume that Theorem \ref{thm:inductive} is false and let $K$ be a minimum counterexample. In this section, we establish some properties of $K$. 
Note that $K$ must be unexceptional as Theorem~\ref{thm:inductive} holds trivially for any exceptional canvases.  The following observation can be deduced easily from the definition of minimum counterexample and Lemma~\ref{lem:monotone}. 

\begin{obs}\label{obs:feq}
    Equality holds for the function $f$ in~\ref{canv:fCn3}--\ref{canv:fint}.
\end{obs}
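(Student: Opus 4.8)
The statement to prove is Observation~\ref{obs:feq}: in a minimum counterexample $K = (G,P,A,B,f)$, equality holds in each of the lower bounds \ref{canv:fCn3}, \ref{canv:fC3}, and \ref{canv:fint}. The natural approach is a contradiction argument exploiting the third minimality criterion in the definition of a minimum counterexample, namely minimality of $\sum_{v \in V(G)} f(v)$ among counterexamples minimizing $v(G)$ and then $v(G-P)$.

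First I would suppose for contradiction that some vertex $v$ has $f(v)$ strictly larger than the relevant lower bound imposed by \ref{canv:fCn3}--\ref{canv:fint}. Note that all of these constraints concern only vertices in $V(G) \setminus V(P)$ (the constraints \ref{canv:fA} and \ref{canv:fB} for $A$- and $B$-vertices are equalities by definition and are not in scope here), so $v \notin V(P)$. Define a new function $f'$ agreeing with $f$ everywhere except $f'(v) = f(v) - 1$; by hypothesis $f'(v)$ still meets the corresponding lower bound, so $f' \le f$ pointwise and $f'$ still satisfies all of \ref{canv:f}. I would then check that $K' = (G,P,A,B,f')$ is still a canvas: conditions \ref{canv:acceptable}--\ref{canv:B} are unchanged since they do not involve $f$, and \ref{canv:f} holds by the choice of $v$ and the decrement; one should also note $v \notin A \cup B$ since those vertices have $f$-values pinned by \ref{canv:fA}--\ref{canv:fB}, which are already tight, so decrementing $f(v)$ does not conflict with \ref{canv:fA}/\ref{canv:fB}. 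Moreover $K'$ is unexceptional, since the exceptional cases \ref{ex:B}--\ref{ex:AB} are purely structural (they depend on $G$, $P$, $A$, $B$ but not on $f$), so $K$ unexceptional forces $K'$ unexceptional. Since $G$, $P$, $A$, $B$ are unchanged, $K'$ has the same $v(G)$ and the same $v(G-P)$ as $K$, but strictly smaller $\sum_w f'(w) = \sum_w f(w) - 1$. By minimality of $K$, the canvas $K'$ is therefore \emph{not} a counterexample, i.e.\ $K'$ is weakly $f'$-degenerate. But $f \ge f'$ pointwise on $V(G)$, hence on $V(G-P)$ we have $f_{K} \ge f_{K'}$ pointwise (the decrement passes through the definition $f_K(w) = f(w) - |N(w)\cap V(P)|$ unchanged), so by Lemma~\ref{lem:monotone}, $G - P$ being weakly $f_{K'}$-degenerate implies it is weakly $f_K$-degenerate. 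That is precisely the statement that $K$ is weakly $f$-degenerate, contradicting that $K$ is a counterexample.

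The only genuine subtlety, and the step I would be most careful about, is verifying that decrementing $f$ at a single vertex keeps $K'$ a \emph{bona fide canvas satisfying all of \ref{canv:f}} and keeps it \emph{unexceptional}; both turn out to be immediate once one observes that the exceptional cases and the structural conditions \ref{canv:acceptable}--\ref{canv:B} are independent of $f$, and that the vertex $v$ being decremented cannot lie in $A$, $B$, or $P$ (for $A,B$ because their $f$-values are already at the minimum, for $P$ because the constraints in question do not restrict $V(P)$). Everything else is routine monotonicity via Lemma~\ref{lem:monotone}. Thus equality must hold throughout \ref{canv:fCn3}--\ref{canv:fint}, establishing the observation.
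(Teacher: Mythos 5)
Your argument is correct and is exactly the deduction the paper has in mind: the paper states the observation follows "easily from the definition of minimum counterexample and Lemma~\ref{lem:monotone}", and your decrement-at-one-vertex argument, using the third minimality criterion ($\sum_v f(v)$) together with the monotonicity lemma applied to $f_{K}\ge f_{K'}$ on $G-P$, is precisely that deduction spelled out. The side checks you flag (that the decremented vertex lies outside $V(P)\cup A\cup B$, that the canvas and exceptionality conditions are unaffected since the exceptions depend only on $G,P,A,B$) are the right ones and are handled correctly.
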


\subsection{Chords and Short Cycles}
We will require the following definition.
\begin{definition}\label{def:seppath}
    We say that a path $H \subseteq G$ with endpoints in $V(\delta G)$ and no other vertices in $V(\delta G)$ \emph{separates $G$ into $G_1$ and $G_2$} if the following hold:
    \begin{itemize}
        \item $G_1$ and $G_2$ are connected subgraphs of $G$ that inherit the plane embedding of $G$,
        \item $G_1 \cap G_2 = H$ and $G_1 \cup G_2 = G$
        \item $V(G_1) \setminus V(G_2) \neq \emptyset$ and $V(G_2) \setminus V(G_1) \neq \emptyset$, and
        \item $H \subseteq \delta G_i$ for each $i \in \{1,2\}$. 
    \end{itemize}
    If the path $H$ separates $G$ into $G_1$ and $G_2$ and $e(H) = t$, we say that $H$ is a \emph{$t$-chord}. 
\end{definition}
Throughout, we adopt the convention that if $H$ separates $G$ into $G_1$ and $G_2$, then $|V(G_1) \cap V(P)| \geq |V(G_2) \cap V(P)|$.

\begin{definition}
    Let $H$ be a cycle with a fixed plane embedding. The \emph{interior} of $H$ is the unique non-infinite face bounded by $H$. 
\end{definition}

Our first structural results for a minimal counterexample show that 3- and 4-cycles in $G$ have no vertices in their interior. We also restrict the girth of vertices in any 5-cycles that do have vertices in their interior.

\begin{lemma}\label{lem:triangles}
    $G$ does not contain a cycle of length at most four with a vertex in its interior.
\end{lemma}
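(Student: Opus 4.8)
The proof is by contradiction using minimality of the counterexample $K=(G,P,A,B,f)$. Suppose $C$ is a cycle of length at most four in $G$ with at least one vertex in its interior; choose such a $C$ so that the number of vertices strictly inside $C$ is as small as possible. The idea is that $C$ should act like a separating cycle: removing the interior of $C$ and applying induction to the ``outer'' piece, then separately colouring (i.e., legally removing) the ``inner'' piece, should contradict the minimality of $K$. The first thing I would check is that $C$ is not contained in $P$, and more generally understand how $C$ meets $P$ — since $P\subseteq \delta G$ and $C$ has a vertex in its interior, $C$ cannot lie entirely on the outer face unless all of its interior vertices are accounted for, which forces $C$ to bound a region that is ``new.'' I expect one can argue $V(C)\cap V(P)$ is a subpath of $P$ (possibly empty) using that $P$ is on the outer boundary walk.

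\textbf{The splitting step.} Let $G_{\mathrm{in}}$ be the subgraph of $G$ consisting of $C$ together with everything in its interior, and $G_{\mathrm{out}}$ the subgraph consisting of $C$ together with everything outside (including $P$). First I would handle $G_{\mathrm{out}}$: form the canvas $K_{\mathrm{out}} = (G_{\mathrm{out}}, P, A\cap V(G_{\mathrm{out}}), B\cap V(G_{\mathrm{out}}), f)$. Since $v(G_{\mathrm{out}}) < v(G)$ (there is at least one interior vertex of $C$ omitted), and since by Observation~\ref{obs:subcanvas} removing vertices preserves the canvas property and cannot create an exception, $K_{\mathrm{out}}$ is an unexceptional canvas, hence weakly $f$-degenerate by minimality. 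This gives a legal sequence of $\del$/$\delsave$ operations removing $V(G_{\mathrm{out}})\setminus V(P)$, and in particular at the point where this sequence has removed all of $V(G_{\mathrm{out}})\setminus V(P)$, it has in particular removed all of $V(C)\setminus V(P)$. The plan is then to argue that the vertices of $C$ now play the role of a ``preremoved path'' for a canvas built on $G_{\mathrm{in}}$: because $C$ has length at most four, $V(C)$ is (or contains a spanning) acceptable path/cycle, and the residual function values on the vertices of $G_{\mathrm{in}} - C$, after accounting for edges to $C$, need to be large enough to form a valid canvas $K_{\mathrm{in}}$. Then $K_{\mathrm{in}}$ has fewer vertices than $K$ and must be weakly $f$-degenerate (checking it is not one of \ref{ex:B}, \ref{ex:A}, \ref{ex:AB}), and concatenating the two operation sequences removes all of $V(G-P)$, contradicting that $K$ is a counterexample.

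\textbf{The main obstacle.} The delicate point — and I expect this to be where almost all the work lies — is verifying that $V(C)$, playing the role of ``$P$'' in the inner canvas $K_{\mathrm{in}}$, is genuinely acceptable and that the function restricted to $G_{\mathrm{in}}$ still satisfies the canvas axioms \ref{canv:f}, particularly the internal-vertex bounds \ref{canv:fint}. A subtle issue: vertices of $C$ that lay on $\delta G$ had the smaller outer-boundary function values, but as a ``path'' in $K_{\mathrm{in}}$ their function values don't matter (the restriction in \ref{canv:f} is only to $V(G)\setminus V(P)$); conversely, a vertex of $G_{\mathrm{in}}$ that was interior in $G$ but lies on $\delta G_{\mathrm{in}} = C$ now needs only the weaker boundary bound, which it certainly satisfies. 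The genuinely tricky cases are (i) when $C$ shares some but not all vertices with $P$, so that $V(C)$ as a path has some vertices already constrained by being in $P$; (ii) ensuring the girth values $g_{G_{\mathrm{in}}}(v)$ used in the inner canvas are at least $g_G(v)$ — which holds since $G_{\mathrm{in}}\subseteq G$ — so the function inequalities are preserved; and (iii) ruling out that $K_{\mathrm{in}}$ is exceptional, which should follow from the minimality choice of $C$ (an exceptional configuration inside would itself be a shorter cycle with an interior vertex, or could be pushed to a contradiction). I would also need to handle the degenerate sub-case where $C$ has length exactly the same as $P$ or where the interior vertex count is $1$ separately, as small cases. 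A further wrinkle: if $C$ is not acceptable on its own (e.g., a $4$-cycle whose internal vertices have girth $3$), one must instead delete a proper subpath; but since $C$ has an interior vertex it has girth at most $4$ for several of its own vertices, so one reduces to the $3$-cycle or short-path case — this is the kind of case analysis I would expect to dominate the proof.
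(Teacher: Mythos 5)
Your overall strategy is the paper's: cut along the short cycle, apply minimality to the outer piece with the original $P$, treat the cycle as a ``preremoved'' path for an inner canvas, and concatenate the two legal sequences. The gap is exactly at the point you defer: you propose to let the whole cycle $C$ serve as the path/cycle of the inner canvas, but this does not in general satisfy Definition~\ref{def:accpath}. If $C$ is a 4-cycle, an acceptable cycle must contain a spanning acceptable path on four vertices, whose internal vertices must satisfy (A3); every vertex of a 4-cycle has girth at most four (and possibly three, if it lies on a triangle inside $C$), so (A3b) is impossible and (A3a) can fail for every choice of spanning path, and a chord of $C$ kills acceptability outright. Your fallback (``delete a proper subpath\dots reduces to the 3-cycle or short-path case'') is precisely the missing argument, not a reduction you have carried out. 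There is a second, independent failure: your contradiction rests on ``$K_{\mathrm{in}}$ has fewer vertices than $K$,'' which is false when $C=\delta G$, in particular when $P$ itself is the cycle $C$ with vertices inside. That case cannot be excluded here, since Lemma~\ref{lem:Pispath} later invokes this very lemma to dispose of cyclic $P$; one must instead drop to the secondary minimality measure or change the construction, and your proposal does neither.

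The paper resolves both problems with one device that your plan omits: it first reduces (WLOG) to a triangle or a 4-cycle with no chord in its interior, then applies $\del$ to a single vertex $u$ of the cycle and takes $P'=H-u$, a path on at most three vertices, as the inner canvas's path. This makes acceptability automatic (no (A3) condition to check, and $V(P')$ induces a path by the WLOG), makes the inner canvas strictly smaller even when $\delta G=C$, and lets one take $B'=\emptyset$ and $A'=\{v: f'(v)\le 1\}$, so the exceptional types \ref{ex:B}--\ref{ex:AB} are excluded immediately. The remaining content of the paper's proof --- checking that $A'$ consists of girth-$\ge 5$ interior vertices all adjacent to $u$ (hence independent), that the function conditions \ref{canv:fA}--\ref{canv:fint} hold for $f'$, and that the concatenated sequence is legal because the residual function on the interior vertices equals $f'_{K'}$ and is nonnegative --- is exactly the verification your proposal leaves undone; your bookkeeping sketch for the concatenation is right in spirit, but without the $\del(u)$ step and the canvas verification for the inner object it does not close.
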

\begin{proof}
    Suppose for contradiction that $H$ is such a cycle. 
    Without loss of generality, $H$ is either a 3-cycle (and thus does not have a 1-chord) or a 4-cycle without a 1-chord in its interior. This is because a 4-cycle with a vertex and 1-chord in its interior contains a 3-cycle with vertex in its interior.
    Let $X$ be the set of vertices of $G$ embedded in the interior of $H$. 
    By Observation~\ref{obs:subcanvas}, $(G-X,P,A,B,f)$ is an unexceptional canvas (because $K$ is unexceptional), and by minimality it is weakly $f$-degenerate by a legal sequence of operations $\sigma$.
    Let $u$ be a vertex in $H$, and let $(G',f') = \del(G[X \cup V(H)],f,u)$. 
    We will define $P'$, $A'$ and $B'$ such that $K'= (G', P', A', B', f')$ is a canvas. 
    Let $P'=H-u$, let $A'$ be the set $\{v \in V(\delta G')\setminus V(P'): f'(v) \leq 1\}$, and let $B'=\emptyset$. 
    Note that this means $V(G')=X\cup V(P')$.
    We now show that $K'$ is a canvas. 
   By our choice of $H$, $v(P')\le 3$ and $V(P')$ induces a path in $G'$, and so $P'$ is acceptable in $G'$ and thus \ref{canv:acceptable} holds. 
    Since $B'=\emptyset$, both \ref{canv:B} and~\ref{canv:fB} hold.
    Note that $V(\delta G')\setminus V(P') \subseteq X\subseteq V(G-\delta G)$ and thus $A'$ only contains vertices of girth at least $5$. 
    This is because if $g(v)\le 4$ then $f(v)\ge 3$ by~\ref{canv:f} and hence $f'(v)\ge 2$.
    Also due to the fact that $V(\delta G')\setminus V(P') \subseteq V(G-\delta G)$, \ref{canv:f} gives that $f(v) \ge 2$ for all $v\in V(\delta G')\setminus V(P')$. 
    Then $A\cap A'=\emptyset$ and thus $f'(v) = 1$ for all $v \in A'$. 
    Moreover, $v\in A'$ only if $v\in N_G(u)$.  
    Thus, \ref{canv:fA} holds.  
    Since every vertex in $A'$ has girth at least five and is adjacent in $G$ to $u$, $A'$ is independent in $G'$ and so \ref{canv:A} holds. 
    \ref{canv:fint} holds because $K$ is a canvas. 
    Suppose now that $v \in V(\delta G') \setminus (V(P') \cup A' \cup B')$. If $g(v) \ne 3$, then $f(v) \leq 3$ and hence $f'(v) \geq 2$ and so \ref{canv:fCn3} holds. If $g(v) = 3$, then $f(v) \geq 4$ and so $f'(v) \geq 3$ and \ref{canv:fC3} holds. 
    Note that $K'$ must be unexceptional as $B'=\emptyset$ rules out the existence of exceptions of types~\ref{ex:B} and~\ref{ex:AB}, and $v(P')\le 3$ rules out type~\ref{ex:A}.
    
    By the minimality of $K$, the canvas $K'$ is weakly $f'$-degenerate by some legal sequence of operations. 
    That is, for the function $f'_{K'}$ given by $f'_{K'}(v) = f'(v)-|N(v)\cap P'|$, the graph $G'-P'$ is weakly $f'_{K'}$-degenerate by some legal sequence, say, $\sigma'$.

    But then $K$ is weakly $f$-degenerate by the sequence of operations formed by the concatenation $\sigma\sigma'$. 
    That is, with $f_K$ as in Definition~\ref{def:degencanvas}, we start from $(G-P, f_K)$ and perform the sequence of operations $\sigma$. By minimality, the sequence is legal starting with $(G-X-P, f_K)$ and since we verified that $K'=(G',P',A',B',f')$ is a canvas with $V(G'-P')=X$, it is also legal starting with $(G-P, f_K)$. 
    After performing the sequence $\sigma$ we have the graph-function pair $(G'-P', f'_{K'})$ because $G'-P'=G[X]$, and in performing $\sigma$ starting with the function $f_K$ we removed $V(H)$ in such a way the function value on any $v\in X$ falls by $|N(v)\cap V(H)|$. 
    But this is also true for $f'_{K'}$: starting from $f$ the function value falls by one on all neighbours of $u$ in forming $f'$ via $\del(G[X\cup V(H)], f, u)$ and forming $f'_{K'}$ from $f'$ accounts for the remaining decrease.
\end{proof}

The proof of Lemma~\ref{lem:triangles} is paradigmatic of most of our proofs. We take a (hypothetical) minimal counterexample, manually transform it into a smaller object using weak degeneracy operations, show that the smaller object is an unexceptional canvas, and obtain a contradiction by establishing that the concatenation of some sequences of operations is legal. 
In more complex examples, these latter two goals overlap somewhat: the fact that the smaller object is a canvas implies a significant number of the nonnegativity conditions for legality (Observation~\ref{obs:canvasfunction}). 


\begin{lemma}\label{lem:5cycles}
    If $H \subseteq G$ is a 5-cycle with a vertex in its interior, then $H$ contains neither a vertex of girth at least five (in $G$) nor two adjacent vertices of girth four (in $G$).
\end{lemma}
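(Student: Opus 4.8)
The plan is to mimic the proof of Lemma~\ref{lem:triangles} very closely, with the difference that a $5$-cycle carrying a bad vertex (either one of girth $\ge 5$, or two adjacent vertices of girth $4$) allows us to extract a spanning \emph{acceptable} path from $H$ on which to induct. So suppose for contradiction that $H\subseteq G$ is a $5$-cycle with at least one vertex in its interior, and that $H$ contains either a vertex of girth $\ge 5$ in $G$ or two adjacent vertices of girth $4$ in $G$. As in Lemma~\ref{lem:triangles}, we may assume $H$ has no $1$-chord in its interior: a $1$-chord together with an interior vertex would produce a cycle of length at most four with an interior vertex, contradicting Lemma~\ref{lem:triangles}. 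Let $X$ be the set of vertices of $G$ embedded strictly inside $H$. By Observation~\ref{obs:subcanvas}, $(G-X,P,A,B,f)$ is an unexceptional canvas, and by minimality it is weakly $f$-degenerate via a legal sequence $\sigma$ (which in particular legally removes all of $V(H)$, decreasing the function on each $v\in X$ by exactly $|N(v)\cap V(H)|$).

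Next I would build the smaller canvas $K'$ on the ``inside''. Here is where the girth hypothesis on $H$ is used: choose a vertex $u\in V(H)$ so that $H-u$ is an \emph{acceptable} path in $G$. Concretely, if $H$ has a vertex $x$ of girth $\ge 5$, take $u=x$; then the two internal vertices of the path $H-u$ may be arbitrary, but one of the \emph{endpoints}' neighbours on the path\,--- wait, we need acceptability of the length-$4$ path $H-u$, which by (A3) requires either both internal vertices to have girth $\ge 4$, or one internal vertex to have girth $\ge 5$. If $H$ has two adjacent vertices $y,z$ of girth $4$, choose $u$ to be a vertex of $H$ at distance $2$ from the edge $yz$ along $H$, so that $y$ and $z$ become the two internal vertices of $H-u$, giving (A3a). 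If instead $H$ has a vertex $x$ of girth $\ge 5$, choose $u$ to be a neighbour of $x$ on $H$, so that $x$ is an internal vertex of $H-u$, giving (A3b). In either case $P':=H-u$ is an acceptable path on $4$ vertices in $G'$, where $(G',f')=\del(G[X\cup V(H)],f,u)$, and note $V(G')=X\cup V(P')$. Now set $A'=\{v\in V(\delta G')\setminus V(P'):f'(v)\le 1\}$ and $B'=\emptyset$, exactly as before. The verification that $K'=(G',P',A',B',f')$ is an unexceptional canvas is essentially verbatim from Lemma~\ref{lem:triangles}: since $V(\delta G')\setminus V(P')\subseteq X\subseteq V(G-\delta G)$, every such vertex $v$ has $f(v)\ge 2$ by \ref{canv:fint}, and if $g(v)\le 4$ then $f(v)\ge 3$ so $f'(v)\ge 2$; hence every $v\in A'$ has girth $\ge 5$, lies in $N_G(u)$, and satisfies $f'(v)=1$ (it cannot lie in $A$ and have its value dropped), so \ref{canv:A}, \ref{canv:fA} hold and $A'$ is independent; \ref{canv:fCn3}, \ref{canv:fC3} follow from the same arithmetic; \ref{canv:fB}, \ref{canv:B} are trivial as $B'=\emptyset$; \ref{canv:fint} is inherited; and $K'$ is unexceptional since $B'=\emptyset$ kills types~\ref{ex:B} and~\ref{ex:AB} while $v(P')=4$ together with $B'=\emptyset$\,---\,hmm, type~\ref{ex:A} needs $v(P)=4$ and a vertex of $A\setminus V(P)$ adjacent to both endpoints of $P$; this is ruled out because $V(\delta G')\setminus V(P')\subseteq X$ lies in the interior of $H$, so no vertex of $A'$ can be adjacent to both endpoints $u_1,u_4$ of $P'$ unless it were inside $H$ adjacent to two vertices of $H$, which combined with a chord structure would again contradict Lemma~\ref{lem:triangles}; I would spell this out carefully.

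With $K'$ an unexceptional canvas on fewer vertices, minimality gives a legal sequence $\sigma'$ witnessing that $G'-P'=G[X]$ is weakly $f'_{K'}$-degenerate, where $f'_{K'}(v)=f'(v)-|N(v)\cap V(P')|$. The concatenation $\sigma\sigma'$ then witnesses that $K$ is weakly $f$-degenerate, a contradiction. The bookkeeping is identical to Lemma~\ref{lem:triangles}: running $\sigma$ from $(G-P,f_K)$ is legal (it is legal from $(G-X-P,f_K)$ by minimality, and legality is preserved by reintroducing the vertices of $X$ since $K'$ is a canvas), and after $\sigma$ we arrive at $(G[X],f'_{K'})$ because forming $f'$ via $\del(G[X\cup V(H)],f,u)$ drops the value by one on $N(u)$ and passing to $f'_{K'}$ accounts for the remaining neighbours of $V(P')=V(H)\setminus\{u\}$, matching exactly the total drop of $|N(v)\cap V(H)|$ incurred while running $\sigma$.

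The main obstacle, and the only genuinely new point beyond Lemma~\ref{lem:triangles}, is the choice of $u$ making $H-u$ an acceptable path: this is precisely why the statement excludes $5$-cycles whose only special feature is, say, five vertices of girth $3$ (for which no choice of $u$ yields acceptability of the resulting $4$-path, since (A3) fails). A secondary point to handle with care is confirming $K'$ is unexceptional of type~\ref{ex:A}\,---\,i.e.\ that no vertex strictly inside $H$ is adjacent to both endpoints of $P'=H-u$\,---\,which follows from Lemma~\ref{lem:triangles} and the no-$1$-chord assumption, since such a vertex plus the two endpoints plus part of $H$ would enclose a $3$- or $4$-cycle with an interior vertex. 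Everything else is a transcription of the argument already given.
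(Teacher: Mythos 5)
Your overall strategy is exactly the paper's: delete one vertex $u$ of $H$ so that the remaining four vertices form an acceptable path $P'$, build the canvas $K'=(G',P',A',\emptyset,f')$ on the interior, and conclude by concatenating the two legal sequences. The one step that fails as written is your choice of $u$ in the girth-$\ge 5$ case. If $x\in V(H)$ has $g(x)\ge 5$ and you delete a \emph{neighbour} $u$ of $x$ on $H$, then $x$ becomes an endpoint of the $4$-vertex path $H-u$, not an internal vertex: writing $H=xn_1abn_2x$ and $u=n_2$, the path $H-u=xn_1ab$ has internal vertices $n_1$ and $a$. So (A3b) is not satisfied by this choice, and if $n_1$ and $a$ both have girth three (entirely possible when $x$ is the only vertex of $H$ of girth at least five), then $H-u$ is not acceptable and $K'$ fails~\ref{canv:acceptable}. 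The correct choice\,---\,the one the paper makes by labelling $H=w_1w_2w_3w_4w_5w_1$ with $g(w_3)\ge 5$ and deleting $w_5$\,---\,is to take $u$ at distance two from $x$ along $H$ (a non-neighbour of $x$ on the cycle), so that $x$ is an internal vertex of $H-u$. With this correction your argument goes through; your handling of the adjacent girth-$4$ case (delete the vertex of $H$ at distance two from the edge $yz$) is already right.

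A smaller point: your exclusion of exception~\ref{ex:A} via ``a vertex inside $H$ adjacent to two vertices of $H$, combined with a chord structure, contradicts Lemma~\ref{lem:triangles}'' is not the right reason, since the $3$- or $4$-cycle so created need not have any vertex in its interior. But you do not need this detour: you have already shown that every $v\in A'$ has $g(v)\ge 5$ and is adjacent in $G$ to the deleted vertex $u$, and $u$ is adjacent to both endpoints of $P'$; hence if $v$ were adjacent to even one endpoint of $P'$ it would lie in a triangle, contradicting $g(v)\ge 5$. This is precisely the paper's argument, and it rules out types~\ref{ex:A} and~\ref{ex:AB} (the latter already dead because $B'=\emptyset$).
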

\begin{proof}
    Suppose not. Let $H = w_1w_2w_3w_4w_5w_1$, where either $g(w_3) \geq 5$, or $g(w_2) = g(w_3) = 4$.  Let $X$ be the set of vertices of $G$ embedded in the interior of $H$. By Observation~\ref{obs:subcanvas} and the minimality of $K$, $(G-X,P,A,B,f)$ is a canvas that is weakly $f$-degenerate. 

    Let $(G', f') = \del(G[X \cup V(H)], w_5)$. Let $P' = H-w_5$ and note that $v(P')=4$. 
    By Lemma \ref{lem:triangles}, $H$ does not have a chord in its interior and thus $V(P')$ induces a path in $G'$. 
    Let $A'$ be the set $\{v \in V(\delta G') \setminus V(H): f'(v) \leq 1\}$ and let $B' = \emptyset$.
    
    We claim that $K' = (G', P', A', B', f')$ is an unexceptional canvas. First, we show it is a canvas. 
    Since either $g(w_2) = g(w_3) = 4$ or $g(w_3) \geq 5$, $P'$ is an acceptable path in $G'$ and thus \ref{canv:acceptable} holds. 
    Since $B' = \emptyset$, both \ref{canv:B} and \ref{canv:fB} hold. 
    Note that for each vertex $v \in V(G'-P')$, we have that $f'(v) \geq f(v)-1$ and $f(v) \geq 2$. 
    Thus, every vertex $v$ in $A'$ has $f'(v) \geq 1$ and so \ref{canv:fA} holds. Moreover, if $v \in A'$, then $f(v) = 2$ and so $g(v) \geq 5$. 
    Since every vertex in $A'$ has girth at least five and is adjacent to $w_5$ in $G$, it follows that $A'$ is an independent set and so \ref{canv:A} holds. Property~\ref{canv:fint} holds because $K$ is a canvas. 
    Suppose now that $v \in V(\delta G') \setminus (V(P') \cup A' \cup B')$. If $g(v) \neq 3$, then $f(v) \leq 3$ and hence $f'(v) \geq 2$ and so \ref{canv:fCn3} holds. 
    If $g(v) = 3$, then $f(v) \geq 4$ and so $f'(v) \geq 3$ and \ref{canv:fC3} holds. 
    
    Finally, we claim that $K'$ is unexceptional. It is not an exceptional canvas of type~\ref{ex:B} since $v(P') = 4$. Moreover, no vertex of $A'$ is adjacent to an endpoint of $P'$ since every vertex $v \in A'$ is adjacent in $G$ to $w_5$ and has $g(v) \geq 5$, and so $K'$ is not an exceptional canvas of type~\ref{ex:A} or~\ref{ex:AB}.

    By the minimality of $K$, $K'$ is weakly $f'$-degenerate. But then $K$ is weakly $f$-degenerate by the concatenation of the sequences of operations that we have for $(G-X,P,A,B,f)$ and $K'$ exactly as in the proof of Lemma~\ref{lem:triangles}. This gives the desired contradiction.
    \end{proof}


\begin{lemma}\label{lem:nolowdegs}
$G$ does not contain a vertex $v \in V(G) \setminus V(P)$ with $f(v) \geq \deg(v)$.    
\end{lemma}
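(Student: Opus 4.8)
The plan is to argue by contradiction, using the minimality of $K$ in the standard way established by the proofs of Lemmas~\ref{lem:triangles} and~\ref{lem:5cycles}. Suppose $v\in V(G)\setminus V(P)$ has $f(v)\ge\deg(v)$. The key idea is that such a vertex can simply be deleted last: if $G-v$ (with the appropriately reduced data) is weakly $f$-degenerate, then we can append a final $\del(v)$ operation, and the condition $f(v)\ge\deg_G(v)$ is exactly what guarantees that at the moment we delete $v$ its function value is still nonnegative — indeed, the function value at $v$ never changes as we remove other vertices unless those vertices are deleted by $\del$ or $\delsave$ operations that reduce it, and in the worst case it drops by $|N_G(v)|=\deg_G(v)$ total, which by hypothesis leaves it nonnegative.

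Concretely, first I would form the canvas $K' = (G-v, P, A', B', f')$ where $f'$ is the restriction of $f$ to $V(G-v)$ (no decrements, since we are not yet performing the deletion of $v$), and $A' = A\setminus\{v\}$, $B' = B\setminus\{v\}$. I must check $K'$ is a canvas: since $v\notin V(P)$, $P$ remains an acceptable path or cycle in $G-v$ (removing $v$ cannot decrease the girth of any remaining vertex, and can only make paths/cycles remain induced), so \ref{canv:acceptable} holds; properties \ref{canv:A}--\ref{canv:fint} are inherited directly, as deleting a vertex only weakens the constraints (girths can only increase, and the required lower bounds on $f$ are unchanged since we did not decrement $f$). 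By Observation~\ref{obs:subcanvas}, $K'$ is also unexceptional. Hence by minimality $K'$ is weakly $f'$-degenerate, i.e.\ $G-v-P$ is weakly $f'_{K'}$-degenerate via some legal sequence $\sigma'$, where $f'_{K'}(u) = f(u) - |N_{G-v}(u)\cap V(P)| = f(u) - |N_G(u)\cap V(P)|$ (the latter equality holds because $v\notin V(P)$).

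To finish, I would exhibit a legal sequence of operations witnessing that $K$ itself is weakly $f$-degenerate, contradicting the assumption that $K$ is a counterexample. Starting from $(G-P, f_K)$, one first runs $\sigma'$ — this is legal because it is legal starting from $(G-v-P, f'_{K'})$ and the only difference is the presence of the extra vertex $v$ together with its edges, whose effect is merely to make the function values on $N_G(v)$ no larger than they would otherwise be, which can never break legality (exactly the phenomenon handled by Lemma~\ref{lem:monotone}); more carefully, running $\sigma'$ from $(G-P,f_K)$ leaves behind the single vertex $v$ with function value $f_K(v)$ decremented by the number of neighbours of $v$ deleted by $\del$-operations in $\sigma'$, together with those deleted by $\delsave$ not in the "saved" role. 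One then appends a single $\del(v)$ operation. This is legal precisely when the current value of the function at $v$ is nonnegative; since that value is at least $f(v) - |N_G(v)\cap V(P)| - |N_{G-P}(v)| = f(v) - \deg_G(v) \ge 0$, we are done, and we have also removed every vertex of $G-P$, yielding the contradiction.

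The main obstacle I anticipate is the bookkeeping in the last paragraph: one must be careful that after running $\sigma'$ the leftover vertex is exactly $v$ with a function value that is provably at least zero, accounting correctly for decrements coming from both $\del$ and $\delsave$ operations (in particular that $\delsave(\cdot, v)$ operations, in which $v$ plays the "saved" neighbour, do not decrement $f$ at $v$ at all, which only helps). This is routine given the framework already set up, but it is the only place where the inequality $f(v)\ge\deg(v)$ — as opposed to $f(v)\ge\deg(v)-1$ or similar — is genuinely used, so it deserves care.
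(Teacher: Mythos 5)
Your proposal is correct and follows essentially the same route as the paper: apply minimality to the canvas $(G-v,P,A,B,f)$ (unexceptional by Observation~\ref{obs:subcanvas}), observe that the resulting legal sequence remains legal when run from $(G-P,f_K)$ because $f(v)\ge\deg(v)$ keeps the value at $v$ nonnegative throughout, and finish with a single $\del(v)$. The extra bookkeeping you flag (tracking decrements at $v$ from $\del$ versus $\delsave$) is exactly the point the paper also relies on, just stated more tersely there.
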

\begin{proof}
    Suppose not. 
    Since $K$ is a minimum counterexample, $(G-v,P,A,B,f)$ (which is an unexceptional canvas by Observation~\ref{obs:subcanvas}) is weakly $f$-degenerate. 
    That is, there is a legal sequence of operations $\sigma$ that reaches the empty graph starting with $(G-v-P, f_K)$ (where $f_K$ is as in Definition~\ref{def:degencanvas}). 
    Since $f(v) \geq \deg(v)$, the sequence $\sigma$ is legal when starting with $(G-P, f_K)$, and at the end we have the single vertex $v$ with a nonnegative function value. Thus, we can finally remove $v$ with a legal $\del(v)$ operation.
\end{proof}

\begin{lemma}\label{lem:2-conn}
    $G$ is 2-connected.
\end{lemma}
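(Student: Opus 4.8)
The plan is to argue by contradiction via a standard cut-vertex / block-decomposition argument, exploiting minimality of $K$. First I would handle connectivity: if $G$ is disconnected, let $G_1$ be the component containing $P$ (note $P$ is connected since it is a path or cycle, so it lies in one component) and $G_2$ the union of the remaining components. Both $(G_1, P, A\cap V(G_1), B\cap V(G_1), f|_{G_1})$ and $(G_2, \emptyset, A\cap V(G_2), B\cap V(G_2), f|_{G_2})$ are canvases: $\delta G$ restricted to each piece is the outer boundary of that piece, the acceptability of $P$ is inherited, and the function conditions \ref{canv:fA}--\ref{canv:fint} are inherited verbatim since girths and degrees only decrease or stay the same when passing to a subgraph (actually they are unchanged for vertices in a union of components). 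Both are unexceptional by Observation~\ref{obs:subcanvas} (or because they are smaller), so by minimality each is weakly $f$-degenerate; concatenating the two certifying sequences (the second operates on vertices disjoint from the first) certifies that $K$ is weakly $f$-degenerate, a contradiction. Here I should be a little careful that $G_2$ with $P' = \emptyset$ is genuinely covered by Theorem~\ref{thm:inductive} — an empty path is acceptable and $f_{K'} = f$ on all of $G_2$, which is nonnegative, so this is fine; alternatively one invokes that $G_2$ is weakly $f$-degenerate directly since it is a planar graph with $f$ at least a local girth function on it, but staying within the canvas framework is cleaner.

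Next, assuming $G$ is connected, suppose $x$ is a cut-vertex. Write $G = G_1 \cup G_2$ where $G_1 \cap G_2 = \{x\}$, $V(G_i)\setminus\{x\}\neq\emptyset$, and (since $P$ is 2-connected as a path or cycle, or a single/double vertex, hence lies in a single block) we may assume $V(P)\subseteq V(G_1)$. The key observation is that both $G_1$ and $G_2$ inherit the plane embedding and that $x\in V(\delta G_i)$ for each $i$, since a cut-vertex lies on the outer face of each side. Now I would process the two sides. The side $G_1$ (with $P$, $A\cap V(G_1)$, $B\cap V(G_1)$, and $f|_{G_1}$) is a canvas — but one must check $x$'s function value is still adequate, which it is since $f$ is unchanged and the girth of $x$ in $G_1$ is at least its girth in $G$, only relaxing the requirement. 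It is unexceptional (smaller), so weakly $f$-degenerate, giving a legal sequence $\sigma_1$ removing $V(G_1)\setminus V(P)$ starting from $(G_1 - P, f_{K_1})$; in particular at some point $x$ is deleted (if $x\notin V(P)$). For $G_2$, the natural move is to take $P_2$ to be the single-vertex path $x$ (which is acceptable), and set $A_2 = A\cap V(G_2)$, $B_2 = B\cap V(G_2)$, $f_2 = f|_{G_2}$ adjusted appropriately — but there is a subtlety about what function value $x$ effectively has after being removed on the $G_1$ side.

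The main obstacle — and the step requiring genuine care — is bookkeeping the function value of the cut-vertex $x$ across the two sides so that the concatenated sequence is legal. When we run $\sigma_1$ on the full graph $G-P$, deleting a neighbour of $x$ inside $G_1$ decrements $f$ at $x$; by the time $\sigma_1$ finishes (having removed all of $V(G_1)\setminus V(P)$, including $x$ via some $\del$ or $\delsave$), we need the residual function on $V(G_2)\setminus\{x\}$ to be exactly what a canvas $K_2$ on $G_2$ would demand after ``preremoving'' $x$. The clean way is to choose $K_2 = (G_2, P_2, A_2, B_2, f|_{G_2})$ with $P_2$ the one-vertex path at $x$, verify it is an unexceptional canvas (easy: $v(P_2)=1$ rules out all three exception types, and all function conditions on $V(G_2)\setminus\{x\}$ hold since they held in $K$ and girths/degrees only shrink), apply minimality to get $\sigma_2$ certifying $G_2 - x$ is weakly $f_{K_2}$-degenerate where $f_{K_2}(v) = f(v) - |N_{G_2}(v)\cap\{x\}|$, and then check that after running $\sigma_1$ from $(G-P, f_K)$ the graph-function pair reached on $V(G_2)\setminus\{x\}$ is dominated pointwise by $(G_2 - x, f_{K_2})$ — because the only decrements to $v\in V(G_2)\setminus\{x\}$ during $\sigma_1$ come from deleting $x$ itself (its $G_1$-side neighbours are not adjacent to $v$), matching the single decrement in $f_{K_2}$. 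Then Lemma~\ref{lem:monotone} lets us run $\sigma_2$ and finish, contradicting that $K$ is a counterexample. I would also note the degenerate sub-case $x\in V(P)$: then $V(P)\subseteq V(G_1)$ still makes $G_1$ a canvas, and on the $G_2$ side $x$ has been preremoved as part of $P$, so the same analysis with $f_{K_2}$ accounting for the one decrement at neighbours of $x$ goes through identically.
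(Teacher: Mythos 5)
Your disconnected case and your handling of a cut vertex $x$ with $V(P)$ entirely on one side are essentially the paper's argument (split at $x$, give the other side the one-vertex path $x$, check it is an unexceptional canvas, concatenate the two legal sequences), and the bookkeeping you describe for the function value at $x$ is correct. However, there is a genuine gap in the reduction to that situation: you assume ``we may assume $V(P)\subseteq V(G_1)$'' on the grounds that $P$ is 2-connected, or at least lies in a single block. A path on three or four vertices is not 2-connected, and a path in $\delta G$ can perfectly well pass \emph{through} the cut vertex with vertices of $P$ strictly on both sides (for instance, two $5$-cycles sharing the vertex $x$, with $P=u_1xu_3$ and $u_1,u_3$ in different components of $G-x$; here no choice of the split puts all of $P$ on one side). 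Your closing remark about the ``degenerate sub-case $x\in V(P)$'' still presupposes $V(P)\subseteq V(G_1)$, so this straddling case is simply not covered by your argument.

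This is precisely the case the paper's proof is organised around: it sets $P_1=P\cap G_1$ and $P_2=P\cap G_2$, and uses the convention $|V(G_1)\cap V(P)|\ge |V(G_2)\cap V(P)|$ together with $v(P)\le 4$ to conclude $v(P_2)\le 2$, so that $(G_2,P_2,A,B,f)$ is automatically an unexceptional canvas (every exception requires a path on at least three vertices), while $(G_1,P_1,A,B,f)$ is unexceptional by Observation~\ref{obs:subcanvas}; minimality then applies to both sides and the two certifying sequences concatenate (the sides only meet at $x\in V(P_1)\cap V(P_2)$, so their function values do not interact). To repair your write-up you would need to drop the block-decomposition claim, allow $x$ to be an internal vertex of $P$, and run exactly this two-restricted-paths version of the argument; note also that your suggested fallback for the disconnected case (invoking Theorem~\ref{thm:main} directly on a component) does not work as stated, since vertices of $A$ carry $f$-value $1$, below any local girth function, so one must stay inside the canvas framework there as well.
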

\begin{proof}
Suppose not. If $G$ is disconnected, then since $K$ is a minimum counterexample to Theorem \ref{thm:inductive} we obtain a contradiction by invoking Theorem \ref{thm:inductive} on each component of $G$ separately. Thus, we may assume that $G$ is connected and that $\delta G$ has a 0-chord (i.e.\ a cutvertex) $u$ that separates $G$ into two graphs $G_1$ and $G_2$. To be precise, the case $v(H)=1$ of Definition~\ref{def:seppath} gives that $G_1$ and $G_2$ are induced subgraphs of $G$ such that $V(G_1)\cap V(G_2)=\{u\}$ and $V(G_1)\cup V(G_2)=V(G)$.
Let $P_1$ be the restriction of $P$ to $G_1$.  Note that since $K$ is an unexceptional canvas, by Observation \ref{obs:subcanvas} so too is $(G_1, P_1, A, B, f)$. Since $K$ is a minimum counterexample, $(G_1, P_1, A, B, f)$ is weakly $f$-degenerate. 

If $V(P) \cap V(G_2) = \emptyset$, let $P_2 = u$. Otherwise, let $P_2$ be the restriction of $P$ to $G_2$. Note that $u\in V(P_2)$ in either case. 
We have that $(G_2, P_2, A, B, f)$ is an unexceptional canvas since by convention $|V(P_1) \cap V(P)| \geq |V(P_2) \cap V(P)|$ and thus $v(P_2) \leq 2$. 
Again, by the minimality of $K$ we have that $(G_2, P_2, A, B, f)$ is weakly $f$-degenerate. This is a contradiction, as then $G$ is weakly $f$-degenerate.
\end{proof}

\noindent
Lemma \ref{lem:2-conn} implies that $\delta G$ is a cycle.

\begin{lemma}\label{lem:1chord}
    $\delta G$ does not have a 1-chord.
\end{lemma}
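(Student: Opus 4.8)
The plan is to argue by contradiction using the now-familiar split-and-recombine strategy. Suppose $\delta G$ has a $1$-chord $H=xy$; by Definition~\ref{def:seppath} and our convention, $H$ separates $G$ into $G_1$ and $G_2$ with $|V(G_1)\cap V(P)|\ge |V(G_2)\cap V(P)|$, and $H\subseteq \delta G_i$ for $i=1,2$. Since $P\subseteq\delta G$ is a path or cycle on at most $4$ vertices (a cycle only when $v(P)\le 4$), at least one of the two sides contains ``most'' of $P$. I would first handle the side $G_1$: form $P_1$ by restricting $P$ to $G_1$ and, if necessary, augmenting with $x$ and/or $y$ so that $P_1$ is a path on at most $4$ vertices that still lies on $\delta G_1$ and contains $E(H)$ when $H$ is traversed by $P$. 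One must check $P_1$ is acceptable: its internal vertices are vertices of $G$ with the same or smaller girth in $G_1$ than in $G$, and the acceptability clauses (A3) are about lower bounds on girth, so care is needed — if the girth of an internal vertex of $P_1$ is strictly smaller in $G_1$ than in $G$, acceptability could fail. This is where the hypothesis that $H$ is a chord of $\delta G$ (not just any path) helps: a vertex of $P\cap \delta G_1$ that was internal to $P$ keeps all its $G$-neighbours on both sides only through $x$ or $y$, so short cycles through it survive in whichever side contains them. The cleanest route is to observe that $G$ has no short cycle with a vertex in its interior (Lemma~\ref{lem:triangles}) and no $1$-chord is short-cycle-creating in the bad way, so girths of the relevant path-vertices are unchanged on the side containing their shortest cycle; I would use this to pin down which side each internal vertex's girth is computed on.

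Next I would run the reductions. By Observation~\ref{obs:subcanvas}, $(G_1,P_1,A,B,f)$ is an unexceptional canvas (removing the vertices of $G_2\setminus H$ from $K$), so by minimality of $K$ it is weakly $f$-degenerate: there is a legal sequence $\sigma_1$ removing all of $V(G_1-P_1)$ starting from $(G_1-P_1, f_{K_1})$. For the other side, set $P_2$ to be the restriction of $P$ to $G_2$ augmented with $x$ and $y$ (adding $x,y$ if they are not already in $P$); since $|V(G_2)\cap V(P)|\le \tfrac12 v(P)$ and $v(P)\le 4$, one checks $v(P_2)\le 4$, $V(P_2)$ induces a path containing $H$ in $G_2$ (using that $H$ is a chord, hence $x,y$ are nonadjacent-in-$P$ or consecutive appropriately), and $P_2$ is acceptable — here the internal vertices of $P_2$ are again old vertices of $G$ whose girths in $G_2$ we control as above. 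Then $(G_2,P_2,A,B,f)$ is a canvas; it is unexceptional because each exception forces a particular vertex of $B$ or $A$ adjacent to endpoints of the path, and the endpoints of $P_2$ are $x$ and $y$, which lie in $V(\delta G)$ while $A,B\subseteq V(\delta G)\setminus V(P)$ — one must verify no such $A$- or $B$-vertex can be positioned to create exception \ref{ex:B}, \ref{ex:A}, or \ref{ex:AB}, which follows since such a vertex would force a short cycle or a chord contradicting Lemmas~\ref{lem:triangles} and the structure of $H$. By minimality, $(G_2,P_2,A,B,f)$ is weakly $f$-degenerate via a legal sequence $\sigma_2$.

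Finally I would recombine. Starting from $(G-P, f_K)$, first run $\sigma_2$: this is legal because $(G_2,P_2,A,B,f)$ is a canvas with $V(G_2-P_2)\subseteq V(G-P)$, and the function bookkeeping matches (the drop on each surviving vertex is exactly $|N(\cdot)\cap V(P_2)|$, and the extra colours ``saved'' on $x,y$ by not yet deleting them are accounted for just as in Lemma~\ref{lem:triangles}). After $\sigma_2$ we are left with $(G_1-P_1, f_{K_1})$ — crucially using that $x,y$ are precisely the overlap $G_1\cap G_2$, so deleting $V(G_2)\setminus\{x,y\}$ leaves $G_1$ with $P_1$ removed and the function reduced correctly — and then $\sigma_1$ finishes the job legally. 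This yields a legal sequence removing all of $V(G-P)$, contradicting that $K$ is a counterexample. The main obstacle is the acceptability check for $P_1$ and $P_2$: ensuring that augmenting the restricted path with $x$ and/or $y$ keeps $v(P_i)\le 4$ and that the internal vertices satisfy clause (A3) in $G_i$, which requires carefully tracking how girths of outer-walk vertices change when passing to $G_1$ or $G_2$ and invoking Lemma~\ref{lem:triangles} to rule out the problematic cases.
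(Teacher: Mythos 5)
Your skeleton captures only the easy half of this lemma. When neither endpoint of the chord $uv$ is an internal vertex of $P$, the generic split-and-recombine does work (there $P_1=P$, the $G_2$-side canvas is $(G_2,uv,A,B,f)$, and the two sequences concatenate), and this is exactly how the paper begins. But the heart of the lemma is the case where a chord endpoint $u$ is an \emph{internal} vertex of $P$, and there your plan has a genuine gap: when you append the other endpoint $v$ to the restricted path, the resulting canvases $(G_1,v+P_1,A,B,f)$ and $(G_2,v+P_2,A,B,f)$ really can be exceptional, and your reason for ruling this out --- that $A,B\subseteq V(\delta G)\setminus V(P)$ --- is not valid, because the exceptions are defined relative to the \emph{new} path, not $P$: a vertex of $B$ adjacent to $v$, $u$ and the preceding $P$-vertex (type~\ref{ex:B}), or a vertex of $A$ adjacent to $v$ and the far endpoint of $v+P_1$ (types~\ref{ex:A},~\ref{ex:AB}), is perfectly compatible with everything proved so far and contradicts neither Lemma~\ref{lem:triangles} nor ``the structure of $H$''. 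The paper's proof spends most of its length on precisely these configurations: it first chooses the 1-chord to minimize $v(G_2)$, so that a type~\ref{ex:B} exception on the $G_2$ side would produce a smaller 1-chord (a contradiction you have no access to, since you never fix such a minimal choice), and when $(G_1,v+P_1,\dots)$ is exceptional, or when $v+P_1$ fails to be induced, it uses Lemmas~\ref{lem:triangles} and~\ref{lem:5cycles} to pin the whole graph down to an explicit small configuration and then kills it with Lemma~\ref{lem:nolowdegs} (a vertex with $\deg(v)=f(v)=3$). Without the minimal-chord choice and the degree-count argument, the proof does not go through.

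There are also two bookkeeping problems. First, if you augment \emph{both} $P_1$ and $P_2$ with a chord endpoint $y\notin V(P)$, then neither $\sigma_1$ nor $\sigma_2$ ever deletes $y$, so the concatenation does not remove all of $V(G-P)$; moreover, after running $\sigma_2$ first you are not left with $(G_1-P_1,f_{K_1})$, since $\sigma_2$ deletes nothing of $G_1$. The correct scheme (as in the paper, and as in Lemma~\ref{lem:triangles}) augments only one side's path and runs first the side whose sequence actually deletes the extra vertex, so that the residual function matches the augmented-path canvas on the other side. Second, the requirement that $\{v\}\cup V(P_2)$ \emph{induce} a path can genuinely fail ($v$ adjacent to both vertices of $P_2$); this is not excluded by $uv$ being a chord and is a separate case in the paper, where it forces $v(\delta G_2)=3$ and feeds into the small-configuration analysis. (Your worry about girth is backwards --- a vertex's girth can only increase when passing to a subgraph, so acceptability of the restricted paths is automatic --- but that confusion is harmless.)
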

\begin{proof}
Suppose not, and let $uv$ be a 1-chord that separates $G$ into $G_1$ and $G_2$. Note that $\delta G \neq P$, since if $P$ is a cycle, it is an induced cycle by the definition of acceptable (Definition~\ref{def:accpath}). Thus we may assume that $P$ is an acceptable path. Write $P_1 = P \cap G_1$ and $P_2 = P \cap G_2$.   
Since $K$ is unexceptional, by Observation \ref{obs:subcanvas} $K_1=(G_1, P_1, A, B, f)$ is an unexceptional canvas.

If neither $u$ nor $v$ is an internal vertex of $P$, then since by convention $v(P_1) \geq v(P_2)$, we have that $P_1 = P$. By the minimality of $K$, $K_1$ is weakly $f$-degenerate. Turning to $G_2$, note that $K_2=(G_2, uv, A, B, f)$ is a canvas (this follows easily from the fact that $uv$ is acceptable and $K$ is a canvas), and that it is unexceptional since $|\{u,v\}| = 2$. By the minimality of $K$, $K_2$ is weakly $f$-degenerate.
But then $K$ is weakly $f$-degenerate\textemdash a contradiction.
Thus, $G$ does not contain a 1-chord where neither endpoint is an internal vertex of $P$.

Hence, we may assume without loss of generality that $u$ is an internal vertex of $P$, and therefore that $v(P) \geq 3$. Subject to the convention that $v(P_1) \geq v(P_2)$, we may assume that the 1-chord $uv$ is chosen to minimize $v(G_2)$. This ensures that $\delta G_2$ does not have a 1-chord. Since $u$ is an internal vertex of $P$, the above conventions give that $v(P_1)\in\{2,3\}$ and $v(P_2) = 2$.

Suppose that $\{v\}\cup V(P_2)$ induces a path in $G_2$. The same argument as above applies and reaches a contradiction unless the canvas $K_2'' = (G_2, v+P_2, A, B, f)$ is exceptional. But then it must be that $(G_2, v+P_2, A, B, f)$ is an exceptional canvas of type~\ref{ex:B}, since $|\{v\}\cup V(P_2)|=3$. But then $\delta G_2$ has a 1-chord, contradicting our initial choice of $1$-chord $uv$. 
Thus, we may assume that $\{v\}\cup V(P_2)$ does not induce a path in $G_2$, which means that $v$ is adjacent to both vertices of $P_2$. 
Let $w \ne u$ be the second vertex of $P_2$. 
By our choice of 1-chord $uv$, we have that $v(\delta G_2) = 3$. If not, then $vw$ would be a 1-chord of $G$ with a smaller $v(G_2)$.
Given this structure, we break into four cases pictured in Figure~\ref{fig:1chord}.

\textbf{Case 1.} Suppose that $v$ is adjacent to at least two vertices in $P_1$. In particular, this implies that $v+P_1$ is not an induced path. Since $v$ is also adjacent to $u$ and $w$ the girth of $u$ is three, and then the fact that $P$ is an acceptable path gives $v(P) = 3$. 
Let $P = yuw$. Since $K$ is unexceptional, $v \not \in B$. 
Since we already showed that $\delta G$ does not contain a 1-chord where neither endpoint is an internal vertex of $P$, any remaining 1-chords include $u$.  
Thus $\delta G = yuwvy$. 
This is depicted in Figure~\ref{fig:1chord_case1}. By Lemma~\ref{lem:triangles}, we have that the triangles $yuvy$ and $uwvu$ do not have vertices in their interiors, and thus $V(G) = V(\delta G)$. Since $v \notin B$ and $g(v) = 3$, it follows that $f(v) \geq 3$. Since $\deg(v)=3$ this contradicts Lemma~\ref{lem:nolowdegs}.

Thus, in the remaining cases $u$ is the only neighbour of $v$ in $P_1$, and so $v + P_1$ is an induced path in $G_1$. Let $P_1'=v+P_1$.

If $K_1'=(G_1,P_1',A,B,f)$ is an unexceptional canvas, then by the minimality of $K$, $K_1'$ is weakly $f$-degenerate. But then $K$ is weakly $f$-degenerate, a contradiction. Thus, either $K_1'$ is not a canvas, or it is an exceptional canvas. First we show $K_1'$ is a canvas. Note by Observation \ref{obs:subcanvas}, it suffices to show $P_1'$ is acceptable in $G_1$.  By the fact that $P$ is acceptable in $G$, either $P_1'$ has three vertices, or $P_1'$ has four vertices.   If $P_1'$ has four vertices, then since the internal vertices in $P_1'$ are the same as those in $P$ and $P$ is acceptable in $G$, it follows that $P_1'$ is acceptable in $G_1$. Thus, $K_1'$ is a canvas, and so it is exceptional. We break into cases depending on the length of $P_1'$. Recall that in the remaining cases, $u$ is the only neighbour of $v$ in $P_1$.

\textbf{Case 2.} $v(P_1')=3$. Let $P = yuw$. Then $K_1'$ is an exceptional canvas of type~\ref{ex:B}, and so there exists a vertex $v' \in V(\delta G)$ with $v' \in B$ and $N(v') \supseteq \{v, y, u\}$. 
Since we already argued that $\delta G$ does not contain a 1-chord whose endpoints do not include $u$, we have $\delta G = yuwvv'y$. This is depicted in Figure~\ref{fig:1chord_case2}.
By Lemma~\ref{lem:triangles}, the triangles $yuv'y$, $v'uvv'$, and $vuwv$ do not have vertices in their interior. 
Thus, $V(G) = V(\delta G) = \{u,v,v',w,y\}$. Since $B$ is an independent set in $G$ and $g(v) = 3$, we have that $f(v) = 3$. Since $\deg(v) =3$, this contradicts Lemma \ref{lem:nolowdegs}.

\textbf{Case 3.} $v(P_1')=4$. Let $P = xyuw$ and note that $g(y) \geq 5$ since $P$ is acceptable and $g(u)=3$. 
Then $K_1'$ is an exceptional canvas of type~\ref{ex:A} (\textbf{Case 3a}) or~\ref{ex:AB} (\textbf{Case 3b}); see Figures~\ref{fig:1chord_case3a} and~\ref{fig:1chord_case3b}.
In either case, by our previous analysis $\delta G$ does not contain a 1-chord whose endpoints do not include $u$ or $y$. Since $K$ is unexceptional and $B$ is an independent set, in either case we have that $f(v) = 3$. By Lemmas \ref{lem:triangles} and \ref{lem:5cycles}, $V(G) = V(\delta G_1) \cup \{w\}$, and thus $\deg(v)=3$ which contradicts Lemma~\ref{lem:nolowdegs}.
\end{proof}

\begin{figure}[ht]\centering
    \subcaptionbox{Case 1: $v$ has at least two neighbours in $P_1$.\label{fig:1chord_case1}}[0.35\textwidth]{\includestandalone[page=1]{figures/fig_1chord}}%
    \hspace*{0.1\textwidth}
    \subcaptionbox{Case 2: $v$ has one neighbour in $P_1$ and $v(P_1')=3$.\label{fig:1chord_case2}}[0.35\textwidth]{\includestandalone[page=2]{figures/fig_1chord}}%
    \\\vspace*{0.05\textwidth}
    \subcaptionbox{Case 3a: $v$ has one neighbour in $P_1$ and $v(P_1')=4$.\label{fig:1chord_case3a}}[0.35\textwidth]{\includestandalone[page=3]{figures/fig_1chord}}%
    \hspace*{0.1\textwidth}%
    \subcaptionbox{Case 3b: $v$ has one neighbour in $P_1$ and $v(P_1')=4$.\label{fig:1chord_case3b}}[0.35\textwidth]{\includestandalone[page=4]{figures/fig_1chord}}%
    \captionsetup{width=0.9\textwidth}
    \caption{Cases for Lemma~\ref{lem:1chord} in which we argue that the 1-chord $uv$ (the thick edge) cannot be in a minimal counterexample.\label{fig:1chord}}
\end{figure}

\begin{lemma}\label{lem:Pispath}
    $P$ is an induced path, and $V(G) \setminus V(\delta G)$ is nonempty.
\end{lemma}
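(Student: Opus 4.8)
The plan is to prove the two assertions in order: first that $P$ is an induced path, then that $G$ has an interior vertex. For the first part, we already know from Lemma~\ref{lem:2-conn} that $\delta G$ is a cycle, and from Lemma~\ref{lem:1chord} that $\delta G$ has no $1$-chord. If $P$ is a cycle, then by~\ref{canv:acceptable} and the definition of acceptable (Definition~\ref{def:accpath}), $P$ contains a spanning acceptable path and $P$ is an induced cycle; but then $P\subseteq\delta G$ forces $P=\delta G$ (as $\delta G$ is a cycle), and since $G$ is $2$-connected with no $1$-chord one checks $G=\delta G$, so $G$ is a cycle of length at most $4$. I would dispatch this small case directly: a short induced cycle with the prescribed function values is weakly $f$-degenerate (remove the vertices of $P$, then the one or zero remaining vertices have large enough $f_K$ value by Observation~\ref{obs:canvasfunction} together with Lemma~\ref{lem:nolowdegs}), contradicting that $K$ is a counterexample. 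So $P$ is a path. That $P$ is \emph{induced} is exactly condition~(A2) in Definition~\ref{def:accpath}, which holds since $P$ is acceptable — so really the content of the first clause is just ruling out the cycle case.

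For the second part, suppose toward a contradiction that $V(G)=V(\delta G)$, i.e.\ $G$ is outerplanar and (being $2$-connected) its outer boundary is a Hamilton cycle $\delta G$. Since $\delta G$ has no $1$-chord by Lemma~\ref{lem:1chord}, and $G$ is a $2$-connected plane graph with $V(G)=V(\delta G)$, the only edges of $G$ are those of the cycle $\delta G$ itself: any additional edge would be a chord of $\delta G$, and a shortest such chord (with respect to the number of vertices cut off) together with the boundary arc it subtends would be either a $1$-chord — contradiction — or it would bound a face that is a cycle of length $\ge 3$ containing a $1$-chord-free region, and repeatedly taking a shortest chord yields a $1$-chord. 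Hence $G$ is precisely the cycle $\delta G$.

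Now I finish by showing a cycle cannot be a minimum counterexample. Let $n=v(G)$ and $k=v(P)$; then $G-P$ is a path (or empty) on $n-k$ vertices. Every vertex $v$ of $G$ has $\deg(v)=2$. By Lemma~\ref{lem:nolowdegs}, every $v\in V(G)\setminus V(P)$ has $f(v)\le\deg(v)-1=1$; combined with the canvas conditions~\ref{canv:fA}--\ref{canv:fint} (which on the cycle force $f(v)\ge 2$ whenever $v\notin A$, since every vertex of $\delta G$ has $g(v)\in\{n\}$, hence $g(v)\ge 3$ and in fact $g(v)\ge 5$ once $n\ge 5$), this forces $V(\delta G)\setminus V(P)\subseteq A$, so every non-$P$ vertex has $f(v)=1$. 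But $A$ is an independent set by~\ref{canv:A}, so the at most $\lfloor(n-k)/\cdots\rfloor$ — more simply, two consecutive vertices of the path $G-P$ both lie in $A$ and are adjacent, which is impossible unless $G-P$ has at most one vertex. If $G-P$ is empty we are in the case $P=\delta G$ handled above (or $k=n$ with $P$ a path spanning a cycle, again trivially degenerate: the endpoints of $P$ are adjacent in $G$, so after preremoving $P$ nothing remains). If $G-P$ has exactly one vertex $v$, then $v\in A$, $f(v)=1$, and $v$ has two neighbours in $P$, so $f_K(v)=f(v)-|N(v)\cap V(P)|=1-2=-1<0$; since $K$ is unexceptional this contradicts Observation~\ref{obs:canvasfunction}. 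In every case we reach a contradiction, so $V(G)\setminus V(\delta G)\ne\emptyset$.

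The main obstacle I anticipate is the bookkeeping in the last paragraph: carefully matching the small-case subgraph structure (cycle lengths $3,4,5,\dots$, the location of $P$ on the cycle, and whether any leftover vertex lands in $A$ or $B$) against the precise canvas function bounds~\ref{canv:fA}--\ref{canv:fint}, Lemma~\ref{lem:nolowdegs}, and the exception definitions~\ref{ex:B}--\ref{ex:AB}. Everything else — the cycle case of~$P$, and deducing $G=\delta G$ when $V(G)=V(\delta G)$ — follows routinely from $2$-connectivity (Lemma~\ref{lem:2-conn}) and the absence of $1$-chords (Lemma~\ref{lem:1chord}).
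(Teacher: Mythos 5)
Your overall structure is sound, but the justification you give in the cycle case of the first assertion is not valid as stated. You claim that ``$G$ is $2$-connected with no $1$-chord'' forces $G=\delta G$; this implication is false in general, since an interior vertex joined to two nonadjacent boundary vertices produces $2$-chords, not $1$-chords, and is perfectly compatible with Lemmas~\ref{lem:2-conn} and~\ref{lem:1chord}. What actually closes this case (and is exactly what the paper does) is Lemma~\ref{lem:triangles}: if $P$ is a cycle then, since $\delta G$ is a cycle, $P=\delta G$ is a cycle of length at most four, so by Lemma~\ref{lem:triangles} its interior contains no vertex, hence $G-P=\emptyset$ and $K$ is trivially weakly $f$-degenerate. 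The same unsupported ``$2$-connected $+$ no $1$-chord $\Rightarrow G=\delta G$'' shortcut appears again (harmlessly, with a correct conclusion) in your second part, where the right reason is simply that with $V(G)=V(\delta G)$ every edge off the cycle would itself be a $1$-chord, contradicting Lemma~\ref{lem:1chord}. So the gap is a misattributed justification rather than a missing idea, and it is repaired by one citation.

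For the second assertion your route is correct but genuinely different from the paper's. You argue by contradiction from the structural lemmas: on the cycle every vertex has degree $2$, so Lemma~\ref{lem:nolowdegs} forces $f(v)\le 1$ off $P$, the canvas bounds then force every such vertex into $A$ (or give an immediate contradiction when its girth is too small), independence of $A$ in~\ref{canv:A} kills the case of two or more leftover vertices, and the single-leftover-vertex case contradicts Observation~\ref{obs:canvasfunction} (equivalently, it is exactly exception~\ref{ex:A}, which cannot occur in an unexceptional minimum counterexample). The paper instead proceeds constructively: it shows the arc $P'=\delta G-P$ can be removed by a legal sequence of $\del$ operations (first the endpoints lying in $A$, then the rest along $P'$), so $K$ is weakly $f$-degenerate, a contradiction. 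Your version is shorter on sequencing bookkeeping but leans on Lemma~\ref{lem:nolowdegs} and the exception analysis; the paper's version needs neither and exhibits the degeneracy ordering explicitly. Either is acceptable here.
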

\begin{proof}
By the definition of acceptable (Definition~\ref{def:accpath}), either $P$ is an induced path, or $P$ is a cycle. Since $v(P) \leq 4$, if $P$ is a cycle, then by Lemma \ref{lem:triangles}, $G-P$ is empty and so $K$ is trivially weakly $f$-degenerate, a contradiction.
Thus, $P$ is an induced path. Suppose next that $V(G) \setminus V(\delta G)$ is empty. 
Let $P' = \delta G - P$ and for $v\in V(P')$ let $f_K(v)=f(v)-|N(v)\cap P|$ be the function in Definition~\ref{def:degencanvas}. By Lemma \ref{lem:1chord}, $V(P')$ induces a path in $G$, and only the endpoints of $P'$ are adjacent to vertices in $P$.

If $v(P') = 1$, say $P' = u$, then $u\notin A$ since $K$ is unexceptional, and so $f_K(u) \geq 0$. Then $G-P$ is weakly $f_K$-degenerate, contradicting that $K$ is not weakly $f$-degenerate. 
Thus, we may assume that $v(P') \geq 2$. 
Let $u$ and $w$ be the endpoints of $P'$. Since $\delta G$ is chordless, $f_K(v) = f(v) \geq 1$ for all $v \in V(P') \setminus \{u,w\}$. If $f_K(u) = f_K(w) = 0$, then both $u$ and $w$ are in $A$ and so $u$ and $w$ are non-adjacent.  

By first removing the independent set $\{u,w\} \cap A$ (in any order) via $\del$ and then removing the remaining vertices of $P'$ via $\del$ in the order in which they appear along $P'$, we obtain that $P'=G-P$ is weakly $f_K$-degenerate. 
This contradicts the assumption that $K$ is not weakly $f$-degenerate. 
\end{proof}

\begin{lemma}\label{lem:Bmax}
    There does not exist a path $xyz \subseteq \delta G-P$ with $g(x) = g(y) = g(z) = 3$ and $\{x,y,z\} \cap B = \emptyset$.
\end{lemma}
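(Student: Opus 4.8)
The plan is to argue by contradiction using the minimality of the counterexample $K=(G,P,A,B,f)$: if such a path $xyz$ existed, I would simply move $y$ into the set $B$, lower $f(y)$ accordingly, and obtain a strictly smaller unexceptional canvas whose weak degeneracy pulls back to $K$.

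First I would record what is known about $y$. Since $g(y)=3$, vertices of $A$ have girth at least $5$, so $y\notin A$; by hypothesis $y\notin B$ and $y\notin V(P)$; with $y\in V(\delta G)$, condition~\ref{canv:fC3} (and Observation~\ref{obs:feq}) gives $f(y)=3$. The crucial structural point is that by Lemmas~\ref{lem:2-conn} and~\ref{lem:1chord} the graph $\delta G$ is a chordless cycle, and by Lemma~\ref{lem:Pispath} $P$ is an induced sub-path of it, so $\delta G-P$ is a path; as $xyz$ is a sub-path of $\delta G - P$ with $y$ internal, $x$ and $z$ are exactly the two neighbours of $y$ on the cycle $\delta G$, and every other neighbour of $y$ lies in $V(G)\setminus V(\delta G)$. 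In particular $y$ has \emph{no} neighbour in $V(P)$, and the only neighbours of $y$ that could possibly belong to $B$ are $x$ and $z$, which do not by the hypothesis $\{x,y,z\}\cap B=\emptyset$.

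Next I would set $K'=(G,P,A,B',f')$ with $B'=B\cup\{y\}$, $f'(y)=2$, and $f'=f$ elsewhere, and verify that $K'$ is an unexceptional canvas. Conditions~\ref{canv:acceptable},~\ref{canv:A},~\ref{canv:fA},~\ref{canv:fint} are untouched; \ref{canv:B} holds since $y\in V(\delta G)\setminus V(P)$ has girth $3$ and $B'$ is independent by the previous paragraph; \ref{canv:fB} holds since $f'(y)=2$; and~\ref{canv:fCn3}--\ref{canv:fC3} for $K'$ follow from those for $K$ because the relevant vertex set only loses $y$ and $f'$ agrees with $f$ on what remains. For unexceptionality: exceptions of types~\ref{ex:B} and~\ref{ex:AB} each require a vertex of $B'$ adjacent to a vertex of $P$, and $y$ has no neighbour in $V(P)$, so $y$ cannot be such a witness; any witness in $B\subseteq B'$ would already exhibit $K$ as exceptional. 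Type~\ref{ex:A} does not involve $B$, so if $K'$ were of type~\ref{ex:A} then so would $K$. Hence $K'$ is unexceptional.

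Finally, $v(G')=v(G)$ and $G'-P'=G-P$ while $\sum_v f'(v)<\sum_v f(v)$, so $K'$ is strictly smaller than $K$ in the relevant order; by minimality $K'$ is weakly $f'$-degenerate, i.e. $G-P$ is weakly $f'_{K'}$-degenerate. Since $f'_{K'}\le f_K$ pointwise (they differ only at $y$, where $f'_{K'}(y)=f_K(y)-1$), Lemma~\ref{lem:monotone} yields that $G-P$ is weakly $f_K$-degenerate, i.e. $K$ is weakly $f$-degenerate, contradicting that $K$ is a counterexample. I expect the only step needing real care to be the unexceptionality check, and it rests entirely on the observation that $y$, being an interior vertex of the chordless path $\delta G - P$, has no neighbour on $P$; the rest is routine bookkeeping in the style of Lemma~\ref{lem:triangles}.
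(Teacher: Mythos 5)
Your proposal is correct and follows essentially the same route as the paper's proof: lower $f(y)$ to $2$, add $y$ to $B$ (independence and unexceptionality both coming from the absence of 1-chords in $\delta G$, i.e.\ $y$ has no neighbour in $V(P)$ and no $\delta G$-neighbour besides $x,z$), invoke minimality with respect to $\sum_v f(v)$, and pull the conclusion back via Lemma~\ref{lem:monotone}. Your write-up is just a more detailed verification of the same steps.
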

\begin{proof}
    Suppose that there does exist such a path. Then by \ref{canv:fC3} we have $f(y)\ge 3$. Let $f'(y) := 2$, and $f'(v) := f(v)$ for all $v \in V(G) \setminus \{y\}$. 
    Since $B$ is independent and $\delta G$ has no 1-chord (Lemma~\ref{lem:1chord}), $B\cup\{y\}$ is independent and thus $K'=(G, P, A, B \cup \{y\}, f')$ is a canvas. 
    By Lemma \ref{lem:1chord}, there is no edge from $y$ to $V(P)$, and so $K'$ is not exceptional. As $K$ was chosen to minimize $\sum_{v \in V(G)} f(v)$, it follows that $K'$ is not a counterexample to Theorem \ref{thm:inductive} and thus $K'$ is weakly $f'$-degenerate. But then by Lemma~\ref{lem:monotone} the canvas $K$ is weakly $f$-degenerate, a contradiction.
\end{proof} 
In particular, Lemma \ref{lem:Bmax} implies that $\delta G$ does not contain three consecutive vertices $v$ with $f(v) = 4$. Moreover, a nearly identical argument shows the following.
We omit the proof.

\begin{lemma}\label{lem:Amax}
    There does not exist a path $xyz \subseteq \delta G-P$ with $g(v) \geq 5$ for all $v \in \{x,y,z\}$ and $\{x,y,z\} \cap A = \emptyset$.
\end{lemma}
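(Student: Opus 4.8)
The plan is to mimic the proof of Lemma~\ref{lem:Bmax} essentially verbatim, with $A$ playing the role of $B$, vertices of girth at least $5$ playing the role of vertices of girth $3$, and the target function value $1$ (the value forced by \ref{canv:fA}) playing the role of $2$. So suppose for contradiction that there is a path $xyz\subseteq \delta G-P$ with $g(v)\ge 5$ for all $v\in\{x,y,z\}$ and $\{x,y,z\}\cap A=\emptyset$. By Observation~\ref{obs:feq} and \ref{canv:fCn3} we have $f(y)=2$, but we want to drop it to $1$, which is the value prescribed for vertices of $A$.

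First I would set $f'(y):=1$ and $f'(v):=f(v)$ for all other $v\in V(G)$, and consider the tuple $K'=(G,P,A\cup\{y\},B,f')$. The key point is that $K'$ is again a canvas: since $y\in V(\delta G)\setminus V(P)$ has girth at least $5$, it is eligible for membership in $A$ by \ref{canv:A}, and $A\cup\{y\}$ is independent because $\delta G$ has no $1$-chord by Lemma~\ref{lem:1chord} (so $y$'s only $\delta G$-neighbours are $x$ and $z$, which are not in $A$, and $y$ has no other neighbours on $\delta G$), and because $A\subseteq V(\delta G)$ so all of $A$ lies on the cycle $\delta G$. The condition \ref{canv:fA} holds for $y$ since $f'(y)=1$, and all other conditions in \ref{canv:f} are inherited from $K$ since we only decreased the function at $y$ and $y$ now satisfies the more restrictive \ref{canv:fA}. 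By Lemma~\ref{lem:1chord} there is no edge from $y$ to $V(P)$, so $K'$ is not exceptional (none of \ref{ex:B}, \ref{ex:A}, \ref{ex:AB} can involve $y$, and the rest of the structure is unchanged and $K$ was unexceptional).

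Next, since $\sum_{v\in V(G)}f'(v)=\sum_{v\in V(G)}f(v)-1<\sum_{v\in V(G)}f(v)$, while $v(G)$ and $v(G-P)$ are unchanged, the canvas $K'$ is strictly smaller than the minimum counterexample $K$ in the lexicographic order, so $K'$ is not a counterexample and hence $K'$ is weakly $f'$-degenerate. Since $f\ge f'$ pointwise, Lemma~\ref{lem:monotone} gives that $K$ is weakly $f$-degenerate, contradicting the choice of $K$.

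I do not anticipate a genuine obstacle here: the only things to be careful about are (i) verifying that $A\cup\{y\}$ remains independent, which is exactly where Lemma~\ref{lem:1chord} is used, and (ii) checking that demoting $y$ to an $A$-vertex does not create an exceptional canvas, which again follows from Lemma~\ref{lem:1chord} since exceptions \ref{ex:A} and \ref{ex:AB} require an $A$-vertex adjacent to an endpoint of $P$ and there are no edges from $y$ to $P$. Everything else is bookkeeping identical to Lemma~\ref{lem:Bmax}, which is why the paper is content to omit it.
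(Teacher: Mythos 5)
Your proposal is correct and is exactly the argument the paper intends when it says the proof is "nearly identical" to that of Lemma~\ref{lem:Bmax}: demote $y$ to $A$ with $f'(y)=1$, use Lemma~\ref{lem:1chord} (and $A\subseteq V(\delta G)$) for independence and unexceptionality, then apply minimality of $\sum_v f(v)$ together with Lemma~\ref{lem:monotone}. No gaps.
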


Though we cannot entirely rule out the existence of 2-chords, the lemma below establishes some of their neighbouring structure. 
We show examples in Figure~\ref{fig:2chords}.

\begin{lemma}\label{lem:2chords}
If $xyz$ is a 2-chord of $\delta G$ that separates $G$ into $G_1$ and $G_2$, then either one of $x$ and $z$ is an internal vertex of $P$, or $(G_2, xyz, A, B, f)$ is an exceptional canvas of type~\ref{ex:B}.
In the latter case, note that there is a vertex $v\in B$ adjacent to each vertex in the 2-chord, meaning that $g(v)=3$ and $f(v)=2$.
\end{lemma}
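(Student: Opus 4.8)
\medskip

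The plan is to argue by contradiction, assuming $xyz$ is a 2-chord separating $G$ into $G_1$ and $G_2$ (with $|V(G_1)\cap V(P)|\ge |V(G_2)\cap V(P)|$ by convention), that neither $x$ nor $z$ is an internal vertex of $P$, and that $(G_2,xyz,A,B,f)$ is \emph{not} an exceptional canvas of type \ref{ex:B}. Since $P$ is an induced path (Lemma \ref{lem:Pispath}) and has at most $4$ vertices, the hypothesis that no internal vertex of $P$ lies in $\{x,z\}$ forces $P\subseteq G_1$: indeed if $P$ met $G_2\setminus\{x,z\}$ in some vertex it would have to cross the 2-chord, but its only possible meeting points with $V(\delta G_2)$ are $x$ and $z$, which would then be endpoints of $P$, and by the convention on $|V(G_i)\cap V(P)|$ this cannot put two precoloured vertices in $G_2$ unless $P\subseteq G_1$. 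So I would first nail down that $P\subseteq G_1$ and hence $P\cap G_2=\emptyset$ (or at most the endpoints $x,z$ sitting on $\delta G_2$ but not as path-interior vertices).

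\medskip

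Next I would run the standard splitting argument used throughout Section \ref{sec:properties}: apply minimality to $K_1=(G_1,P_1,A,B,f)$ where $P_1$ is $P$ together with whatever of $xyz$ is needed — since $P\subseteq G_1$ and $xyz$ is acceptable (it has $3$ vertices and its middle vertex $y$ has some girth, so (A3) is vacuous for a 3-vertex path), $P_1=P$ works and $K_1$ is an unexceptional canvas by Observation \ref{obs:subcanvas}, hence weakly $f$-degenerate by minimality. For $G_2$, consider $K_2=(G_2,xyz,A,B,f)$. First check $K_2$ is a canvas: $xyz$ is an acceptable path in $G_2$, \ref{canv:A}--\ref{canv:fint} are inherited from $K$ via Observation \ref{obs:subcanvas}, and $xyz\subseteq \delta G_2$ since a separating path lies on the outer boundary of each side. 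If $K_2$ is unexceptional, minimality gives it is weakly $f$-degenerate, and then concatenating the two sequences shows $K$ is weakly $f$-degenerate (exactly as in Lemma \ref{lem:triangles}), a contradiction. So $K_2$ must be an exceptional canvas, and since $v(xyz)=3$ the only possible type is \ref{ex:B}: there is $v\in B$ adjacent to each of $x,y,z$. This is precisely the conclusion, contradicting our assumption — giving the final sentence about $g(v)=3$ and $f(v)=2$ directly from \ref{canv:B} and \ref{canv:fB}.

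\medskip

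The main obstacle — and the step I would spend the most care on — is the case analysis showing that when neither $x$ nor $z$ is an internal vertex of $P$, we genuinely have $P\subseteq G_1$ and the path $xyz$ (rather than some augmentation $x+$(something) or $xyz+$(something)) is the right acceptable path to feed into minimality on the $G_2$ side. In particular I must rule out that $x$ or $z$ is an \emph{endpoint} of $P$ lying in $G_2$: if, say, $x$ is an endpoint of $P$ and $P$ has its remaining vertices in $G_2\setminus\{z\}$, then $G_2$ contains $v(P)-1$ precoloured vertices plus possibly $z$; here the convention $|V(G_1)\cap V(P)|\ge|V(G_2)\cap V(P)|$ must be invoked carefully, possibly combined with the no-1-chord result (Lemma \ref{lem:1chord}) to prevent $P$ being split evenly, and one may need to observe $v(P)\le 2$ on the $G_2$ side so that the exceptional-canvas check there reduces to type \ref{ex:B} only. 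The rest is routine bookkeeping of the canvas axioms, identical in spirit to the verifications already carried out in Lemmas \ref{lem:triangles}, \ref{lem:5cycles}, and \ref{lem:1chord}.
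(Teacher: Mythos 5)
Your proposal is correct and takes essentially the same route as the paper: split $G$ along the 2-chord, apply minimality to $(G_1,P,A,B,f)$ and to $(G_2,xyz,A,B,f)$, observe that since the path $xyz$ has only three vertices the only possible exception on the $G_2$ side is type~\ref{ex:B}, and conclude by concatenating the two legal sequences as in Lemma~\ref{lem:triangles}. The one point the paper makes explicit that you gloss over is that $\{x,y,z\}$ induces a path (i.e.\ $xz\notin E(G)$), which is needed for acceptability and follows immediately from Lemma~\ref{lem:1chord}; conversely, your careful justification that $P\subseteq G_1$ via the convention $|V(G_1)\cap V(P)|\ge|V(G_2)\cap V(P)|$ is exactly what the paper uses silently.
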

\begin{proof}
    Recall that we assume that $|V(G_1)\cap V(P)|\ge |V(G_2)\cap V(P)|$.
    Suppose for contradiction that the lemma does not hold. By Observation~\ref{obs:subcanvas} and the minimality of $K$, $(G_1, P, A, B, f)$ is weakly $f$-degenerate. 
    Note that $\{x,y,z\}$ induces a path in $G$ since $\delta G$ has no 1-chord by Lemma~\ref{lem:1chord}. Thus, $xyz$ is an acceptable path and so $K_2 = (G_2, xyz, A, B, f)$ is a canvas. We have that $K_2$ is unexceptional by assumption, and so since $K$ is a minimum counterexample, $K_2$ is weakly $f$-degenerate. But then $G$ is weakly $f$-degenerate, a contradiction.
\end{proof}

\begin{figure}[ht]\centering
    \subcaptionbox{The 2-chord contains an internal vertex of $P$.\label{fig:2chorda}}[0.4\textwidth]{\includestandalone[page=1]{figures/fig_2chord}}%
    \hspace*{0.1\textwidth}
    \subcaptionbox{An exceptional canvas of type~\ref{ex:B} lies on one side of the 2-chord.\label{fig:2chordb}}[0.4\textwidth]{\includestandalone[page=2]{figures/fig_2chord}}%
    \captionsetup{width=0.9\textwidth}
    \caption{2-chords $xyz$ of the type that are not ruled out by Lemma~\ref{lem:2chords}.\label{fig:2chords}}
\end{figure}

A nearly identical proof gives the following result. 
We show examples in Figure~\ref{fig:3chords}.

\begin{lemma}\label{lem:3chords}
    If $xyzw$ is a 3-chord where $g(y) \geq 5$ and neither $x$ nor $w$ is an internal vertex of $P$, then $xyzw$ separates $G$ into $G_1$ and $G_2$ where $(G_2, xyzw, A, B, f)$ is an exceptional canvas of type~\ref{ex:A} or~\ref{ex:AB}. 
    If in addition we have $g(z)\ge 5$, then $(G_2, xyzw, A, B, f)$ is an exceptional canvas of type~\ref{ex:A}.
\end{lemma}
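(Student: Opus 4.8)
The plan is to follow the proof of Lemma~\ref{lem:2chords} essentially verbatim, arguing by contradiction. Suppose the conclusion fails. As neither $x$ nor $w$ is an internal vertex of $P$, the separation convention places $P$ inside $G_1$, so by Observation~\ref{obs:subcanvas} and the minimality of $K$ the canvas $(G_1, P, A, B, f)$ is weakly $f$-degenerate. The crux is to show that $(G_2, xyzw, A, B, f)$ is an \emph{unexceptional} canvas; minimality then makes it weakly $f$-degenerate as well, and concatenating the two certifying sequences of $\del$ and $\delsave$ operations — exactly as in the proof of Lemma~\ref{lem:triangles} — shows that $G$ is weakly $f$-degenerate, a contradiction.

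The only genuinely new ingredient compared with the $2$-chord case is checking that $xyzw$ is an acceptable path in $G$, and in particular that $\{x,y,z,w\}$ induces a path there. Each potential extra edge would place $y$ on a short cycle: $xz\in E(G)$ gives the triangle $xyzx$, $yw\in E(G)$ gives the triangle $yzwy$, and $xw\in E(G)$ gives the $4$-cycle $xyzwx$, each of which contradicts $g(y)\ge 5$; hence none is present. Since $v(xyzw)=4$ and $g(y)\ge 5$, condition~(A3b) of Definition~\ref{def:accpath} holds and $xyzw$ is acceptable; by Definition~\ref{def:seppath} it lies on $\delta G_2$, and the remaining canvas axioms~\ref{canv:A}--\ref{canv:f} are inherited from $K$ exactly as in Lemma~\ref{lem:2chords}. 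So $(G_2,xyzw,A,B,f)$ is a canvas; it cannot be exceptional of type~\ref{ex:B} because that requires a precoloured path on three vertices whereas this one has four, so if it is exceptional at all it is of type~\ref{ex:A} or~\ref{ex:AB}. Under the negation of the statement being proved it is unexceptional, and the argument closes as above.

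For the strengthened conclusion, assume in addition $g(z)\ge 5$ and suppose for contradiction that $(G_2,xyzw,A,B,f)$ is exceptional of type~\ref{ex:AB}. By the definition of~\ref{ex:AB}, up to reversing the names of the path vertices there is a vertex $v_1\in B$ adjacent to the first two vertices of the path, i.e.\ either to both $x$ and $y$, or to both $w$ and $z$. Since $xy,zw\in E(G)$, the first case yields the triangle $xyv_1x$ through $y$, contradicting $g(y)\ge 5$, and the second yields the triangle $zwv_1z$ through $z$, contradicting $g(z)\ge 5$. Hence type~\ref{ex:AB} is impossible, leaving only type~\ref{ex:A}.

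I do not anticipate a serious obstacle: the bulk of the work is the routine verification that $(G_2,xyzw,A,B,f)$ satisfies~\ref{canv:acceptable}--\ref{canv:f} and that the concatenated operation sequence is legal, both entirely parallel to Lemmas~\ref{lem:triangles} and~\ref{lem:2chords}. The one point needing care is the girth bookkeeping that rules out the chords $xz$, $yw$, $xw$ and, in the strengthened case, the triangle forced by a $B$-vertex of an~\ref{ex:AB}-exception.
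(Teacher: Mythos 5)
Your proposal is correct and takes essentially the same route as the paper, which omits this proof with the remark that it is ``nearly identical'' to that of Lemma~\ref{lem:2chords}: you perform exactly that reduction (split along the chord, apply minimality to $(G_1,P,A,B,f)$ and to $(G_2,xyzw,A,B,f)$, concatenate the legal sequences as in Lemma~\ref{lem:triangles}). Your added girth bookkeeping—$xyzw$ induces an acceptable path since $g(y)\ge 5$, type~\ref{ex:B} is excluded because the path has four vertices, and type~\ref{ex:AB} is excluded via the triangle through $y$ or $z$ when $g(z)\ge 5$—supplies precisely the ``nearly identical'' details the paper leaves implicit.
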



\begin{figure}[ht]\centering
    \subcaptionbox{A 3-chord yielding an exceptional canvas of type~\ref{ex:A}.}[0.4\textwidth]{\includestandalone[page=1]{figures/fig_3chord}}%
    \hspace*{0.1\textwidth}
    \subcaptionbox{A 3-chord yielding an exceptional canvas of type~\ref{ex:AB}.}[0.4\textwidth]{\includestandalone[page=2]{figures/fig_3chord}}%
    \captionsetup{width=0.9\textwidth}
    \caption{3-chords $xyzw$ of the type we cannot rule out in Lemma~\ref{lem:3chords}: $g(y)\ge 5$, neither $x$ nor $w$ is an internal vertex of $P$, and one side of the chord is an exception of type~\ref{ex:A} or~\ref{ex:AB}.\label{fig:3chords}}
\end{figure}

As a consequence of Lemmas~\ref{lem:triangles}, \ref{lem:1chord} and \ref{lem:Pispath}, we have the following fact in a minimum counterexample.
\begin{cor}
       $v(P) \geq 3$.
\end{cor}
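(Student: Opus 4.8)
## Proof Plan for the Corollary $v(P) \geq 3$

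The plan is to argue by contradiction: suppose $K = (G, P, A, B, f)$ is a minimum counterexample with $v(P) \leq 2$. We have already assembled the structural ingredients we need. By Lemma~\ref{lem:Pispath}, $P$ is an induced path (not a cycle), so $v(P) \in \{1, 2\}$, and moreover $V(G) \setminus V(\delta G)$ is nonempty. By Lemma~\ref{lem:2-conn}, $\delta G$ is a cycle, and by Lemma~\ref{lem:1chord}, $\delta G$ has no $1$-chord. The idea is to show that under these conditions we can legally remove all of $V(G) - V(P)$ by weak degeneracy operations, directly contradicting that $K$ is not weakly $f$-degenerate.

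First I would handle the vertices of $\delta G - P$. Exactly as in the proof of Lemma~\ref{lem:Pispath}, since $\delta G$ is a chordless cycle, every vertex $v$ of $\delta G - P$ has $f_K(v) = f(v) - |N(v) \cap V(P)|$ with the reduction only occurring at the (at most two) vertices adjacent to an endpoint of $P$; since $K$ is unexceptional, any such vertex with $f_K$-value $0$ lies in $A$, and $A$ is independent. So one removes the independent set of $f_K$-value-$0$ vertices first via $\del$, then the rest of $\delta G - P$ in path order via $\del$, each step legal because interior-of-$P'$ vertices retain $f(v) \geq 1$. The new obstacle compared to Lemma~\ref{lem:Pispath} is that now $V(G) \setminus V(\delta G)$ is nonempty, so this does not finish the job: after deleting $\delta G - P$ and $P$, there remains the subgraph $G - \delta G$. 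The key observation is that once $\delta G$ is entirely gone, each remaining vertex $v \in V(G) \setminus V(\delta G)$ has lost at most $\deg_G(v)$ from its function value — but more usefully, one should track this more carefully and invoke Lemma~\ref{lem:nolowdegs}, which says no vertex $v \notin V(P)$ has $f(v) \geq \deg(v)$; equivalently every interior vertex is "tight." This is exactly the hard part: naively, deleting the whole boundary could drive interior function values negative.

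The main obstacle, then, is controlling the function on $G - \delta G$ after peeling the boundary. I expect the resolution is to not peel the whole boundary blindly, but rather to recognize that $G - \delta G$ together with a suitable short path on its own outer boundary forms a smaller canvas. Concretely: let $K$ be as above with $v(P) \leq 2$; by Lemma~\ref{lem:Pispath} some interior vertex exists, and by $2$-connectedness one can find a near-triangulation structure. One picks an edge $uv$ of $\delta(G - \delta G)$ (the outer boundary of the graph obtained by deleting the original outer cycle), sets $P^* = uv$, and checks that $(G - \delta G, P^*, \emptyset, \emptyset, f^*)$ — with $f^*$ the function inherited after legally removing all of $\delta G$ via the $\del$/$\delsave$ sequence above — is an unexceptional canvas with a strictly smaller vertex count, hence weakly $f^*$-degenerate by minimality. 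Verifying canvas conditions \ref{canv:acceptable}--\ref{canv:f} for this inner object reduces to checking that the boundary-peeling sequence leaves every inner-boundary vertex with enough budget; this uses \ref{canv:fint} (inner vertices of $G$ start with $f(v) \geq 2$) and the fact that each such vertex loses at most its number of boundary neighbours, combined with Lemma~\ref{lem:5cycles} and Lemma~\ref{lem:triangles} to bound how many boundary neighbours a single inner vertex can have in short faces. Concatenating the boundary-peeling sequence with the sequence from the inner canvas gives a legal sequence removing all of $V(G) - V(P)$, the desired contradiction. If this inner-canvas approach does not directly work because of exceptions, one falls back to a case analysis on $v(\delta G)$ and the adjacency pattern between $\delta G$ and the (nonempty) interior, using Lemmas~\ref{lem:triangles}, \ref{lem:5cycles}, \ref{lem:nolowdegs}, \ref{lem:Bmax}, and \ref{lem:Amax} to rule out each configuration — but I expect the clean statement is simply that the corollary follows immediately from the three cited lemmas with only the short argument above, since the paper bills it as a "consequence" rather than a separate result requiring its own machinery.
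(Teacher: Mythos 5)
Your overall strategy---peel all of $\delta G - P$ and then treat $G-\delta G$ (with a short path $P^*$ on its boundary) as a smaller canvas---has a genuine gap at exactly the point you flag as the main obstacle, and the tools you cite do not close it. An interior vertex of $G$ can have arbitrarily many neighbours on $\delta G$: for example, a girth-$3$ interior vertex with $f(v)=4$ may be joined to five or more consecutive vertices of $\delta G$, each consecutive pair spanning a triangle with empty interior. Lemmas~\ref{lem:triangles} and~\ref{lem:5cycles} only assert that short cycles have empty interiors; they do not bound the number of boundary neighbours of an interior vertex, and since $v(P)\le 2$ the path has no internal vertices, so Lemma~\ref{lem:2chords} merely converts 2-chords into type~\ref{ex:B} configurations rather than excluding them. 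Consequently the boundary-peeling sequence itself can already be illegal (a $\del$ operation requires every remaining neighbour to stay nonnegative), and even when it is legal, the inherited function $f^*$ on $G-\delta G$ need not satisfy~\ref{canv:f}, so $(G-\delta G, P^*,\emptyset,\emptyset,f^*)$ need not be a canvas. Your fallback ``case analysis on $v(\delta G)$'' is not carried out, so the argument does not go through as written.

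The paper's proof avoids all of this with a much lighter move that the proposal misses: it does not shrink the graph at all, but grows the path. Since $v(P)\le 2$, the vertex $v$ of $\delta G$ immediately following $P$ has $f_K(v)\ge 0$ (using that $K$ is unexceptional), and $P+v$ has at most three vertices, hence is automatically acceptable; since $\delta G$ has no 1-chord, $(G,P+v,A,B,f)$ is an unexceptional canvas. Because the minimality of $K$ is lexicographic---first $v(G)$, then $v(G-P)$---this canvas with the same $G$ but strictly smaller $v(G-P)$ is weakly $f$-degenerate, and its sequence of operations, prefixed by $\del(v)$ performed from $(G-P,f_K)$, shows that $K$ is weakly $f$-degenerate, a contradiction. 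The secondary minimality criterion $v(G-P)$ is precisely the ingredient your approach does not exploit, and it is what makes the one-vertex path extension work where boundary peeling fails.
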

\begin{proof}
    Suppose for contradiction that $v(P)\le 2$.
    By Lemma \ref{lem:Pispath}, $V(P)$ induces a path in $G$ and so $\delta G$ contains a vertex $v$ immediately following $P$. 
    Since $K$ is not exceptional, either $f(v) \geq 2$, or $f(v) = 1$ and $v$ is adjacent to only one vertex in $P$. 
    In any case, $f_K(v) = f(v) - |N(v)\cap V(P)| \geq 0$.  
    We claim that $K' = (G,P+v, A, B, f)$ is a canvas. Since $K'$ is obtained from $K$ by growing the acceptable path, the only property we need to check is that $P+v$ is indeed acceptable in $G$.  Since $v(P) \leq 2$, it follows that $P+v$ has at most three vertices, and so it is trivially an acceptable path or induces an acceptable cycle. Next, since $P+v$ has at most three vertices and $\delta G$ has no 1-chord by Lemma \ref{lem:1chord}, $K'$ is unexceptional.
    By the minimality of $K$, $K'$ is weakly $f$-degenerate which means that $G-P-v$ is weakly $f'$-degenerate for the function $f':V(G-P-v)\to\mathbb{N}$ with $f'(u)=f(u)-|N(u)\cap V(P+v)|$. 
    But this $f'$ is the function one obtains by calling $\del(G-P,f_K,v)$. 
    This shows that $K$ is weakly $f$-degenerate, a contradiction. 
\end{proof}

\subsection{Removing an arbitrarily long path in \texorpdfstring{$\delta G$}{the boundary of G}}\label{subsec:removeR}

The next step of our proof follows a similar structure to the analogous proof for list colouring found in \cite{PS22}: given a minimum counterexample $K$ we remove a path $R$ near $P$ and argue for contradiction that a smaller canvas $\tilde K$ obtained from $K$ via this operation satisfies the hypotheses of Theorem \ref{thm:inductive}.
Suppose that $\tilde f$ is the function associated with the canvas $\tilde K$.
As vertices $v$ in $\delta G$ already have relatively small values $f(v)$, we aim to define $R$ in such a way that no vertex in $\delta G$ sees $\tilde f(v)$ dip too far below $f(v)$. 
The precise definition of $R$ thus depends on the values of $f$ on vertices in $\delta G$. 
One factor complicates our analysis beyond that in \cite{PS22}.
In the list colouring setup it is tempting to choose colours for vertices in $R$ in a coordinated fashion that is not obviously possible to mimic in the weak degeneracy setup. 
We must therefore carefully simulate something analogous to a coordinated choice of colour along $R$ with the weak degeneracy operations.

As we will show in this section, there are essentially two ways in which this general approach can fail, both of which come down to $\tilde K = (\tilde G, P, \tilde A, \tilde B, \tilde f)$ failing to be a canvas: either $\tilde B$ will fail to be an independent set, or $\tilde A$ will fail to be an independent set.
Fortuitously, we are able to show that these two situations are mutually exclusive. We will handle the former case in Section \ref{sec:tildeB} and the latter in Section \ref{sec:tildeA}, thus completing the proof of Theorem \ref{thm:inductive}.

We start by establishing notation for this part of the proof. 
There is a subtle technicality to handle coming from the way we define a canvas being weakly degenerate. 
We start with the canvas $K=(G,P,A,B,f)$ which (by definition) is weakly $f$-degenerate if and only if the graph $G-P$ is weakly $f_K$-degenerate where $f_K(v)=f(v)-|N(v)\cap P|$ for each $v \in V(G-P)$. 
When we define $R$ and give a way of removing $R$ with weak degeneracy operations, we intend for these operations to be legal when starting from the pair $(G-P,f_K)$. 
Suppose that the resulting function is $f':V(G)\setminus (V(P)\cup V(R))\to \mathbb{N}$. 
Then we define $\tilde f(v)$ by $\tilde f(v)=f'(v)+|N(v)\cap P|$ and show that for suitable sets $\tilde A$ and $\tilde B$, $\tilde K = (G-R, P, \tilde A, \tilde B, \tilde f)$ is an unexceptional canvas.
By minimality, $\tilde K$ is not a counterexample to Theorem~\ref{thm:inductive} and hence $\tilde K$ is weakly $\tilde f$-degenerate. 
But this means that the graph $G-R-P$ is weakly $f'$-degenerate because $f'(v) = \tilde f(v) - |N(v)\cap P|$ by definition. 
This shows, therefore, that $K$ is weakly $f$-degenerate in the sense that starting from $G-P$ and the function $f_K$, by a sequence of operations that starts with the removal of $R$ and then continues with a sequence whose existence follows from the fact that $\tilde K$ is not a counterexample, we can reach an empty graph. 
This is the desired contradiction.

The subtle technicality is the subtraction of $|N(v)\cap V(P)|$ from $f(v)$ in the definition of $f_K$, and the addition of this term back to get the function $\tilde f$ from $f'$. One consequence of this technicality is that the validity of the sequence of operations used to remove $R$ must be established starting with the graph $G-P$ and the function $f_K$. 
We carefully define $f'$ and $\tilde f$ after identifying $R$ and the sequence of operations used to remove it below.

Let $P = u_k u_{k-1} \dots u_1$, and let $P'$ and $P''$ be the two subpaths of $\delta G$ in the unique decomposition $\delta G = PP'P''u_k$, where for $P'' = v_1 \dots v_t$, we have either
\begin{itemize}
    \item $P'$ is empty and $f(v_1) \geq 2$, or
    \item $P'$ contains a single vertex $v_0$ with $v_0 \in A$ (and so $f(v_0) = 1$).
\end{itemize}
In both cases, define $v_{t+1} := u_k$. 
We always have that $P''$ is non-empty. 
In the first case this follows from the fact $P$ is an induced path (by Lemma \ref{lem:Pispath}), and hence $\delta G$ contains a vertex outside $P$. 
In the second case, since $v_0 \in A$, we have that $g(v_0) \geq 5$ and thus $v(P) \geq 4$. As $K$ is unexceptional, $v_0$ has a neighbour outside $u_1$ and $u_k$, and since $\delta G$ is a cycle, the result follows. Moreover, $f(v_1) \geq 2$ in both cases since $A$ is an independent set.

\begin{figure}[ht]
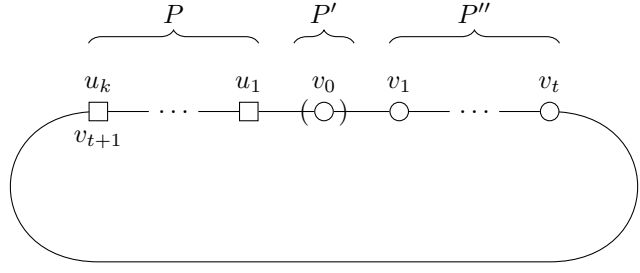
\centering
    \includestandalone{figures/fig_Ps}
    \captionsetup{width=0.9\textwidth}
    \caption{The decomposition of $\delta G$ into $PP'P''u_k$. If $v_0$ exists then $v_0\in A$, and we have $f(v_1)\ge 2$ in any case. The labels $u_k$ and $v_{t+1}$ refer to the same vertex for convenience.\label{fig:Ps}}
\end{figure}

Depending on the structure of $P''$, we define and remove a path $R$ with carefully specified weak degeneracy operations as follows.
Note that we consider the cases mutually exclusive. For brevity, we do not state mutually exclusive conditions and we remove $R$ according to the lexicographically minimal applicable case. 
We will write $j$ for the largest index such that $v_j \in V(R)$.  In some cases we will refer to subpaths $Q_2$ and $Q_3$ of $P''$ defined as follows: $Q_2$ is the maximal subpath of $P''$ immediately following $v_2$ with the property that $f(v) =2$ for all $v \in V(Q_2)$, and $Q_3$ is the maximal subpath of $P''$ immediately following $Q_2$ with the property that $f(v) = 3$ for all $v \in V(Q_3)$. 
\begin{enumerate}[label=(R\arabic*)]
    \item\label{case:vP''<=2} If $v(P'') \le 2$ then let $R = P''$.
    \item\label{case:f33} If $f(v_1) = f(v_2) = 3$, then let $R = v_1v_2$. 
    \item\label{case:f3233}  If $f(v_1) = 3$ and $f(v_2) \leq 2$ and $v(Q_3) \geq 2$, let $R = v_2Q_2$. 
    \item\label{case:f323} If $f(v_1) = 3$ and $f(v_2) \leq 2$ and $v(Q_3) \leq 1$, let $R = v_2Q_2Q_3$.
    \item\label{case:f23}  If $f(v_1) = 2$, $f(v_2) = 3$, and $f(v_3) \leq 2$, let $R = v_1v_2$.
    \item\label{case:f233} If $f(v_1) = 2$ and $f(v_2) = f(v_3) = 3$, let $R = v_1$.
    \item\label{case:f2233}  If $f(v_1) = 2$, $f(v_2) \leq 2$ and $v(Q_3) \geq 2$, let $R = v_1v_2Q_2$.
    \item\label{case:f223}  Otherwise, it must be that $f(v_1) = 2$,  $f(v_2) \leq 2$ and $v(Q_3) \leq 1$. Let $R = v_1v_2Q_2Q_3$.
\end{enumerate}

Given the definition of $R$, in each case we perform the following sequences of weak degeneracy operations to remove $R$. 
We will prove that the sequence used to remove $R$ is legal in $G-P$ when starting with the function $f_K$. 
Because $\delta G$ has no 1-chord (by Lemma \ref{lem:1chord}), the only vertices in $\delta G-P$ that are adjacent to a vertex in $P$ are $v_0$ (only if $P'$ is nonempty), $v_1$ (only if $P'$ is empty) and the last vertex of $P''$. Hence for every internal vertex $v$ of $R$, we have that $f_K(v) = f(v) \geq 1$. 

\begin{enumerate}[label=(D\arabic*)]
    \item\label{handle:vP''<=2} If $v(P'') \le 2$ then $R = P'' \in\{v_1, v_1v_2\}$.
    \begin{itemize} 
        \item If $v(P'')=1$, remove $R$ via $\delsave(v_1, v_0)$.  This is legal since either $v_0$ does not exist or $f_K(v_0) = 0$ and $f_K(v_1) \geq 1$.
        \item If $v(P'')=2$, remove $R$ via $\del(v_2), \delsave(v_1,v_0)$. This is legal because $v_1\notin A$ and hence $f_k(v_1) = f(v_1) \geq 2$, and if $P'$ is non-empty then $f_K(v_0) = 0$.
    \end{itemize}
    \item\label{handle:f33} If $f(v_1) = f(v_2) = 3$, then $R = v_1v_2$. 
    Remove $R$ via $\delsave(v_2,v_3), \delsave(v_1, v_0)$. Recall that by Lemma \ref{lem:Bmax}, if $v_3 \neq u_k$ then $f_K(v_3) < 3$ and hence $\delsave(v_2,v_3)$ is legal. Either $v_0$ does not exist or $f_K(v_0) = 0$, and hence $\delsave(v_1, v_0)$ is also legal. 

    \item\label{handle:f3233}  If $f(v_1) = 3$ and $f(v_2) \leq 2$ and $v(Q_3) \geq 2$ then $R = v_2Q_2$. 
    Remove $R$ via $\del(v_j)$, $\del(v_{j-1})$, $\dotsc$, $\del(v_2)$. 
    This is always legal since $f_K(v) \geq 1$ for all $v \in V(R)$.
    \item\label{handle:f323} If $f(v_1) = 3$ and $f(v_2) \leq 2$ and $v(Q_3) \leq 1$ then $R = v_2Q_2Q_3$.
    Remove $R$ via first performing $\delsave(v_j, v_{j+1})$, and then removing the remaining vertices in $R$ via $\del(v_i)$ in decreasing order of index $i$. 
    By the definition of $Q_3$, we have that if $v_{j+1} \neq u_k$, then $f_K(v_{j+1}) < 3$ and hence $\delsave(v_j, v_{j+1})$ is legal; and if $v_{j+1}=u_k$ then $\delsave(v_{j}, v_{j+1}) = \del(v_j)$. 
    The remaining operations are also legal since $f_K(v) \geq 1$ for all $v \in R$.
    \item\label{handle:f23} If $f(v_1) = 2$, $f(v_2) = 3$, and $f(v_3) \leq 2$, then $R = v_1v_2$.
    Remove $R$ via $\delsave(v_2,v_3)$, $\delsave(v_1,v_0)$. 
    Note that $\delsave(v_2,v_3)$ is legal since $f_K(v_2) > f_K(v_3)$ if $v_3 \neq u_k$, and $\delsave(v_2,v_3) = \del(v_2)$ otherwise. Moreover, $\delsave(v_1,v_0)$ is legal since either $v_0$ does not exist or $f_K(v_0) = 0$ and $f_K(v_1) = f(v_1) = 2$.
    \item\label{handle:f233} If $f(v_1) = 2$ and $f(v_2) = f(v_3) = 3$ then $R = v_1$. Remove $R$ via $\delsave(v_1,v_0)$. This is legal since either $v_0$ does not exist or $f_K(v_0) = 0$.
    \item\label{handle:f2233}  If $f(v_1) = 2$, $f(v_2) \leq 2$ and $v(Q_3) \geq 2$, let $R = v_1v_2Q_2$. Remove $R$ via $\del(v_j)$, $\del(v_{j-1})$, $\dotsc$, $\delsave(v_1, v_0)$.  The delete operations are legal since $f_K(v) \geq 2$ for all $v \in V(R) \setminus \{v_1,v_2\}$, and $f_K(v_2) \geq 1$. The operation $\delsave(v_1, v_0)$ is legal since $f(v_1) = 2$ and either $v_0$ does not exist or $f_K(v_0) = 0$. 
    \item\label{handle:f223}  Otherwise, it must be that $f(v_1) = 2$,  $f(v_2) \leq 2$ and $v(Q_3) \leq 1$. Let $R = v_1v_2Q_2Q_3$. Remove $R$ via $\delsave(v_j,v_{j+1})$, $\del(v_{j-1}), \dots, \del(v_2), \delsave(v_1,v_0)$. By the definition of $R$, if $v_{j+1} \neq u_k$, then $f(v_{j+1}) > f(v_j)$ and hence $\delsave(v_j,v_{j+1})$ is legal. The remaining operations are legal for the same reason as \ref{handle:f2233}.
\end{enumerate}

\noindent
The arguments given above yield the following important lemma.

\begin{lemma}\label{lem:delRlegal}
    The sequence of operations used to remove $R$ is legal when starting with the pair $(G-P, f_K)$.
\end{lemma}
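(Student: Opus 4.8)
The plan is to check legality one case at a time, pairing each of the definitions (R1)--(R8) of $R$ with its prescribed removal sequence (D1)--(D8); the individual legality remarks needed in each case are already recorded alongside those descriptions, so the proof is largely a matter of assembling two uniform observations that underlie them all.

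The first observation controls the function values involved. By Lemma~\ref{lem:1chord}, $\delta G$ has no $1$-chord, so among the vertices of $\delta G - P$ only the last vertex $v_t$ of $P''$ (which is adjacent to $u_k = v_{t+1}$) and exactly one of $v_0, v_1$ --- namely $v_0$ if $P'$ is nonempty and $v_1$ if $P'$ is empty --- can be adjacent to a vertex of $V(P)$. Hence $f_K(v) = f(v)$ for every vertex of $R$ other than (possibly) its two endpoints; moreover every such non-endpoint vertex lies in $V(Q_2)\cup V(Q_3)$ or equals $v_2$, so by \ref{canv:fCn3} together with the defining $f$-values (namely $2$ on $Q_2$ and $3$ on $Q_3$) we obtain $f_K(v)\ge 1$ for all $v\in V(R)$, and in fact $f_K(v)\ge 2$ for $v\in V(Q_2)\cup V(Q_3)$. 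Since in every case $R$ is removed essentially from the $v_{t+1}$-end towards the $v_1$-end along the induced path $\delta G - P$, each vertex of $R$ is decremented by earlier operations in the sequence at most once before it is itself removed, so these bounds are exactly what make the ``interior'' $\del$ operations of (D1)--(D8) legal; decrements to vertices of $V(G)\setminus V(\delta G)$ are harmless since such vertices have $f$-value at least $2$ by \ref{canv:fint}.

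The second observation handles the two ``endpoint'' $\delsave$ operations. At the $v_1$-end the operation is $\delsave(v_1, v_0)$: when $P'$ is empty, $v_0$ does not exist and (by Definition~\ref{def:techWD}) this becomes $\del(v_1)$, which is legal because at that stage every surviving neighbour of $v_1$ lies outside $\delta G$; when $P'$ is nonempty, $v_0\in A$ so $f(v_0)=1$ and $f_K(v_0)=0$, while the current value of $v_1$ is still positive (it began at $f_K(v_1)\ge 2$ and was lowered at most once, by the removal of $v_2$), so the inequality $f(v_1) > f(v_0)$ required for legality holds. At the $v_{t+1}$-end, when present, the operation is $\delsave(v_j, v_{j+1})$ (or $\delsave(v_2, v_3)$): if $v_{j+1} = u_k$ this becomes $\del(v_j)$, and otherwise the required strict inequality between the current values of $v_j$ and $v_{j+1}$ follows from the case hypothesis in cases (R5) and (R6), from the maximality in the definitions of $Q_2$ and $Q_3$ in cases (R4), (R7), and (R8), and in case (R2) from Lemma~\ref{lem:Bmax}: since $f(v_1)=f(v_2)=3$ forces $v_1$ and $v_2$ to have girth three and lie outside $B$, a third consecutive such vertex $v_3$ --- which is what would be needed to have $f_K(v_3)=3$ --- is forbidden. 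Combining these observations across the eight cases yields the lemma.

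The step I expect to be most delicate is precisely this second-paragraph bookkeeping: one must be sure that, in the order the operations are actually performed, no function value --- on a vertex still in $R$, on a neighbour of $R$ in $\delta G - P$, or on an interior vertex adjacent to $R$ --- is pushed below zero. This is where the no-$1$-chord property (controlling the interaction of $R$ with $P$), the structure of $Q_2$ and $Q_3$, and the constraints on neighbourhoods of low-$f$ boundary vertices coming from Lemmas~\ref{lem:triangles}, \ref{lem:Bmax}, and \ref{lem:nolowdegs} all come into play.
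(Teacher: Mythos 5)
Your argument is essentially the paper's own proof of this lemma: the paper establishes it precisely through the case-by-case legality annotations accompanying (D1)--(D8), resting (as you do) on the absence of 1-chords in $\delta G$ so that $f_K=f$ on internal vertices of $R$, on $f_K(v_0)=0$ (or $v_0$ not existing) for the front $\delsave$, and on Lemma~\ref{lem:Bmax}, the case hypotheses, and the maximality of $Q_2$ and $Q_3$ for the back $\delsave$. The one assertion of yours that is not justified as stated --- that decrements to vertices of $V(G)\setminus V(\delta G)$ are harmless because $f\ge 2$ there (such a vertex could a priori have several neighbours in $R$, or neighbours in $P$ reducing $f_K$; excluding this really takes Lemma~\ref{lem:2chords}-type arguments, cf.\ Corollary~\ref{cor:notmanyRnbrs} and the explicit check of this kind in Section~\ref{sec:tildeA}) --- is a point on which the paper's own proof of this lemma is equally silent, so your write-up matches the paper's proof in both approach and level of detail.
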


Recall that $A$ is an independent set. Hence in Case \ref{handle:vP''<=2}, if $P'$ is nonempty then $v_1 \not \in A$ and so $f(v_1) \geq 2$. If $P'$ is empty, then by definition $f(v_1) \geq 2$. 
Analysing the cases above, we find the following.

\begin{obs}\label{obs:whereistheAvert}
 If $R$ contains a vertex $v$ with $f(v) = 1$, then $v = v_2$.
\end{obs}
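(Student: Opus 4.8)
If $R$ contains a vertex $v$ with $f(v)=1$, then $v=v_2$.

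The plan is to go through the eight cases \ref{case:vP''<=2}--\ref{case:f223} defining $R$ and, in each, read off which vertices can lie in $R$ and hence bound their $f$-values from below using the canvas conditions \ref{canv:fA}--\ref{canv:fC3} together with the structural facts already established for a minimum counterexample. Recall that by \ref{canv:f}, a vertex $v\in V(\delta G)\setminus V(P)$ has $f(v)=1$ only when $v\in A$, since otherwise $f(v)\ge 2$ by \ref{canv:fCn3} or $f(v)\ge 3$ by \ref{canv:fC3}. So it suffices to show that the only vertex of $R$ that can lie in $A$ is $v_2$. Since $A$ is an independent set and $\delta G$ has no $1$-chord (Lemma~\ref{lem:1chord}), consecutive vertices of $\delta G$ cannot both be in $A$; this will be the main leverage for ruling out all but one $A$-vertex in $R$.

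First I would dispose of the cases where $R$ is a single vertex or two vertices adjacent to $v_1$: in \ref{case:vP''<=2}, \ref{case:f33}, \ref{case:f23}, \ref{case:f233}, the path $R$ is among $v_1$, $v_1v_2$. By the case hypotheses (or, in \ref{case:vP''<=2}, by the remark just before Observation~\ref{obs:whereistheAvert} that $f(v_1)\ge 2$ always), we have $f(v_1)\ge 2$, so $v_1\notin A$; and in the two-vertex subcases $R=v_1v_2$, only $v_2$ remains as a candidate. Next, in \ref{case:f3233}, \ref{case:f323} we have $R=v_2Q_2$ or $R=v_2Q_2Q_3$ with $f(v_1)=3$, so again $v_1\notin R$; every vertex of $Q_2$ has $f$-value $2$ and every vertex of $Q_3$ has $f$-value $3$ by their definitions, so none of these lies in $A$, leaving only $v_2$. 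Finally, in \ref{case:f2233}, \ref{case:f223} we have $f(v_1)=2$, hence $v_1\notin A$, and $R=v_1v_2Q_2$ or $R=v_1v_2Q_2Q_3$; as before the vertices of $Q_2$ and $Q_3$ have $f$-values $2$ and $3$ respectively, so the only vertex of $R$ that can be in $A$ — equivalently, can have $f$-value $1$ — is $v_2$.

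The only subtlety is that in the subcase $v(P'')=1$ of \ref{case:vP''<=2} the path $R$ is the single vertex $v_1$, and one must be sure that $f(v_1)\ge 2$ there too; this is exactly the content of the sentence preceding the observation (if $P'$ is nonempty then $v_1\notin A$ because $A$ is independent and $v_0\in A$; if $P'$ is empty then $f(v_1)\ge 2$ by the defining property of the decomposition). I do not expect any real obstacle here — it is a finite case check — but the one place to be careful is not to overlook that $Q_2$ (respectively $Q_3$) is defined to consist of vertices with $f$-value exactly $2$ (respectively $3$), so that no vertex of $Q_2\cup Q_3$ can be an $A$-vertex; with that in hand the claim follows in every case.
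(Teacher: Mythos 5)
Your proposal is correct and matches the paper's (implicit) argument: the paper treats this as an observation obtained by inspecting cases \ref{case:vP''<=2}--\ref{case:f223}, using exactly the facts you cite, namely that $f(v_1)\ge 2$ in every case (by the case hypotheses, or in case \ref{case:vP''<=2} by the remark that $v_0\in A$ and $A$ is independent, resp.\ the definition of the decomposition when $P'$ is empty), and that vertices of $Q_2$ and $Q_3$ have $f$-value exactly $2$ and $3$ by definition, so $v_2$ is the only vertex of $R$ that can have $f$-value $1$.
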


\noindent
The following observation will also be useful.

\begin{obs}\label{obs:whereisthelist4vert}
    If $R$ is defined according to \ref{case:f3233}, \ref{case:f323}, \ref{case:f2233}, or \ref{case:f223}, then $R$ contains at most one vertex $v$ with $f(v) = 3$, and if such a vertex exists then it is the vertex $v_j$. 
\end{obs}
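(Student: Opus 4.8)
The plan is to establish Observation~\ref{obs:whereisthelist4vert} by a direct case analysis over the four relevant definitions of $R$, namely \ref{case:f3233}, \ref{case:f323}, \ref{case:f2233}, and \ref{case:f223}, using the definitions of the subpaths $Q_2$ and $Q_3$ together with Observation~\ref{obs:feq} (equality in~\ref{canv:fCn3}--\ref{canv:fint}) so that the only possible values of $f$ on $\delta G - P$ vertices are exactly $1$, $2$, or $3$. Recall that $Q_2$ is the maximal subpath of $P''$ immediately following $v_2$ on which $f$ is identically $2$, and $Q_3$ is the maximal subpath of $P''$ immediately following $Q_2$ on which $f$ is identically $3$; thus along $P''$ the vertices are, in order, $v_1, v_2$, then $Q_2$ (all with $f=2$), then $Q_3$ (all with $f=3$), and the vertex immediately after $Q_3$ (if it is not $u_k$) has $f \ne 3$, hence $f \in \{1,2\}$ on it.

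First I would handle \ref{case:f3233}: here $R = v_2 Q_2$, so $f(v_1) = 3$ but $v_1 \notin R$; every vertex of $R$ is either $v_2$ with $f(v_2) \le 2$ or a vertex of $Q_2$ with $f = 2$. Hence $R$ contains \emph{no} vertex with $f = 3$, and the claim holds vacuously. Next, \ref{case:f2233}: here $R = v_1 v_2 Q_2$ with $f(v_1) = 2$, $f(v_2) \le 2$, and $f = 2$ on $Q_2$; again $R$ has no vertex of $f$-value $3$, so the statement is vacuous. The two remaining cases \ref{case:f323} and \ref{case:f223} are where $R$ reaches into $Q_3$: in \ref{case:f323} we have $R = v_2 Q_2 Q_3$ with $v(Q_3) \le 1$, and in \ref{case:f223} we have $R = v_1 v_2 Q_2 Q_3$ with $v(Q_3) \le 1$. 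In both, every vertex of $R$ outside $Q_3$ has $f \le 2$ (it is $v_1$ with $f = 2$, or $v_2$ with $f \le 2$, or a vertex of $Q_2$ with $f = 2$), while $Q_3$ has at most one vertex, which has $f = 3$ by definition of $Q_3$. So $R$ contains at most one vertex of $f$-value $3$. It remains to identify that vertex as $v_j$: since $j$ is by definition the largest index with $v_j \in V(R)$, and in both \ref{case:f323} and \ref{case:f223} the path $R$ ends precisely at the last vertex of $Q_3$ (the furthest-along vertex of $R$ in $P''$), the unique vertex of $Q_3$ in $R$, when it exists, is $v_j$.

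I do not anticipate a serious obstacle here; the statement is essentially bookkeeping once one notes two things: (i) $f$ takes only the values $1, 2, 3$ on $V(\delta G) \setminus V(P)$ by the canvas conditions and Observation~\ref{obs:feq}, and (ii) the decomposition $P'' = v_1 v_2 \cdot Q_2 \cdot Q_3 \cdots$ is monotone in the sense that the $f$-values encountered are $3$-or-less up through $Q_2$ and then exactly $3$ on $Q_3$. The only point requiring a moment's care is confirming that in cases \ref{case:f323} and \ref{case:f223} the path $R$ really does terminate at the end of $Q_3$ (as opposed to somewhere inside it), which is immediate from the stated definitions $R = v_2 Q_2 Q_3$ and $R = v_1 v_2 Q_2 Q_3$ together with $v(Q_3) \le 1$, so that ``the vertex $v_j$'' and ``the unique vertex of $Q_3$ in $R$'' coincide.
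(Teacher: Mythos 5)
Your proposal is correct and matches the paper's (implicit) justification: the observation is stated without a separate proof precisely because it follows by inspecting the definitions of the cases \ref{case:f3233}, \ref{case:f323}, \ref{case:f2233}, \ref{case:f223} together with the definitions of $Q_2$ and $Q_3$, exactly as you do. Your only extra care point---that in \ref{case:f323} and \ref{case:f223} the path $R$ terminates at the (at most one) vertex of $Q_3$, which is therefore $v_j$---is the same bookkeeping the paper relies on.
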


Let the pair $(G-P-R, f')$ be obtained from $(G-P,f_K)$ by removing $R$ with the prescribed weak degeneracy operations. Define $\tilde f(v)$ by $\tilde f(v)=f'(v)+|N(v)\cap P|$, and let $\tilde G = G-R$.
Let $\tilde A$ be the set of vertices $v$ in $V(\delta \tilde G) \setminus V(P)$ with $\tilde f(v) \leq 1$.
Let $\tilde B$ be the set of vertices $v$ in $V(\delta \tilde G) \setminus V(P)$ with $\tilde f(v) = 2$ and $g_{\tilde G}(v) = 3$. 
Note that if $\tilde K := (\tilde G, P, \tilde A, \tilde B, \tilde f)$ is an unexceptional canvas, then by the minimality of $K$ we have that $\tilde K$ is weakly $\tilde f$ degenerate. 
As argued above, following from Lemma~\ref{lem:delRlegal} in this case $K$ is $f$-degenerate\textemdash a contradiction. 
Thus, either $\tilde K$ is not a canvas, or it is an exceptional canvas. 
Note that Properties~\ref{canv:acceptable} and~\ref{canv:fint} hold for $\tilde K$, since $K$ is a canvas. 
Moreover, Properties~\ref{canv:fCn3} and~\ref{canv:fC3} hold by definition of $\tilde A$ and $\tilde B$.

\clearpage
\begin{lemma}\label{lem:dGtildeAprops} The following properties hold for vertices in $V(\delta G)$. 
\begin{enumerate}[label=\textup{(\roman*)}]
    \item\label{itm:dGtildeAgf} If $v \in V(\delta \tilde G) \cap V(\delta G)$ and $v \in \tilde A$, then $g(v) \geq 5$ and $\tilde f(v) = 1$. 
    \item\label{itm:dGtildeABindep} $\tilde A \cap V(\delta G)$ and $\tilde B \cap V(\delta G)$ are independent sets in $\tilde G$. 
\end{enumerate}
\end{lemma}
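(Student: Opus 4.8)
The plan is to prove both parts by carefully tracking how $\tilde f$ differs from $f$ on $V(\delta G)$ and where the weak degeneracy operations used to remove $R$ can decrease a function value. The key observation is that the only vertices of $\delta G - P$ affected by the removal of $R$ are $v_0$ (if it exists), the vertices of $R$ itself (which are removed), and the vertex $v_{j+1}$ immediately following $R$ along $P''$ — the latter only when a $\delsave(v_j, v_{j+1})$ operation is used, and even then $v_{j+1}$ may coincide with $u_k \in V(P)$. So for a vertex $v \in V(\delta \tilde G) \cap V(\delta G)$ with $v \neq v_{j+1}$, we have $\tilde f(v) \geq f(v)$ (in fact equality unless $v = v_0$, in which case $\tilde f(v) = f(v)$ too since $v_0$ is only saved, not deleted). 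Thus the only genuinely ``new'' low-value vertex is $v_{j+1}$, and I would handle it separately.

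For part \ref{itm:dGtildeAgf}: suppose $v \in V(\delta \tilde G) \cap V(\delta G)$ lies in $\tilde A$, so $\tilde f(v) \leq 1$. If $v \neq v_{j+1}$ then $\tilde f(v) \geq f(v)$, and since $v \notin V(P)$ property \ref{canv:f} of the canvas $K$ gives $f(v) \geq 1$, forcing $\tilde f(v) = 1$ and $f(v) = 1$; by \ref{canv:fA} this means $v \in A$, hence $g(v) \geq 5$, as required. If $v = v_{j+1}$, then $R$ was removed by a sequence ending with a $\delsave(v_j, v_{j+1})$ operation, which occurs only in cases \ref{handle:f323}, \ref{handle:f23}, and \ref{handle:f223}. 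In each of these cases the $\delsave$ was justified precisely because $f_K(v_j) > f_K(v_{j+1})$, i.e.\ $f(v_{j+1}) > f(v_j) \geq$ some explicit bound; chasing the case definitions, $R$ ends in a $Q_3$-block (value $3$) or a single value-$3$ vertex, so $f(v_{j+1}) \geq 3$ would have continued that block, contradicting maximality of $Q_3$ — hence $f(v_{j+1}) \leq 2$, but then $\delsave$ leaves $\tilde f(v_{j+1}) = f(v_{j+1})$ unchanged (save operations do not decrement the saved vertex), so $\tilde f(v_{j+1}) = f(v_{j+1})$ and again $f(v_{j+1}) = 1$ forces $v_{j+1} \in A$. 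Combined with the canvas conditions this gives $g(v) \geq 5$ and $\tilde f(v) = 1$ in all cases.

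For part \ref{itm:dGtildeABindep}: since $\delta G$ has no $1$-chord (Lemma~\ref{lem:1chord}), consecutive vertices of $\delta G$ are the only ones that could create an edge within $\tilde A \cap V(\delta G)$ or $\tilde B \cap V(\delta G)$ — any two vertices of $\delta \tilde G \cap \delta G$ that are adjacent in $\tilde G$ are consecutive along $\delta G$ or were made consecutive along $\delta \tilde G$ by the deletion of $R$ (so they are $v_{j+1}$ together with the vertex $v_0$ or $u_k$ preceding $R$). Using part \ref{itm:dGtildeAgf}, every vertex of $\tilde A \cap V(\delta G)$ has girth at least $5$; if two such vertices were adjacent we would get a path of three consecutive girth-$\geq 5$ vertices in $\delta G$ (or a subpath landing us in the configuration ruled out by Lemma~\ref{lem:Amax}), or a pair of adjacent vertices both outside $A$ with girth $\geq 5$, again contradicting Lemma~\ref{lem:Amax}; I would argue that any newly-adjacent pair $\{v_{j+1}, x\}$ with both in $\tilde A$ forces $x \in A$ and $v_{j+1} \in A$ (via \ref{itm:dGtildeAgf}), but $A$ is independent in $G$ and no new adjacency is created since deleting $R$ only identifies the outer boundary, not edges — so $v_{j+1}$ and its predecessor were already adjacent in $\delta G$, contradicting independence of $A$. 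For $\tilde B$: a vertex $v \in \tilde B \cap V(\delta G)$ has $\tilde f(v) = 2$ and $g_{\tilde G}(v) = 3$; note $g_{\tilde G}(v) \geq g_G(v)$, so $g_G(v) = 3$, and $\tilde f(v) = 2 \geq f(v) - (\text{decrease})$; for $v \neq v_{j+1}$ we get $f(v) \leq 2$, so by \ref{canv:fC3} (which would force $f(v) \geq 3$ if $v \notin A \cup B$) we conclude $v \in B$; two adjacent vertices of $\tilde B \cap V(\delta G)$ would then either both be in $B$ (contradicting independence of $B$) or include $v_{j+1}$, which I dispatch as before. The vertex $v_{j+1}$ case for $\tilde B$ uses that when $\delsave(v_j, v_{j+1})$ is applied $\tilde f(v_{j+1}) = f(v_{j+1})$ is unchanged, so again $f(v_{j+1}) \leq 2$ and $v_{j+1} \in B$, which is independent in $G$.

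The main obstacle I anticipate is the bookkeeping around $v_{j+1}$: showing rigorously in each of the cases \ref{handle:f323}, \ref{handle:f23}, \ref{handle:f223} that $\tilde f(v_{j+1})$ equals $f(v_{j+1})$ (i.e.\ that the final $\delsave$ is the only operation touching $v_{j+1}$ and it does not decrement it), and that $f(v_{j+1}) \leq 2$ by maximality of the $Q_2$/$Q_3$ blocks. A secondary subtlety is confirming that deleting $R$ creates no genuinely new edges of $\tilde G$ between boundary vertices — only a new outer face boundary — so that ``adjacent in $\tilde G$'' for a pair not involving $R$ means ``adjacent in $G$'', which is what lets us invoke independence of $A$ and $B$ and Lemmas~\ref{lem:Amax} and~\ref{lem:Bmax} directly.
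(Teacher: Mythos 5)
Your overall strategy matches the paper's (track where $\tilde f$ drops relative to $f$, note that only boundary vertices adjacent to the ends of $R$ can be affected, use the absence of 1-chords, and finish via the independence of $A$ and $B$), but your central bookkeeping claim is false, and the error lands exactly on the delicate part of the argument. You assert that for $v \in V(\delta \tilde G)\cap V(\delta G)$ with $v\ne v_{j+1}$ one has $\tilde f(v)\ge f(v)$, and that $v_{j+1}$ is only ever touched by a $\delsave(v_j,v_{j+1})$, hence unchanged. Neither holds. First, in cases~\ref{case:f3233} and~\ref{case:f323} the path $R$ begins at $v_2$, so the vertex $v_1$ immediately \emph{preceding} $R$ survives and is decremented by the final $\del(v_2)$ in~\ref{handle:f3233}/\ref{handle:f323}; there $f(v_1)=3$ and $\tilde f(v_1)=2$, so $v_1$ can lie in $\tilde B\setminus B$. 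Second, in cases~\ref{handle:f3233}, \ref{handle:f233} and~\ref{handle:f2233} the vertex $v_{j+1}$ is decremented by a plain $\del(v_j)$ (or by $\delsave(v_1,v_0)$ in~\ref{handle:f233}), so $\tilde f(v_{j+1})=f(v_{j+1})-1=2$ and $v_{j+1}$ can also lie in $\tilde B\setminus B$. (You also omit~\ref{handle:f33} from your list of cases using $\delsave(v_j,v_{j+1})$, though that is minor.)

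Because of this, your proof of part~\ref{itm:dGtildeABindep} for $\tilde B$ breaks down: your deduction that every $v\in \tilde B\cap V(\delta G)$ with $v\ne v_{j+1}$ satisfies $f(v)\le 2$ and hence lies in $B$ is simply wrong for $v_1$ in cases~\ref{case:f3233}/\ref{case:f323}, and your treatment of $v_{j+1}$ assumes a $\delsave$ that does not occur in the problematic cases. The genuinely new $\tilde B$-vertices $v_1$ and $v_{j+1}$ are precisely what must be handled: for $v_1$, its only surviving boundary neighbour is $v_0$ (which is in $A$, so of girth at least five and not in $\tilde B$) or a vertex of $P$; for $v_{j+1}$ in cases~\ref{handle:f3233}, \ref{handle:f233}, \ref{handle:f2233}, the case conditions force $f(v_{j+2})=3$, hence $\tilde f(v_{j+2})=3$ and $v_{j+2}\notin\tilde B$, so no $\tilde B$-edge arises. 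Your proposal contains no argument for either of these situations. Part~\ref{itm:dGtildeAgf} happens to survive only because in every decremented case the value lands at $2$, so the vertex avoids $\tilde A$; but the justification you give ($\tilde f\ge f$ away from $v_{j+1}$) is not a correct statement and should be replaced by the explicit case analysis of the vertices immediately preceding and following $R$, which is what the paper does.
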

\begin{proof}
By the definition of $\tilde A$, to prove~\ref{itm:dGtildeAgf} we must consider the vertices $v\in V(\delta\tilde G)\cap V(\delta G)$ with $\tilde f(v)\le 1$ and show that $g(v)\ge 5$ and $\tilde f(v)=1$. 

For any vertex $v$, if $\tilde f(v)\le 1$ this is either because $f(v)=1$ and $\tilde f(v)=1$, or because $v$ is adjacent to at least one vertex of $R$ whose removal causes $\tilde f(v) < f(v)$. 
In the first case, the fact that $K$ is a canvas means that $g(v) \geq 5$ as required. 
Since $\delta G$ has no 1-chord by Lemma \ref{lem:1chord}, the only vertices $v\in \delta G$ for which the second case applies are the vertices immediately preceding or immediately following $R$ along $\delta G$.
Note further that if $v\in \delta G$ is immediately preceding or following $R$ then the first case applies if the neighbour $v'$ of $v$ in $R$ was removed with a $\delsave(v',v)$ operation. 

Consider the vertex $v$ which immediately precedes $R$ (if it exists). Its neighbour $v'$ in $R$ is removed with $\delsave(v',v)$ in all cases except \ref{handle:f3233} and \ref{handle:f323}. 
In these cases, $v = v_1$ and $f(v) = 3$. It follows that $v \not \in B$ and $\g(v) = 3$. Since $\tilde f(v) = f(v)-1 = 2$, we have that $v \notin \tilde A$ and hence there is nothing to prove for $v$ to establish~\ref{itm:dGtildeAgf}. 

Next, consider the vertex $v=v_{j+1}$ immediately following $R$. Note that if $v = u_k$, there is nothing to show since $u_k \in V(P)$ and therefore $u_k \not \in \tilde{A}$. 
Similarly, in all cases except \ref{handle:f3233}, \ref{handle:f233}, and \ref{handle:f2233} we have $\tilde f(v)=f(v)$ (and so the simpler analysis above applies). 
In these cases, $f(v) = 3$ and so $\tilde f(v_{j+2}) = 3$. 
After removing $R$ we have that $\tilde f(v) = 2$, and hence $v\notin \tilde A$ and there is nothing to prove for $v$ to establish~\ref{itm:dGtildeAgf}.

We now turn to proving~\ref{itm:dGtildeABindep} with a similar method. 
It suffices to verify that $\delta G - R$ does not contain a pair of consecutive vertices in $\tilde A$ or $\tilde B$, and by the analysis above such a pair must be located immediately preceding $R$ or immediately following $R$. 
Moreover, since $A$ and $B$ are independent, an edge in $\tilde A \cap V(\delta G)$ or $\tilde B \cap V(\delta G)$ can exist only if a vertex in $\delta G$ adjacent to $R$ is in $\tilde A\setminus A$ or $\tilde B\setminus B$ respectively, and the next vertex along lies in $A$ or $B$ respectively.

Following the proof of~\ref{itm:dGtildeAgf}, consider again the vertex $v$ which immediately precedes $R$ (if it exists). Its neighbour $v'$ in $R$ is removed with $\delsave(v',v)$ in all cases except \ref{handle:f3233} and \ref{handle:f323}, and in these cases it cannot be that $v\in \tilde A$. In these cases, $v = v_1$. Note that $v_0$ is either in $A$ or non-existent, and so neither $\tilde A$ nor $\tilde B$ induce an edge immediately preceding $R$.

Similarly, consider the vertex $v=v_{j+1}$ immediately following $R$. Again, if $v = u_k$, there is nothing to show. 
In all cases except \ref{handle:f3233}, \ref{handle:f233}, and \ref{handle:f2233} we have $\tilde f(v)=f(v)$ and so $v \not \in (\tilde{A} \cup \tilde{B}) \setminus (A \cup B)$.
In the problematic cases, $f(v_{j+1}) = 3$ (and so $g(v_{j+1}) = 3$) and there exists a vertex $v_{j+2}$ with $f(v_{j+2}) = 3$ (and hence also $\tilde f(v_{j+2}) = 3$). 
After removing $R$ we have that $\tilde f(v_{j+1}) = 2$ and hence $v_{j+1}\in \tilde B$. Since $\tilde f(v_{j+2})=3$, we have that $v_{j+2}\notin \tilde B$ and so $\tilde B$ is independent in this case as desired.
\end{proof}

We will also need to verify that the vertices in $\tilde A \setminus V(\delta G)$ have girth at least five and do not see $\tilde f$ dip too far below $f$. To do so, we use the following useful consequence of Lemma \ref{lem:2chords}.

\begin{cor}\label{cor:notmanyRnbrs}
If $v$ is a vertex in $V(G-\delta G)$ with $g(v) \geq 4$, then $v$ has at most one neighbour in $R$.
\end{cor}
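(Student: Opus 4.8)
The plan is to argue by contradiction: suppose $v \in V(G - \delta G)$ has $g(v) \ge 4$ and has at least two neighbours in $R$. Since $R$ is a subpath of $\delta G$, write the neighbours of $v$ in $R$ in the order they appear along $R$, and let $x$ and $z$ be two of them that are \emph{consecutive} among the neighbours of $v$ in $R$ (so that no vertex of $R$ strictly between $x$ and $z$ along $\delta G$ is adjacent to $v$). Let $Q$ be the subpath of $R$ between $x$ and $z$. Then $x + (\text{path } v) + z$ — more precisely the walk $x v z$ — together with $Q$ bounds a closed region, and since $v$ is an interior vertex (not on $\delta G$) while $Q \subseteq \delta G$, the two-edge path $xvz$ is a $2$-chord of $\delta G$, separating $G$ into $G_1$ and $G_2$. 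The key point is that, by our choice, $Q$ is an internally-$v$-free subpath of $\delta G$, and the side $G_2$ containing $Q$ has all its $P$-vertices among those of $Q$; but $Q \subseteq R \subseteq \delta G - P$, so $V(G_2) \cap V(P) = \emptyset$, and in particular neither endpoint $x$ nor $z$ of the $2$-chord is an internal vertex of $P$.

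Now apply Lemma~\ref{lem:2chords} to the $2$-chord $xvz$. Since neither $x$ nor $z$ is an internal vertex of $P$, the lemma forces $(G_2, xvz, A, B, f)$ to be an exceptional canvas of type~\ref{ex:B}, which means there is a vertex $b \in B$ adjacent to each of $x$, $v$, and $z$. But then $b$ and $v$ are both adjacent to $x$ and $z$, so $\{b, x, v, z\}$ — together with the edges $xb, bz, xv, vz$ (and possibly $bv$) — contains a $4$-cycle $x v z b x$ (using that $b \ne v$ since $b \in B \subseteq V(\delta G)$ while $v \notin V(\delta G)$, and $x \ne z$). If $bv \in E(G)$ then $xvbx$ is in fact a triangle through $v$, contradicting $g(v) \ge 4$; otherwise $xvzbx$ is a $4$-cycle through $v$, contradicting $g(v) \ge 4$ as well. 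Either way we contradict the hypothesis $g(v) \ge 4$, completing the proof.

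**Main obstacle.** The delicate step is verifying that the $2$-chord $xvz$ genuinely separates $G$ with $x, z$ non-internal to $P$ and with $Q$ lying on the side $G_2$; this requires knowing that $R$ is a subpath of $\delta G$ disjoint from $P$ (true by construction, since $R \subseteq P'' \subseteq \delta G - P$ in every case~\ref{case:vP''<=2}--\ref{case:f223}), and that choosing $x, z$ consecutive among $v$'s neighbours in $R$ makes $xvz$ a genuine $2$-chord — i.e.\ that $Q$ contains no other vertex of $\delta G$ adjacent to $v$ in a way that would subdivide the chord. Because $v \notin \delta G$, the edges $xv$ and $vz$ have no interior vertices on $\delta G$, so $xvz$ has its endpoints on $\delta G$ and no other vertex on $\delta G$, exactly as required by Definition~\ref{def:seppath}. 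Once the chord is correctly set up, the rest is an immediate appeal to Lemma~\ref{lem:2chords} together with the girth constraint.
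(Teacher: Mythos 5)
Your argument is, in substance, the paper's proof: take two neighbours of $v$ in $R$, observe that together with $v$ they form a 2-chord whose endpoints (being in $R\subseteq \delta G-P$) are not internal vertices of $P$, and use Lemma~\ref{lem:2chords} together with $g(v)\ge 4$ to rule out the type~\ref{ex:B} exception. Two points need repair, though neither is fatal. First, you never rule out that $x$ and $z$ are adjacent. If they are (two consecutive vertices of $R$), then $xvz$ does not separate $G$: by Lemma~\ref{lem:triangles} the triangle $xvzx$ has empty interior, so one side of the would-be chord is empty, Definition~\ref{def:seppath} fails, and Lemma~\ref{lem:2chords} cannot be invoked. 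The case is harmless---$xz\in E(G)$ gives a triangle through $v$, contradicting $g(v)\ge 4$ directly---but it must be said; the paper dispatches it in one line (``$u$ and $w$ are not adjacent, since $g(v)\ge 4$''), and this non-adjacency is exactly what guarantees that both arcs of $\delta G$ between the endpoints contain vertices, i.e.\ that you genuinely have a 2-chord. Second, your closing dichotomy is off: the type~\ref{ex:B} structure makes $b$ adjacent to \emph{every} vertex of the chord, including the middle vertex $v$, so $bv\in E(G)$ always holds and only your first branch occurs; moreover the ``otherwise'' branch rests on a false claim, since a 4-cycle through $v$ is perfectly consistent with $g(v)=4$ and would contradict only $g(v)\ge 5$. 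Once these are cleaned up (add the adjacency check, drop the vacuous second branch), the proof is correct and coincides with the paper's.
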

\begin{proof}
    Suppose not, and let $u$ and $w$ be neighbours of $v$ in $R$. Note $u$ and $w$ are not adjacent, since $g(v) \geq 4$. It also follows from the fact that $g(v) \geq 4$ that $uvw$ is a 2-chord that violates Lemma \ref{lem:2chords}.
\end{proof}

As a consequence of this, we obtain the following.

\begin{cor}\label{cor:4s5sdidnotlosetoomuch}
If $v \in \tilde A$, then $g(v) \geq 5$ and $\tilde f(v) = 1$.
\end{cor}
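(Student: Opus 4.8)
The plan is to combine Corollary~\ref{cor:notmanyRnbrs} with the structural restrictions on $R$ recorded in Observations~\ref{obs:whereistheAvert} and~\ref{obs:whereisthelist4vert}, together with the girth conditions on interior vertices from~\ref{canv:fint}. We only need to consider a vertex $v \in \tilde A \setminus V(\delta G)$, since Lemma~\ref{lem:dGtildeAprops}\ref{itm:dGtildeAgf} already handles vertices of $\tilde A$ lying on $\delta G$. So let $v$ be such a vertex; then $\tilde f(v) \le 1$, which since $v\notin V(\delta G)$ means $f(v) - |N(v)\cap V(R)| \le 1$, as $v$ is nonadjacent to $P$ (it is not on $\delta G$, and the only neighbours of $P$ outside $P$ lie on $\delta G$ because there is no $1$-chord, Lemma~\ref{lem:1chord}). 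Recall from~\ref{canv:fint} that $f(v) \ge 2$ always, with $f(v)\ge 3$ if $g(v)=4$ and $f(v) \ge 4$ if $g(v)=3$.

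First I would dispose of the low-girth cases. If $g(v) = 3$, then $f(v)\ge 4$, so $v$ would need at least three neighbours in $R$; but $R$ is a subpath of $\delta G$, so three neighbours of $v$ on $R$ would create a cycle through $v$ of length at most $4$ (two consecutive neighbours on $R$ give a triangle through $v$ if they are adjacent, and in any case a short cycle with a vertex of $R$ in its interior), which either contradicts Lemma~\ref{lem:triangles} directly or, more carefully, contradicts the fact that $v$ has at most one neighbour in $R$ once we note $g(v)\ge 4$ is needed for Corollary~\ref{cor:notmanyRnbrs} — so I would instead argue that if $g(v)=3$ then $v$ lies on a triangle, and having even two neighbours on the path $R$ together with $f(v)\ge 4$ forces a configuration ruled out by Lemmas~\ref{lem:triangles} and~\ref{lem:5cycles}. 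If $g(v) = 4$, then $f(v) \ge 3$, so $v$ needs at least two neighbours in $R$, contradicting Corollary~\ref{cor:notmanyRnbrs}. Hence $g(v) \ge 5$, giving the first conclusion.

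It remains to show $\tilde f(v) = 1$, i.e.\ to rule out $\tilde f(v) = 0$. Since $g(v)\ge 5$ we have $f(v)\ge 2$, and by Corollary~\ref{cor:notmanyRnbrs} $v$ has at most one neighbour in $R$, so $\tilde f(v) \ge f(v) - 1 \ge 1$; thus $\tilde f(v) \le 1$ forces $f(v) = 2$, $v$ has exactly one neighbour $w\in R$, and $w$ was removed by a $\del$ operation (not a $\delsave$ that protected $v$). The only subtlety is that an interior vertex of $G$ might lose value to a $\delsave$ aimed at it; but in each of~\ref{handle:vP''<=2}--\ref{handle:f223} the $\delsave$ operations are of the form $\delsave(v_i, v_{i+1})$ or $\delsave(v_1, v_0)$, always protecting a vertex on $\delta G$, never an interior vertex. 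So the decrease at $v$ is genuinely $-1$ and $\tilde f(v) = 1$. The main obstacle I anticipate is the bookkeeping in the $g(v)=3$ case: I need to be careful that a vertex of girth three with two neighbours on the path $R\subseteq\delta G$ really does produce a cycle of length at most four or five with a vertex in its interior (or some other already-forbidden chord configuration), so that Lemmas~\ref{lem:triangles} and~\ref{lem:5cycles} can be invoked; alternatively, and more cleanly, one reduces immediately to $g(v)\ge 4$ by observing that a girth-$3$ interior vertex with $\ge 3$ neighbours on $R$ is impossible since then two of those neighbours are consecutive on $R$, yielding a triangle bounding a region containing the rest of $R$, contradicting Lemma~\ref{lem:triangles}.
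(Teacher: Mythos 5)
Your handling of the boundary case (deferring to Lemma~\ref{lem:dGtildeAprops}), of interior vertices of girth $4$ (via Corollary~\ref{cor:notmanyRnbrs}), and of the final computation $\tilde f(v)=1$ (including the remark that no $\delsave$ in \ref{handle:vP''<=2}--\ref{handle:f223} protects an interior vertex) all agree with the paper. The genuine gap is in the interior girth-$3$ case, which is precisely the case that needs work. Your ``cleaner'' reduction rests on two claims that are both false. First, an interior vertex $v$ with three neighbours on $R$ need not have two of them consecutive on $R$: planarity does not force this. In the paper, consecutiveness is \emph{derived}: the two outer neighbours together with $v$ form a 2-chord whose endpoints are not internal vertices of $P$, so Lemma~\ref{lem:2chords} forces a vertex of $B$ adjacent to all three chord vertices; since $B\subseteq V(\delta G)$ and $\delta G$ has no 1-chord (Lemma~\ref{lem:1chord}), that $B$-vertex must be the middle neighbour, and the three neighbours must be consecutive. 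Second, even when two neighbours of $v$ are consecutive on $R$, the triangle they span with $v$ does \emph{not} contain the rest of $R$ in its interior: the remaining vertices of $R$ lie on $\delta G$, outside that triangle, so Lemma~\ref{lem:triangles} gives no contradiction. Indeed, an interior vertex adjacent to three consecutive boundary vertices is a configuration that genuinely occurs in this setting (the vertex $w$ of Lemma~\ref{lem:BBstructure} is exactly of this kind, with both triangles having empty interiors), so no purely topological argument of the sort you propose can rule it out; your hedged variant invoking Lemmas~\ref{lem:triangles} and~\ref{lem:5cycles} against a girth-$3$ interior vertex with two neighbours in $R$ fails for the same reason.

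What actually closes this case is the interaction between the $f$-values along $R$ and the way $R$ was defined. Since $f(v)=4$ and $\tilde f(v)\le 1$ give $|N(v)\cap V(R)|\ge 3$, we have $v(R)\ge 3$, so $R$ is defined according to \ref{case:f3233}, \ref{case:f323}, \ref{case:f2233}, or \ref{case:f223}. With the three neighbours consecutive, say $v_q,v_{q+1},v_{q+2}$, and $v_{q+1}\in B$ as above, independence of $B$ and Observation~\ref{obs:feq} force the outer neighbours (which have girth $3$, lie on $\delta G$, and are not in $A\cup B$) to have $f$-value $3$; this contradicts Observation~\ref{obs:whereisthelist4vert}, which says that in these cases $R$ contains at most one vertex with $f$-value $3$, located at its end. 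This contradiction, not Lemma~\ref{lem:triangles}, is what eliminates the girth-$3$ case, and none of these ingredients appears in your proposal.
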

\begin{proof}
    If $v \in \tilde{A}$ and $v \in V(\delta G)$, this follows from Lemma \ref{lem:dGtildeAprops}\ref{itm:dGtildeAgf}. If $v \in \tilde{A} \setminus V(\delta G)$ and $g(v) \geq 4$, then since $\tilde{f}(v) \leq 1$, by Corollary \ref{cor:notmanyRnbrs} we have that $g(v) \geq 5$ as desired. Thus we may assume $g(v) = 3$; in this case, since $f(v) = 4$, we have that $v$ is adjacent to at least three vertices $v_q,v_r,v_s$ in $R$. 
    Then $v(R) \geq 3$ and so $R$ was defined according to either \ref{case:f3233}, \ref{case:f323}, \ref{case:f2233}, or \ref{case:f223}. 
    By Lemma \ref{lem:2chords}, we have without loss of generality that $r = q+1$, that $s = r+1$, and that $v_r \in B$. 
    Since $B$ is an independent set, $v_s \not \in B$. Since $g(v_s) = 3$, it follows that $f(v_s) = 4$. This contradicts Observation~\ref{obs:whereisthelist4vert}.
\end{proof}

Lemma~\ref{lem:dGtildeAprops} and Corollary~\ref{cor:4s5sdidnotlosetoomuch} give the girth condition relevant to $\tilde A$ for property~\ref{canv:A} and give properties~\ref{canv:fA} and~\ref{canv:fB} for $\tilde K$.

Thus, the function $\tilde f$ satisfies the relevant properties in the definition of canvas with respect to $\tilde K$.
At this point, the only parts of the definition of canvas we have not verified for $\tilde K$ are the independence conditions in~\ref{canv:A} and~\ref{canv:B}. 
Nevertheless, since we are working with a hypothetical minimal counterexample, $\tilde K$ must fail to be a canvas.

\begin{cor}
    $\tilde K$ is not a canvas.
\end{cor}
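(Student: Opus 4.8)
The statement to prove is that $\tilde K = (\tilde G, P, \tilde A, \tilde B, \tilde f)$ is not a canvas. By the discussion immediately preceding the corollary, all the defining properties of a canvas have been verified for $\tilde K$ \emph{except} the independence conditions in \ref{canv:A} and \ref{canv:B}; moreover, by Lemma~\ref{lem:dGtildeAprops}\ref{itm:dGtildeABindep}, the sets $\tilde A \cap V(\delta G)$ and $\tilde B \cap V(\delta G)$ are already known to be independent. So if $\tilde K$ were a canvas, it would be an \emph{unexceptional} canvas (the unexceptionality must also be addressed, but the exceptions all involve edges from $P$ to other boundary vertices, which are ruled out along the same lines), and then by minimality of $K$ it would be weakly $\tilde f$-degenerate; combined with Lemma~\ref{lem:delRlegal} this would make $K$ weakly $f$-degenerate, contradicting that $K$ is a counterexample. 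Hence the whole point is that $\tilde K$ \emph{must} fail some remaining condition, and the only candidates are the independence of $\tilde A$ and the independence of $\tilde B$ in $\tilde G$.

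**Key steps.** First, I would record that if $\tilde K$ were a canvas, then since Properties \ref{canv:acceptable}, \ref{canv:fint}, \ref{canv:fCn3}, \ref{canv:fC3} hold because $K$ is a canvas and Properties \ref{canv:fA}, \ref{canv:fB} together with the girth condition in \ref{canv:A} hold by Corollary~\ref{cor:4s5sdidnotlosetoomuch} and Lemma~\ref{lem:dGtildeAprops}\ref{itm:dGtildeAgf}, the only possible failure is in the independence assertions of \ref{canv:A} or \ref{canv:B}. Second, I would observe that $\tilde K$ is unexceptional if it is a canvas: the exceptional structures \ref{ex:B}, \ref{ex:A}, \ref{ex:AB} all require a vertex outside $P$ adjacent to (several) vertices of $P$, but by Lemma~\ref{lem:1chord} the only vertices of $\delta \tilde G \subseteq \delta G$ adjacent to $V(P)$ are $v_0$ (or $v_1$) and the vertex following $R$, and one checks these do not create an exception — in particular $v_0, v_1 \notin B$ in the relevant cases, and the internal-vertex-of-$P$ hypotheses fail. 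Third, with $\tilde K$ an unexceptional canvas, minimality of $K$ forces $\tilde K$ weakly $\tilde f$-degenerate; unwinding Definition~\ref{def:degencanvas} and concatenating with the legal removal sequence for $R$ from Lemma~\ref{lem:delRlegal} yields that $G-P$ is weakly $f_K$-degenerate, i.e.\ $K$ is weakly $f$-degenerate, contradicting that $K$ is a counterexample. Therefore $\tilde K$ is not a canvas.

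**Main obstacle.** The conceptual content here is essentially bookkeeping: it has all been set up in the preceding pages. The one step requiring a little care is confirming that a hypothetical canvas $\tilde K$ cannot be exceptional — one must run through cases \ref{handle:vP''<=2}--\ref{handle:f223} just enough to see that the boundary vertices adjacent to $P$ in $\tilde G$ are exactly those that were adjacent to $P$ in $G$ (no new adjacencies to $P$ are created by deleting $R$, since $\delta G$ has no $1$-chord), so $\tilde K$ inherits unexceptionality from $K$ up to the new data $\tilde A, \tilde B$, and the new vertices placed into $\tilde A$ or $\tilde B$ are not adjacent to $V(P)$. Granting that, the corollary is immediate: a canvas $\tilde K$ would have to be unexceptional and hence, by minimality, weakly $\tilde f$-degenerate, contradicting that $K$ is a counterexample; so $\tilde K$ is not a canvas.
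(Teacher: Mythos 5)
Your overall skeleton matches the paper's: suppose $\tilde K$ is a canvas, argue it must then be unexceptional, and invoke minimality together with Lemma~\ref{lem:delRlegal} to conclude $K$ is weakly $f$-degenerate, a contradiction. But the step you wave through\textemdash ruling out the exceptional types\textemdash is exactly where the content of this corollary lies, and your justification for it rests on a false premise. You write that the relevant vertices lie in ``$\delta \tilde G \subseteq \delta G$'' and that ``no new adjacencies to $P$ are created by deleting $R$.'' The inclusion $\delta\tilde G\subseteq \delta G$ is simply not true: removing $R$ exposes vertices that were interior in $G$, and these newly exposed vertices are precisely the dangerous ones\textemdash they may already be adjacent to $V(P)$ in $G$ (Lemma~\ref{lem:1chord} only forbids chords of $\delta G$, it says nothing about edges from interior vertices to $P$), and their function values may have dropped during the removal of $R$, placing them in $\tilde A$ or $\tilde B$. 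Your assertion that ``the new vertices placed into $\tilde A$ or $\tilde B$ are not adjacent to $V(P)$'' is not something one can read off; it is what must be proved, and the paper does so via the 2-chord structure (Lemma~\ref{lem:2chords}): e.g.\ a vertex of $\tilde A\setminus V(\delta G)$ adjacent to both endpoints of $P$ would yield a 2-chord violating that lemma, which kills type~\ref{ex:A}, and a similar 2-chord analysis (using $g(a)\ge 5$ and that $b\in\tilde B\setminus V(\delta G)$ has two neighbours in $R$) kills type~\ref{ex:AB}.

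The type~\ref{ex:B} case is an even clearer gap: there the offending vertex $b\in\tilde B\setminus B$ adjacent to $u_1,u_2,u_3$ cannot be excluded by any adjacency bookkeeping at all. The paper's argument applies Lemma~\ref{lem:2chords} to the 2-chord $u_1bu_3$ to force an exceptional type~\ref{ex:B} structure on one side, deduces $v(\delta G)=4$ and, via Lemma~\ref{lem:triangles}, that $v(G)=5$ with $\deg(b)=4$ and $f(b)=4$, and only then reaches a contradiction with Lemma~\ref{lem:nolowdegs}. So the contradiction in that case comes from a degree/function-value analysis of a concrete five-vertex configuration, not from the nonexistence of the adjacencies. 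As written, your proposal would not survive this case, and the ``one checks'' covering the exceptional types needs to be replaced by the actual chord arguments.
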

\begin{proof}
    Suppose for a contradiction that $\tilde K$ is a canvas. We will argue that it is not exceptional, and thus contradict our assumption that $K$ is a minimal counterexample to Theorem~\ref{thm:inductive}.
    
    Since $\delta G$ has no 1-chord by Lemma \ref{lem:1chord} and $K$ is not exceptional, no vertex in $\tilde A \cap V(\delta G)$ has two neighbours in $P$. Moreover, no vertex in $\tilde A \setminus V(\delta G)$ neighbours both endpoints of $P$, as otherwise there is a 2-chord violating Lemma \ref{lem:2chords}.
    Thus, $\tilde K$ is not an exceptional canvas of type~\ref{ex:A}. 
    
    Similarly, we claim that $\tilde K$ is not an exceptional canvas of type~\ref{ex:AB}: to see this, suppose not. Let $b \in \tilde B$ be the vertex adjacent to two vertices of $P$, and let $a \in \tilde A$ be the vertex adjacent to an endpoint of $P$ and to $b$. 
    If $a \notin V(\delta G)$, then since $a \in \tilde A$ it follows that $a$ is adjacent to a vertex $x$ in $R$. But then $x,a$ and the endpoint of $P$ adjacent to $a$ form a 2-chord violating Lemma \ref{lem:2chords}. Hence $a \in V(\delta G)$.  Since $\delta G$ has no 1-chord by Lemma \ref{lem:1chord}, $b \not \in \delta G$, and hence since $b \in \tilde B$, we have that $b$ has two neighbours in $R$. But then $a$, $b$, and a neighbour of $b$ in $R$ form a 2-chord violating Lemma~\ref{lem:2chords} (since $g(a) \geq 5$).
    
    Suppose finally that $\tilde K$ is an exceptional canvas of type~\ref{ex:B}. 
    Then $v(P) = 3$, and $\tilde B \setminus B$ contains a vertex $b$ adjacent to $u_1, u_2$, and $u_3$. Since $b$ is adjacent to $u_1$ and $u_3$, Lemma \ref{lem:2chords} implies that $u_1bu_3$ separates $G$ into $G_1$ and $G_2$, where $(G_2, u_1bu_3, A, B, f)$ is an exceptional canvas of type~\ref{ex:B}. But then $v(\delta G) = 4$, and $b$ is adjacent to every vertex in $\delta G$.  
    By Lemma \ref{lem:triangles}, $v(G) = 5$ and so $\deg(b) = 4$. Since $g(b) = 3$ and $b \not \in \delta G$, we have that $f(b) = 4$. This contradicts Lemma \ref{lem:nolowdegs}.
\end{proof}

We have now proved that $\tilde K$ is not a canvas, defined $\tilde B$ such that it contains vertices of girth three, verified that all vertices in $\tilde A$ have girth at least five (Corollary \ref{cor:4s5sdidnotlosetoomuch}), and proved that every canvas property holds except the independence statements in~\ref{canv:A} and~\ref{canv:B}, it must be that $\tilde A$ or $\tilde B$ is not an independent set in $\tilde G$. 
Indeed, the statements of the upcoming Lemmas~\ref{lem:BBstructure} and \ref{lem:AAstructure} show that exactly one of these occurs.

\begin{lemma}\label{lem:BBstructure}
    If $\tilde B$ is not an independent set in $\tilde G$ then the following hold.
    \begin{enumerate}[label=\textup{(\roman*)}]
        \item\label{itm:BBw} $G-\delta G$ contains a vertex $w$ that is adjacent to $v_1$, $v_2$, and $v_3$,

        \item\label{itm:BBunique} $\tilde B$ induces exactly one edge $v_1w$ in $\tilde G$.
        \item\label{itm:BBv2}
        $N(v_2) = \{v_1,v_3,w\}$, and
        \item\label{itm:BBdefofR}
        $R=v_2v_3$ and is defined according to~\ref{case:f323} with $v(Q_2)=0$ and $Q_3=v_3$.
        %
    \end{enumerate}
\end{lemma}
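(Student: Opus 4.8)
\emph{Overview and setup.} The plan is to extract a single edge from inside $\tilde B$, read off a rigid local picture using Lemma~\ref{lem:dGtildeAprops} and the short‑cycle lemmas of Section~\ref{sec:properties}, and then check against the eight definitions of $R$ that the only surviving possibility is the one in~\ref{itm:BBw}--\ref{itm:BBdefofR}. Suppose $\tilde B$ is not independent and fix an edge $pq$ of $\tilde G$ with $p,q\in\tilde B$. By Lemma~\ref{lem:dGtildeAprops}\ref{itm:dGtildeABindep} the set $\tilde B\cap V(\delta G)$ is independent in $\tilde G$, so at least one of $p,q$, say $p$, lies in $V(\delta\tilde G)\setminus V(\delta G)$; that is, $p\in V(G-\delta G)$ but $p$ lies on the outer boundary of $\tilde G=G-R$. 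Since $p\in\tilde B$ we have $g_{\tilde G}(p)=3$, hence $g_G(p)=3$, and then~\ref{canv:fint} with Observation~\ref{obs:feq} gives $f(p)=4$. No $\delsave$ in Section~\ref{subsec:removeR} ever saves a colour towards an interior vertex, so $\tilde f(v)=f(v)-|N(v)\cap R|$ for all $v\in V(G-\delta G)$; thus $|N(p)\cap R|=f(p)-\tilde f(p)=2$, say $N(p)\cap R=\{a,b\}$. Also $\deg_G(p)>f(p)=4$ by Lemma~\ref{lem:nolowdegs}, and since $p\in\tilde B$ lies on a triangle of $\tilde G$, that triangle uses two neighbours of $p$ outside $R$.

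\emph{The neighbours $a,b$ are consecutive on $\delta G$.} If $a\not\sim b$, then $apb$ is a $2$-chord of $\delta G$ whose only vertex off $\delta G$ is $p$; since $a,b\in R\subseteq P''$ are not internal vertices of $P$, Lemma~\ref{lem:2chords} produces $c\in B$ adjacent to each of $a,p,b$. As $c\in V(\delta G)$ is adjacent to the $\delta G$-vertices $a,b$ and $\delta G$ has no $1$-chord (Lemma~\ref{lem:1chord}), the vertices $a,c,b$ are consecutive on $\delta G$, so $c\in R$. But then the $4$-cycle $apbc$ has empty interior (Lemma~\ref{lem:triangles}), and since its bounded side is exactly the side of $c$ not meeting the outer face, $c$ has no neighbour beyond $a,b$; so $\deg_G(c)=2=f(c)$, contradicting Lemma~\ref{lem:nolowdegs}. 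Hence $a\sim b$, and by Lemma~\ref{lem:1chord} they are consecutive on $\delta G$, say $a=v_\ell$, $b=v_{\ell+1}$, with the triangle $pv_\ell v_{\ell+1}$ bounding a face (Lemma~\ref{lem:triangles}).

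\emph{Pinning everything down.} Now $q$ satisfies $\tilde f(q)=2$, $g_{\tilde G}(q)=3$, $q\sim p$. If $q\notin V(\delta G)$ then $q$ is a second interior vertex with two consecutive neighbours on $R$ adjacent to $p$, which (together with Lemma~\ref{lem:triangles}, or the degree-$2$ trap above applied to a resulting $B$-vertex) is impossible; so $q\in V(\delta G)$, and since $\delta G$ has no $1$-chord, $q$ is one of the at most two vertices of $\delta G$ adjacent to $R$. Tracking the removal of $R$ on such a vertex and using $g_G(q)=3$ forces $q=v_1$ and $f(v_1)=3$; since $v_1\sim v_2$, $R$ begins at $v_2$, putting us in case~\ref{case:f3233} or~\ref{case:f323}. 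As $p\sim v_1$ and $p\sim v_\ell\in R$, if $\ell\ge3$ then $v_1pv_\ell$ is a $2$-chord and Lemma~\ref{lem:2chords} (plus the degree-$2$ trap when $\ell=3$) yields a contradiction; hence $\ell=2$, so $w:=p$ is adjacent to $v_1,v_2,v_3$, giving~\ref{itm:BBw}. Finally $f(v_3)\in\{2,3\}$ (as $v_3\in R$), and $f(v_3)=2$ would force $v_3\in B$ (the triangle $pv_2v_3$ gives $g_G(v_3)=3$, so~\ref{canv:fC3} excludes $f(v_3)=2$ outside $B$), a case ruled out by analysing how $R$ continues past $v_3$ via Observations~\ref{obs:whereistheAvert}, \ref{obs:whereisthelist4vert} and Lemmas~\ref{lem:Bmax}, \ref{lem:Amax}; so $f(v_3)=3$, and Observation~\ref{obs:whereisthelist4vert} then forces $Q_2$ empty, $Q_3=v_3$, $R=v_2v_3$ via~\ref{case:f323}, which is~\ref{itm:BBdefofR}. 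Re-running the argument shows $\tilde B$ induces no further edge (each would force another copy of this picture incompatible with the location of $R$), giving~\ref{itm:BBunique}; and for~\ref{itm:BBv2}, the triangles $v_1v_2w$, $v_2v_3w$ have empty interiors (Lemma~\ref{lem:triangles}) and together with the outer face exhaust the faces at $v_2$, so $N(v_2)=\{v_1,v_3,w\}$.

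\textbf{Main obstacle.} The delicate step is the third paragraph: a methodical pass through the eight definitions~\ref{case:vP''<=2}--\ref{case:f223} of $R$, in each case following how $\tilde f$ changes on the few vertices of $\delta G$ adjacent to $R$ and combining this with the surviving-triangle requirement at $p$ and with Lemmas~\ref{lem:triangles}, \ref{lem:5cycles}, \ref{lem:Bmax}, \ref{lem:Amax}, \ref{lem:nolowdegs} and Observations~\ref{obs:whereistheAvert}, \ref{obs:whereisthelist4vert} to discard every configuration except the claimed one.
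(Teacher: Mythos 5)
There is a genuine gap, and it sits exactly where you flag your ``main obstacle'': the case analysis that constitutes the actual content of the lemma is asserted rather than carried out. Two specific places fail. First, your dismissal of the possibility that \emph{both} endpoints of the $\tilde B$-edge are interior vertices is not a proof and the parenthetical justification is wrong: if $q\notin V(\delta G)$, then $q$ has two consecutive neighbours on $R$, but there is no reason these are ``adjacent to $p$'', and neither Lemma~\ref{lem:triangles} nor your degree-$2$ trap applies to this configuration. The paper's argument here is of a different nature: each of $p,q$ is adjacent to a consecutive pair of girth-$3$ vertices of $R$, Observation~\ref{obs:whereisthelist4vert} allows at most one vertex of $R$ with $f=3$ (and only the last one), so among the four $R$-neighbours three have $f=2$, forcing two adjacent vertices of $B$ and contradicting the independence of $B$. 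Nothing in your sketch substitutes for this.

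Second, the step ``Tracking the removal of $R$ on such a vertex and using $g_G(q)=3$ forces $q=v_1$ and $f(v_1)=3$'' hides precisely the work the lemma is for: you must rule out $q\in B$ (done in the paper via a 2-chord through $q$ whose type-\ref{ex:B} vertex would be adjacent to $q\in B$, contradicting independence) and rule out $q=v_{j+1}$ (done by a case-by-case pass over the definitions of $R$, using Lemma~\ref{lem:Bmax} and further 2-chords). Likewise, your route to $f(v_3)=3$ (``ruled out by analysing how $R$ continues past $v_3$'') is not an argument; the paper instead shows $v_2\in B$ via the 2-chord $v_1wv_3$ and Lemma~\ref{lem:2chords}, and then independence of $B$ forces $f(v_1)=f(v_3)=3$, which kills case~\ref{case:f3233} and yields~\ref{case:f323} with $v(Q_2)=0$, i.e.\ conclusion~\ref{itm:BBdefofR}. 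Since you explicitly defer this ``methodical pass through the eight definitions of $R$'', the proposal establishes the shape of the target configuration but not the lemma itself; the parts you do carry out (the consecutiveness of $p$'s $R$-neighbours via the empty $4$-cycle and the empty-triangle argument for $N(v_2)=\{v_1,v_3,w\}$) are fine, and in places a pleasant alternative to the paper's wording, but they are the easy parts.
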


\begin{proof}[Proof of Lemma~\ref{lem:BBstructure}]
Suppose that $\tilde B$ is not an independent set in $\tilde G$ and let $ww'$ be an edge induced by $\tilde B$ in $\tilde G$. 
We break into cases according to the size of $\{w,w'\}\cap V(\delta G)$.

\textbf{Case 1.} Suppose that $w$ and $w'$ are both in $\tilde B \setminus V(\delta G)$. We will show that this does not happen as it leads to a contradiction. 
By property~\ref{canv:fint} and Observation~\ref{obs:feq}, we have that $f(w) = 4$. 
Since no operation used to remove $R$ is of the form $\delsave(v,w)$ with $w\notin\delta G$, we have $\tilde f(w) = f(w) - |N(w)\cap R|$. 
Thus, $w$ must be adjacent (in $G$) to two vertices, say $v_q$, $v_r$, in $R$. 
Similarly, $w'$ is adjacent to two vertices $v_s$, $v_t$ in $R$. 
Without loss of generality, we may assume that $q < r$ and  $s < t$, and since $G$ is planar we may also assume that $r\leq s$. 
Note then that $v(R) \geq 3$, and so $R$ is defined according to either \ref{case:f3233}, \ref{case:f323}, \ref{case:f2233}, or \ref{case:f223}. Using Observation \ref{obs:whereisthelist4vert}, it follows that $w$ and $w'$ are each adjacent to \emph{exactly} two vertices in $R$, as otherwise there is a 2-chord violating Lemma \ref{lem:2chords} (as one of the endpoints of this chord would have function value 2). But then again by Lemma \ref{lem:2chords}, we have that $r = q+1$ and $t = s+1$, and so $g(v_q)=g(v_r) = g(v_s) = g(v_t) = 3$. 
By the definition of $R$, both $v_2$ and $v_3$ are in $V(R)$.

Since $B$ is an independent set in $G$, at least one vertex $v \in \{v_s, v_t\}$ has $f(v) = 3$. 
Recall that we define $j$ such that $v_j$ is last vertex of $R$.
By Observation \ref{obs:whereisthelist4vert} again, since $R$ is defined according to either \ref{case:f3233}, \ref{case:f323}, \ref{case:f2233}, or \ref{case:f223}, $R$ has at most one vertex $v$ with $f(v) = 3$, and if such a vertex exists, it is the vertex $v_j$. Hence $v_t = v_j$, $f(v_t) = 3$, and $3 \not \in \{f(v_q), f(v_r), f(v_s)\}$. 
But then $v_q$ and $v_r$ are adjacent vertices of girth three with $f(v_q) = f(v_r) = 2$, contradicting the fact that $B$ is an independent set. 

\textbf{Case 2.} Suppose that $w$ and $w'$ are both in $\tilde B \cap V(\delta G)$. 
Again, we will show that this does not happen as it leads to a contradiction. 
$R$ is not defined according to case~\ref{case:vP''<=2} because then $V(\delta G-P)\cap V(\delta\tilde G)\subset\{v_0\}$ and contains no vertices of girth three.
Since $B$ is independent, without loss of generality we may assume that $w \in (\tilde{B}\setminus B) \cap V(\delta G)$ and thus $f(w)=3$ by~\ref{canv:fC3} and Observation~\ref{obs:feq}.  
Since $\delta G$ has no 1-chord by Lemma \ref{lem:1chord}, every vertex $v\in \delta G$ has its function value drop by at most 1 when we remove $R$, and hence a vertex $v\in \tilde B\cap V(\delta G)$ either lies in $B$ or has $f(v)=3$ and is adjacent to an endpoint of $R$.
Since $g(v_0)\ge 5$ (if it exists), we have that $v_0 \not \in \{w, w'\}$ and so either $w=v_1$ or $w=v_{j+1}$. 

If $w = v_1$, then $v_1 \notin V(R)$ and so since $\delta G$ has no 1-chord (Lemma~\ref{lem:1chord}) we have that $w'=v_2$ (and hence $v_2 \notin V(R)$). Inspecting the cases reveals that whenever $v_1\notin V(R)$ we have $v_2\in V(R)$, a contradiction.

Suppose now $w = v_{j+1}$. Note that in all cases except~\ref{case:vP''<=2} (which we already ruled out), either $Q_3$ is the terminal segment of $R$ or $R$ contains none of $Q_3$. Since is $Q_3$ defined as the maximal subpath of $P''$ immediately following $Q_2$ on which $f$ is $3$, since $w=v_{j+1}$ then $R$ is defined to contain none of $Q_3$.  Hence $R$ is defined according to one of cases~\ref{case:f33}, \ref{case:f3233}, \ref{case:f23}, \ref{case:f233} and~\ref{case:f2233}. 
Again, since $\delta G$ has no 1-chord by Lemma~\ref{lem:1chord}, we must have $w'=v_{j+2}$ and this means $f(v_{j+2})=2$ and $v(Q_3)=1$.
This leaves only case~\ref{case:f33}. But then since $f(w) = 3$, we have that $\delta G-P$ contains a path $v_1v_2w$ on three vertices of girth three on which $f=3$, violating Lemma~\ref{lem:Bmax}.

\textbf{Case 3.} Since the two cases above cannot occur, we conclude that if $\tilde B$ is not independent in $\tilde G$ then there exists a vertex $w \in \tilde B \setminus V(\delta G)$ and vertex $w'$ in $\tilde B \cap V(\delta G)$ such that $ww' \in E(\tilde G)$. 
This rules out being in case~\ref{case:vP''<=2} because then $V(\delta G-P)\cap V(\delta\tilde G)\subset\{v_0\}$ and contains no vertices of girth three.

Since $g(w)=3$ and $w \in \tilde B \setminus V(\delta G)$, it follows from~\ref{canv:fint} for the canvas $K$ that $w$ is adjacent to at least two vertices $v_s$ and $v_t$ in $R$, where without loss of generality $s < t$. 
That $v(R)\ge 2$ rules out case~\ref{case:f233}. 
Similar to Case 2, a vertex $w'\in \tilde B\cap V(\delta G)$ occurs either when $w'\in B$, or when $f(w')=3$ and $w'$ is adjacent to an endpoint of $R$. 

We first rule out the case $w'\in B$. 
If this occurs, then inspecting the cases we see that $w' = v_r$ for some $r > t$. 
But then the 2-chord $v_sww'$ violates Lemma~\ref{lem:2chords} because its endpoints are not internal vertices of $P$ and $w'\in B$.
Then it must be that $w'\notin B$ which means $f(w')=3$ and $w'$ is adjacent to an endpoint of $R$. 

We now rule out the case $w'=v_{j+1}$. 
As in Case 2, if $v_{j+1}\in \tilde B$ with $f(v_{j+1})=3$ then we must be in a case where $R$ contains none of $Q_3$. This leaves cases~\ref{case:f33}, \ref{case:f3233}, \ref{case:f23}, and~\ref{case:f2233}. 
We cannot be in case~\ref{case:f33} as then $R=v_1v_2$ and we have that $s=1$, $t=2$, and so $v_1v_2v_3$ is a path of three girth three vertices in $\delta G$ on which $f$ is $3$, violating Lemma~\ref{lem:Bmax}.
We cannot be in case~\ref{case:f23} as then $j=2$ and in case~\ref{case:f23} we have $f(v_{j+1})\le 2$. 
This leaves cases~\ref{case:f3233} and~\ref{case:f2233}, but in both of these cases $R$ contains no vertices on which $f$ is three and so the 2-chord $v_sww'$ contradicts Lemma~\ref{lem:2chords} because $f(v_s) = 2$ and $B$ is an independent set.

It follows that $w'$ is adjacent to the endpoint of $R$ that is not $v_j$; and since $g(v_0) \geq 5$ (if it exists), inspecting the cases shows $w' = v_1$. 
Given the cases ruled out above, we are now in a position to establish the essential structure described by the lemma.

Since $v_1 \notin B$, we have that $f(v_1)=3$. 
Since moreover $v_1\notin R$, these facts rule out every case except~\ref{case:f3233} and~\ref{case:f323}.
Recall that we have $w\in \tilde B\setminus V(\delta G)$ which we know is adjacent to $w'=v_1$ and to $v_s$ and $v_t$ with $1<s<t$. 
If $t \geq 4$ then the 2-chord $v_1wv_t$ contradicts Lemma \ref{lem:2chords}, and hence $t = 3$. This gives $s=2$, establishes~\ref{itm:BBw}, and gives $g(v_1)=g(v_2)=g(v_3)=3$. 

If $v_2 \notin B$, then the 2-chord $v_1wv_3$ contradicts Lemma~\ref{lem:2chords}, and hence $v_2\in B$ with $f(v_2)=2$. 
The fact that $B$ is independent then forces $f(v_1)=f(v_3)=3$ and hence $v(Q_2)=\emptyset$ and $v(Q_3)\ge1$.
But this rules out case~\ref{case:f3233} as we have cannot have both $v(R)\ge 2$ and $v(Q_2)=0$ in this case.
Since the only remaining case we can be in is~\ref{case:f323}, this establishes~\ref{itm:BBdefofR}. 
By the planarity of $G$, there is at most one vertex in $\tilde B \setminus V(\delta G)$ adjacent to all of $v_1$, $v_2$, and $v_3$, and thus the vertex $w$ as described in the lemma is unique. 
This means that $\tilde B$ induces exactly one edge, as claimed by~\ref{itm:BBunique}. 
Finally, item~\ref{itm:BBv2} follows from the planarity of $G$ and Lemma~\ref{lem:triangles}: the interiors of the triangles $wv_1v_2w$ and $wv_2v_3w$ are empty.
\end{proof} 

\begin{lemma}\label{lem:AAstructure}
    If $\tilde A$ is not an independent set in $\tilde G$, then the following hold:
    \begin{enumerate}[label=\textup{(\roman*)}]
        \item $G$ contains a $3$-chord $v_1w_1w_2v_3$ such that $g(w_1) = g(w_2) = 5$,\label{itm:AA3chord}
        \item\label{itm:AAunique} $\tilde A$ induces exactly one edge $w_1w_2$ in $\tilde G$,
        \item\label{itm:onlynbrsw1w2} the only neighbour of $w_1$ in $\delta 
        G-P$ is $v_1$, and the only neighbour of $w_2$ in $\delta G-P$ is $v_3$, and\item\label{itm:AAdefofR}
        $R$ is defined according to \ref{case:f2233} or \ref{case:f223}.
    \end{enumerate}
\end{lemma}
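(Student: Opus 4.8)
The plan is to follow the template of Lemma~\ref{lem:BBstructure}, with Lemma~\ref{lem:3chords} playing the role that Lemma~\ref{lem:2chords} played there and Lemma~\ref{lem:Amax} the role of Lemma~\ref{lem:Bmax}. The essential difference is that the edge witnessing the failure of independence now joins two \emph{interior} vertices of girth $5$, so the surviving configuration is the $3$-chord of item~\ref{itm:AA3chord} rather than a $2$-chord; the bulk of the work is ruling out every other possibility and then pinning down the structure exactly.

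Assume $\tilde A$ is not independent in $\tilde G$ and fix an edge $w_1w_2$ it induces. By Corollary~\ref{cor:4s5sdidnotlosetoomuch} we have $g(w_1), g(w_2)\ge 5$ and $\tilde f(w_1)=\tilde f(w_2)=1$. I would first show both $w_i$ lie in $V(G)\setminus V(\delta G)$. By Lemma~\ref{lem:dGtildeAprops}\ref{itm:dGtildeABindep}, $\tilde A\cap V(\delta G)$ is independent in $\tilde G$, so $w_1$ and $w_2$ are not both in $V(\delta G)$. Suppose exactly one, say $w_1$, lies in $V(\delta G)$. Since $g(w_2)\ge 5$, Corollary~\ref{cor:notmanyRnbrs} and $f(w_2)=2$ (by~\ref{canv:fint} and Observation~\ref{obs:feq}) give that $w_2$ has a unique neighbour $v_s$ on $R$; then $v_s w_2 w_1$ is a $2$-chord (its endpoints are not internal to $P$ and $v_s\not\sim w_1$, as otherwise $v_sw_1w_2$ is a triangle through $w_2$). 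Lemma~\ref{lem:2chords} then supplies a vertex $b\in B$ adjacent to $v_s$, $w_2$, and $w_1$, whence $v_s b w_1 w_2 v_s$ is a $4$-cycle through $w_2$ --- contradicting $g(w_2)\ge 5$. Hence $w_1,w_2\in V(G)\setminus V(\delta G)$.

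Now $f(w_1)=f(w_2)=2$, and each $w_i$ has exactly one neighbour $v_{s_i}$ on $R$. Since $g(w_1)\ge 5$, the vertices $v_{s_1}$ and $v_{s_2}$ are distinct and non-adjacent, hence not consecutive on the cycle $\delta G$; so the arc of $\delta G$ between them contains further vertices and, after possibly swapping names so that $s_1<s_2$, the path $H:=v_{s_1}w_1w_2v_{s_2}$ is a genuine $3$-chord with middle vertices of girth $\ge 5$ and endpoints not internal to $P$. By Lemma~\ref{lem:3chords}, $H$ separates $G$ into $G_1$ and $G_2$ with $(G_2,H,A,B,f)$ exceptional of type~\ref{ex:A}; let $a\in A$ be adjacent to both $v_{s_1}$ and $v_{s_2}$. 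Since $A\subseteq V(\delta G)$ and $\delta G$ is chordless (Lemma~\ref{lem:1chord}), $a$ is a common $\delta G$-neighbour of $v_{s_1}$ and $v_{s_2}$ lying on the $G_2$-side arc, which forces $s_2=s_1+2$ and $a=v_{s_1+1}$; because $f(a)=1$ and $a\in V(R)$, Observation~\ref{obs:whereistheAvert} gives $a=v_2$, so $s_1=1$ and $s_2=3$. In particular $v_1,v_2,v_3\in V(R)$ with $v_2\in A$, so $v_3\ne u_k$ and $v(P'')\ge 3$; inspecting \ref{case:vP''<=2}--\ref{case:f223} shows the only rules with $\{v_1,v_2,v_3\}\subseteq V(R)$ are \ref{case:f2233} and \ref{case:f223}, which is~\ref{itm:AAdefofR}. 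The $5$-cycle $v_1w_1w_2v_3v_2v_1$ contains $w_1$ and $w_2$, so $g(w_1)=g(w_2)=5$, giving~\ref{itm:AA3chord}; by Lemma~\ref{lem:5cycles} this $5$-cycle has empty interior, and together with planarity this both determines $w_1,w_2$ and shows $\tilde A$ induces no further edge, giving~\ref{itm:AAunique}. Finally, any neighbour of $w_1$ in $\delta G-P$ other than $v_1$ would be non-adjacent to $v_1$ (else a triangle through $w_1$), so $v_1w_1$ would begin a $2$-chord whose type-\ref{ex:B} exception (Lemma~\ref{lem:2chords}) yields a $4$-cycle through $w_1$, contradicting $g(w_1)=5$; the symmetric argument for $w_2$ and $v_3$ establishes~\ref{itm:onlynbrsw1w2}.

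The step I expect to be the main obstacle is not a single displayed case but the bookkeeping underlying two of the moves above: verifying rule-by-rule in \ref{case:vP''<=2}--\ref{case:f223} and \ref{handle:vP''<=2}--\ref{handle:f223} that a vertex of $\delta G-P$ adjacent to an endpoint of $R$ never ends up in $\tilde A\setminus A$ (this is what makes Lemma~\ref{lem:dGtildeAprops}\ref{itm:dGtildeABindep} applicable and what forces the endpoint of $H$ to be $v_1$ rather than a later vertex of $R$). As in Lemma~\ref{lem:BBstructure}, keeping track of which endpoint of $R$ is deleted versus saved, and the resulting local values of $f$ along $\delta G-P$, is the delicate part.
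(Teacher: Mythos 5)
Your proposal is correct and takes essentially the same route as the paper's proof: both endpoints of the offending edge are interior vertices of girth at least five, each with a unique neighbour on $R$, yielding a $3$-chord to which Lemma~\ref{lem:3chords} (the case of two internal vertices of girth at least five) applies; Observation~\ref{obs:whereistheAvert} then forces the type-\ref{ex:A} vertex to be $v_2$ and the endpoints to be $v_1,v_3$, whence cases \ref{case:f2233}/\ref{case:f223}, with uniqueness and item~\ref{itm:onlynbrsw1w2} following from Lemma~\ref{lem:5cycles} plus planarity and from Lemma~\ref{lem:2chords}, exactly as in the paper. The only cosmetic difference is that you establish that both endpoints lie off $\delta G$ via Lemma~\ref{lem:dGtildeAprops}\ref{itm:dGtildeABindep} together with an extra $2$-chord argument, whereas the paper gets this directly by inspecting cases \ref{handle:vP''<=2}--\ref{handle:f223} to see that $\tilde A\setminus A$ avoids $V(\delta G)$ (and disposes of a neighbour in $\tilde A\cap A$ by a $2$-chord), so the bookkeeping you flag as a possible obstacle is already covered by the established lemmas you cite.
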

\begin{proof}
    Suppose that $\tilde A$ is not an independent set, and let $uu'$ be an edge induced by $\tilde A$. 
    Since $A$ is independent, we may assume that $u\in \tilde A\setminus A$.
    Note that every such $u$ is adjacent in $G$ to a vertex $v$ in $R$. Moreover, inspecting the cases~\ref{handle:vP''<=2}--\ref{handle:f223} shows every vertex in $\tilde A \setminus A$ (and in particular, $u$) is not in $V(\delta G)$.

    If $u$ is adjacent in $\tilde G$ to a vertex $u'$ in $\tilde A \cap A$, it follows that $u'uv$ is a 2-chord that violates Lemma \ref{lem:2chords}, a contradiction. 
    Thus, $u' \in \tilde A \setminus A$. 
    Since $u$ and $u'$ each have neighbours in $R$, there exist vertices $v_s$ and $v_t$ in $R$ where $uv_s \in E(G)$ and $u'v_t \in E(G)$. Since $g(u) \geq 5$, the vertices $v_s$ and $v_t$ are distinct and non-adjacent. Hence $v_suu'v_t$ is a 3-chord, and so \ref{itm:AA3chord} holds. 
    
    By Lemma~\ref{lem:3chords} (the case where both internal vertices of the 3-chord have girth at least five), the 3-chord $v_suu'v_t$ separates $G$ into $G_1$ and $G_2$, where $G_2$ has the structure of an exceptional canvas of type~\ref{ex:A}. 
    By Observation~\ref{obs:whereistheAvert}, we conclude that $v_2 \in A$, and without loss of generality  $v_s = v_1$ and $v_t = v_3$.
    Moreover, since $R$ contains $v_1$ and $v_3$, we have that $R$ was defined according to \ref{case:f2233} or \ref{case:f223}, and thus~\ref{itm:AAdefofR} holds. Statement~\ref{itm:onlynbrsw1w2} is an easy consequence of Lemma \ref{lem:2chords} and the fact that $g(w_1) = g(w_2) = 5$.

    Suppose now that $\tilde A$ induces two distinct edges, say $uu'$ and $ww'$ (note that it may be the case that $\{u,u'\}\cap \{w,w'\}\ne \emptyset$). 
    As argued above, there must now exist distinct 3-chords $v_1uu'v_3$ and $v_1ww'v_3$. 
    As $G$ is planar, without loss of generality $u'$ lies in the bounded face of the cycle $v_1ww'v_3v_2v_1$. This contradicts Lemma \ref{lem:5cycles}, and hence~\ref{itm:AAunique} holds.
\end{proof}

Note that if $\tilde B$ is not independent, then by Lemma \ref{lem:BBstructure}\ref{itm:BBdefofR} we have that $R$ was defined according to \ref{case:f323}. 
If $\tilde A$ is not independent, then by Lemma \ref{lem:AAstructure}\ref{itm:AAdefofR} we have that $R$ was defined according to \ref{case:f2233} or \ref{case:f223}. 
As the cases are mutually exclusive, exactly one of $\tilde A$ and $\tilde B$ is not independent. 
Combined with our earlier analysis, we have the following observation.

\begin{obs}\label{obs:almostcanvas}
    $\tilde{K}$ satisfies all canvas properties except exactly one of the following: either $\tilde{B}$ is not independent, or $\tilde{A}$ is not independent. 
\end{obs}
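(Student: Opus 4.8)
The plan is to assemble facts that have already been put in place. First I would invoke the corollary immediately preceding Lemma~\ref{lem:BBstructure}, which establishes that $\tilde K$ is not a canvas, together with the running verification carried out in this section: properties~\ref{canv:acceptable} and~\ref{canv:fint} hold for $\tilde K$ because $K$ is a canvas; properties~\ref{canv:fCn3} and~\ref{canv:fC3} hold by the definitions of $\tilde A$ and $\tilde B$; and Lemma~\ref{lem:dGtildeAprops} together with Corollary~\ref{cor:4s5sdidnotlosetoomuch} gives the girth requirement inside~\ref{canv:A} as well as properties~\ref{canv:fA} and~\ref{canv:fB}. Hence the only defining conditions of a canvas that have not yet been verified for $\tilde K$ are the independence of $\tilde A$ in $\tilde G$ (part of~\ref{canv:A}) and the independence of $\tilde B$ in $\tilde G$ (part of~\ref{canv:B}). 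Since $\tilde K$ fails to be a canvas, at least one of these two independence statements must fail.

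Next I would rule out the possibility that both fail simultaneously. Suppose for contradiction that neither $\tilde A$ nor $\tilde B$ is independent in $\tilde G$. Applying Lemma~\ref{lem:BBstructure}\ref{itm:BBdefofR} to the non-independence of $\tilde B$ shows that $R$ was defined according to case~\ref{case:f323}, while applying Lemma~\ref{lem:AAstructure}\ref{itm:AAdefofR} to the non-independence of $\tilde A$ shows that $R$ was defined according to case~\ref{case:f2233} or case~\ref{case:f223}. But $R$ is defined according to the lexicographically minimal applicable case among~\ref{case:vP''<=2}--\ref{case:f223}, and in particular these cases are mutually exclusive, so $R$ cannot simultaneously fall under~\ref{case:f323} and under one of~\ref{case:f2233},~\ref{case:f223}. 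This contradiction shows that exactly one of $\tilde A$, $\tilde B$ fails to be independent, which together with the first paragraph establishes the observation.

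The hard part here is not the observation itself — it is purely a bookkeeping step — but rather the two structural lemmas it relies on, Lemmas~\ref{lem:BBstructure} and~\ref{lem:AAstructure}, which pin down the exact case in the definition of $R$ that must hold whenever the respective independence condition is violated. Once those are in hand, the present statement follows immediately from the mutual exclusivity of the cases defining $R$.
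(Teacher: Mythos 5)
Your proposal is correct and follows essentially the same route as the paper: it cites the same prior verifications (properties~\ref{canv:acceptable}, \ref{canv:fint}, \ref{canv:fCn3}, \ref{canv:fC3}, Lemma~\ref{lem:dGtildeAprops}, Corollary~\ref{cor:4s5sdidnotlosetoomuch}) and the corollary that $\tilde K$ is not a canvas to get that at least one independence condition fails, and then uses Lemma~\ref{lem:BBstructure}\ref{itm:BBdefofR} versus Lemma~\ref{lem:AAstructure}\ref{itm:AAdefofR} together with the mutual exclusivity of the cases \ref{case:vP''<=2}--\ref{case:f223} defining $R$ to rule out both failing at once. This matches the paper's argument exactly.
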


In the following sections, we complete the proof of Theorem~\ref{thm:inductive} by tackling each case separately.
In Section \ref{sec:tildeB} we handle the case where $\tilde B$ induces an edge, and in Section \ref{sec:tildeA} we consider the case where $\tilde A$ induces an edge.

\section{Handling the edge in \texorpdfstring{$\tilde G[\tilde B]$}{tildeG[tildeB]}}\label{sec:tildeB}

In this section we assume that $\tilde{B}$ is not independent, and therefore the necessary structure established by Lemma~\ref{lem:BBstructure} exists. 
The arguments in this section are inspired by those in \cite{dvovrak20175} and \cite{bernshteyn2024weak} (itself inspired by \cite{dvovrak20175}). The relevant proof in \cite{dvovrak20175} involves a coordinated colouring choice, which in general has no analogue in the weak degeneracy setup; our reductions are thus more akin to those in \cite{bernshteyn2024weak}. Still, the analysis requires more care as the canvases in \cite{bernshteyn2024weak} do not involve the set $A$, for instance, so their weak degeneracy function is substantially simpler. 
It is also not valid to assume that $G$ is a near-triangulation in our setup.

The following lemma shows that when $\tilde B$ is not an independent set, $K$ is not a counterexample to Theorem \ref{thm:inductive}, thus reaching the desired contradiction.

\begin{lemma}\label{lem:BBhandle}
    If $\tilde{B}$ is not independent then $K$ is not a counterexample to Theorem \ref{thm:inductive}.
\end{lemma}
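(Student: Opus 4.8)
The plan is to use the detailed structure provided by Lemma~\ref{lem:BBstructure} to build a smaller canvas (or directly carry out a legal sequence of operations) and reach the usual contradiction with the minimality of $K$. By Lemma~\ref{lem:BBstructure} we may assume: there is a vertex $w\in V(G-\delta G)$ adjacent exactly to $v_1,v_2,v_3$; $N(v_2)=\{v_1,v_3,w\}$; and $R=v_2v_3$ is defined according to~\ref{case:f323} with $v(Q_2)=0$ and $Q_3=v_3$, which forces $f(v_1)=f(v_3)=3$, $f(v_2)=2$, $v_2\in B$ (so $g(v_1)=g(v_2)=g(v_3)=3$, and $g(w)=3$), and $v_4=v_{j+1}$ is the vertex immediately following $R$ with $f(v_4)\le 2$.

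First I would set up the reduced object. The natural move is to delete $v_2$ (it has only three neighbours and small function value) and then also handle $w$ and the boundary vertices $v_1,v_3$ appropriately. Concretely, I would consider the canvas obtained from $G$ by deleting $v_2$, then growing $P$ along the boundary, or alternatively deleting $v_1,v_2,v_3$ from the boundary and recording the resulting function on the remaining graph; the vertex $w$ then becomes a new boundary vertex of girth three. Since $\tilde B$ was the only failed independence condition and it failed \emph{only} because of the edge $v_1w$, the trick (mirroring the approach in \cite{bernshteyn2024weak}) is to avoid creating that edge: rather than putting $w$ into $B$, we ensure $w$ retains enough function value. Because $f(w)=4$ and $w$ has exactly three neighbours $v_1,v_2,v_3$ in the path we are removing, after removing $v_1,v_2,v_3$ the vertex $w$ has function value $4-3=1$; this would force $w\in A$, but $g(w)=3$, contradicting the girth condition for $A$. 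So instead I would remove only \emph{two} of $v_1,v_2,v_3$ via $\del$ and the third via a $\delsave$ that saves $w$: for instance remove $v_2$ by $\del(v_2)$, then $v_1$ and $v_3$ — one of them, say $v_1$, by $\delsave(v_1,w)$ and the other, $v_3$, by $\delsave(v_3,v_4)$ (legal since $f(v_3)=3>f(v_4)$ when $v_4\ne u_k$, and $\delsave(v_3,v_4)=\del(v_3)$ otherwise; here one checks $\delsave(v_1,w)$ is legal since $f(v_1)=3>f(w)$ is \emph{false} in general — so more care is needed, see below). The upshot is a graph $G'=G-\{v_1,v_2,v_3\}$ with a boundary path $P$, sets $A'=\tilde A$ (still independent by Observation~\ref{obs:almostcanvas}), $B'$ chosen to be $\tilde B$ minus the offending vertices, and a function $f'$ on which $w$ has value $\ge 2$.

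The key steps, in order, would be: (1) invoke Lemma~\ref{lem:BBstructure} to fix the local structure around $R$; (2) use Lemma~\ref{lem:triangles} to note the triangles $wv_1v_2$, $wv_2v_3$ have empty interiors, so $v_2$ is an internal vertex of a near-triangulated region and $\delta G$ runs $\ldots v_1 v_2 v_3 v_4 \ldots$ with $w$ just inside; (3) apply the minimality of $K$ to the canvas $(G-\delta G\text{-path-segment},P,A,B,f)$ as in Lemmas~\ref{lem:triangles} and~\ref{lem:5cycles} to colour/remove the bulk of $G$; (4) carefully order a short sequence of $\del$ and $\delsave$ operations to remove $v_1,v_2,v_3$ (and update $w$) in a way that is legal starting from $f_K$, crucially saving a unit of $w$'s function value so that $w\notin A$ and $w\notin B$ in the reduced canvas, and saving a unit at $v_4$ where needed; (5) verify the reduced tuple is an unexceptional canvas, noting that $B'$ is now independent (we broke the only bad edge) and $A'$ remains independent, and that no exception of type~\ref{ex:B}, \ref{ex:A} or~\ref{ex:AB} is created — here Lemma~\ref{lem:triangles} and Lemma~\ref{lem:nolowdegs} rule out the degenerate small configurations, much as in the final corollary before this section; (6) concatenate the operation sequences and apply Lemma~\ref{lem:delRlegal}-style reasoning to conclude $K$ is weakly $f$-degenerate, contradiction.

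The main obstacle I anticipate is step (4): arranging the weak-degeneracy operations so that $w$ ends up with function value $2$ rather than $1$, since a naive deletion of $v_1,v_2,v_3$ drops $f(w)$ from $4$ to $1$. A $\delsave$ saving $w$ requires the deleted neighbour to have strictly larger function value than $w$ at that moment, and $f(v_1)=f(v_2)=f(v_3)=3<4=f(w)$, so we cannot directly save $w$ from any of them. The resolution must be to delete $v_2$ first (dropping $f(w)$ to $3$ and $f(v_1),f(v_3)$ to $2$), then use the structure to delete one more vertex while saving $w$ — but now $f(v_1)=2<3=f(w)$, still the wrong direction. The genuine fix is likely that $w$ \emph{should} be placed in $\tilde B$ and the resulting edge of $\tilde B$ is acceptable because it sits inside an \emph{exceptional} configuration that Theorem~\ref{thm:inductive} permits; i.e. one argues $\tilde K$ with $w\in\tilde B$ would be an exceptional canvas of type~\ref{ex:B}, forcing $v(G)$ to be tiny by Lemma~\ref{lem:triangles}, and then a direct check using Lemma~\ref{lem:nolowdegs} finishes. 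So the delicate point is deciding whether to break the bad edge by a saved operation or to push into a controlled exceptional case; working out which of these actually goes through — and handling the interaction with the vertex $v_4$ and the independent set $\tilde A$ — is where the real work lies.
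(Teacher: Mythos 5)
Your proposal correctly isolates the crux — after removing $v_1,v_2,v_3$ the vertex $w$ would drop from $4$ to $1$, forcing it into the new $A$-set despite $g(w)=3$, and no $\delsave$ can rescue $w$ because its function value exceeds that of each of $v_1,v_2,v_3$ — but it does not resolve it, and the resolution you guess at is not viable. Placing $w\in\tilde B$ and arguing that the offending edge "sits inside an exceptional configuration of type~\ref{ex:B}" does not work: exception~\ref{ex:B} requires a $B$-vertex adjacent to all three vertices of a three-vertex path $P$, whereas here $v_1,v_2,v_3$ lie on $\delta G - P$, so the edge $v_1w$ in $\tilde G[\tilde B]$ creates no exception of Theorem~\ref{thm:inductive}, and nothing in the structure of Lemma~\ref{lem:BBstructure} forces $v(G)$ to be small (the triangles $wv_1v_2$ and $wv_2v_3$ having empty interiors says nothing about the rest of $G$). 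Since you explicitly defer "which of these actually goes through" to future work, the proof as proposed has a genuine gap at exactly the step the lemma exists to handle.

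The paper's actual device is different from both of your alternatives: one does not remove $v_2$ at all in the first phase. The sequence is $\delsave(v_3,v_4)$, $\del(v_0)$, $\delsave(v_1,v_2)$ (legal since after the first two operations $v_1$ has value $2$ and $v_2$ has value $1$), so $w$ loses only the two units coming from $v_1$ and $v_3$ and retains value $2$; it is then placed in $\hat B$ of the smaller canvas $\hat K$ on $\hat G = G-\{v_0,v_1,v_2,v_3\}$, and $\hat B$ is independent precisely because $v_1$ has been deleted (a 2-chord argument via Lemma~\ref{lem:2chords} rules out any other neighbour of $w$ in $\hat B$). Meanwhile $v_2$ survives as a pendant vertex of degree one (its only remaining neighbour is $w$, by Lemma~\ref{lem:BBstructure}\ref{itm:BBv2}) with function value $1$; the legal sequence $\tau$ for $\hat K$ provided by minimality remains legal with this pendant present, since the eventual removal of $w$ drops $v_2$ only to $0$, and a final $\del(v_2)$ completes the contradiction. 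So the missing idea is to break the bad $\tilde B$-edge by deleting $v_1$ (saving $v_2$, not $w$) and to defer the removal of $v_2$ to the very end, rather than trying to protect $w$'s function value directly or to absorb the failure into an exceptional case.
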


\begin{proof}
By Lemma \ref{lem:BBstructure}, if $\tilde B$ is not independent then $\tilde{G}[\tilde{B}]$ contains exactly one edge $v_1w$, where the vertex $w$ is also adjacent to $v_2$ and $v_3$, and we are in case~\ref{case:f323} with $R=v_2v_3$. 
The lemma also gives that $|N(v_2)|=\{v_1,v_3,w\}$.

Let $\sigma$ be the sequence 
\[ \sigma = \delsave(v_3,v_4), \del(v_0), \delsave(v_1,v_2). \]
Let $f_K(v) = f(v)-|N(v) \cap V(P)|$, and let $(\overline G, \overline f)$ be the graph-function pair obtained by performing the sequence of operations $\sigma$ starting from $(G-P, f_K)$.
Then $v_2\in \overline G$ with $\deg_{\overline G}(v_2)=1$ by Lemma~\ref{lem:BBstructure}\ref{itm:BBv2}, and $\overline f(v_2)=1$. 
Let $\hat G := G - \{v_0,v_1,v_2,v_3\}$ and let $\hat f(v) = \overline f(v) + |N(v)\cap P|$ be a function with domain $V(\hat G)\cup\{v_2\}$. 
Let $\hat{A}$ be the set of vertices $v \in  V(\delta \hat{G}) \setminus V(P)$ with $\hat{f}(v) \leq 1$, and let $\hat{B}$ be the set of vertices $v \in V(\delta \hat{G}) \setminus V(P)$ with $\hat{f}(v) = 2$ and $g_{\hat G}(v) = 3$. 

\begin{figure}[ht]
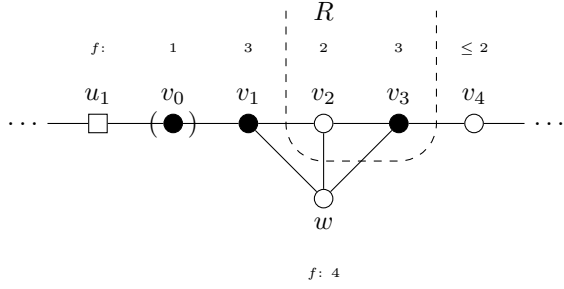
\centering
    \includestandalone{figures/fig_BB}
    \captionsetup{width=0.9\textwidth}
    \caption{Necessary graph structure in the case that $\tilde B$ is not independent. 
    The black vertices are removed by the sequence of operations $\sigma$ and $\hat G = G - \{v_0,v_1,v_2,v_3\}$.
    Note that it may be the case that $v_4 = u_k$ (and hence $v_4\in V(P)$).
    \label{fig:BB}}
\end{figure}

We will argue that $\hat{K}:=(\hat{G}, P, \hat{A}, \hat{B}, \hat{f})$ is an unexceptional canvas.
Then by minimality, $\hat K$ is $\hat f$-degenerate and hence there exists a legal sequence of operations $\tau$ which, starting with $(\hat G-P, \hat f_{\hat K})$ reaches the empty graph. 
We claim that the sequence $\tau$ is legal starting with the graph $\overline G$ (which we think of as $\hat G -P +v_2$) and function $\hat f$ (which we carefully defined such that $\hat f(v_2)=1$). 
This is because $\tau$ is legal starting with $(\overline G-v_2,\hat f)$, and because $v_2$ has degree $1$ in $\overline G$. Since $\hat f(v_2)=1$, after $\tau$ is performed the remaining graph-function pair consists of the isolated vertex $v_2$ with function value zero. 
If we also show that $\sigma$ is a legal sequence when starting with $(G-P, f_K)$, then the sequence of operations $\sigma\tau\del(v_2)$ is legal when starting with $(G-P, f_K)$ and shows that $K$ is $f$-degenerate---a contradiction.

\begin{claim}\label{claim:Khatiscanvas}
    $\hat{K}$ is a canvas and $\hat B\setminus B=\{w\}$.
\end{claim}

\begin{cproof}[Proof of claim]     
First, note that~\ref{canv:acceptable} and~\ref{canv:fint} hold by Observation~\ref{obs:subcanvas} since $K$ is a canvas with the same path $P$ as $\hat K$, and the interior of $\hat G$ is a subset of the interior of $G$.
Next, we claim that \ref{canv:fCn3} and \ref{canv:fC3} hold. 
This follows from the fact that for all vertices $v\in V(\delta G) \cap V(\hat{G})$ we have $\hat f(v)=f(v)$ which we now establish. 
This clearly holds for any such $v$ with no neighbour that is removed by $\sigma$, and that is all such $v$ except $v_4$. 
Since $\delta G$ has no 1-chord by Lemma~\ref{lem:1chord}, the only neighbour of $v_4$ removed by $\sigma$ is $v_3$, which is removed with $\delsave(v_3,v_4)$. Hence, $\hat f(v_4)=f(v_4)$ as required. 
Then since $K$ is a canvas, \ref{canv:fCn3} and \ref{canv:fC3} hold for $\hat K$.

Recall that $\hat{B}$ is defined as containing only vertices $v$ of girth $g_{\tilde G(v)}=3$ for which $\hat f(v)=2$, and so~\ref{canv:fB} holds by definition. 
Next, we argue~\ref{canv:B} holds: it remains to check that $\hat B$ is an independent set in $\hat G$.
Since $B$ is independent, every edge induced by $\hat B$ contains a vertex in $\hat B\setminus B$. 
Note that since $\hat f$ is equal to $f$ on $V(\hat{G})\cap V(\delta G)$, every vertex in $\hat B \setminus B$ is in $V(\hat G)\setminus V(\delta G)$. 
If there exists a vertex $w' \neq w$ in $\hat{B} \setminus B$, then it must be that $w'$ is adjacent to a pair of vertices from $\{v_0,v_1,v_3\}$ as these are the vertices removed by $\sigma$ in forming $\hat G$. Note that while $v_2\notin V(\hat G)$ it was not removed by $\sigma$ in the construction of $\hat f$.

The pair is not $\{v_0,v_1\}$ as (if $v_0$ exists) $g(v_0)\ge 5$. 
But the pair is also not $\{v_0,v_3\}$ or $\{v_1,v_3\}$, because since $w' \not \in N(v_2)$, this would yield a 2-chord contradicting Lemma~\ref{lem:2chords}. See Figure~\ref{fig:BhatminusB}.

Hence $\hat{B} \setminus B \subset \{w\}$, and it is easy to check that $w\in \hat B$ so this is equality. 
Moreover, $w$ is not adjacent to a vertex $w'$ in $\hat{B}\cap B$, as otherwise $v_1ww'$ is also a 2-chord violating Lemma~\ref{lem:2chords} (since we know that $w' \neq v_3$ in this case). See Figure~\ref{fig:BhatcapB}.
It follows that $\hat{B}$ is independent and so~\ref{canv:B} holds.

We next show that~\ref{canv:fA} holds. 
As above, $\hat f$ agrees with $f$ on $V(\hat{G})\cap V(\delta G)$, so every vertex $v \in \hat{A}\setminus A$ is in $V(\hat G) \setminus V(\delta G)$. 
Let $v$ be a vertex in $\hat{A} \setminus A$. 
Note that since $v_0$, if it exists, has girth at least five, no vertex is adjacent to every vertex in $\{v_0,v_1,v_3\}$. It follows that every vertex $u$ in $\hat{G}$ has $\hat{f}(u) \geq f(u) - 2$.
Hence since $\hat{f}(v) \leq 1$, we have that $g(v) \geq 4$. 
Moreover, $v$ is adjacent to at most one of $v_1$ and $v_3$, as otherwise since $g(v) \geq 4$, we have that $v_1vv_3$ is a 2-chord violating Lemma \ref{lem:2chords} (cf.\ the vertex labelled $w'''$ in Figure~\ref{fig:BhatminusB}).
Then $\hat{f}(v) \geq f(v)-1$, and since $\hat{f}(v) \leq 1$ by the definition of $\hat{A}$, we have that $\hat{f}(v) = f(v)-1 = 1$ and hence $f(v) = 2$. 
Thus $g(v) \geq 5$ since $K$ is a canvas. Since $v \in \hat{A}\setminus A$ was arbitrary, it follows that $\hat{A}$ contains only vertices $v$ of girth at least five and with $\hat{f}(v) = 1$. 
Thus, \ref{canv:fA} holds.

Finally, we show that the independence required for~\ref{canv:A} holds. 
To see this, suppose for contradiction that $\hat{A}$ contains vertices $v$ and $v'$ with $vv' \in E(\hat{G})$. Since $A$ is independent, at least one of $v$ and $v'$ is in $\hat{A}\setminus A$.  
Without loss of generality, suppose that $v \in \hat{A}\setminus A$. 
Then there exists $i \in \{0,1,3\}$ such that $vv_i \in E(G)$. 
Since $g(v) \geq 5$, if $v'\in \delta(G)$ we have that $v_ivv'$ is a 2-chord violating Lemma~\ref{lem:2chords}. 
Thus, we may assume that both $v$ and $v'$ are in $\hat{A} \setminus V(\delta G)$, and hence there exists $i' \in \{0,1,3\}$ such that $v'v_{i'} \in E(G)$. 
We have $i \neq i'$ since $g(v) \geq 5$. 
But then given the existence of $w$, the 3-chord $v_ivv'v_{i'}$ violates Lemma~\ref{lem:3chords} because $g(v_2)=3$. See Figure~\ref{fig:AhatAhatedge}.
\end{cproof}

\begin{figure}[ht]
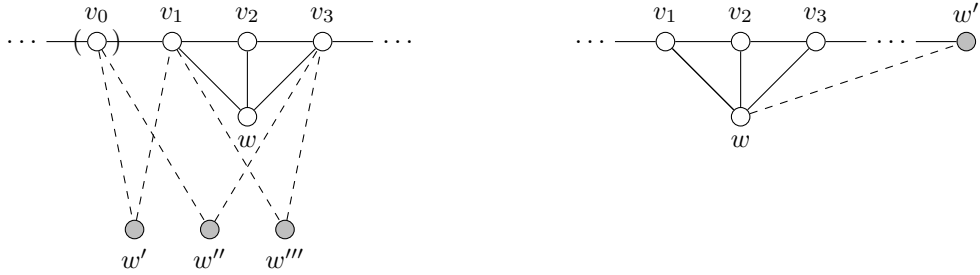
\centering
    \subcaptionbox{Vertices $w'$, $w''$, $w'''$, none of which can exist in $\hat B\setminus B$ due to girth the girth of $v_0$ ($w'$), or 2-chords violating Lemma~\ref{lem:2chords} ($w''$ and $w'''$).\label{fig:BhatminusB}}[0.4\textwidth]{\includestandalone[page=1]{figures/fig_BBdetails}}%
    \hspace*{0.05\textwidth}
    \subcaptionbox{The vertex $w$ cannot have a neighbour $w'\in \hat B\cap B$ as then $v_1ww'$ violates Lemma~\ref{lem:2chords}.\label{fig:BhatcapB}}[0.4\textwidth]{\includestandalone[page=2]{figures/fig_BBdetails}}%
    \captionsetup{width=0.9\textwidth}
    \caption{Structure relevant to the proof of Claim~\ref{claim:Khatiscanvas}.\label{fig:BBdetails}}
\end{figure}

\begin{figure}[ht]
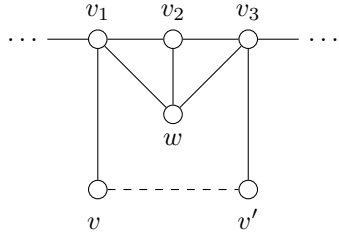
\centering
    \includestandalone[page=3]{figures/fig_BBdetails}%
    \captionsetup{width=0.9\textwidth}
    \caption{In the proof of Claim~\ref{claim:Khatiscanvas}, an edge $vv'$ that cannot be induced by $\hat A$ as then the 3-chord $v_1vv'v_3$ violates Lemma~\ref{lem:3chords}.\label{fig:AhatAhatedge}}
\end{figure}
 
Thus, $\hat{K}$ is indeed a canvas. We argue that it is unexceptional. 
If $\hat{K}$ is an exceptional canvas of type~\ref{ex:B}, then since $K$ is unexceptional, since $\tilde{B}\setminus B = \{w\}$ by Claim \ref{claim:Khatiscanvas}, and since $K$ and $\hat K$ share the same path $P$, we have that $N(w)$ contains three vertices of $P$. 
But this leads to a 2-chord contradicting Lemma~\ref{lem:2chords} (namely $u_1wu_3$). 
Similarly, $\hat{K}$ is not an exceptional canvas of type~\ref{ex:A} as otherwise there is a 2-chord violating Lemma~\ref{lem:2chords}. 
Suppose finally that $\hat{K}$ is an exceptional canvas of type~\ref{ex:AB}. Then there exist vertices $u' \in \hat{B}$ and $u'' \in \hat{A}$ such that $u'u'' \in E(G)$, and each of $u'$ and $u''$ is adjacent to an endpoint of $P$. 
Since $K$ is not exceptional, either $u' \in \hat{B} \setminus B$ or $u'' \in \hat{A}\setminus A$. 
If $u' \in \hat{B}\setminus B$, then by Claim \ref{claim:Khatiscanvas} we have that $u' = w$. 
Let $x$ be the endpoint of $P$ adjacent to $w$. 

Since $\hat{K}$ is an unexceptional canvas and $K$ is a minimum counterexample, as outlined above, provided we show that the sequence $\sigma$ that we defined to remove $v_3$, $v_0$ and $v_1$ from $G-P$ is legal we can conclude that $K$ is weakly $f$-degenerate.

\clearpage
\begin{claim}
    The sequence of operations $\sigma$ is legal when starting with $(G-P, f_K)$.
\end{claim}
\begin{cproof}
    Since we verified that $\hat K$ is a canvas in Claim~\ref{claim:Khatiscanvas}, we know that no vertex in $V(\hat G)-P$ has its function value fall below zero through the execution of $\sigma$. 
    We also showed that after executing $\sigma$ the resulting function has value $1$ on $v_2$. 
    Thus, it suffices to check that
    \begin{itemize}
        \item when we call $\delsave(v_3,v_4)$ and $v_4\notin P$ we have $f_K(v_3) > f_K(v_4) \ge 0$,
        \item calling $\del(v_0)$ does not cause the function to fall below zero on $v_1$, and 
        \item when we call $\delsave(v_1, v_2)$ the ``save'' is legal because the relevant function is strictly larger on $v_1$ than on $v_2$.
    \end{itemize}
    The first item holds because $R$ is defined according to  case~\ref{case:f323} by Lemma~\ref{lem:BBstructure}\ref{itm:BBdefofR} and so $f_K(v_4)\le 2$ when $v_4\notin V(P)$.
    Given what we know about $\hat f$, the second item is implied by the third. 
    The third item holds because $f_K(v_1)=3$ and $f_K(v_2)=2$, and the combined effect of the first two operations in $\sigma$ is that each falls by exactly one.
\end{cproof}

\noindent
This completes the proof of Lemma~\ref{lem:BBhandle}.
\end{proof}

\section{Handling the edge in \texorpdfstring{$\tilde G[\tilde A]$}{tildeG[tildeA]}}\label{sec:tildeA}

In this section, we complete the proof of Theorem \ref{thm:inductive} by handling the case where $\tilde G[\tilde A]$ contains an edge. Recall that $\tilde K = (\tilde G, P, \tilde A, \tilde B, \tilde f)$ is obtained from $K$ by removing $R$ via the procedure described in Subsection \ref{subsec:removeR}. Since $\tilde{A}$ is not independent, the necessary structure established by Lemma~\ref{lem:AAstructure} exists. 
In particular, by Lemma~\ref{lem:AAstructure}\ref{itm:AA3chord}, $G$ contains a 3-chord $v_1w_1w_2v_3$ such that $g(w_1) = g(w_2) = 5$ and $v_2 \in A$; and by Lemma~\ref{lem:AAstructure}\ref{itm:AAdefofR}, $R$ is defined according to \ref{case:f2233} or \ref{case:f223}. 
See Figure~\ref{fig:AA}.

\begin{figure}[ht]
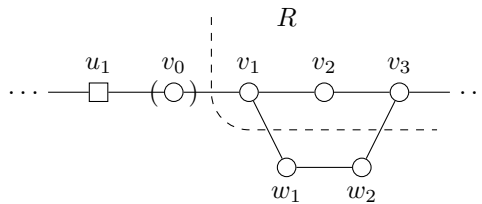
\centering
    \includestandalone{figures/fig_AA}
    \captionsetup{width=0.9\textwidth}
    \caption{Necessary graph structure in the case that $\tilde A$ is not independent.\label{fig:AA}} 
\end{figure}

The general approach to handle this case will be to remove $w_1$ and $w_2$ \emph{as well as} $R$. The following claim is used to argue that it is possible to remove $V(R) \cup \{w_1, w_2\}$ from $G-P$ via a sequence of legal operations.

\begin{claim}\label{claim:w1w2notadjP}
    At most one of $w_1$ and $w_2$ has a neighbour in $P$, and $|N(\{w_1,w_2\}) \cap V(P)| \leq 1$. 
\end{claim}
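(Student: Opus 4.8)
The plan is to argue by contradiction, assuming $|N(\{w_1,w_2\})\cap V(P)|\ge 2$, and to exploit the girth bound $g(w_1)=g(w_2)=5$ recorded in Lemma~\ref{lem:AAstructure}\ref{itm:AA3chord}, together with the edge $w_1w_2$ and the adjacencies $w_1v_1$, $w_2v_3$. Two observations set the stage: $w_1,w_2\notin V(\delta G)$, since they are the internal vertices of the $3$-chord $v_1w_1w_2v_3$; and $3\le v(P)\le 4$, so $P=u_k\dotsb u_1$ has at most two internal vertices. The argument will primarily give the bound $|N(\{w_1,w_2\})\cap V(P)|\le 1$, from which the assertion that at most one of $w_1,w_2$ has a neighbour in $P$ follows easily.

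First I would show that no neighbour of $w_1$ or of $w_2$ in $V(P)$ is an \emph{endpoint} of $P$. Fix $i\in\{1,2\}$ and let $x_i$ denote the unique neighbour of $w_i$ on $\delta G-P$ (so $x_1=v_1$ and $x_2=v_3$), which exists by Lemma~\ref{lem:AAstructure}\ref{itm:onlynbrsw1w2}; note $x_i\notin V(P)$. Suppose $w_i$ is adjacent to some $u\in V(P)$. If $x_iu\in E(G)$ then, since $\delta G$ has no $1$-chord (Lemma~\ref{lem:1chord}), $x_i$ and $u$ must be consecutive on the cycle $\delta G$, so $x_iw_iux_i$ is a triangle through $w_i$---impossible. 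Otherwise $x_i$ and $u$ are non-adjacent, hence non-consecutive on $\delta G$, so the path $x_iw_iu$ (whose only interior vertex $w_i$ lies off the boundary) is a $2$-chord of $\delta G$ in the sense of Definition~\ref{def:seppath}. Lemma~\ref{lem:2chords} then yields that either one of $x_i,u$ is an internal vertex of $P$, or there is a vertex $v\in B$ adjacent to all of $x_i$, $w_i$, and $u$---but the latter would make $vw_ix_i$ a triangle through $w_i$, which is impossible. Hence one of $x_i,u$ is internal in $P$, and since $x_i\notin V(P)$ it is $u$.

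Consequently every vertex of $N(\{w_1,w_2\})\cap V(P)$ is an internal vertex of $P$, and there are at most $v(P)-2\le 2$ of these. If $v(P)=3$ we are done, so assume $v(P)=4$, say $P=u_4u_3u_2u_1$; since we are assuming $|N(\{w_1,w_2\})\cap V(P)|\ge 2$, both $u_2$ and $u_3$ must have a neighbour in $\{w_1,w_2\}$. Choose $a\in\{w_1,w_2\}$ with $a\sim u_2$ and $b\in\{w_1,w_2\}$ with $b\sim u_3$. If $a=b$, then---using $u_2u_3\in E(G)$, since $P$ is a path in $G$ (Lemma~\ref{lem:Pispath})---$au_2u_3a$ is a triangle through $a$; and if $a\ne b$, then $ab=w_1w_2\in E(G)$ and $au_2u_3ba$ is a $4$-cycle through $a$. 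Either way $g(a)\le 4$, contradicting $g(a)=5$. This proves $|N(\{w_1,w_2\})\cap V(P)|\le 1$, and the other half of the claim follows at once: if $w_1$ had a neighbour $a\in V(P)$ and $w_2$ a neighbour $b\in V(P)$, then $a\ne b$ would contradict what we just proved, whereas $a=b$ would make $w_1aw_2w_1$ a triangle through $w_1$.

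I expect the only delicate point to be the first step---verifying that $x_iw_iu$ genuinely qualifies as a $2$-chord (so that Lemma~\ref{lem:2chords} applies) and cleanly disposing of the degenerate possibility that $x_i$ and $u$ are adjacent. Both are handled by the single remark that an edge between two vertices of $\delta G$ is either a $1$-chord, forbidden by Lemma~\ref{lem:1chord}, or a boundary edge, which here would force a short cycle through $w_i$; and that a path from $\delta G$ to $\delta G$ through a single interior vertex, with non-adjacent (hence non-consecutive) endpoints, separates $G$ into two parts, each containing a vertex of $\delta G$ not on the path.
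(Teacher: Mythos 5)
Your proof is correct, and it leans on the same two ingredients as the paper --- the girth condition $g(w_1)=g(w_2)=5$ from Lemma~\ref{lem:AAstructure}\ref{itm:AA3chord} and violations of Lemma~\ref{lem:2chords} --- but it organizes the case analysis differently. You first prove the intermediate statement that \emph{every} neighbour of $w_1$ or $w_2$ in $V(P)$ is an internal vertex of $P$, by forming, for each such neighbour $u$, the 2-chord $x_iw_iu$ with $x_i\in\{v_1,v_3\}$ (the exceptional outcome of Lemma~\ref{lem:2chords} being impossible since a $B$-vertex adjacent to all three would put $w_i$ on a triangle), and you dispose of the degenerate case $x_iu\in E(G)$ directly via Lemma~\ref{lem:1chord} and a triangle through $w_i$. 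You then finish purely with short cycles: two internal neighbours force $v(P)=4$ and yield a triangle $au_2u_3a$ or 4-cycle $au_2u_3ba$ through a girth-5 vertex, and a common neighbour of $w_1$ and $w_2$ in $P$ yields a triangle through $w_1$. The paper instead splits into ``each of $w_1,w_2$ has a neighbour in $P$'' versus ``some $w\in\{w_1,w_2\}$ has two neighbours in $P$,'' uses a condensed planarity-plus-girth argument to pin the adjacencies down to $u_2$ and $u_4$ (respectively $u_1$ and $u_4$, with $P'$ nonempty), and kills each case with a single 2-chord ($u_4w_2v_3$, respectively $u_4wu_1$) violating Lemma~\ref{lem:2chords}. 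Your route buys a cleaner, more uniform reduction (no need to determine exactly which $u_i$ the $w$'s attach to) and makes explicit the separation/adjacency technicalities behind calling $x_iw_iu$ a 2-chord, which the paper leaves implicit; the paper's version is slightly shorter because it collapses your first step into the positional claim. Both are valid, and your argument requires nothing beyond what the paper has already established at this point.
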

\begin{cproof}
Suppose not. First suppose that each of $w_1$ and $w_2$ has a neighbour in $P$. Then since $g(w_1) = g(w_2) = 5$ (and $G$ is a plane graph), we have that $P'$ is nonempty (i.e. $v_0$ exists), that $v(P) = 4$, that $w_1$ is adjacent to $u_2$, and $w_2$ is adjacent to $u_4$. But then since $g(w_2) = 5$, we have that $u_4w_2v_3$ is a 2-chord violating Lemma \ref{lem:2chords}, a contradiction. Thus we may assume that there exists a vertex $w \in \{w_1,w_2\}$ with two neighbours in $P$. Since $g(w) = 5$, we have that $v(P)= 4$, and that $w$ is adjacent to both $u_1$ and $u_4$. But then $u_4wu_1$ is again a 2-chord violating Lemma \ref{lem:2chords}. 
\end{cproof}

Let $\ell\in\{1,2\}$ be such that $w_\ell$ has a neighbour in $P$ if such a choice exists; otherwise, let $\ell \in \{1,2\}$ be arbitrary. 
Recall that $f_K(v) = f(v) - |N(v) \cap V(P)|$ for all $v \in V(G)\setminus V(P)$. If $R$ is defined according to \ref{case:f223}, let
\[ 
\sigma = (\delsave(v_j, v_{j+1}), \del(v_{j-1}), \dots, \del(v_2), \delsave(v_1, v_0), \del(w_\ell), \del(w_{3-\ell}))
\]
and if $R$ is defined according to \ref{case:f2233} let 
\[ 
\sigma = (\del(v_j), \del(v_{j-1}), \dots, \del(v_2), \delsave(v_1, v_0), \del(w_\ell), \del(w_{3-\ell})). 
\]
Let $(\bar{G},\bar{f})$ be the graph and function obtained by performing the sequence of operations $\sigma$ starting from from $(G-P,f_K)$. 
Note that this removes $V(R)\cup \{w_1,w_2\}$. 
Let $\hat{G} = G-R-\{w_1,w_2\}$, and let $\hat{f}(v) = \bar{f}(v) + |N(v) \cap V(P)|$ for all $v \in V(\bar{G})$. Let $\hat{A}$ be the set $\{v \in V(\hat{G}) \setminus V(P) : \hat{f}(v) \leq 1\}$, and let $\hat{B}$ be the set $\{v \in V(\hat{G}) \setminus V(P): \hat{f}(v) = 2 \textnormal{ and } g(v) = 3\}$.
Let $\hat{K}: = (\hat{G}, P, \hat{A}, \hat{B}, \hat{f})$. Note that $\hat{B} \subseteq V(\delta \hat{G})$ and $\hat{A} \subseteq V(\delta \hat{G})$.
We will argue that the application of $\sigma$ starting from $(G-P, f_K)$ is legal through an analysis of the structure of $\hat K$.

Note that in both cases~\ref{case:f223} and~\ref{case:f2233}, an initial segment of $\sigma$ is precisely the sequence used to remove $R$ in the relevant cases~\ref{handle:f2233} and \ref{handle:f223}, and hence the initial segment of $\sigma$ which removes $R$ is legal by Lemma~\ref{lem:delRlegal}. 
To argue that the final two operations in $\sigma$ (which remove $w_1$ and $w_2$) are legal, we first show the following.

\begin{claim}\label{claim:delGfine}
    If $R$ is defined according to \ref{case:f223}, then  for each $v \in V(\delta G-P) \cap V(\hat{G})$,  $\hat{f}(v) = f(v)$. 
    If $R$ is defined according to \ref{case:f2233}, then for each $v \in V(\delta G-P) \cap V(\hat{G})$ except $v_{j+1}$,  $\hat{f}(v) = f(v)$. 
\end{claim}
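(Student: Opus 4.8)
The plan is to show that, aside from the single possible exception $v_{j+1}$ in case~\ref{case:f2233}, no vertex of $\delta G$ outside $P$ that survives into $\hat G$ has its $f$-value disturbed by the sequence $\sigma$, so that $\hat f$ agrees with $f$ on $V(\delta G - P)\cap V(\hat G)$. The key input is that $\sigma$ removes exactly $V(R)\cup\{w_1,w_2\}$, together with the chord and girth structure recorded in Lemmas~\ref{lem:1chord}, \ref{lem:2chords} and~\ref{lem:AAstructure}. First I would recall that, since $\delta G$ has no $1$-chord by Lemma~\ref{lem:1chord}, the only vertices of $\delta G - P$ adjacent to a vertex of $R$ are the vertex immediately preceding $R$ (namely $v_0$ if it exists, otherwise $v_1$, which both lie in $R$ in cases~\ref{case:f2233} and~\ref{case:f223}) and the vertex $v_{j+1}$ immediately following $R$; all internal vertices of $R$ have no neighbours in $\delta G - P$ other than their two neighbours along $\delta G$, which are themselves in $R$. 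Hence the only vertex of $V(\delta G - P)\cap V(\hat G)$ that can be adjacent to $R$ is $v_{j+1}$.

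Next I would handle the effect of removing $w_1$ and $w_2$. By Lemma~\ref{lem:AAstructure}\ref{itm:onlynbrsw1w2}, the only neighbour of $w_1$ in $\delta G - P$ is $v_1$ and the only neighbour of $w_2$ in $\delta G - P$ is $v_3$; since $v_1,v_3\in V(R)$, neither $w_1$ nor $w_2$ has a neighbour in $V(\delta G - P)\cap V(\hat G)$. Therefore, for every $v\in V(\delta G - P)\cap V(\hat G)$, the multiset $N(v)\cap(V(R)\cup\{w_1,w_2\})$ is empty unless $v = v_{j+1}$, and so $\bar f(v) = f_K(v)$, which gives $\hat f(v) = \bar f(v) + |N(v)\cap V(P)| = f_K(v) + |N(v)\cap V(P)| = f(v)$, as desired.

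It remains to deal with $v_{j+1}$ in case~\ref{case:f223} and show it too is undisturbed there. In case~\ref{case:f223} the relevant operation on the last vertex of $R$ is $\delsave(v_j, v_{j+1})$ (rather than a plain $\del(v_j)$), so the removal of $v_j$ does \emph{not} decrement the function value at $v_{j+1}$; since $v_j$ is the only neighbour of $v_{j+1}$ among the removed vertices (again by Lemma~\ref{lem:1chord}, and because $w_1,w_2$ are not adjacent to $v_{j+1}$), we get $\bar f(v_{j+1}) = f_K(v_{j+1})$ and hence $\hat f(v_{j+1}) = f(v_{j+1})$. By contrast, in case~\ref{case:f2233} the operation is $\del(v_j)$, which does decrement $v_{j+1}$, so we can only assert $\hat f(v) = f(v)$ for $v\ne v_{j+1}$, matching the statement. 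The main obstacle — really the only thing requiring care — is being scrupulous that no \emph{other} vertex of $\delta G$ in $\hat G$ is adjacent to two or more removed vertices in a way that could matter; this is exactly ruled out by the absence of $1$-chords (Lemma~\ref{lem:1chord}) for neighbours of $R$, and by Lemma~\ref{lem:AAstructure}\ref{itm:onlynbrsw1w2} for neighbours of $w_1,w_2$, so the argument goes through cleanly.
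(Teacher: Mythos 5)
Your overall route is the paper's: use the chordlessness of $\delta G$ (Lemma~\ref{lem:1chord}) plus Lemma~\ref{lem:AAstructure}\ref{itm:onlynbrsw1w2} to show that no surviving boundary vertex loses function value to the removal of $V(R)\cup\{w_1,w_2\}$, and then note that in case~\ref{case:f223} the vertex $v_{j+1}$ is protected because $v_j$ is removed via $\delsave(v_j,v_{j+1})$. However, there is a concrete error in your first paragraph: you assert that $v_0$ lies in $R$ in cases~\ref{case:f2233} and~\ref{case:f223}, and from this you conclude that $v_{j+1}$ is the \emph{only} vertex of $V(\delta G-P)\cap V(\hat G)$ adjacent to a removed vertex. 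That is false. $R$ is a subpath of $P''$, which begins at $v_1$; the vertex $v_0$ (when it exists) belongs to $P'$, not to $R$, is not one of $w_1,w_2$ (those are interior vertices), and therefore survives into $\hat G$ while being adjacent to $v_1\in V(R)$. So your claim that $N(v)\cap(V(R)\cup\{w_1,w_2\})=\emptyset$ for every surviving boundary vertex $v\neq v_{j+1}$ fails at $v=v_0$, and your proof as written does not establish $\hat f(v_0)=f(v_0)$, which the statement requires.

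The gap is easily repaired, and by exactly the mechanism you already invoke for $v_{j+1}$ in case~\ref{case:f223}: in both definitions of $\sigma$ the vertex $v_1$ is removed via $\delsave(v_1,v_0)$, so the removal of $v_1$ does not decrement the function value at $v_0$, and since (by chordlessness and Lemma~\ref{lem:AAstructure}\ref{itm:onlynbrsw1w2}) $v_1$ is the only vertex of $V(R)\cup\{w_1,w_2\}$ adjacent to $v_0$, one gets $\hat f(v_0)=f(v_0)$. This is precisely how the paper's proof treats $v_0$; with that one sentence added, your argument matches it.
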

\begin{cproof}
    Let $v \in V(\delta G-P) \cap V(\hat{G})$. It suffices to account for when and how neighbours of $v$ are removed by $\sigma$.
    Since $\delta G$ is chordless by  Lemma~\ref{lem:1chord}, and $v$ is not adjacent to $w_1$ or $w_2$ by Lemma~\ref{lem:AAstructure}\ref{itm:onlynbrsw1w2}, $\hat{f}(v) = f(v)$ unless $v\in \{v_0,v_{j+1}\}$. Moreover, only the removal of $v_1$ and $v_j$ can affect $\hat f(v)$ for such a $v$.
    If $v_0$ exists, then $\hat{f}(v_0) = f(v_0)$ since $v_1$ is removed via $\delsave(v_1,v_0)$ in $\sigma$.
    If $v = v_{j+1}$ and $R$ is defined according to \ref{case:f223}, then $v_j$ is removed via $\delsave(v_j,v_{j+1})$, and so similarly $\hat{f}(v_{j+1}) = f(v_{j+1})$.
\end{cproof}

We now show that $V(\hat{G})$ does not contain a vertex adjacent to one of $\{w_1, w_2\}$ as well as to a vertex in $R$. This will be useful in showing both that $w_\ell$ is removed via a legal application of $\del$, and in establishing several of the canvas properties for $\hat{K}$ (namely, that $\hat{f}(v)$ is sufficiently large for all vertices $v \in V(\hat{G}) \setminus V(P)$).

\begin{claim}\label{claim:notadjtow1w2andR}
    $V(\hat G)$ does not contain a vertex adjacent in $G$ to one of $\{w_1,w_2\}$ as well as to a vertex in $R$.
\end{claim}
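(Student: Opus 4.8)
The plan is to suppose for contradiction that there is a vertex $z \in V(\hat G)$ adjacent in $G$ to one of $w_1, w_2$ (say $w_i$, where $i \in \{1,2\}$) and also to some vertex $v_s \in V(R)$. Since $z \neq w_{3-i}$ (as $w_1, w_2$ have girth $5$ and so are non-adjacent) and $z \notin \{v_1, v_3\}$ (because $v_1$ and $v_3$ are the only neighbours of $w_1, w_2$ in $\delta G - P$ by Lemma~\ref{lem:AAstructure}\ref{itm:onlynbrsw1w2}), we should first locate $z$ relative to the $3$-chord $v_1 w_1 w_2 v_3$. Recall from Lemma~\ref{lem:AAstructure} that this $3$-chord together with $v_2$ bounds a region $G_2$ having the structure of an exceptional canvas of type~\ref{ex:A}: the only vertices of $G_2$ are $v_1, w_1, w_2, v_3, v_2$, and possibly none in the interior since $g(w_1) = g(w_2) = 5$. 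So $z$ cannot lie strictly inside the region bounded by $v_1 w_1 w_2 v_3 v_2$; thus $z$ lies in $G_1$, the other side of the $3$-chord.

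**Deriving the contradiction via a short chord.**

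The key observation is that $z$ is adjacent to $w_i$ and to $v_s \in V(R) \subseteq V(\delta G)$. First I would argue that $z \in V(\delta G)$: if $z \notin V(\delta G)$, then $z$ lies in the interior of some face, and the path $v_s z w_i$ (or the edge structure it induces) together with the relevant boundary arc should produce a short cycle or chord configuration that we have already excluded. More carefully: the edge $z w_i$ together with the edge $w_i v_1$ (or $w_i v_3$) and a boundary arc of $\delta G$ forms a closed walk; since $g(w_i) = 5$, the cycle through $z$, $w_i$, and $v_1$ (resp.\ $v_3$) has length at least $5$, but $z$'s adjacency to $v_s \in R$ means $v_s z w_i$ followed by the short arc of $\delta G$ from $v_s$ back to $v_1$ or $v_3$ gives a cycle through $w_i$ that is too short, contradicting $g(w_i) = 5$ — unless $v_s \in \{v_1, v_2, v_3\}$. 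So the only possibility is $v_s \in \{v_1, v_2, v_3\} \cap V(R)$; since $R$ is defined by \ref{case:f2233} or \ref{case:f223}, both $v_1$ and $v_2$ lie in $R$ (and $v_3 \in R$ too when $R$ is defined by \ref{case:f223}, else $v_3 = v_{j+1}$). But then $z$ is adjacent to $w_i$ and to $v_1$ or $v_2$ (or $v_3$), and combined with the already-known edges of the $3$-chord this yields either a triangle or $4$-cycle with a vertex in its interior, contradicting Lemma~\ref{lem:triangles}, or a short chord of $\delta G$ contradicting Lemma~\ref{lem:1chord} or Lemma~\ref{lem:2chords}.

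**Organising the case analysis.**

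Concretely, I would split on which of $w_1, w_2$ is involved and on whether $v_s = v_1$, $v_s = v_2$, or $v_s = v_3$. If $z$ is adjacent to $w_1$ and to $v_1$: then $z w_1 v_1$ is a triangle (recall $w_1 v_1 \in E(G)$), and since $v_2 \in A$ has girth $\geq 5$, the disposition of $v_2, v_3, w_2$ relative to this triangle forces a small cycle with an interior vertex — contradicting Lemma~\ref{lem:triangles}. If $z$ is adjacent to $w_1$ and to $v_3$: then $z w_1 w_2 v_3$ would duplicate the $3$-chord structure and, by planarity and Lemma~\ref{lem:AAstructure}\ref{itm:AAunique} (which says $\tilde A$ induces exactly one edge), this is impossible, or it produces a $4$-cycle bounding a region containing $w_2$, contradicting Lemma~\ref{lem:triangles} applied after noting the region is small. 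If $z$ is adjacent to $w_1$ and to $v_2$: since $g(v_2) \geq 5$ this must be handled by Lemma~\ref{lem:2chords} or Lemma~\ref{lem:5cycles} — the cycle $v_1 w_1 z v_2$ is a $4$-cycle, and $w_2$ sits in its interior or exterior; either way we get a contradiction with Lemma~\ref{lem:triangles} or Lemma~\ref{lem:5cycles}. The symmetric cases with $w_2$ in place of $w_1$ are identical by the symmetry of the $3$-chord. The main obstacle I anticipate is bookkeeping the planar embedding carefully enough to be sure which region each auxiliary vertex lies in; once the embedding is pinned down, each subcase collapses quickly onto one of Lemmas~\ref{lem:triangles}, \ref{lem:1chord}, \ref{lem:2chords}, or \ref{lem:5cycles}.
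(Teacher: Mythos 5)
Your reduction to $v_s \in \{v_1,v_2,v_3\}$ does not work, and it is exactly the case it discards that carries the content of the claim. You argue that if $z\notin V(\delta G)$ is adjacent to $w_i$ and to $v_s\in V(R)$ with $v_s$ not among $v_1,v_2,v_3$, then $v_sz w_i$ plus a boundary arc gives a cycle through $w_i$ that is ``too short,'' contradicting $g(w_i)=5$. But that cycle has length $3$ plus the length of the arc from $v_s$ back to the neighbour $w'\in\{v_1,v_3\}$ of $w_i$, so it gets \emph{longer}, not shorter, as $v_s$ moves away from $w'$; already for $v_s=v_4$ one only obtains cycles of length at least $5$, which is perfectly consistent with $g(w_i)=5$ (recall $g(w_1)=g(w_2)=5$ exactly, witnessed by $v_1w_1w_2v_3v_2v_1$). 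Since in the relevant cases \ref{case:f2233} and \ref{case:f223} the path $R$ can be arbitrarily long, an attachment $v_s$ deep inside $R$ is a genuine configuration that girth considerations cannot exclude; indeed, later in the paper (Claim~\ref{claim:finalstructure2}) closely related far attachments at $v_i$ with $i\ge 4$ do occur and must be analysed, not dismissed. The paper rules out the present configuration differently: for $z\notin V(\delta G)$ it forms the 3-chord $w'w_izv_s$, whose middle vertex $w_i$ has girth at least five and whose endpoints are not internal vertices of $P$, and then applies Lemma~\ref{lem:3chords}; the exceptional structure that lemma would force requires an $A$-vertex positioned inside the chord's small side, which Observation~\ref{obs:whereistheAvert} (the only $A$-vertex on $R$ is $v_2$) makes impossible. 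Your proposal never deploys Lemma~\ref{lem:3chords} in this way, so the main case is unproved.

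Two smaller points. First, the case $z\in V(\delta G)$ is not actually handled: you announce you will ``argue that $z\in V(\delta G)$'' but then give no argument for a boundary vertex adjacent both to $w_i$ and to a vertex of $R$ (e.g.\ $z=v_{j+1}$ or a vertex of $P$); the paper disposes of this with a 2-chord through $w_i$ violating Lemma~\ref{lem:2chords}, using $g(w_i)\ge 5$ to exclude the type~\ref{ex:B} outcome. Second, in your subcases the appeal to Lemma~\ref{lem:AAstructure}\ref{itm:AAunique} is misplaced, since $z$ need not lie in $\tilde A$, so $zw_1$ need not be an edge of $\tilde G[\tilde A]$; those subcases are in fact immediate from $g(w_i)=5$ (a triangle or 4-cycle through $w_i$), but they are not where the difficulty lies.
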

\begin{cproof}
    Suppose not, and let $v$ be a counterexample. Let $w$ be a vertex in $\{w_1,w_2\}$ adjacent to $v$, and let $v' \in V(R)$ be the vertex in $R$ adjacent to $v$. Let $w'$ be the neighbour of $w$ in $V(R)$. (Recall that $w$ is adjacent to either $v_1$ or $v_3$ since $G$ has the structure established in Lemma~\ref{lem:AAstructure}, and $w' \in \{v_1,v_3\}$ is the only neighbour of $w$ in $R$, as otherwise there is a 2-chord violating Lemma~\ref{lem:2chords}.) If $v \in V(\delta G) \cap  V(\hat{G})$, then since $g(w) \geq 5$, we have that $v_{j+1}ww'$ is a 2-chord violating Lemma~\ref{lem:2chords}, a contradiction. Hence $v \in V(G) \setminus V(\delta G)$. But Observation~\ref{obs:whereistheAvert} means that $v_2$ is the only vertex in $A \cap V(R)$, and so $w'wvv'$ is a 3-chord violating Lemma~\ref{lem:3chords}, again a contradiction.
\end{cproof}

We now argue $w_\ell$ is removed legally. The argument that the removal of $w_{3-\ell}$ is also legal will come after.

\clearpage
\begin{claim}
The removal of $w_\ell$ in $\sigma$ is legal.
\end{claim}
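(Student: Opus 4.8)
The plan is to check directly that the operation $\del(w_\ell)$ is legal at the moment it occurs in $\sigma$. Write $f'$ for the function on $V(G)\setminus(V(P)\cup V(R))$ obtained from $(G-P,f_K)$ after the initial segment of $\sigma$ that removes $R$ (equivalently, $f'(v)=\tilde f(v)-|N(v)\cap V(P)|$, using the notation of Subsection~\ref{subsec:removeR}); this is the function in force immediately before $\del(w_\ell)$. Since that initial segment is legal by Lemma~\ref{lem:delRlegal}, every vertex of $G-P-R$ has $f'\ge 0$, so it suffices to show $f'(u)\ge 1$ for every neighbour $u$ of $w_\ell$ in $G-P-R$.

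First I would pin down those neighbours. By Lemma~\ref{lem:AAstructure}\ref{itm:onlynbrsw1w2} the only neighbour of $w_\ell$ in $\delta G-P$ is its partner among $\{v_1,v_3\}$ on the $3$-chord, which lies in $R$; combined with the fact that $N(w_\ell)\cap V(P)$ has been removed, this shows every surviving neighbour of $w_\ell$ lies in $V(G)\setminus V(\delta G)$. I would then split these interior neighbours into two kinds. If $u=w_{3-\ell}$, then $u\in\tilde A$ by Lemma~\ref{lem:AAstructure}\ref{itm:AAunique}, so Corollary~\ref{cor:4s5sdidnotlosetoomuch} gives $\tilde f(u)=1$; and by the choice of $\ell$ together with Claim~\ref{claim:w1w2notadjP}, $u$ has no neighbour in $P$, so $f'(u)=\tilde f(u)=1$. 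If $u\neq w_{3-\ell}$ is an interior neighbour of $w_\ell$, then $u\in V(\hat G)$ is adjacent to a vertex of $\{w_1,w_2\}$, so by Claim~\ref{claim:notadjtow1w2andR} it has no neighbour in $R$; hence $f'(u)=f_K(u)=f(u)$, and since $u$ is an interior vertex, property~\ref{canv:fint} gives $f(u)\ge 2$. In every case $f'(u)\ge 1$, so $\del(w_\ell)$ is legal.

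I expect the one delicate point to be the vertex $w_{3-\ell}$: its value just before $\del(w_\ell)$ is exactly $1$, so it is essential that it has no neighbour in $P$ (otherwise the value would be $0$ and the operation illegal) --- which is precisely why $\ell$ is chosen so that $w_\ell$, not $w_{3-\ell}$, is the one adjacent to $P$. Everything else is routine bookkeeping with the structural lemmas already established; the removal of $w_{3-\ell}$ is genuinely different (its partner $w_\ell$ has already been deleted and cannot be used as a surviving neighbour) and is handled in the claim that follows.
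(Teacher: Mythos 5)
Your overall structure matches the paper's: you reduce to checking that every neighbour of $w_\ell$ surviving in $G-P-R$ has value at least $1$ just before $\del(w_\ell)$, you use Lemma~\ref{lem:AAstructure}\ref{itm:onlynbrsw1w2} to see that these neighbours are $w_{3-\ell}$ together with interior vertices of $G$, your treatment of $w_{3-\ell}$ (via $w_{3-\ell}\in\tilde A$, Corollary~\ref{cor:4s5sdidnotlosetoomuch}, Claim~\ref{claim:w1w2notadjP} and the choice of $\ell$) is correct and is a mild variant of the paper's count, and Claim~\ref{claim:notadjtow1w2andR} correctly shows that removing $R$ does not lower the value of any other neighbour of $w_\ell$.

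There is, however, a genuine gap at the last step. For an interior neighbour $u\ne w_{3-\ell}$ of $w_\ell$ you assert $f'(u)=f_K(u)=f(u)\ge 2$, but the middle equality is unjustified: by definition $f_K(u)=f(u)-|N(u)\cap V(P)|$, and nothing in your argument prevents such a $u$ from having neighbours in $P$. This is precisely where the substance of the proof lies. What you actually need is $f_K(u)\ge 1$, i.e.\ that $u$ does not have $f(u)$ neighbours in $P$, and ruling this out requires a case analysis on $g(u)$ using Observation~\ref{obs:feq}, the acceptability of $P$, and Lemma~\ref{lem:2chords}: if $g(u)=3$ then $f(u)=4$ and $u$ would need four neighbours in $P$, forcing $v(P)=4$ with all vertices of $P$ of girth three, contradicting acceptability; if $g(u)=4$ then $u$ would need three neighbours in $P$, impossible since $v(P)\le 4$ and $g(u)=4$; and if $g(u)\ge 5$ then $u$ would need two neighbours in $P$, forcing $v(P)=4$ and $u$ adjacent to both endpoints, so that $u_1uu_4$ is a 2-chord violating Lemma~\ref{lem:2chords}. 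Without this analysis (or a substitute for it), your conclusion that every such $u$ has value at least $2$\textemdash or even at least $1$\textemdash before $\del(w_\ell)$ does not follow.
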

\begin{cproof}
First, we note that $f_K(w_{3-\ell}) = f(w_{3-\ell}) = 2$  by Claim~\ref{claim:w1w2notadjP} and our choice of $\ell$. Moreover, $w_{3-\ell}$ does not have three neighbours in $V(R)\cup \{w_\ell\}$  by Lemma \ref{lem:2chords}. Thus, removing $V(R) \cup \{w_\ell\}$ does not cause the function value of $w_{3-\ell}$ to become negative.

To complete the prove the claim, it suffices to show that immediately before the execution of the operation $\del(w_\ell)$ in $\sigma$, no neighbour of $w_\ell$ in $\hat G$ has function value at most zero. 
This is equivalent to each $v\in V(\hat G)$ having no more than $f_K(v)$ neighbours in $R\cup\{w_\ell\}$. Recall that by Lemma~\ref{lem:delRlegal} and the definition of $\sigma$, all but the last two operations in $\sigma$ form a legal sequence of operations that removes $R$ starting from $(G-P, f_K)$, and hence each $v\in V(\hat{G})$ has no more than $f_K(v)$ neighbours in $V(R)$. 

By Claim \ref{claim:notadjtow1w2andR}, no neighbour of $w_\ell$ has a neighbour in $R$, and so it suffices to verify that no vertex $v$ in $\hat{G}$ with $f_K(v) = 0$ neighbours $w_\ell$. 
To that end, let $v$ be a vertex in $N(w_\ell)\cap V(\hat{G})$ with $f_K(v) = 0$. 
Since $f_K(v) = 0$, $v$ is adjacent to at least one vertex of $P$. Note that if $v$ is in $V(\delta G)$, then letting $v' \in \{v_1,v_3\}$ be the neighbour of $w_\ell$ in $R$, we have that $vw_\ell v'$ is a 2-chord violating Lemma \ref{lem:2chords}. Hence $v \not \in V(\delta G)$. First suppose $g(v) = 3$ (and so $f(v) = 4$). Then since $f_K(v) = 0$, $v$ has four neighbours in $P$. Thus $v(P) = 4$, and all vertices in $P$ have girth three, contradicting that $P$ is acceptable. Next suppose $g(v) = 4$ (and so $f(v) = 3$). Since $f_K(v) = 0$,  $v$ has at least three neighbours in $P$. But this is impossible, since $g(v) = 4$ and $v(P) \leq 4$. Finally, suppose $g(v) \geq 5$ (and so $f(v) = 2$). Then since $f_K(v) = 0$, $v$ has two neighbours in $P$. Since $V(P) \leq 4$ and $g(v) \geq 5$, it follows that $v(P) = 4$ and $v$ neighbours both endpoints of $P$. But then $u_1vu_4$ is a 2-chord violating Lemma \ref{lem:2chords}.

\end{cproof}

Eventually, we will show that $\hat{K}= (\hat{G}, P, \hat{A}, \hat{B}, \hat{f})$ fails to be a canvas. 
In particular, we will show it fails to be a canvas precisely because $\hat{A}$ is not an independent set, which will reveal the necessary structure to complete proof. To do this, we first argue that \emph{if} $\hat{K}$ is a canvas, then it is an unexceptional canvas (which will lead to a contradiction, as then we will show $K$ is not a counterexample). 

\begin{claim}\label{claim:ifcanvthenunex}
    If $\hat{K}$ is a canvas, then it is unexceptional.
\end{claim}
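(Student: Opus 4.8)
The plan is to suppose $\hat K$ is an exceptional canvas and obtain a contradiction in each of the three cases~\ref{ex:B}, \ref{ex:A}, \ref{ex:AB}. Since $\hat G$ carries the same distinguished path $P$ as $K$ and $K$ is unexceptional, in every case the witnessing structure must involve a vertex of $(\hat A\cup\hat B)\setminus(A\cup B)$, so the first task is to pin down these ``new'' special vertices. By Claim~\ref{claim:delGfine}, every $v\in V(\delta G-P)\cap V(\hat G)$ has $\hat f(v)=f(v)$, unless $v=v_{j+1}$ and $R$ is defined by~\ref{case:f2233}; in that one case $\hat f(v_{j+1})=2$ and $g(v_{j+1})=3$, so $v_{j+1}\in\hat B\setminus B$. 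Consequently every other vertex of $\hat A\setminus A$ or $\hat B\setminus B$ lies in $V(G)\setminus V(\delta G)$, and since no $\delsave$ operation in $\sigma$ has an interior vertex as its second coordinate, such an interior $v$ satisfies $\hat f(v)=f(v)-|N(v)\cap(V(R)\cup\{w_1,w_2\})|$. Arguing exactly as in the proof of Corollary~\ref{cor:4s5sdidnotlosetoomuch}, and using Claim~\ref{claim:notadjtow1w2andR}, Observation~\ref{obs:whereisthelist4vert}, Lemma~\ref{lem:2chords}, Lemma~\ref{lem:triangles}, and the facts that $w_1w_2\in E(G)$ and $g(w_1)=g(w_2)=5$, one then shows: every interior vertex of $\hat A\setminus A$ has girth at least $5$, $f$-value $2$, and exactly one neighbour in $V(R)\cup\{w_1,w_2\}$ (the cases $g(v)\in\{3,4\}$ being excluded); and every interior vertex of $\hat B\setminus B$ has girth $3$, $f$-value $4$, and exactly two neighbours in $V(R)\cup\{w_1,w_2\}$, which by Claim~\ref{claim:notadjtow1w2andR} are either both in $R$ or are precisely $w_1$ and $w_2$.

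Granted this classification, types~\ref{ex:B} and~\ref{ex:A} are dispatched by locating a single short chord. For~\ref{ex:B}, $v(P)=3$ and some $b\in\hat B$ is adjacent to all three vertices of $P$; then $b\notin B$ (else $K$ is exceptional), and $b\notin V(\delta G)$ since $b$ is adjacent to the internal vertex of $P$ and $\delta G$ has no $1$-chord (Lemma~\ref{lem:1chord}); now, exactly as in the argument that $\tilde K$ is not a canvas, the $2$-chord $u_1bu_3$ forces via Lemma~\ref{lem:2chords} that $\delta G$ is a four-cycle all of whose vertices are adjacent to $b$, whence Lemma~\ref{lem:triangles} gives $v(G)=5$ and $\deg(b)=f(b)=4$, contradicting Lemma~\ref{lem:nolowdegs}. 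For~\ref{ex:A}, $v(P)=4$ and some $a\in\hat A\setminus V(P)$ is adjacent to $u_1$ and $u_4$; then $a\notin A$ (else $K$ is exceptional), so $a$ is an interior vertex of girth at least $5$, and $u_1au_4$ is a $2$-chord whose endpoints are not internal to $P$; by Lemma~\ref{lem:2chords} this forces a $B$-vertex adjacent to $u_1$, $a$, and $u_4$ lying on $\delta G$, which (again using Lemma~\ref{lem:1chord}) forces $\delta G$ to be a five-cycle containing $P$, so $\delta G-P$ is a single vertex; but that is impossible since $V(R)\subseteq V(\delta G-P)$ and $|V(R)|\ge2$ by Lemma~\ref{lem:AAstructure}\ref{itm:AAdefofR}.

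Type~\ref{ex:AB} is the remaining and most delicate case: $v(P)=4$, and (after possibly reversing $P$) there are $b\in\hat B$ adjacent to $\{u_1,u_2\}$ and $a\in\hat A$ adjacent to $\{u_4,b\}$. Since $u_2$ is internal to $P$ and $\delta G$ has no $1$-chord, $b\notin V(\delta G)$, hence $b\notin B$; likewise one argues that $a\notin V(\delta G)$ (the case $a\in V(\delta G)$ again leads to a forbidden chord, or forces $a\in V(P)$, which $A$-membership excludes), so $a$ is interior with $a\notin A$. Thus both $a$ (interior, girth $\ge5$, one neighbour in $V(R)\cup\{w_1,w_2\}$) and $b$ (equal to $v_{j+1}$, or interior of girth $3$ with two neighbours in $V(R)\cup\{w_1,w_2\}$) are tightly constrained by the classification. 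The idea is then to route $b$ or $a$ through one of its prescribed neighbours in $V(R)\cup\{w_1,w_2\}$ to an endpoint of $P$: this produces a $2$-chord, or---going through $w_1$ or $w_2$, where $g(w_\ell)=5$---a $3$-chord, and by Lemma~\ref{lem:2chords} or Lemma~\ref{lem:3chords} it forces a $B$- or $A$-vertex on $\delta G$ adjacent to an endpoint of $P$; tracking this against the $3$-chord $v_1w_1w_2v_3$, the fact that $v_2\in A$ (so $v_0\in A$ whenever it exists, by Observation~\ref{obs:whereistheAvert}), Claim~\ref{claim:notadjtow1w2andR}, and Lemma~\ref{lem:5cycles}, every resulting configuration either collapses $\delta G$ enough to contradict $|V(R)|\ge2$ or Lemma~\ref{lem:nolowdegs}, or forces the extra $B$-vertex to coincide with the $A$-vertex $v_0$, a contradiction.

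The main obstacle is this last case. Types~\ref{ex:B} and~\ref{ex:A} each reduce to a single short chord, but~\ref{ex:AB} is the only one with two interacting new special vertices, each already carrying a prescribed small neighbourhood in $V(R)\cup\{w_1,w_2\}$; the difficulty is the careful planar case analysis needed to exhibit a forbidden configuration in every realization, rather than any single new idea.
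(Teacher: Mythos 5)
Your treatment of types~\ref{ex:B} and~\ref{ex:A} is essentially sound: transplanting the argument used earlier to show $\tilde K$ is unexceptional (the $2$-chord $u_1bu_3$, respectively $u_1au_4$, forcing $\delta G$ to collapse, then Lemma~\ref{lem:triangles} and Lemma~\ref{lem:nolowdegs}, or simply the fact that $v_1,v_2,v_3$ are distinct vertices of $\delta G-P$) works, and differs only cosmetically from the paper, which instead runs the $2$-chord from an endpoint of $P$ into $R$. The genuine gap is type~\ref{ex:AB}, which you yourself flag as the crux: there you give only a routing strategy ("through one of its prescribed neighbours in $V(R)\cup\{w_1,w_2\}$") and assert that every resulting configuration lands in one of three terminal contradictions. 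That assertion is not checkable and, as stated, is not even accurate: carrying the plan out, when $b$ is adjacent to $u_1$ and its two neighbours in $R$ are forced (via Lemma~\ref{lem:2chords}) to be $v_1,v_2$, the contradiction is the triangle $bv_1v_2$ through $v_2\in A$ of girth at least five; when $b$ is attached at the $u_4$-end one must route through the \emph{lower}-indexed neighbour $v_{i'}$ of $b$ to get an index contradiction, since the chord through the higher-indexed neighbour admits the type-\ref{ex:B} escape with $v_t\in B$; and when the removed neighbour of $a$ lies in $R$ the contradiction is a triangle through $a$ itself, using $g(a)\ge 5$. None of these appears on your list ("collapses $\delta G$", "contradicts Lemma~\ref{lem:nolowdegs}", "coincides with $v_0$"), which indicates the case analysis was not actually executed; the~\ref{ex:AB} case is therefore unproved as written.

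You also miss the much shorter route available here, which is what the paper does. Under the claim's hypothesis $\hat K$ is a canvas, so $g(a)\ge 5$ by~\ref{canv:A}; hence when both $a$ and $b$ are new (interior) vertices, the path $u_1bau_4$ is itself a $3$-chord with an internal vertex of girth at least five and with neither endpoint internal to $P$, and Lemma~\ref{lem:3chords} forces an exception of type~\ref{ex:A} or~\ref{ex:AB} on the side containing $v_0,v_1,\dotsc,v_t$, which is impossible given $v_2\in A$ and the length of $P''$ (Observation~\ref{obs:whereistheAvert}, Lemma~\ref{lem:1chord}). The mixed cases ($a\in A$, or $b=v_{j+1}$ via Claim~\ref{claim:delGfine}) are each killed by a single $2$-chord through the edge $ab$ together with the relevant endpoint of $P$, using $g(a)\ge5$. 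In particular the~\ref{ex:AB} case needs none of your removed-neighbour classification (which, incidentally, is largely redundant once $\hat K$ is assumed to be a canvas, and which also overlooks that a girth-three vertex cannot be adjacent to both $w_1$ and $w_2$ since $w_1w_2\in E(G)$ and $g(w_1)=5$). To complete your proof you would need either to adopt this direct $3$-chord argument or to write out the full planar case analysis your sketch promises.
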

\begin{cproof}
Suppose for a contradiction that $\hat{K}$ is an exceptional canvas. 
If it is an exceptional canvas of type \ref{ex:B}, then since $P$ is acceptable we have $v(P)=3$ and there exists a vertex $u \in \hat{B}$ adjacent to $u_1$, $u_2$, and $u_3$. 
Since $K$ is unexceptional and $\delta G$ is chordless by Lemma \ref{lem:1chord}, we have that $u \not \in V(\delta G)$. 
Thus $f(u) = 4$. Since $u \in \hat{B}$, we have that $\hat{f}(u) = 2$ and hence that $u$ is adjacent in $G$ to two vertices in $V(R)\cup\{w_1,w_2\}$. 
By Claim~\ref{claim:notadjtow1w2andR} and the fact that $g(w_1) = g(w_2) = 5$ (Lemma~\ref{lem:AAstructure}\ref{itm:AA3chord}), $u$ is adjacent to two vertices $v_i, v_{i'}$ in $V(R)$, where without loss of generality $i > i'$. Thus $u_1uv_i$ is a 2-chord as described in Lemma~\ref{lem:2chords}. 
This is a contradiction, since $g(v_2) = 5$ (and if $v_0$ exists, $g(v_0)\geq 5$), and hence $i \geq 3$. 

Next suppose $\hat{K}$ is an exceptional canvas of type \ref{ex:A}. Then $v(P) = 4$, and there exists a vertex $u \in \hat{A}$ adjacent to both $u_1$ and $u_4$. Since $K$ is unexceptional and $\delta G$ is chordless, $u \not \in V(\delta G)$. But then since $g(u) \geq 5$, $u_1uu_4$ is a 2-chord violating Lemma \ref{lem:2chords}.

Finally, suppose $\hat{K}$ is an exceptional canvas of type \ref{ex:AB}. Then $v(P) = 4$ and there exists a vertex $u \in \hat{A}$ adjacent to a second vertex $v \in \hat{B}$ where $u$ and $v$ are each adjacent to distinct endpoints of $v(P)$. Note that either $v \not \in B$ or $u \not \in A$ since $K$ is unexceptional. If both $u \not \in A$ and $v \not \in B$, then using Claim \ref{claim:delGfine}, one of the following holds:
\begin{itemize}
    \item $R$ is defined according to \ref{case:f223}, or $R$ was defined according to \ref{case:f2233} and $v \neq v_{j+1}$. In either case both $u$ and $v$ are in $V(G) \setminus V(\delta G)$. But then there is a 3-chord on the vertices in the set $\{u_1, u_4, v, u\}$ which, given the existence of $v_2$, violates Lemma~\ref{lem:3chords}. See Figure~\ref{fig:f223ornotj1}.
    \item  $R$ is defined according to \ref{case:f2233} and $v = v_{j+1}$. In this case $u,v_{j+1}$, and the endpoint of $P$ adjacent to $u$ form a 2-chord which, given $g(u) \geq 5$, violates Lemma \ref{lem:2chords}. See Figure~\ref{fig:f2233j1}.
\end{itemize}

\begin{figure}[ht]\centering
    \subcaptionbox{Given the existence of $v_2$, the emphasized 3-chord violates Lemma~\ref{lem:3chords}.\label{fig:f223ornotj1}}[0.45\textwidth]{\includestandalone[page=7]{figures/fig_AAdetails}}%
    \hspace*{0.05\textwidth}
    \subcaptionbox{Since $g(u)\ge 5$, the emphasized 2-chord violates Lemma~\ref{lem:2chords}.\label{fig:f2233j1}}[0.45\textwidth]{\includestandalone[page=6]{figures/fig_AAdetails}}%
    \captionsetup{width=0.9\textwidth}
    \caption{Showing that $\hat K$ cannot be an exceptional canvas of type~\ref{ex:AB} in Claim~\ref{claim:ifcanvthenunex}.\label{fig:AAnotexAB}}
\end{figure}

If $u \in A$ and $v \not \in B$, then given $g(u) \geq 5$, we have that $uv$ together with the endpoint of $P$ neighbouring $v$ form a 2-chord violating Lemma \ref{lem:2chords}. If $u \not \in A$ and $v \in B$, then $uv$ together with the endpoint of $P$ neighbouring $u$ form a 2-chord violating Lemma \ref{lem:2chords}: either way, a contradiction.
\end{cproof}

Next we argue that $\hat{K}$ is not a canvas. In particular, we argue that $\hat{K}$ fails to be a canvas precisely because $\hat{A}$ is not an independent set. We will show $\hat{K}$ satisfies all \emph{other} properties of canvases:  in particular, that $\hat{f}$ is positive for all $v \in V(\hat{G}) \setminus V(P)$. Since $w_{3-\ell}$ is removed via $\del$, showing that $\hat{f}(v) \geq 0$ for all $v \in V(\hat{G})\setminus V(P)$ implies that $w_{3-\ell}$ is removed via a legal operation.

\begin{claim}\label{claim:finalstructure1}
    $\hat{K}$ satisfies every canvas property with the exception of $\ref{canv:A}$ (and hence $w_{3-\ell}$ is removed via a legal application of $\del$). In particular, $\hat{A}$ is a set of vertices of girth at least five but $\hat{G}[\hat{A}]$ contains at least one edge.   
\end{claim}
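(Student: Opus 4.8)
The plan is to verify each canvas property for $\hat K = (\hat G, P, \hat A, \hat B, \hat f)$ in turn, using the structural facts already established — chiefly Claims~\ref{claim:delGfine} and~\ref{claim:notadjtow1w2andR}, together with Lemmas~\ref{lem:2chords} and~\ref{lem:3chords} — and then to observe that since $K$ is a minimum counterexample and $\hat K$ cannot be an unexceptional canvas (it would otherwise be weakly $\hat f$-degenerate, making $K$ weakly $f$-degenerate via $\sigma$ followed by the sequence for $\hat K$), while Claim~\ref{claim:ifcanvthenunex} rules out $\hat K$ being an exceptional canvas, the only possibility is that $\hat K$ fails to be a canvas. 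All canvas properties except the independence of $\hat A$ and $\hat B$ will be checked directly; the independence of $\hat B$ will be checked too, leaving exactly \ref{canv:A} to fail, i.e.\ $\hat G[\hat A]$ has an edge.

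First I would handle the routine properties. Property~\ref{canv:acceptable} holds since $\hat K$ has the same path $P$ as $K$, which is acceptable, and~\ref{canv:fint} holds by Observation~\ref{obs:subcanvas} since $\hat G$'s interior is contained in $G$'s. For vertices $v\in V(\delta G-P)\cap V(\hat G)$, Claim~\ref{claim:delGfine} gives $\hat f(v) = f(v)$ (with the single possible exception of $v_{j+1}$ when $R$ follows \ref{case:f2233}, where $\hat f(v_{j+1}) = f(v_{j+1})-1$ but $f(v_{j+1})=3$ so $\hat f(v_{j+1})=2$ and $g(v_{j+1})=3$, which is consistent with membership in $\hat B$); hence \ref{canv:fCn3}, \ref{canv:fC3} hold, and every vertex of $\hat A\setminus A$ or $\hat B\setminus B$ lies in $V(\hat G)\setminus V(\delta G)$. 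Properties~\ref{canv:fA} and~\ref{canv:fB} hold by the definitions of $\hat A$ and $\hat B$, once I show vertices of $\hat A$ have girth at least five: a vertex $v\in\hat A\setminus A$ is in $V(\hat G)\setminus V(\delta G)$, and by Claim~\ref{claim:notadjtow1w2andR} it has no neighbour in $R$ if it neighbours $\{w_1,w_2\}$, so combining with Lemma~\ref{lem:2chords} (at most one neighbour among $\{w_1,w_2\}$ unless a 2-chord is violated) and Lemma~\ref{lem:3chords}, $v$ loses at most the amount that forces $f(v)=2$, hence $g(v)\ge 5$ since $K$ is a canvas; a similar count shows $\hat f(v)\ge 0$ for every $v\in V(\hat G)\setminus V(P)$, which is what certifies that the final operation $\del(w_{3-\ell})$ in $\sigma$ is legal.

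Next I would verify that $\hat B$ is independent. Since $B$ is independent, any edge induced by $\hat B$ contains a vertex of $\hat B\setminus B$, which lies in $V(\hat G)\setminus V(\delta G)$ and therefore has $\hat f=2$, $g=3$, and (by the degree/girth conditions of a canvas) at least two neighbours in $V(R)\cup\{w_1,w_2\}$; by Claim~\ref{claim:notadjtow1w2andR} and $g(w_1)=g(w_2)=5$ these are two non-adjacent vertices of $R$, yielding a 2-chord, and then Lemma~\ref{lem:2chords} forces its far endpoint into $B$ with $g=3$; chasing this as in the proof of Lemma~\ref{lem:BBstructure} (using Observation~\ref{obs:whereisthelist4vert} and that $R$ follows \ref{case:f2233} or \ref{case:f223}) produces adjacent girth-three vertices of $R$ both with $f$-value $2$, contradicting independence of $B$. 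An analogous argument, now also excluding an endpoint of the chord lying on $\delta G$ (again via Lemma~\ref{lem:2chords} and $g(v_0)\ge 5$ if $v_0$ exists), shows no edge of $\hat B$ can touch $V(\delta G)$. Hence $\hat B$ is independent. The main obstacle is this last part — carefully ruling out every configuration of a $\hat B$-edge using the 2-chord and 3-chord lemmas together with the precise form of $R$ — since it mirrors the most intricate case analysis in Lemma~\ref{lem:BBstructure}, but no new ideas are needed. Having verified~\ref{canv:acceptable}, \ref{canv:B}, \ref{canv:f} (all parts), and noted that~\ref{canv:A}'s girth requirement holds, the only property that can fail is the independence of $\hat A$; since $\hat K$ must fail to be a canvas, $\hat G[\hat A]$ contains an edge, completing the claim.
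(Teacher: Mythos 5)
Your overall architecture is exactly the paper's: verify \ref{canv:acceptable}, \ref{canv:fint}, the $f$-conditions, the girth-five property of $\hat A$ and independence of $\hat B$, note that nonnegativity of $\hat f$ legalizes $\del(w_{3-\ell})$, and then use Claim~\ref{claim:ifcanvthenunex} plus minimality to force an edge in $\hat G[\hat A]$. However, the two verifications that carry essentially all the content of this claim are where your plan has genuine gaps. First, for the independence of $\hat B$ you assert that any vertex of $\hat B\setminus B$ lies in $V(\hat G)\setminus V(\delta G)$; this is false when $R$ is defined according to \ref{case:f2233}, where $v_{j+1}\in\hat B\setminus B$ lies on $\delta G$ (you noted this exception earlier but then dropped it). The resulting cases are the delicate ones: an interior vertex of $\hat B\setminus B$ adjacent to $v_{j+1}$, where the 2-chord one builds is \emph{not} immediately a violation of Lemma~\ref{lem:2chords} (the type~\ref{ex:B} exception must be excluded using $f(v_i)\le 2$ and the independence of $B$), and the case of two adjacent interior vertices of $\hat B\setminus B$, which the paper kills by observing that both would lie in $\tilde B$, contradicting the mutual exclusivity of Lemmas~\ref{lem:BBstructure} and~\ref{lem:AAstructure} — an idea absent from your sketch. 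Your substitute step is also incorrect as stated: a girth-three vertex of $\hat B\setminus B$ has its two neighbours in $R$ typically \emph{adjacent} (consecutive on $R$), so they do not ``yield a 2-chord''; the BBstructure-style chase can be repaired via the consecutive-pair analysis together with Observation~\ref{obs:whereisthelist4vert} and a nested-triangle argument (Lemma~\ref{lem:triangles}) when the two pairs coincide, but not via the argument you describe.

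Second, your girth argument for $\hat A$ (``$v$ loses at most the amount that forces $f(v)=2$'') compresses away the one configuration that Lemma~\ref{lem:2chords} does not exclude: a girth-three interior vertex adjacent to three consecutive vertices $v_i,v_{i+1},v_{i+2}$ of $R$ with $f$-pattern $3,2,3$ and $v_{i+1}\in B$, which is consistent with the type~\ref{ex:B} exception of that lemma and must instead be ruled out by Observation~\ref{obs:whereisthelist4vert} (as the paper does); you invoke that observation only in the $\hat B$ part. (Also, the reason a vertex cannot neighbour both $w_1$ and $w_2$ is simply that $w_1w_2\in E(G)$ and $g(w_1)=5$, not Lemma~\ref{lem:2chords}.) Until these cases are argued, the claim that only \ref{canv:A} can fail is not established.
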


\begin{cproof}
We first show that $\hat{K}$ satisfies every canvas property except \ref{canv:A}.

Note that Properties \ref{canv:acceptable} and \ref{canv:fint} hold, since $K$ is a canvas.  Recall that $\hat{A}$ is defined as $\{v \in V(\hat{G}) \setminus V(P) : \hat{f}(v) \leq 1\}$, and  $\hat{B}$ is defined as $\{v \in V(\hat{G}) \setminus V(P): \hat{f}(v) = 2 \textnormal{ and } g(v) = 3\}$. Hence Property \ref{canv:fB} holds by definition of $\hat{B}$, and Property \ref{canv:fCn3} holds by definition of $\hat{A}$. Property \ref{canv:fC3} holds by definition of $\hat{A}$ and $\hat{B}$. Thus it remains to show that Properties \ref{canv:B} and \ref{canv:fA} hold. We begin with Property \ref{canv:B}.

\begin{subclaim}
    Property \ref{canv:B} holds for $\hat K$.
\end{subclaim}
\begin{scproof}[Proof of subclaim]
Note that every vertex in $\hat{B}$ has girth three by definition, and hence it suffices to show $\hat{B}$ is independent. To that end, suppose not: then there exist vertices $u,u' \in \hat{B}$ with $uu' \in E(\hat{G})$. 
Since $B$ is independent, without loss of generality either $u \in \hat{B}\setminus B$ and $u' \in B$, or $\{u,u'\} \subseteq \hat{B} \setminus B$. 

First suppose $u \in \hat{B}\setminus B$ and $u' \in B$. If $u \neq v_{j+1}$, then by Claim \ref{claim:delGfine} $u \not \in V(\delta G)$ and hence $f(u) = 4$. Since $u \in \hat{B}$, we have that $u$ is adjacent in $G$ to at least two vertices in $V(R) \cup \{w_1,w_2\}$. By Claim \ref{claim:notadjtow1w2andR} and the fact that $g(w_1) = g(w_2) = 5$, $u$ is adjacent to two vertices in $R$, and so there exists an index $i \leq j-1 $ such that $u$ is adjacent to $v_i$. But then $v_iuu'$ is a 2-chord which, given $u' \in B$ and $B$ is independent, violates Lemma \ref{lem:2chords}. Thus we may assume $u = v_{j+1}$. Since $u \not \in B$, by Claim \ref{claim:delGfine} $R$ is defined according to \ref{case:f2233}. Since $\delta G$ has no 1-chord by Lemma \ref{lem:1chord}, $u' = v_{j+2}$. This contradicts the definition of $R$.

Thus we may assume both $u$ and $u'$ are in $\hat{B}\setminus B$.
If $u$ is not $v_{j+1}$, then by Claim \ref{claim:delGfine} we have that $f(u) = 4$. Since $u$ is in $\hat{B}\setminus \{v_{j+1}\}$, it is adjacent to two vertices in $V(R) \cup \{w_1, w_2\}$. As above, the girth of $w_1$ and $w_2$ Claim~\ref{claim:notadjtow1w2andR} mean that in fact $u$ is adjacent to two vertices in $V(R)$. 
Since $R$ is removed via the same sequence of operations as in \ref{handle:f2233} or \ref{handle:f223}, we have that $u$ is also in $\tilde B$ (where we use the notation from Section~\ref{sec:properties}: $(G-R,P, \tilde{A}, \tilde{B}, \tilde{f})$ is obtained from $K$ by removing $R$ via \ref{handle:f2233} or \ref{handle:f223}).  If $u'$ is \emph{also} not $v_{j+1}$, we have that $u'$ is also in $\tilde B$ and so it follows that $\tilde{G}[\tilde{B}]$ is not independent and the structure established in Lemma \ref{lem:BBstructure} exists. But since $\tilde{G}[\tilde{A}]$ is also not independent, Lemma \ref{lem:AAstructure} also applies. This is a contradiction, as Lemmas \ref{lem:BBstructure} and \ref{lem:AAstructure} are mutually exclusive.

Thus, without loss of generality $u \in \hat{B}\setminus B$ and $u' = v_{j+1}$. Let $i$ be the smallest index such that $u$ is adjacent to $v_i$ and $v_i \in V(R)$. 
As above, $u$ is adjacent to two vertices in $R$ which means that $i \leq j-1$. Since $R$ is defined according to case \ref{case:f2233} or \ref{case:f223}, $f(v_{i}) \leq 2$. But then $v_iuu'$ is a 2-chord violating Lemma \ref{lem:2chords}, a contradiction.
\end{scproof}

Next, we show Property \ref{canv:fA} holds for $\hat K$, and that every vertex in $\hat{A}$ has girth at least five.

\begin{subclaim}
    Property \ref{canv:fA} holds, and $g(v) \geq 5$ for all $v \in \hat{A}$.
\end{subclaim}
\begin{scproof}[Proof of subclaim]
Recall that $\hat{A}$ is the set $\{v \in V(\hat{G}) \setminus V(P) : \hat{f}(v)\leq 1\}$. First suppose that there exists a vertex $v \in \hat{A}$ with $\hat{f}(v) \leq 0$. By Claim \ref{claim:delGfine}, we have that $v \not \in V(\delta G)$, and hence either $f(v) = 4$ and $g(v)=3$, or $f(v) = 3$ and $g(v) = 4$, or $f(v) = 2$ and $g(v) \geq 5$.  Since $\hat{f}(v) \leq 0$, if $v$ has girth three, four, or at least five, then $v$ is adjacent to at (at least) four, three, or two vertices, respectively, in $V(R) \cup \{w_1,w_2\}$. Since $g(w_1) =5$, $v$ is not adjacent to both $w_1$ and $w_2$, and hence by Claim \ref{claim:notadjtow1w2andR}, $v$ is adjacent to at least four, three, or two vertices in $V(R)$ (depending on its girth). If $g(v) \geq 4$, this immediately gives a 2-chord violating Lemma \ref{lem:2chords}, and so we may assume $g(v) = 3$ and that $v$ is adjacent to four vertices in $R$. Let $i$ and $i'$ be the lowest and highest indices, respectively, with $vv_i,vv_{i'} \in E(G)$ and $v_i,v_{i'} \in V(R)$.  Since $i' \geq i+3$, we have that $v_ivv_{i'}$ is a 2-chord violating Lemma \ref{lem:2chords}. Thus Property \ref{canv:fA} holds, as desired.

Suppose next that $\hat{A}$ contains a vertex $v$ of girth at most four. Again by Claim \ref{claim:delGfine}, we have that $v \not \in V(\delta G)$, and hence either $f(v) = 4$ and $g(v)=3$, or $f(v) = 3$ and $g(v) = 4$. Since  $\hat{f}(v) = 1$, if $v$ has girth three or four, then $v$ is adjacent to three or two vertices, respectively, in $V(R) \cup \{w_1,w_2\}$. Again since $g(w_1) = 5$, $v$ is not adjacent to both $w_1$ and $w_2$, and hence by Claim \ref{claim:notadjtow1w2andR} $v$ is adjacent to three or two (depending on its girth) vertices in $V(R)$. 
As above, if $g(v) = 4$, this creates a 2-chord violating Lemma \ref{lem:2chords}, and so we may assume $g(v) = 3$. Let $v_i$ and $v_{i'}$ be the vertices of lowest and highest index in $V(R)$ adjacent to $v$ respectively. 
Since $v$ is adjacent to three vertices in $R$, we have that $i' \geq i+2$.  Hence $v_ivv_{i'}$ is a 2-chord violating Lemma \ref{lem:2chords} unless $i' = i+2$, $f(v_i) = 3$, $f(v_{i+1}) = 2$, and $f(v_{i+2}) = 3$. But $f(v_i)\ne 3$ by Observation \ref{obs:whereisthelist4vert}.
\end{scproof}
To complete the proof of Claim~\ref{claim:finalstructure1}, we must verify that $\hat{G}[\hat{A}]$ contains at least one edge.
First note that if $\hat{A}$ is independent, then $\hat{K}$ is a canvas. By Claim \ref{claim:ifcanvthenunex}, it is moreover unexceptional and so by the minimality of $K$, we have that $\hat{K}$ is weakly $\hat{f}$-degenerate and so there exists a legal sequence of operations $\sigma'$ that removes all vertices of $\hat{G}$. 
But by performing $\sigma$ and then $\sigma'$ starting from $(G-P, f_K)$, we conclude that $G-P$ is weakly $f_K$-degenerate. But then $K$ is weakly $f$-degenerate, a contradiction. 
Thus $\hat{G}[\hat{A}]$ contains at least one edge.
\end{cproof}

\begin{claim}\label{claim:finalstructure2}
    For every edge $w_3w_4$ in $\hat{G}[\hat{A}]$, up to swapping the labels of $w_3$ and $w_4$, exactly one of the following holds:
    \begin{enumerate}[label=\textup{(\roman*)}]
        \item\label{itm:w3w1} $w_3$ is adjacent to $w_1$ and $w_4$ is adjacent to a vertex $v_i\in V(R)$ with $i \geq 4$, or
        \item\label{itm:w3w2} $w_3$ is adjacent to $w_2$ and $w_4$ is adjacent to a vertex $v_i\in V(R)$ with $i \geq 4$.
    \end{enumerate}
\end{claim}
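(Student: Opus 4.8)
\textbf{Proof plan for Claim~\ref{claim:finalstructure2}.}

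The plan is to take an arbitrary edge $w_3w_4$ of $\hat G[\hat A]$ and mimic the analysis that produced Lemma~\ref{lem:AAstructure}, but now tracking the role of the additional removed vertices $w_1,w_2$. First I would record that, since $A$ is independent in $G$ and $\hat A\subseteq V(\delta\hat G)$, at least one of $w_3,w_4$, say $w_3$, lies in $\hat A\setminus A$; hence $w_3$ must be adjacent in $G$ to a vertex that was removed by $\sigma$, i.e.\ to a vertex of $V(R)\cup\{w_1,w_2\}$. By Claim~\ref{claim:finalstructure1}, $g(w_3)=g(w_4)=5$, so each of $w_3,w_4$ is adjacent to at most one vertex of $R$ (as a pair of such neighbours would give a $2$-chord through a girth-$\ge 5$ vertex, violating Lemma~\ref{lem:2chords} via Corollary~\ref{cor:notmanyRnbrs}), and $w_3$ is not adjacent to both $w_1$ and $w_2$ (again girth $5$). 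So $w_3$ has exactly one neighbour in $V(R)\cup\{w_1,w_2\}$, and the same for $w_4$ if $w_4\in\hat A\setminus A$.

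Next I would split on whether $w_4\in A$ or $w_4\in\hat A\setminus A$. If $w_4\in A$, then $w_4$ has no neighbour removed by $\sigma$, so $\hat f(w_4)=f(w_4)=1$ already, contradicting $w_4\in\hat A\setminus A$ being needed — wait, that is fine, but then the only neighbour of $w_3$ removed by $\sigma$ is some vertex $x\in V(R)\cup\{w_1,w_2\}$, and $x$, $w_3$, $w_4$ would form a $2$-chord (with endpoints $x$ and $w_4\in A$, not internal to $P$) violating Lemma~\ref{lem:2chords} unless $x\in\{w_1,w_2\}$; but if $x\in V(R)$ this $2$-chord is illegal, so $x\in\{w_1,w_2\}$, and then $x w_3 w_4$ together with the chord structure still needs checking — I expect this case to in fact be subsumed by noting $w_4\in A$ forces $w_4\in\tilde A$ and $w_3$ adjacent to $w_1$ or $w_2$, which is consistent with the claim once we identify where $w_4$'s neighbour in $R$ sits. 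The cleaner route: assume $w_4\in\hat A\setminus A$ as well (handling $w_4\in A$ by the same chord argument), so both $w_3$ and $w_4$ have a unique neighbour in $V(R)\cup\{w_1,w_2\}$. If both neighbours lie in $V(R)$, then $w_3,w_4$ are both in $\tilde A$ (since the initial segment of $\sigma$ is exactly the removal of $R$), so $\tilde G[\tilde A]$ contains the edge $w_3w_4$; but Lemma~\ref{lem:AAstructure}\ref{itm:AAunique} says $\tilde G[\tilde A]$ induces exactly the edge $w_1w_2$, forcing $\{w_3,w_4\}=\{w_1,w_2\}$ — then $w_1$ and $w_2$ would each have a neighbour in $R$ (namely $v_1$, $v_3$) consistent with the claim with $w_3=w_1$, $w_4=w_2$ and $v_i=v_3$ (so $i=3$, not $\ge 4$); this apparent clash I expect is resolved by Lemma~\ref{lem:AAstructure}\ref{itm:onlynbrsw1w2}, which says $w_1$'s only boundary neighbour is $v_1$, so $w_1\notin\hat A$ as $\hat f(w_1)=f(w_1)-1=4\ge 1$ fails the $\le 1$ test — hence $w_1,w_2\notin\hat A$ and this subcase is vacuous. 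Therefore at least one of $w_3,w_4$ is adjacent to $w_1$ or $w_2$; say $w_3$ is adjacent to $w_1$ or to $w_2$.

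The remaining work is to pin down the neighbour of $w_4$. Since $w_3$ is adjacent (say) to $w_1$, and $w_1$'s only neighbour in $\delta G-P$ is $v_1$ by Lemma~\ref{lem:AAstructure}\ref{itm:onlynbrsw1w2}, $w_3$ is not on $\delta G$ in a way that clashes; and $w_4\in\hat A\setminus A$ has a unique neighbour $x\in V(R)\cup\{w_1,w_2\}$. If $x\in\{w_1,w_2\}$ too, then $w_3,w_4$ together with $w_1$ (or $w_2$) and the $3$-chord $v_1w_1w_2v_3$ form a short cycle with a vertex inside, contradicting Lemma~\ref{lem:5cycles} (using $g=5$); more precisely $x=w_1$ gives a triangle-free short cycle $w_3w_1w_4\dots$ whose interior/position is controlled by planarity and Lemma~\ref{lem:triangles}/Lemma~\ref{lem:5cycles}, ruling it out, and $x=w_2$ gives a $4$- or $5$-cycle $w_3w_1w_2w_4$ (or with $v_3$) again excluded. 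So $x=v_i\in V(R)$. It remains to show $i\ge 4$: since $v_2\in A$ (Lemma~\ref{lem:AAstructure}) and $w_4$ has girth $5$, $w_4$ cannot be adjacent to $v_2$; and $w_4$ adjacent to $v_1$ would, together with $w_3$–$w_1$–$v_1$, create a short cycle through two girth-$5$ vertices with $v_2$ (or an interior vertex) trapped, contradicting Lemma~\ref{lem:5cycles}; similarly $v_3$ is excluded because $w_2$ is already the private neighbour of $v_3$ on the relevant side, and planarity plus Lemma~\ref{lem:5cycles} forbid a second girth-$5$ vertex $w_4$ adjacent to $v_3$ inside the region bounded by $v_1w_1w_2v_3v_2$. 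Hence $v_i$ has $i\ge 4$. The symmetric statement with $w_3$ adjacent to $w_2$ gives case~\ref{itm:w3w2}, and the two cases are exclusive because $w_3$, having girth $5$, cannot be adjacent to both $w_1$ and $w_2$ (that would be a triangle if $w_1w_2\in E$, which it is).

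The main obstacle I anticipate is the bookkeeping around the vertex $v_{j+1}$ and the boundary-vs-interior dichotomy: one must repeatedly invoke Claim~\ref{claim:delGfine} to know exactly which vertices have $\hat f=f$, and carefully use planarity together with Lemmas~\ref{lem:triangles} and~\ref{lem:5cycles} to exclude the various short cycles formed by $w_3,w_4$ with $w_1,w_2,v_1,v_2,v_3$. Getting the exclusion ``$i\ge 4$'' right — in particular ruling out $i\in\{1,3\}$ using the girth-$5$ constraint and the $3$-chord structure from Lemma~\ref{lem:AAstructure} rather than just Lemma~\ref{lem:2chords} — is the delicate point, since a naive $2$-chord argument does not immediately forbid $i=3$.
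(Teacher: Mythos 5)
Your overall skeleton matches the paper's: reduce to both endpoints of the edge lying in $\hat A\setminus A$ (hence interior vertices, each with a single neighbour in $V(R)\cup\{w_1,w_2\}$), use Lemma~\ref{lem:AAstructure}\ref{itm:AAunique} to forbid both neighbours lying in $V(R)$, conclude that one endpoint attaches to $w_1$ or $w_2$ and the other to some $v_i$, and then exclude small $i$. But two steps that you leave unresolved are exactly where the real content lies, and your proposed substitutes would not work. First, the case where one endpoint, say $w_3$, lies in $A$ must be \emph{excluded}, not declared ``consistent with the claim'': if $w_3\in A$ and $w_4$'s removed neighbour is $w_1$ or $w_2$, nothing forces $w_4$ to have a neighbour $v_i\in V(R)$ at all, so the claim would simply be false for that edge unless the configuration is impossible. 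The paper kills this case by observing that $w_3w_4w v'$ (with $w\in\{w_1,w_2\}$ and $v'\in\{v_1,v_3\}$) is a $3$-chord whose two internal vertices have girth five, so Lemma~\ref{lem:3chords} forces its endpoint $w_3$ to neighbour a vertex of $A$, contradicting independence of $A$. Your sketch never produces this contradiction; the $2$-chord argument you lean on only handles the subcase where $w_4$'s neighbour is in $V(R)$. (Your resolution of the ``both neighbours in $R$'' subcase is also garbled: the reason $\{w_3,w_4\}\ne\{w_1,w_2\}$ is simply that $w_1,w_2\notin V(\hat G)$, since $\hat G=G-R-\{w_1,w_2\}$; the computation ``$\hat f(w_1)=f(w_1)-1=4$'' is not meaningful.)

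Second, and as you yourself anticipate, your exclusions of $i\in\{1,2,3\}$ are not valid as stated. For $i=2$ the reason is planarity: by Lemma~\ref{lem:5cycles} (using $g(w_1)=5$) the $5$-cycle $v_1v_2v_3w_2w_1v_1$ has empty interior, so $v_2$ has no neighbour outside $\{v_1,v_3,w_1,w_2\}$; ``$v_2\in A$ and $w_4$ has girth $5$'' proves nothing. For the hard subcase -- $i=3$ when $w_3$ is adjacent to $w_1$ (and symmetrically $i=1$ when $w_3$ is adjacent to $w_2$) -- no girth or chord argument suffices: the paper applies Lemma~\ref{lem:5cycles} to the second $5$-cycle $w_1w_3w_4v_3w_2w_1$, concludes both $5$-cycles around $w_2$ have empty interiors, deduces $\deg(w_2)=2=f(w_2)$, and contradicts Lemma~\ref{lem:nolowdegs}. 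Your sketch never invokes Lemma~\ref{lem:nolowdegs}, and the phrase ``$w_2$ is already the private neighbour of $v_3$'' together with forbidding $w_4$ ``inside the region bounded by $v_1w_1w_2v_3v_2$'' is vacuous, since $w_4$ lies outside that region. (The easy companion subcases, $i=3$ with $w_3$ adjacent to $w_2$ and $i=1$ with $w_3$ adjacent to $w_1$, do follow from a $4$-cycle contradicting girth five, as you indicate.) So the proposal has genuine gaps at both the exclusion of the $A$-endpoint case and the $i\ge 4$ bound.
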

\begin{cproof}
    Note that by Claim \ref{claim:delGfine}, if $R$ is defined according to \ref{case:f2233} then $\hat{f}(v) = f(v)$ for all $v \in V(\delta G) \cap V(\hat{G})$ with $g(v) \geq 5$. 
    If $R$ is defined according to \ref{case:f223}, then $g(v_{j+1}) = 3$ and hence again by Claim \ref{claim:delGfine}, $\hat{f}(v) = f(v)$ for all $v \in V(\delta G) \cap V(\hat{G})$ with $g(v) \geq 5$.  Thus $(\hat{A}\setminus A) \cap V(\delta G) = \emptyset$.

    Let $w_3w_4$ be any edge in $\hat{G}[\hat{A}]$. We first show that both $w_3$ and $w_4$ are in $\hat A\setminus A$. 
    Suppose not; without loss of generality suppose that $w_3 \in A$. Recall that every vertex in $\hat{A} \setminus A$ is adjacent in $G$ to a vertex in $V(R) \cup \{w_1,w_2\}$.
\begin{itemize}
    \item If $w_4 \in A$, then $w_3w_4$ is a 1-chord violating Lemma \ref{lem:1chord}.
    \item If $w_4\in \hat{A} \setminus A$ and $w_4$ neighbours a vertex $w \in V(R)$, then $w_3w_4w$ is a 2-chord violating Lemma~\ref{lem:2chords}.
    \item Finally, suppose that $w_4 \in \hat{A} \setminus A$ and that $w_4$ neighbours a vertex $w \in \{w_1,w_2\}$. Let $v'$ be the neighbour of $w$ in $\{v_1,v_3\}$. Note that $w_3w_4wv'$ is a 3-chord. By Lemma \ref{lem:3chords}, since $g(w_4)=g(w) = 5$, we have that $w_3$ (and $v'$) neighbour a vertex in $A$. This contradicts that $w_3$ is in $A$, since $A$ is independent.


\end{itemize}
Hence, $w_3$ and $w_4$ are both in $\hat A\setminus A$, which as discussed above, is a subset of $V(G)\setminus V(\delta G)$. Let $w_3'$ and $w_4'$ be neighbours of $w_3$ and $w_4$, respectively, in $V(R) \cup \{w_1,w_2\}$. 
Note that since $g(w_1) = 5$, at least one of $w_3'$ and $w_4'$ is not in $\{w_1,w_2\}$ (and is thus in $R$). 
If \emph{both} $w_3'$ and $w_4'$ are in $R$, then since $R$ was removed via the same sequence of operations as \ref{handle:f2233} or \ref{handle:f223}, we have that $w_3w_4$ is also in $\tilde{G}[\tilde{A}]$. This contradicts Lemma \ref{lem:AAstructure} \ref{itm:AAunique}, since $w_3w_4 \neq w_1w_2$. Hence exactly one of $w_3'$ and $w_4'$ is in $V(R)$. Swapping the labels of $w_3$ and $w_4$ if necessary, suppose that $w_4'$ is in $R$.
Then $w_4' = v_i$ for some index $i$. We now argue that $i \geq 4$. 

Note that $i \neq  2$ since $G$ is a plane graph, $g(w_1)=5$, and so the interior of the 5-cycle $v_1v_2v_3w_2w_1v_1$ is empty by Lemma~\ref{lem:5cycles}. 
Moreover, $i \neq 3$: otherwise, in case \ref{itm:w3w1}, we have that the interior of the 5-cycle $w_1w_3w_4v_3w_2w_1$ is empty by Lemma \ref{lem:5cycles} (and the fact that $g(w_1) = 5$) and hence $\deg(w_2) = 2 = f(w_2)$, contradicting Lemma \ref{lem:nolowdegs}. In case \ref{itm:w3w2}, we have that $v_3w_2w_3w_4v_3$ is a 4-cycle, contradicting that $g(w_2) = 5$. Symmetrically, $i \neq 1$, and hence $i \geq 4$. See Figure~\ref{fig:finalstructure2}.
\end{cproof}

\begin{figure}[ht]
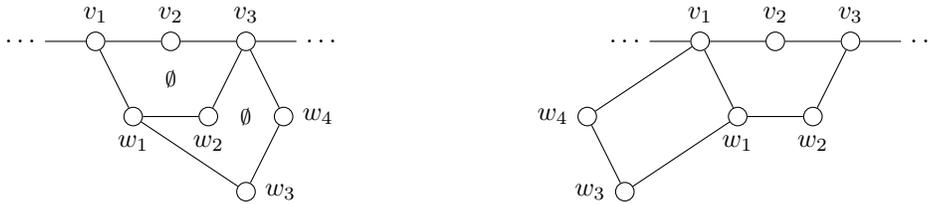
\centering
    \subcaptionbox{Showing that $i\ne 3$ in the proof of Claim~\ref{claim:finalstructure2}\ref{itm:w3w1}: $\deg(w_2)=2$ contradicts Lemma~\ref{lem:nolowdegs}.}[0.4\textwidth]{\includestandalone[page=3]{figures/fig_AAdetails}}%
    \hspace*{0.05\textwidth}
    \subcaptionbox{Showing that $i\ne 1$ in the proof of Claim~\ref{claim:finalstructure2}\ref{itm:w3w1}. The 4-cycle contradicts that $g(w_1) = 5$ (Lemma \ref{lem:AAstructure}\ref{itm:AA3chord}).\label{fig:fs2ine1}}[0.4\textwidth]{\includestandalone[page=4]{figures/fig_AAdetails}}%
    \captionsetup{width=0.9\textwidth}
    \caption{Ruling out neighbours of $w_4$ in $R$ for Claim~\ref{claim:finalstructure2}. The structures for case~\ref{itm:w3w2} are similar.\label{fig:finalstructure2}}
\end{figure}

Let $w_3$ and $w_4$ be vertices as described in Claim~\ref{claim:finalstructure2}, and let $v_i$ the neighbour of $w_4$ in $V(R)$. In particular, choose $w_3w_4$ to minimize $i$. 
If $w_3$ is adjacent to $w_1$, let $Q = v_1w_1w_3w_4v_i$. If $w_3$ is adjacent to $w_2$, let $Q = v_1w_1w_2w_3w_4v_i$. Note that in both cases, the endpoints of $Q$ are $v_1$ and $v_i$. See Figure~\ref{fig:AAdetails}. Let $Q' = v_1v_2 \dots v_i$. The path $Q$ separates $G$ into two graphs $G_1, G_2$ (with $\delta G_2 = Q \cup Q'$). 

\begin{figure}[ht]
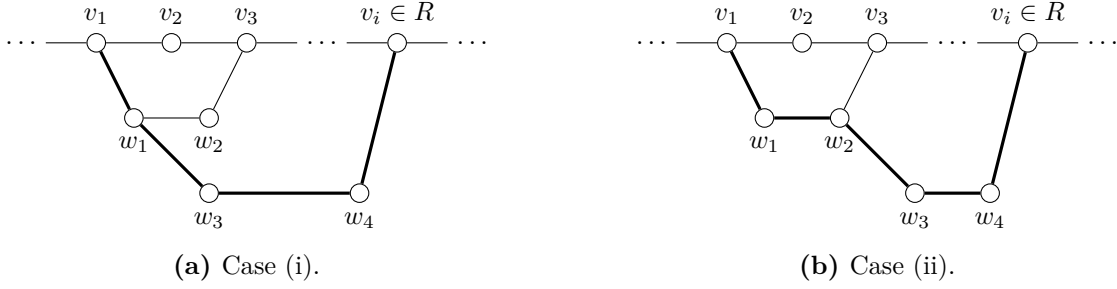
\centering
    \subcaptionbox{Case~\ref{itm:w3w1}.\label{fig:fs2w3w1}}[0.45\textwidth]{\includestandalone[page=1]{figures/fig_AAdetails}}%
    \hspace*{0.05\textwidth}
    \subcaptionbox{Case~\ref{itm:w3w2}.\label{fig:fs2w3w2}}[0.45\textwidth]{\includestandalone[page=2]{figures/fig_AAdetails}}%
    \captionsetup{width=0.9\textwidth}
    \caption{The structure given by Claim~\ref{claim:finalstructure2} for an edge $w_3w_4\in \hat G[\hat A]$ chosen to minimize $i$. The edges of the path $Q$ are bold. $\delta G_2$ is the cycle $v_1\dotsb v_i w_4\dotsb w_1v_1$.\label{fig:AAdetails}}
\end{figure}

Since $G$ is a plane graph, our choice of $i$ gives us the following. 

\begin{obs}\label{obs:goodichoice}
    There does not exist a vertex $w \in V(G_2) \setminus V(\delta G_2)$ with $g(w) \geq 5$ (equivalently, $f(w) = 2$) such that $w$ is adjacent to a vertex in $V(Q')$ and to $w_3$.
\end{obs}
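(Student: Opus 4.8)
The plan is to suppose for contradiction that such a vertex $w$ exists and to extract a forbidden short cycle. First I would locate $w$ precisely: since $w\in V(G_2)\setminus V(\delta G_2)$ and $V(G_2)\cap V(\delta G)=V(Q')$, the vertex $w$ is not on $\delta G$, and it lies outside $V(R)\cup\{w_1,w_2\}$, so $w\in V(\hat G)$ and $w$ is interior to $G$; by~\ref{canv:fint} and Observation~\ref{obs:feq}, $f(w)=2$. Since $Q'=v_1\dots v_i$ with $v_i\in V(R)$, we also have $V(Q')\subseteq V(R)$. Next I would compute $\hat f(w)$: none of the $\delsave$ operations in $\sigma$ spares $w$ (the spared vertices are $v_0$ and, in case~\ref{case:f223}, $v_{j+1}$, neither of which is interior to $G$), and $w\notin V(P)$, so $\hat f(w)=f(w)-|N(w)\cap(V(R)\cup\{w_1,w_2\})|$. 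As $w$ has a neighbour in $V(Q')\subseteq V(R)$, this is at most $1$, whence $w\in\hat A$; and since $w$ is adjacent to $w_3\in\hat A$, the edge $ww_3$ lies in $\hat G[\hat A]$.

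The core of the argument is to apply Claim~\ref{claim:finalstructure2} to the edge $ww_3$. Up to swapping the roles of $w$ and $w_3$, this gives two cases. In the first, $w$ is adjacent to some $v_m\in V(R)$ with $m\ge 4$ while $w_3$ is adjacent to $w_1$ or $w_2$. Because $w$ is interior to $G_2$, its neighbour $v_m\in V(R)\subseteq V(\delta G)$ must lie in $V(G_2)\cap V(\delta G)=V(Q')$, so $m\le i$; but then $ww_3$ itself is a valid instance of the configuration whose parameter was minimised when choosing $w_3w_4$, so $m\ge i$ and hence $m=i$. Now $w\sim w_3$, $w_3\sim w_4$, $w_4\sim v_i$ and $v_i\sim w$ form a $4$-cycle through $w_3$ (the four vertices are distinct: $w,w_3,w_4$ are interior to $G$ while $v_i\in\delta G$, and $w\ne w_4$ since $w_4\in V(\delta G_2)$), contradicting $g(w_3)\ge 5$. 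In the second case, $w$ is adjacent to $w_1$ or $w_2$; since also $w_3$ is adjacent to $w_1$ or $w_2$, and $w_1w_2$ is an edge of the $3$-chord while $ww_3$ is an edge, we obtain either a triangle through $w_1$ or $w_2$ (if $w$ and $w_3$ have a common neighbour in $\{w_1,w_2\}$) or a $4$-cycle through both $w_1$ and $w_2$ (if not), contradicting $g(w_1)=g(w_2)=5$.

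The main obstacle, and the step that needs the most care, is the invocation of the minimality of $i$: one must check that the edge $ww_3$ genuinely satisfies every hypothesis under which $w_3w_4$ was selected to minimise $i$, so that the inequality $m\ge i$ is legitimately available — in particular that $w_3$ (not just $w$) has the required neighbour among $\{w_1,w_2\}$. The remaining work is routine bookkeeping: confirming the exact change in $f(w)$ under $\sigma$ (no spurious $\delsave$ sparing), and verifying in each case that the cycle produced really is short and non-degenerate, using that $w_1,w_2\notin V(\hat G)$ and that interior vertices of $G$ are distinct from vertices of $\delta G$.
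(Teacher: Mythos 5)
Your argument is correct and is exactly the reasoning the paper leaves implicit in its one-line justification (``planarity plus the choice of $i$''): since $w$ lies inside $\delta G_2$ its unique $R$-neighbour (Corollary~\ref{cor:notmanyRnbrs}) is some $v_m$ with $m\le i$, the edge $ww_3$ lies in $\hat G[\hat A]$ so Claim~\ref{claim:finalstructure2} and the minimality of $i$ force $m=i$, and then $w_3w_4v_iw$ is a $4$-cycle contradicting $g(w_3)\ge 5$; your ``second case'' is in fact vacuous (exactly one endpoint of an edge of $\hat G[\hat A]$ can have a neighbour in $R$, by the uniqueness in Lemma~\ref{lem:AAstructure}\ref{itm:AAunique}), but the way you dispose of it is sound. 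The only step you assert without justification is $w\notin\{w_1,w_2\}$, which is needed for $w\in V(\hat G)$: when $Q=v_1w_1w_3w_4v_i$ the vertex $w_2$ \emph{is} an interior vertex of $G_2$ adjacent to $V(Q')$, so you must rule out $w=w_2$ explicitly; this is immediate, since $w\sim w_3$ and $w_3\sim w_1$ would make $w_1w_2w_3$ a triangle, contradicting $g(w_1)=5$ (and in the case $Q=v_1w_1w_2w_3w_4v_i$ we have $w_2\in V(\delta G_2)$ anyway).
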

Similarly, Lemma \ref{lem:3chords} and Observation \ref{obs:whereistheAvert} (noting that $Q' \subseteq R$ since $R$ is defined according to case~\ref{case:f2233} or~\ref{case:f223}) give the following.
\begin{obs}\label{obs:goodichoice2}
    There does not exist a vertex $w \in V(G_2) \setminus V(\delta G_2)$ with $g(w) \geq 5$ (equivalently, $f(w) = 2$) such that $w$ is adjacent to a vertex in $V(Q')$ and to $w_4$.
\end{obs}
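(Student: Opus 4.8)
The plan is to argue by contradiction, mirroring the proof of Observation~\ref{obs:goodichoice} but invoking the $3$-chord lemma. Suppose there is a vertex $w\in V(G_2)\setminus V(\delta G_2)$ with $g(w)\ge 5$ that is adjacent both to some $v_p\in V(Q')$ and to $w_4$. First I would collect two cheap facts: since $w_4\in\hat A$, Claim~\ref{claim:finalstructure1} gives $g(w_4)\ge 5$; and $w,w_4\notin V(\delta G)$, the former because $V(\delta G)\cap V(G_2)=V(Q')$ while $w$ lies strictly inside $G_2$, and the latter because $(\hat A\setminus A)\cap V(\delta G)=\emptyset$ as noted in the proof of Claim~\ref{claim:finalstructure2}.

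Next I would pin down $p$. We cannot have $p=i$, since then $\{w,w_4,v_i\}$ would induce a triangle through $w$; and we cannot have $p=i-1$, since $v_{i-1}$ and $v_i$ are consecutive on $\delta G$ and so $\{v_{i-1},w,w_4,v_i\}$ would induce a $4$-cycle through $w$; both possibilities contradict $g(w)\ge 5$. Hence $p\le i-2$, and $v_pww_4v_i$ is then a simple path whose endpoints $v_p,v_i$ lie on $\delta G$, whose two interior vertices $w,w_4$ do not, and which (using $p\le i-2$) genuinely separates $G$ --- that is, it is a $3$-chord. Neither endpoint is an internal vertex of $P$ because $v_p,v_i\in V(R)\subseteq V(\delta G)\setminus V(P)$, and both interior vertices have girth at least five, so the stronger form of Lemma~\ref{lem:3chords} applies: this $3$-chord puts us in the case of an exceptional canvas of type~\ref{ex:A}, and hence there is a vertex $a\in A\setminus\{v_p,w,w_4,v_i\}$ with $\{v_p,v_i\}\subseteq N(a)$.

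To conclude, I would use that $a\in A\cap V(G)\subseteq V(\delta G)\setminus V(P)$ together with the absence of $1$-chords in $\delta G$ (Lemma~\ref{lem:1chord}): since $a$ lies on the cycle $\delta G$ and is adjacent to each of $v_p$ and $v_i$, it must be a cyclic neighbour of each of them on $\delta G$. The vertex $v_0$ is excluded because its only $\delta G$-neighbours are $v_1$ and an endpoint of $P$, which cannot both be among $\{v_p,v_i\}$ since $i\ge 4$; so $a=v_q$ for some $1\le q\le t$ with $|q-p|=|q-i|=1$, which together with $p\le i-2$ forces $p=i-2$ and $a=v_{i-1}$. But then $v_{i-1}\in A$ and $v_{i-1}\in V(R)$ because $Q'\subseteq R$, so Observation~\ref{obs:whereistheAvert} gives $i-1=2$, i.e.\ $i=3$, contradicting $i\ge 4$ from Claim~\ref{claim:finalstructure2}. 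I expect the only mildly delicate point to be the bookkeeping that certifies $v_pww_4v_i$ as a bona fide $3$-chord (simple path, interior disjoint from $\delta G$, genuinely separating) so that Lemma~\ref{lem:3chords} is applicable; everything after that is a short deduction from the no-$1$-chord property and Observation~\ref{obs:whereistheAvert}.
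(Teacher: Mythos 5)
Your argument is correct and follows exactly the route the paper intends: the paper justifies this observation in one line by citing Lemma~\ref{lem:3chords} and Observation~\ref{obs:whereistheAvert} together with $Q'\subseteq R$, and your write-up is precisely that argument fleshed out (form the 3-chord $v_pww_4v_i$ with both internal vertices of girth at least five, extract the type~\ref{ex:A} vertex of $A$, use Lemma~\ref{lem:1chord} to force it to be $v_{i-1}\in V(R)$, and contradict Observation~\ref{obs:whereistheAvert} via $i\ge 4$). The bookkeeping you flag (simplicity of the path, interior vertices off $\delta G$, genuine separation) is handled at the same level of detail as the paper's other 3-chord applications, so no gap.
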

In fact, these observations can be strengthened as follows. 
\begin{claim}
Either $Q = v_1w_1w_2w_3w_4v_i$ and $G_2$ has no interior vertex of girth at least five adjacent to two vertices in $V(\delta G_2)$, or $Q = v_1w_1w_3w_4v_i$ and the only interior vertex of $G_2$ of girth at least five adjacent to two vertices in $V(\delta G_2)$ is $w_2$.  
\end{claim}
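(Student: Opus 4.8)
The plan is to fix an arbitrary interior vertex $w$ of $G_2$ with $g(w)\ge 5$ that is adjacent to two distinct vertices $x,y\in V(\delta G_2)$, and to show that necessarily $Q=v_1w_1w_3w_4v_i$ and $w=w_2$. Since in the other form $Q=v_1w_1w_2w_3w_4v_i$ the vertex $w_2$ lies on $Q\subseteq V(\delta G_2)$ rather than in the interior of $G_2$, this will establish both halves of the statement at once. Throughout I will use that $g(w)\ge 5$ excludes any triangle or $4$-cycle through $w$; that $\delta G$ has no $1$-chord (Lemma~\ref{lem:1chord}); Lemma~\ref{lem:2chords} and the $3$-chord dichotomy of Lemma~\ref{lem:3chords}; that $v_1w_1w_2v_3$ is a $3$-chord with $g(w_1)=g(w_2)=5$ and $v_2\in A$ (Lemma~\ref{lem:AAstructure}\ref{itm:AA3chord}); that $Q'=v_1\dots v_i\subseteq R$ and $v_2$ is the only vertex of $R$ in $A$ (Observation~\ref{obs:whereistheAvert}); and that the $5$-cycle $v_1v_2v_3w_2w_1v_1$ has empty interior by Lemma~\ref{lem:5cycles}, so that $w_2$ is interior to $G_2$ precisely when $Q=v_1w_1w_3w_4v_i$.

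I will split into cases according to which of $Q'$ and $Q$ contain $x$ and $y$, counting the shared endpoints $v_1,v_i$ as lying on $Q'$. If $x=v_p$ and $y=v_q$ both lie on $Q'$ with $p<q$, then ruling out a triangle (the case $q=p+1$) and a $4$-cycle (the case $q=p+2$) through $w$ forces $q\ge p+3$, so $v_pwv_q$ is a $2$-chord with neither endpoint in $P$; Lemma~\ref{lem:2chords} then yields $b\in B\subseteq V(\delta G)$ with $v_p,v_q\in N(b)$, and since $q\ge p+3$ at least one of $bv_p$, $bv_q$ is a $1$-chord of $\delta G$ — contradicting Lemma~\ref{lem:1chord}. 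If $x=v_p$ lies on $Q'$ and $y\in V(Q)\setminus V(Q')\subseteq\{w_1,w_2,w_3,w_4\}$, then $y\in\{w_3,w_4\}$ is impossible by Observations~\ref{obs:goodichoice} and~\ref{obs:goodichoice2}; and if $y=w_1$ (or $y=w_2$, which occurs only when $w_2\in Q$), then after eliminating $p=1$ by a short cycle through $w$, the path $v_pww_1v_1$ (resp.\ $v_pww_2v_3$) is a $3$-chord both of whose internal vertices have girth at least five, so Lemma~\ref{lem:3chords} supplies $a\in A$ adjacent to its two endpoints. The no-$1$-chord property forces $a$ to be a $\delta G$-neighbour of each endpoint; after dismissing $a=v_0$ and using that $v_2$ is the unique vertex of $R$ in $A$, this pins down $a=v_2$ and hence $p=3$. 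In the $y=w_2$ subcase, $p=3$ produces the triangle $ww_2v_3$, a contradiction; in the $y=w_1$ subcase, $p=3$ makes $w$ adjacent to both $v_3$ and $w_1$, so that $ww_1w_2v_3w$ is a $4$-cycle unless $w=w_2$ — hence we reach either a contradiction or $w=w_2$, and the latter forces $w_2\notin V(\delta G_2)$, i.e.\ $Q=v_1w_1w_3w_4v_i$.

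It remains to handle $x,y\in V(Q)\setminus V(Q')$; then $x,y$ are internal vertices of the path $Q$, and $Q[x,y]+xwy$ is a cycle through $w$ of length $d_Q(x,y)+2$, forcing $d_Q(x,y)\ge 3$. When $Q=v_1w_1w_3w_4v_i$ the internal vertices of $Q$ are pairwise within distance $2$, so this case cannot occur. When $Q=v_1w_1w_2w_3w_4v_i$ the only possibility is $\{x,y\}=\{w_1,w_4\}$, so in particular $w\sim w_1$; but the chord $v_3w_2$ of $\delta G_2$ cuts the disk $G_2$ into two sub-disks, the one on the $w_1$-side being bounded by the $5$-cycle $v_1v_2v_3w_2w_1$, whose interior is empty by Lemma~\ref{lem:5cycles}, so its only vertices are $v_1,v_2,v_3,w_2,w_1$. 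Since $w$ is interior to $G_2$ and distinct from these, it lies in the other sub-disk, and hence cannot be adjacent to $w_1$ — a contradiction. Assembling the three cases, every such $w$ equals $w_2$ and is interior to $G_2$, which by the opening remark forces $Q=v_1w_1w_3w_4v_i$; and conversely, in that form $w_2$ is an interior vertex of $G_2$ of girth five adjacent to $w_1$ and $v_3$ on $\delta G_2$, so it is the unique such vertex.

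The step I expect to be most delicate is the mixed case: correctly identifying which vertex of $\delta G$ can serve as the set-$A$ vertex produced by Lemma~\ref{lem:3chords}, and checking that the no-$1$-chord property together with the precise lists of $\delta G$-neighbours of $v_1$ and $v_3$ and Observation~\ref{obs:whereistheAvert} leave only $a=v_2$ with $p=3$. The "both on $Q$" case is also fiddly, as it rests on the planarity fact that the chord $v_3w_2$ genuinely separates $w_1$ from $w_4$ inside $G_2$; I would make this precise by observing that each of the two sub-disks cut off by $v_3w_2$ is bounded by a cycle of length at most five, and applying Lemmas~\ref{lem:5cycles} and~\ref{lem:triangles} to control their interiors.
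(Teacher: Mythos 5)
Your proof is correct and takes essentially the same route as the paper's: an exhaustive analysis of where the two $\delta G_2$-neighbours of $w$ can lie, using the empty interior of the $5$-cycle $v_1v_2v_3w_2w_1$ (Lemma~\ref{lem:5cycles}), Lemma~\ref{lem:2chords}, Lemma~\ref{lem:3chords} together with Observation~\ref{obs:whereistheAvert}, Observations~\ref{obs:goodichoice} and~\ref{obs:goodichoice2}, and the absence of $1$-chords, with your chord-separation argument for the pair $\{w_1,w_4\}$ being just a more explicit version of what the paper compresses into a parenthetical. One small nit: in the mixed case you should also dispose of $p=2$ before invoking Lemma~\ref{lem:3chords}, since for $p=2$ the path $v_pww_1v_1$ (resp.\ $v_pww_2v_3$) does not separate $G$ (its bounded side is empty by Lemma~\ref{lem:triangles}) and so is not a $3$-chord; this is harmless because $p=2$ is impossible anyway, as $v_1v_2ww_1$ (resp.\ $v_2v_3w_2w$) would be a $4$-cycle through $w$, contradicting $g(w)\ge 5$.
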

\begin{cproof}
Let $w$ be an interior vertex of $G_2$ with $g(w) \geq 5$. Note that since the interior of $v_1v_2v_3w_2w_1v_1$ is empty by Lemma \ref{lem:5cycles} and the fact that $g(w_1) = 5$, $w$ is not adjacent to $v_1$ or $v_2$. If $w$ is adjacent to $w_1$, then ($Q = v_1w_1w_3w_4v_i$ and) $w$ is not adjacent to $w_4$ since $g(w) \geq 5$. If $w$ is adjacent to a vertex $r \in V(Q')$, then  $w \neq w_2$, we have that $v_1w_1wr$ is a 3-chord which, given Observation \ref{obs:whereistheAvert}, contradicts Lemma \ref{lem:3chords} unless $r = v_3$. But then $v_2$ is in the interior of $v_1w_1vv_3v_1$, which contradicts Lemma \ref{lem:3chords}.

Thus we may assume $w$ is not adjacent to $w_1$, $v_1,$ or $v_2$. By Lemma \ref{lem:2chords}, $w$ is not adjacent to two vertices in $Q'$. If $w$ is adjacent to $w_3$, it is not adjacent to a vertex in $Q'$ by Observation \ref{obs:goodichoice}. If $w$ is adjacent to $w_4$, it is not adjacent to a vertex in $Q'$ by Observation \ref{obs:goodichoice2}.
\end{cproof}

Lemmas \ref{lem:2chords}, \ref{lem:3chords}, and~\ref{lem:AAstructure}\ref{itm:onlynbrsw1w2} now easily give the following, which  establishes more of the structure of $\delta G_2$.
\begin{obs}\label{obs:notmanyQ'nbrs}
$N(w_1) \cap V(Q') = \{v_1 \}$. $N(w_2) \cap V(Q') = \{v_3\}$. $N(w_3) \cap V(Q') = \emptyset$. $N(w_4) \cap V(Q') = \{v_i\}$.
\end{obs}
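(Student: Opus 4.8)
The plan is to verify the four equalities one at a time, each reducing to a single application of one of the chord lemmas, together with the girth facts $g(w_1)=g(w_2)=5$ from Lemma~\ref{lem:AAstructure}\ref{itm:AA3chord} and $g(w_3),g(w_4)\ge 5$ from Claim~\ref{claim:finalstructure1}, and the basic observations that $V(Q')\subseteq V(R)\subseteq V(\delta G)\setminus V(P)$ and that $v_1,v_3\in V(Q')$ since $i\ge 4$.

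First I would dispatch $w_1$ and $w_2$. We have $w_1v_1,w_2v_3\in E(G)$ from the $3$-chord $v_1w_1w_2v_3$, and by Lemma~\ref{lem:AAstructure}\ref{itm:onlynbrsw1w2} the only neighbour of $w_1$ in $\delta G-P$ is $v_1$ and the only neighbour of $w_2$ in $\delta G-P$ is $v_3$. Since every vertex of $Q'$ lies in $\delta G-P$ and $v_1,v_3\in V(Q')$, this immediately gives $N(w_1)\cap V(Q')=\{v_1\}$ and $N(w_2)\cap V(Q')=\{v_3\}$.

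Next I would handle $w_3$. Because $w_3\in\hat A\subseteq V(\hat G)$ we have $w_3\notin\{w_1,w_2\}$, and Claim~\ref{claim:finalstructure2} tells us $w_3$ is adjacent to one of $w_1,w_2$. Claim~\ref{claim:notadjtow1w2andR} then forbids $w_3$ from having any neighbour in $V(R)$, and since $V(Q')\subseteq V(R)$ we get $N(w_3)\cap V(Q')=\emptyset$. (If one prefers to argue directly rather than invoke Claim~\ref{claim:notadjtow1w2andR}, a neighbour $v_\ell\in V(Q')$ of $w_3$ would give either the triangle $v_1w_1w_3$ when $\ell=1$, contradicting $g(w_1)=5$, or, when $\ell\ge 2$, a $3$-chord $v_1w_1w_3v_\ell$ whose internal vertices both have girth at least five; Lemma~\ref{lem:3chords} and Observation~\ref{obs:whereistheAvert} would force the $A$-vertex of the resulting exceptional canvas of type~\ref{ex:A} to be $v_2$, hence $\ell=3$, and then $w_1w_2v_3w_3$ is a $4$-cycle, contradicting $g(w_1)=5$; the case $w_3\sim w_2$ is analogous.)

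Finally I would treat $w_4$, which is the one place needing care. By construction $w_4v_i\in E(G)$ with $v_i\in V(Q')$; suppose $w_4$ had a second neighbour $v_{i'}\in V(Q')$ with $i'\ne i$. If $|i-i'|=1$ then $v_iv_{i'}w_4$ is a triangle, contradicting $g(w_4)\ge 5$, so $|i-i'|\ge 2$ and, as $\delta G$ is chordless by Lemma~\ref{lem:1chord}, $v_i$ and $v_{i'}$ are non-adjacent. The short arc of $\delta G$ between $v_i$ and $v_{i'}$ runs inside $Q'$ and so has an interior vertex, while the complementary side contains $P$; hence $v_iw_4v_{i'}$ genuinely separates $G$ and is a $2$-chord, neither of whose endpoints is an internal vertex of $P$. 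Lemma~\ref{lem:2chords} then produces a vertex of $B$ adjacent to $w_4$, forcing $g(w_4)=3$ and contradicting $g(w_4)\ge 5$; thus $N(w_4)\cap V(Q')=\{v_i\}$. The only genuine obstacle in the whole argument is confirming that this last path separates $G$ (rather than being a path whose middle vertex merely has degree two), and the arc argument resolves it.
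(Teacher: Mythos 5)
Your proposal is correct and follows essentially the route the paper intends: the paper states this as an observation following from Lemmas~\ref{lem:2chords} and~\ref{lem:3chords} and Lemma~\ref{lem:AAstructure}\ref{itm:onlynbrsw1w2}, which is exactly how you handle $w_1,w_2$ (via \ref{itm:onlynbrsw1w2}), $w_3$ (via the 3-chord argument, with Claim~\ref{claim:notadjtow1w2andR} as a legitimate shortcut since $Q'\subseteq R$), and $w_4$ (via a 2-chord whose forced $B$-vertex creates a triangle through $w_4$). Your attention to the separation requirement for the 2-chord $v_iw_4v_{i'}$ is a fine point the paper glosses over, and it is resolved correctly.
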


We need one more claim.

\begin{claim}\label{claim:i>4}
   If $Q = v_1w_1w_2w_3w_4v_i$, then $i > 4$.
\end{claim}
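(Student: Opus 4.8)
The plan is to derive a contradiction from the assumption that $Q = v_1w_1w_2w_3w_4v_i$ with $i = 4$ (recall $i \ge 4$ by Claim~\ref{claim:finalstructure2}, so the only case to rule out is $i = 4$). First I would observe that in this situation $\delta G_2$ is the cycle $v_1v_2v_3v_4w_4w_3w_2w_1v_1$ together with the path $Q' = v_1v_2v_3v_4$, and that $v_2 \in A$ (hence $g(v_2) \ge 5$, $f(v_2) = 1$) by Observation~\ref{obs:whereistheAvert}, while $g(w_1) = g(w_2) = 5$ by Lemma~\ref{lem:AAstructure}\ref{itm:AA3chord}. The key structural input is that, by Lemma~\ref{lem:5cycles}, no $5$-cycle in $G$ with a vertex in its interior may contain a vertex of girth at least five or two adjacent girth-four vertices; and by Lemma~\ref{lem:triangles}, no cycle of length at most four has a vertex in its interior. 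I would use these to pin down exactly which vertices can live inside $G_2$.

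The second step would be to exploit the claim just proved (that $w_2$ is the only interior vertex of $G_2$ of girth at least five adjacent to two vertices of $V(\delta G_2)$ — but here $w_2$ lies \emph{on} $Q$, not in the interior, so in fact \emph{no} interior vertex of $G_2$ has girth $\ge 5$ and two neighbours on $\delta G_2$) together with Observation~\ref{obs:notmanyQ'nbrs}, which tells us $N(w_1)\cap V(Q') = \{v_1\}$, $N(w_2)\cap V(Q') = \{v_3\}$, $N(w_3)\cap V(Q') = \emptyset$, $N(w_4)\cap V(Q') = \{v_4\}$. Combined with Lemma~\ref{lem:triangles} applied to the short cycles formed along $Q \cup Q'$, these force $V(G_2) = V(\delta G_2)$, or at least that any interior vertex of $G_2$ has girth $3$ or $4$ and is severely constrained. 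In particular I expect that $w_2$ ends up with very small degree: its neighbours are confined to $v_3$, $w_1$, $w_3$, and possibly some interior vertices of $G_2$ — but girth and planarity considerations (the empty interiors of $v_1v_2v_3w_2w_1v_1$ and of the relevant small cycles) cut these down. The aim is to show $\deg(w_2) \le f(w_2) = 2$, contradicting Lemma~\ref{lem:nolowdegs}, exactly as in the proof of Claim~\ref{claim:finalstructure2} where the analogous conclusion $\deg(w_2) = 2$ was reached when $i = 3$.

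Concretely, I would argue as follows: since $g(w_2) = 5$ and $w_2 \in V(\delta G_2)$, the cycle $v_1v_2v_3w_2w_1v_1$ is a $5$-cycle; by Lemma~\ref{lem:5cycles} (using $g(w_1) = 5$) it has empty interior, so $w_2$ has no neighbour in the region it bounds. On the other side, $w_2$ is adjacent to $w_3$ (an edge of $Q$) and to $v_3$, and any further neighbour of $w_2$ must be an interior vertex of $G_2$ lying in the region bounded by the cycle $v_3v_4w_4w_3w_2v_3$ (when $i = 4$). But that cycle has length five and contains $w_4$; if it had a vertex $z$ in its interior then $z$ would be a girth-$\le 4$ vertex, and $w_2z$ would be a chord-type structure — and one checks via Lemma~\ref{lem:triangles} and the fact that $w_2w_3$ and $w_3w_4$ and $w_4v_4$ and $v_4v_3$ bound this region that no such $z$ adjacent to $w_2$ can exist, because $w_2$ together with $z$ and any path back along the cycle yields a cycle of length $\le 4$ with a vertex inside, or violates $g(w_2) = 5$. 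Hence $N(w_2) = \{v_3, w_1, w_3\}$ or $N(w_2) = \{v_3, w_3\}$ (depending on whether $w_1 w_2 \in E(G)$, which it is since $w_1w_2$ is an edge of $Q$), so $\deg(w_2) = 3 > 2 = f(w_2)$ — wait, that does not immediately contradict Lemma~\ref{lem:nolowdegs}, which requires $f(w_2) \ge \deg(w_2)$.

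Let me reconsider: the contradiction must instead come from a $4$-cycle or from forcing $w_2$ or $w_4$ to have degree at most its $f$-value while noting $w_2, w_4 \notin V(P)$. The cleaner route, mirroring Figure~\ref{fig:fs2ine1} and the $i \ne 1$ argument, is: when $i = 4$ and $Q = v_1w_1w_2w_3w_4v_4$, consider the cycle $v_3v_4w_4w_3w_2v_3$. This is a $5$-cycle, and by Lemma~\ref{lem:5cycles} together with $g(w_2) = 5$ it must have empty interior; similarly $v_1v_2v_3w_2w_1v_1$ has empty interior. Therefore $w_2$'s only neighbours are $v_3$, $w_1$, $w_3$. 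Now examine $w_3$: $w_3 \in \hat A$ so $g(w_3) \ge 5$ by Claim~\ref{claim:finalstructure1}; its neighbours in $V(\delta G_2)$ are $w_2$ and $w_4$ (and none in $Q'$ by Observation~\ref{obs:notmanyQ'nbrs}); the region between $w_2w_3$, $w_3w_4$ and the arc $w_2v_3v_4w_4$ has empty interior as it is bounded by the empty $5$-cycle $v_3v_4w_4w_3w_2v_3$. So $\deg(w_3) = 2$. But $w_3 \notin V(P)$ and $f(w_3) \ge \deg(w_3)$ is exactly what Lemma~\ref{lem:nolowdegs} forbids — wait again, $f(w_3) = 2 = \deg(w_3)$ means $f(w_3) \ge \deg(w_3)$, which \emph{is} the forbidden configuration. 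That gives the contradiction. The main obstacle will be carefully justifying that the interiors of these small cycles are empty and that no stray interior vertices of $G_2$ attach to $w_2$ or $w_3$; this is exactly the kind of planarity-plus-girth bookkeeping that Lemmas~\ref{lem:triangles} and~\ref{lem:5cycles} and Observation~\ref{obs:notmanyQ'nbrs} are designed to handle, so the argument should go through without new ideas.

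\medskip

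\begin{cproof}
    Suppose for contradiction that $Q = v_1w_1w_2w_3w_4v_i$ and $i = 4$. By Lemma~\ref{lem:AAstructure}\ref{itm:AA3chord} we have $g(w_1) = g(w_2) = 5$, and $w_3 \in \hat A$ so $g(w_3) \geq 5$ by Claim~\ref{claim:finalstructure1}. By Observation~\ref{obs:whereistheAvert}, $v_2 \in A$ and so $g(v_2) \geq 5$.

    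Consider the $5$-cycle $C_1 = v_1v_2v_3w_2w_1v_1$. Since $g(w_1) = 5$, Lemma~\ref{lem:5cycles} gives that $C_1$ has no vertex in its interior. Next consider the cycle $C_2 = v_3v_4w_4w_3w_2v_3$, which is a $5$-cycle since its vertices are distinct (as $\{v_3,v_4\}\subseteq V(R)$ and $w_2,w_3,w_4\notin V(\delta G)$). Since $g(w_2) = 5$, Lemma~\ref{lem:5cycles} applied to $C_2$ gives that $C_2$ has no vertex in its interior either.

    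By Observation~\ref{obs:notmanyQ'nbrs}, $N(w_3) \cap V(Q') = \emptyset$, where $Q' = v_1v_2v_3v_4$. Since $w_3$ lies in $V(G_2)$ and its only neighbours along $\delta G_2 = Q \cup Q'$ are $w_2$ and $w_4$, every other neighbour of $w_3$ is an interior vertex of $G_2$. But the only region of $G_2$ incident with $w_3$ and not separated from $w_3$ by $\delta G_2$ is the interior of $C_2$ (the edges $w_2w_3$, $w_3w_4$ are edges of $C_2$, and $w_3$ is not on $\delta G$). Since $C_2$ has empty interior, $w_3$ has no neighbour in the interior of $G_2$. Hence $N(w_3) = \{w_2, w_4\}$ and $\deg(w_3) = 2$.

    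Since $w_3 \notin V(\delta G)$ and $g(w_3) \geq 5$, property~\ref{canv:fint} together with Observation~\ref{obs:feq} gives $f(w_3) = 2 = \deg(w_3)$. As $w_3 \notin V(P)$, this contradicts Lemma~\ref{lem:nolowdegs}.
\end{cproof}
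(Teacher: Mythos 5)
Your opening steps match the paper: under $i=4$ you correctly invoke Lemma~\ref{lem:5cycles} (via $g(w_1)=g(w_2)=5$) to conclude that the two $5$-cycles $v_1v_2v_3w_2w_1v_1$ and $v_3v_4w_4w_3w_2v_3$ have empty interiors. But the step that delivers your contradiction is wrong: from ``$N(w_3)\cap V(Q')=\emptyset$ and the interior of $C_2$ is empty'' you conclude $N(w_3)=\{w_2,w_4\}$, i.e.\ $\deg(w_3)=2$. This does not follow, because $w_3$ lies \emph{on} the separating path $Q=G_1\cap G_2$, so it is on the boundary of both $G_1$ and $G_2$; the empty interiors of $C_1$ and $C_2$ only control its neighbours on the $G_2$ side. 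Nothing you have cited prevents $w_3$ from having neighbours in the interior of $G_1$ (or elsewhere in $\delta G$ away from $Q'$), and indeed in a minimum counterexample Lemma~\ref{lem:nolowdegs} forces $\deg(w_3)\ge 3$, so such a third neighbour must exist on the $G_1$ side and no contradiction is reached. Contrast this with the $i=3$ argument inside Claim~\ref{claim:finalstructure2}, where the low-degree vertex ($w_2$ there) is genuinely enclosed between two empty cycles because it does \emph{not} lie on the separating path; your $w_3$ is not enclosed, so the same bookkeeping does not apply.

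The paper's proof takes a different route after the two empty $5$-cycles: it deduces $\deg(v_3)=3$ (legitimate, since $v_3\in\delta G$, $\delta G$ is chordless by Lemma~\ref{lem:1chord}, and $v_3$ is sandwiched between the two empty $5$-cycles) and then $g(v_3)\ge 5$ from $g(v_2)=g(w_2)=5$. It then performs the legal operations $\delsave(v_1,v_0),\del(v_2)$ on $(G,f)$ and shows the resulting tuple $(G''-P,A'',B'',f'')$ is an unexceptional canvas — the crucial point being that any edge of $A''$ would have to use $v_3$, but $N_{G''}(v_3)=\{w_2,v_4\}$ and neither $w_2$ nor $v_4$ lies in $A''$ — so minimality of $K$ yields the contradiction. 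If you want to repair your argument, you would need either this kind of canvas/minimality step or some separate control over the $G_1$-side neighbours of $w_3$; the degree count alone cannot work.
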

\begin{cproof}
Note that by Claim \ref{claim:finalstructure2}, $i\geq 4$. Suppose for a contradiction that $Q = v_1w_1w_2w_3w_4v_i$ and $i = 4$, as depicted in Figure~\ref{fig:i>4}. 
Note that since $v_4 \in V(R)$, by Observation \ref{obs:whereistheAvert}, neither $v_3$ nor $v_4$ are in $A$. By Lemma~\ref{lem:5cycles} and the fact that $g(w_2) = 5$, the 5-cycles $v_1v_2v_3w_2w_1v_1$ and $v_3v_4w_4w_3w_2v_3$ have empty interiors, and hence $\deg(v_3) = 3$.  Since $g(w_2) = g(v_2) = 5$, it follows that $g(v_3) = 5$, as well.  Let $(G'', f'')$ be the graph and function obtained from $(G, f)$ by performing the operations $\delsave(v_1,v_0),\del(v_2)$. Note this sequence of operations is legal: the first operation is legal since $f(v_1) = 2$ and if $v_0$ exists, then by Lemma \ref{lem:1chord} $v_1$ is not adjacent to a vertex in $P$. The second operation is legal since $N(v_2) = \{v_1,v_3\}$ and $f(v_3) = 2$.

\begin{figure}[ht]
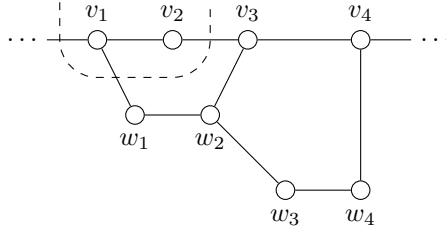
\centering
    \includestandalone[page=5]{figures/fig_AAdetails}%
    \captionsetup{width=0.9\textwidth}
    \caption{Ruling out $Q = v_1w_1w_2w_3w_4v_i$ and $i = 4$ in Claim~\ref{claim:i>4}.\label{fig:i>4}}
\end{figure}

Let $A''$ be the set of vertices in $V(G'') \setminus V(P)$ with $f''(v) \leq 1$, and $B''$, the set of vertices  $v \in V(G'') \setminus V(P)$ with $g(v) = 3$ and $f(v) = 2$. Since $v_2v_1$ is a subpath of $R$, $f''(v) \geq \tilde{f}(v)$ for all $v \in V(G'') \setminus \{v_3\}$. Hence by Lemma \ref{lem:AAstructure}, since $w_2 \not \in A''$, every edge induced by $A''$ includes $v_3$.  But $N_{G''}(v_3) = \{w_2,v_4\}$. Since $f(v_4) = 2$, $v_4 \not \in A''$, and since $w_2 \not \in N(v_1) \cup N(v_2)$, also $w_2 \not \in A''$. Hence $A''$ is independent, and so by Observation \ref{obs:almostcanvas} and the fact that $v_1v_2 \subseteq V(R)$, we have that $(G''-P, A'', B'', f'')$ is a canvas. Since the interior of the 5-cycle $v_1v_2v_3w_2w_1v_1$ is empty, it is easy to see it is unexceptional. This is a contradiction, as then $G''$ is weakly $f''$ degenerate; and hence $G$ is weakly $f$-degenerate. 
\end{cproof}

The proof Theorem \ref{thm:inductive} is completed below.

\begin{claim}
$K$ is not a counterexample to Theorem \ref{thm:inductive}.
\end{claim}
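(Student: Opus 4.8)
The plan is to derive the contradiction in the usual way: reduce $K$ to a strictly smaller unexceptional canvas by weak degeneracy operations, invoke minimality, and concatenate. I would extend the sequence $\sigma$ — which, as already shown, legally removes $V(R)\cup\{w_1,w_2\}$ from $(G-P,f_K)$ — to a sequence $\sigma^*$ that also removes $w_3$, $w_4$, and the vertices of $G$ in the interior of the cycle $\delta G_2=Q\cup Q'$ (peeled innermost-first). Writing $(G^*,f^*)$ for the pair reached by $\sigma^*$, setting $\hat f^*(v)=f^*(v)+|N(v)\cap V(P)|$, and defining $A^*=\{v\in V(G^*)\setminus V(P):\hat f^*(v)\le 1\}$ and $B^*=\{v\in V(G^*)\setminus V(P):\hat f^*(v)=2,\ g(v)=3\}$, the candidate canvas is $K^*=(G^*,P,A^*,B^*,\hat f^*)$. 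If $K^*$ is an unexceptional canvas then, since $v(G^*)<v(G)$, minimality gives that $K^*$ is weakly $\hat f^*$-degenerate, and gluing $\sigma^*$ to a witnessing sequence shows $G-P$ is weakly $f_K$-degenerate, contradicting that $K$ is a counterexample.

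The first step is to exploit the minimal choice of the edge $w_3w_4$ to control $G_2$. Lemma~\ref{lem:5cycles} forces the $5$-cycle $v_1v_2v_3w_2w_1v_1$ (and the analogous short cycles straddling $Q$) to have empty interior, Lemma~\ref{lem:triangles} kills small interior cycles, and Observations~\ref{obs:goodichoice}, \ref{obs:goodichoice2}, \ref{obs:notmanyQ'nbrs}, together with Claim~\ref{claim:i>4} when $Q=v_1w_1w_2w_3w_4v_i$, show that every interior vertex of $G_2$ other than $w_2$ has girth at least five and at most one neighbour on $\delta G_2$. Since $\delta G_2\subseteq V(R)\cup\{w_1,w_2,w_3,w_4\}$, this bounds how far $\hat f^*$ drops below $f$ inside $G_2$, which is precisely what makes $\sigma^*$ legal: the removal of $R,w_1,w_2$ is legal by Lemma~\ref{lem:delRlegal} and the earlier analysis; $\del(w_4)$ and $\del(w_3)$ are legal because a common neighbour of $R$ with $w_3$ or $w_4$ would produce a $2$- or $3$-chord contradicting Lemma~\ref{lem:2chords} or~\ref{lem:3chords} and the minimality of $i$; and the interior of $G_2$ can then be removed innermost-first without any function value reaching $-1$.

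The second step is to verify that $K^*$ is an unexceptional canvas. Axioms~\ref{canv:acceptable} and~\ref{canv:fint} are inherited from $K$; \ref{canv:fB}, \ref{canv:fCn3}, \ref{canv:fC3} hold by the definitions of $A^*,B^*$; an argument like that of Claim~\ref{claim:delGfine} shows $\hat f^*=f$ on $V(\delta G)\cap V(G^*)$ except possibly at $v_{j+1}$, whence, just as in Claim~\ref{claim:finalstructure1}, the $2$-chord arguments give~\ref{canv:fA}, the girth-five condition for $A^*$, and the independence of $B^*$ required by~\ref{canv:B}. The decisive axiom is~\ref{canv:A}: if $G^*[A^*]$ contained an edge, then arguments analogous to those of Claims~\ref{claim:finalstructure1} and~\ref{claim:finalstructure2} would exhibit it in the shape of Claim~\ref{claim:finalstructure2} with $R$-neighbour at an index $i'<i$ — strictly between $v_1$ and $v_i$ — contradicting the minimal choice of $w_3w_4$; hence $A^*$ is independent and $K^*$ is a canvas. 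That $K^*$ is unexceptional follows exactly as in Claim~\ref{claim:ifcanvthenunex}, using $g(w_1)=g(w_2)=5$, Lemma~\ref{lem:1chord} and Lemma~\ref{lem:2chords}. (As usual $G^*$ might be disconnected; a component not meeting $P$ is harmless by Theorem~\ref{thm:inductive} applied to it with an arbitrary two-vertex boundary path, as in the proof of Lemma~\ref{lem:2-conn}.)

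The step I expect to be the main obstacle is the joint verification of legality of $\sigma^*$ and of axiom~\ref{canv:A} for $K^*$: both rest entirely on extracting the last consequences of the minimal choice of $i$ from the lemmas forbidding short chords (\ref{lem:2chords}, \ref{lem:3chords}) and small separating cycles (\ref{lem:triangles}, \ref{lem:5cycles}, \ref{lem:nolowdegs}), and the analysis genuinely splits according to whether $Q=v_1w_1w_3w_4v_i$ or $Q=v_1w_1w_2w_3w_4v_i$. Everything else — the inheritance of the easy canvas axioms, the treatment of the boundary function via Claim~\ref{claim:delGfine}, and the concatenation yielding the contradiction — is routine once this structural control is in place.
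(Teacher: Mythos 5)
Your plan diverges from the paper's at the decisive point, and the divergence creates a real gap. The paper does \emph{not} remove $w_3$, $w_4$ and the interior of $G_2$ by explicit operations; it cuts $G$ along $Q$ and invokes minimality \emph{twice}: once on the canvas $(G_1,P,A,B,f)$ (legal sequence $\sigma$ removing $G_1-P$, which in particular removes $Q$), and once on the canvas $K_4=(G_2-Q',\,Q-Q',\,A_4,B_4,f_4)$, after explicitly deleting only the short path $v_2,\dots,v_{i-1}$ via $\sigma'$ (legality checked with Observation~\ref{obs:notmanyQ'nbrs} and Claim~\ref{claim:i>4}). Your step ``the interior of $G_2$ can then be removed innermost-first without any function value reaching $-1$'' is unjustified and, in general, false: the interior of $G_2$ is an arbitrary planar graph whose girth-$\ge 5$ vertices carry $f=2$, and legally removing such a region is precisely the content of Theorem~\ref{thm:inductive} itself \textemdash{} it needs the inductive hypothesis (with the new boundary path $Q-Q'$ and the $\delsave$ machinery), not a greedy peeling. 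The structural facts you cite (Lemma~\ref{lem:5cycles}, Observations~\ref{obs:goodichoice}--\ref{obs:notmanyQ'nbrs}) only control girth-$\ge 5$ interior vertices' adjacencies to $\delta G_2$; they say nothing about how interior vertices interact with each other, which is what a peeling order would need. Also, your claim that \emph{every} interior vertex of $G_2$ other than $w_2$ has at most one neighbour on $\delta G_2$ is stronger than what is proved: the paper's claim only concerns interior vertices of girth at least five.

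A second gap is in your verification of~\ref{canv:A} for $K^*$. Since you delete $w_1,w_2,w_3,w_4$ (and $R$) on the $P$-side of the picture, new vertices of $A^*$ can arise on the $G_1$ side that are adjacent to these $w$'s but have \emph{no} neighbour in $R$ at all; for instance interior girth-$\ge 5$ vertices $x\sim w_1$ and $y\sim w_4$ with $xy\in E(G)$ (giving a $5$-cycle $w_1xyw_4w_3w_1$, which no lemma forbids) would form an edge of $G^*[A^*]$. Such an edge cannot be converted into a configuration ``with $R$-neighbour at index $i'<i$'', so the minimality of $i$ does not rule it out, and Lemma~\ref{lem:AAstructure} does not apply since it concerns $\tilde A$ (after removing $R$ only). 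The paper's two-sided recursion sidesteps this entirely: vertices on the $G_1$ side are handled by the canvas $(G_1,P,A,B,f)$ with the \emph{original} function $f$, so no new restricted vertices are created there, while $G_2$ is handled by $K_4$ whose path $Q-Q'$ consists of girth-$\ge5$ vertices. To repair your argument you would essentially have to reintroduce both recursions, at which point you recover the paper's proof.
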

\begin{cproof}
By Claims \ref{claim:finalstructure2} and \ref{claim:i>4}, either $Q = v_1w_1w_3w_4v_i$ and $i \geq 4$, or $Q = v_1w_1w_2w_3w_4v_i$ with $i > 4$. Let $f_K(v) = f(v) - |N(v) \cap V(P)|$ for all $v \in V(G_1) \setminus V(P)$. Since $v_2 \not \in V(G_1)$, we have that $v(G_1) < v(G)$, and hence by the minimality of $G$, there exists a sequence of legal degeneracy operations $\sigma$ allowing us to remove all vertices of $G_1-P$, starting from $(G_1-P, f_K)$. 

Recall that $Q = G_1 \cap G_2$. Let $(G_2-Q,f_2)$ be the graph and function obtained from $(G_2, f)$ by removing $Q$ via the same sequence of operations as in $\sigma$. Let 
\[ \sigma' := (\del(v_2), \del(v_3), \dots, \del(v_{i-1})), \]
and let $(G_3, f_3)$ be the graph and function obtained from $(G_2-Q, f_2)$ by performing the sequence of operations $\sigma'$. 
First, we claim this sequence of operations is legal.
By Observation \ref{obs:notmanyQ'nbrs}, if $Q = v_1w_1w_3w_4v_i$, then $f_2(v_2) = 0$, $f_2(v_{i-1}) = 1$, and $f_2(v_{i'}) = 2$ for all $2 < i' < i-1$. If $Q = v_1w_1w_2w_3w_4v_i$, then similarly $f_2(v_2) = 0$, $f(v_3) = 1$, $f_2(v_{i-1}) = 1$, and $f_2(v_{i'}) = 2$ for all $3 < i' < i-1$. Note that here we are using the fact that $v_{i-1}$ is not adjacent to \emph{both} $v_i$ and $w_2$, since  $i > 4$ by Claim \ref{claim:i>4}.

Finally, let $G_4 = G_2-Q'$. Let $f_4(v) = f_3(v) + |N(v) \cap (V(Q)\setminus \{v_1,v_i\})|$ for all $v \in G_3$. Let $A_4$ be the set of vertices in $V(G_4) \setminus V(Q)$ with $f_4(v) \leq 1$, and $B_4$ the set of vertices $\{ v \in G_4\setminus V(Q): g(v) = 3 \textnormal{ and } f_4(v) = 2\}$. 

We claim $K_4 = (G_4, Q-Q', A_4, B_4, f_4)$ is an unexceptional canvas. Since $Q' \subseteq R$ and $Q'$ is removed via the same operations as in \ref{handle:f2233} or \ref{handle:f223}, by Observation \ref{obs:almostcanvas}, $K_4$ is a canvas unless $A_4$ is not independent. By Lemma \ref{lem:AAstructure} and the fact that neither $w_1$ nor $w_2$ is in $A_4$, we conclude that $K_4$ is a canvas. Since $Q-Q'$ contains only vertices of girth five, it is not an exceptional canvas of type \ref{ex:AB} or \ref{ex:B}. Since $v_3w_2$ exists in $G$, there is no vertex in $V(G_2) \supset V(G_4)$ adjacent to both $w_1$ and $w_4$, and hence it is not an exceptional canvas of type \ref{ex:A}. 

Thus there exists a legal sequence of degeneracy operations $\sigma''$ that removes all vertices of $G_4$. But then $\sigma \sigma' \sigma''$ is a legal sequence of degeneracy operations that removes all vertices in $G$, and hence $G-P$ is weakly $f_K$-degenerate, and so $G$ is weakly $f$-degenerate\textemdash a contradiction.
\end{cproof}

\bibliographystyle{habbrv}
\bibliography{bibliog}

\noindent
\end{document}